\newcommand{\numberset}[1]{\ensuremath{\mathbb{#1}}}    
\newcommand{\C}{\numberset{C}}  
\newcommand{\R}{\numberset{R}}  
\newcommand{\Z}{\numberset{Z}}  
\newcommand{\PP}{\numberset{P}}  
\newcommand{\half}{\ensuremath{\frac{1}{2}}}
\newcommand{\inn}[2]{ \left\langle {#1}, {#2} \right\rangle}
\newcommand{\DF}{\mathcal D^bFuk}
\newcommand{\DC}{\mathcal D^bCoh}
\newcommand{\ext}{\mathcal{E}xt}
\newcommand{\ch}{\mathcal{H}om}
\theoremstyle{definition}
\newtheorem{thm}{Theorem}[section]
\newtheorem{prop}[thm]{Proposition}
\newtheorem{lem}[thm]{Lemma}
\newtheorem{cor}[thm]{Corollary}
\theoremstyle{remark}
\newtheorem{rem}[thm]{Remark}
\newtheorem{ex}[thm]{Example}
\newtheorem{defi}[thm]{Definition}
\newtheorem{con}[thm]{Conjecture}
\numberwithin{equation}{section}
\DeclareMathOperator{\Arg}{Arg}
\DeclareMathOperator{\Crit}{Crit} 
\DeclareMathOperator{\conv}{Conv} 
\DeclareMathOperator{\sgn}{sign} 
 \DeclareMathOperator{\Gl}{GL}
\DeclareMathOperator{\spn}{span} 
\DeclareMathOperator{\Hom}{Hom}
\DeclareMathOperator{\Ext}{Ext}
\DeclareMathOperator{\hes}{Hess}
\DeclareMathOperator{\Log}{Log}
\DeclareMathOperator{\rd}{red} 
\DeclareMathOperator{\Sk}{Skel}
\DeclareMathOperator{\vl}{Vol}
\begin{document}

\title[On HMS of toric Calabi-Yau threefolds]{On homological mirror symmetry of toric Calabi-Yau threefolds}

\author[M. Gross and D. Matessi]{Mark Gross and Diego Matessi}

\begin{abstract}
We use Lagrangian torus fibrations on the mirror $X$ of a toric Calabi-Yau threefold $\check X$ to construct Lagrangian sections and various Lagrangian spheres
on $X$. We then propose an explicit correspondence between the sections and line bundles on $\check X$ and between spheres and sheaves supported on the toric divisors of $\check X$. We conjecture that these correspondences induce an embedding of the relevant derived Fukaya category of $X$ inside the derived category of coherent sheaves on $\check X$. 
\end{abstract}

\maketitle

\tableofcontents

\section{Introduction}
An example of mirror symmetry which has been studied a great deal in recent years is so-called local mirror symmetry of (open) toric Calabi-Yau manifolds. If $\check X$ is a smooth toric Calabi-Yau manifold, then a construction of its mirror $X$ appeared first in the physics literature in various articles, see for instance \cite{kkv} or \cite{CKYZ}. Later, in \cite{Gross_spLagEx} and \cite{TMS}, the first author proved that $\check X$ and $X$ admit dual torus fibrations $\check f: \check X \rightarrow \R^n$ and $f: X \rightarrow \R^n$. This suggests that they are mirror to each other in the sense of the SYZ conjecture \cite{SYZ}. The construction in \cite{Gross_spLagEx} implies that $\check f$ is special Lagrangian; moreover, using the results in \cite{CB-M}, we can assume that $f$ is Lagrangian (see also \cite{AAK} for another construction of a Lagrangian fibration on $X$). More recently, mirror symmetry of $X$ and $\check X$ has been proved at deeper levels and in various other aspects, see for instance \cite{AAK}, \cite{ChanLauLeung}, \cite{ChanChoLauTseng}, \cite{GS:ICM}. 

In this article we consider a toric Calabi-Yau threefold $\check X$ and we give rather explicit constructions of Lagrangian sections of $f: X \rightarrow \R^3$ and La\-gran\-gian $3$-spheres in $X$. The Lagrangian spheres we construct represent a homology class which we show can be described via a difference of sections which coincide outside a compact set. Sections of Lagrangian fibrations have long been expected to correspond to line bundles on $\check X$, and our construction of the sections naturally suggests a precise such correspondence. Moreover the relationship between the Lagrangian spheres and different sections then suggests an explicit correspondence between the spheres and line bundles supported on the compact toric divisors of $\check X$. We conjecture that this correspondence induces an embedding of a suitable derived Fukaya category generated by the spheres and the sections into the derived category of coherent sheaves of $\check X$. The two-dimensional version of this conjecture has been partially proved by Chan and Chan-Ueda respectively in \cite{chan:an} and \cite{chan:ueda}. We then prove some results which support our conjecture. For instance, we show that by studying the differential topology of the intersection points between a sphere and a section one finds numbers of intersection points agreeing with the dimension
of the  morphism space between the corresponding line bundle and sheaf. In some special cases (when there are no Floer differentials for degree reasons), we can show that the intersection between a sphere and a section is transverse and that the Floer homology group between the section and the sphere coincides with the group of morphisms between the line bundle and the sheaf. We have not attempted to carry out a detailed analysis of Floer cohomology in this paper.

We apply our results to study the mirror symmetry of $A_{2d-1}$-sin\-gu\-la\-rities in dimension $3$. In particular we describe the vanishing cycles in a smoothing of an $A_{2d-1}$-singularity using our construction of Lagrangian spheres. Then we prove that in the mirror, our correspondence gives an $A_{2d-1}$-configuration of spherical objects, in the sense of Seidel and Thomas \cite{ST}. This refines and makes more explicit a conjecture of Seidel and Thomas in \cite{ST}. 

\subsection{Local mirror symmetry} Let $N \cong \Z^{n-1}$ be a lattice and $M$ its dual. Let $N_{\R}= N \otimes_{\Z} \R$ and $M_{\R} = M \otimes_{\Z} \R$. If $P \subset N_{\R}$ is a convex lattice polytope, let $C(P) \subset \R \times N_{\R}$ be the cone over $\{ 1 \} \times P$. A subdivision of $P$ in smaller lattice polytopes gives a subdivision of $C(P)$, i.e., a fan denoted $\Sigma$, and hence a toric variety $V_{\Sigma}$ of dimension $n$. If $P$ is subdivided in elementary simplices, then $V_{\Sigma}$ is a smooth Calabi-Yau variety of dimension $n$, in the sense that it has trivial canonical bundle. These toric Calabi-Yau varieties are called semi-projective in \cite{cox:little:schenck}, p.332 and are those whose fan has convex support. The variety $\check X$ is obtained by removing from $V_{\Sigma}$ a principal divisor which does not intersect the compact toric divisors of $V_{\Sigma}$.  The mirror $X$ is an affine variety defined in \eqref{mirror}.   We consider the $3$-dimensional case ($n=3$). The discriminant locus of the Lagrangian fibration $f: X \rightarrow \R^3$ can be thought as (a thickening of) the tropical curve $\Gamma$ defined by $P$ and its subdivision (see Figures \ref{can_p2} and \ref{a2d:trop} showing some examples). The critical locus of $f$ is a submanifold $S \subset X$ of real dimension $2$ which maps to $\Gamma$. We have that $X$ has a Hamiltonian $S^1$-action, preserving the fibres of $f$, with moment map $\mu$ such that $S \subset \mu^{-1}(0)$ and $S$ is the fixed point set of this action. It turns out that we can (partially) identify the reduced space $\mu^{-1}(0)/S^1$ with $N_{\R} \times (M_{\R} / M)$, with its standard symplectic form. 

\subsection{Lagrangian sections and spheres} We now describe the construction of the sections and of the spheres. From toric geometry it follows that a line bundle on $\check X$ is described by a piecewise integral affine function $\phi: P \rightarrow \R$, called a support function, whose domains of affineness are unions of the polytopes in the subdivision. Using such a function we construct a Lagrangian section of $f$ as follows. First we extend $\phi$ in a piecewise affine way to all of $N_{\R}$, then we smooth $\phi$ by convoluting it with a ``mollifier'' (see \eqref{mollifier} and \eqref{convolution} for the definition of a mollifier and of the convolution product). This gives a smooth function $\tilde \phi_{\epsilon}$, which essentially is $\phi$ with its  ``corners'' smoothed out. The graph of the differential of $\tilde \phi_{\epsilon}$ gives a Lagrangian section of the reduced space $N_{\R} \times (M_{\R} / M)$.  We then argue that this section can be lifted and extended to give a section of $f$. This is essentially the content of Theorem \ref{sections_thm}. 

The construction of spheres is similar. The complement of the tropical curve $\Gamma$ in $N_{\R}$ has a finite number of bounded connected components and there is a one-to-one correspondence between these connected components and the interior vertices in the subdivision of $P$. We denote by $C$ a connected component and by $v_C$ the corresponding vertex. Moreover, interior vertices are in one-to-one correspondence with compact toric divisors of $\check X$, which we denote by $D_C$. The tangent wedges to the polytopes containing the vertex $v_C$ of the subdivision form a fan $\Sigma_C$ which is the fan of the toric divisor $D_C$. We define the notion of a semi-integral support function (see Definition~\ref{semi:integral}), i.e., a continuous function $\vartheta: |\Sigma_C| \rightarrow \R$ which is linear on the cones of $\Sigma_C$ and satisfies a certain compatibility condition \eqref{semi-int}. Then let $\tilde \vartheta_{\epsilon}$ be a smoothing of $\vartheta$ (obtained by convolution with a mollifier). We interpret $\tilde \vartheta_{\epsilon}$ as a real function on $C$. It then turns out that the graph of the differential of $\tilde \vartheta_{\epsilon}$ defines a map $\lambda: C \rightarrow N_{\R} \times (M_{\R} / M)$ which maps the boundary of $C$ to the critical surface $S$. Then  
$\alpha^{-1}(\lambda(C))$ gives the Lagrangian sphere, where $\alpha$ is the quotient map $\alpha: X \rightarrow X/S^1$, and the reduced space is identified
with $\mu^{-1}(0)/S^1 \subset X/S^1$. In other words, the sphere is an
$S^1$-fibration over $C$, with the circles degenerating to points over the
boundary of $C$. This is the content of Theorem \ref{lag_spheres}. 

It is worth pointing out that our construction of Lagrangian spheres is rather general and also works in the case of compact Calabi-Yau threefolds with the Lagrangian fibrations constructed in \cite{CB-M}. 
In Example \ref{spheres_quintic} we show how to construct Lagrangian spheres inside the quintic threefold in $\PP^4$.

 In \cite{Ab_HMS}, M. Abouzaid studied homological mirror symmetry of a compact toric variety $Y$. The mirror of $Y$ is a Landau-Ginzburg model $((\C^*)^n, W)$, where $W: (\C^*)^n \rightarrow \C$ is a Laurent polynomial. Abouzaid proves that the bounded derived category of coherent sheaves on $Y$ embeds as a full subcategory inside the derived Fukaya category of $((\C^*)^n, W)$.  He defines such a Fukaya category by considering certain Lagrangian submanifolds with boundary on $W^{-1}(0)$.  Abouzaid's construction is similar to ours and it is likely that his correspondence is strongly related to the one we propose. 
\subsection{The correspondence} 
The correspondence between Lagrangian sections of $f: X \rightarrow \R^3$ and line bundles of $\check X$ is stated in Conjecture \ref{hms:dim2}. Essentially, given the piecewise affine function $\phi: P \rightarrow \R$, the section constructed from $\phi$ as described above should correspond to the line bundle given by the function $- \phi$. In order to decide which sheaves should correspond to the Lagrangian spheres we proceed as follows. Suppose we have a pair of sections $\sigma$ and $\sigma'$ which coincide outside a compact set $K$ homeomorphic to a $3$-ball, then we say that $\sigma$ is compactly supported with respect to $\sigma'$.  We can view a (topological) sphere as two copies of $K$ glued along their boundaries. A map from this sphere to $X$ is constructed by defining it to be $\sigma$ on one copy of $K$ and $\sigma'$ on the other copy. This map defines a homology class in $H_3(X, \Z)$ which we denote by $[\sigma \sigma']$. If this homology class is represented by a Lagrangian sphere $L$, then we expect the sheaf $\mathscr E_{L}$ corresponding to $L$ to fit in a short exact sequence of the type 
\[ 0 \longrightarrow \mathscr L_{\sigma'} \longrightarrow \mathscr L_{\sigma} \longrightarrow \mathscr E_{L} \longrightarrow 0. \]
The motivation for this is that it seems reasonable to expect that $\sigma$ can be obtained as the Lagrangian connected sum $\sigma' \# L$. Then a general argument in mirror symmetry (see \cite{A13}, Section 3.3.2) 
suggests that $\mathscr L_{\sigma}$ should be an extension of $\mathscr 
L_{\sigma'}$ by $\mathscr E_{L}$.

In Section \ref{compact_support} we investigate which pairs of sections $\sigma$ and $\sigma'$ can be assumed (up to isotopy) to coincide outside some compact set $K$ and we determine the Lagrangian sphere whose homology class coincides with $[\sigma \sigma']$. This is the content of Proposition \ref{com_sup:dim3} and Theorem \ref{diff_sec}.

Using the above arguments, the outcome of our proposed correspondence is as follows (see the beginning of Section \ref{hms}). Given a semi-integral support function $\vartheta$ on $\Sigma_C$, let $\psi_{K_C}: |\Sigma_C| \rightarrow \R$ be the support function corresponding to the canonical bundle of $D_C$. Define $\psi$ as
 \[ \psi = \frac{1}{2} \psi_{K_C} - \vartheta. \]
Then the sheaf corresponding to the sphere defined by $\vartheta$ is the line bundle supported on $D_C$ defined by $\psi$.  This is the correspondence stated in Conjecture \ref{hms:spheres}.  We point out that in dimension two, in \cite{chan:an}, Chan constructs the same correspondence as ours between spheres and sheaves supported on compact toric divisors. Then he proves that this correspondence defines an embedding of the derived Fukaya category generated by the spheres inside the derived category of coherent sheaves of the mirror. Similarly, in \cite{chan:ueda}, Chan and Ueda construct, in the two-dimensional case, a correspondence between Lagrangian sections of $f$ and line bundles on $\check X$ and prove that it gives an embedding of derived categories. Chan, Ueda and Pomerleano obtained similar results in \cite{CPU:hms:conifold} in the three-dimensional example of the conifold. Moreover in \cite{CPU:sections} the same authors propose a similar construction of Lagrangian sections and mirror correspondence with line bundles and they prove that the wrapped Floer homology ring of the zero section is isomorphic to the algebra of functions on $\check X$ (we thank the authors for sending us a preliminary version of their work). We also point out the result of P. Seidel \cite{Seidel:susp} where, in the case $\check{X}$ is the total space of the canonical bundle of a smooth toric del Pezzo surface $Y$, he finds a full embedding of triangulated categories between the bounded derived category of sheaves supported on $Y$ and the derived Fukaya category of the mirror $X$. Seidel's method is based on the ``suspension'' of Lefschetz fibrations. We do not know what is the relation between our proposed correspondence and Seidel's result. 

\subsection{Other results} We obtain some results which support our conjectures. For instance, in the case that a semi-integral support function $\vartheta$ (or its opposite $- \vartheta$) is strictly convex, we prove that the sphere corresponding to $\vartheta$ intersects the zero section transversely (see Corollary \ref{convex:intersection}). 

This implies that the Floer homology group between the zero section and the sphere is isomorphic to the group of morphisms between the corresponding sheaves (see Remark \ref{floer}).  In fact in this case we argue that for topological reasons the Floer differential must be zero. Thus the dimension of the Floer homology group coincides with the number of intersection points. Then we show that these are in one-to-one correspondence with the integral points in the interior of a convex polytope with vertices on the half lattice $\frac{1}{2} M$ (see Figure \ref{triangle_intersection} for some examples). It is not hard to show that the number of such points is precisely the dimension of $H^2(D_C, \mathscr L_\psi)$ (or respectively $H^0(D_C, \mathscr L_{\psi})$), where $\mathscr L_{\psi}$ is the line bundle on $D_C$ corresponding to the support function $\psi$ defined above.    

More generally, in Theorem \ref{coho_winding} we prove that given any $\vartheta$, the differential topology of the intersection points between the zero section and the sphere corresponding to $\vartheta$ allows us to recover the morphism groups between the corresponding sheaves. More precisely, the half-integral support function $\vartheta$ defines a piecewise linear closed curve in $M_{\R}$. We prove that the dimension of $H^{\text{even}}(D_C, \mathscr L_\psi)$ is the number of integral points which have positive winding number with respect to this curve. Moreover, the dimension of $H^{\text{odd}}(D_C, \mathscr L_\psi)$ is obtained as minus the sum of the winding numbers (with respect to this curve) of all integral points whose winding number is negative. 

\subsection{$A_{2d-1}$-singularities} In Section \ref{spherical:an} we study the mirror symmetry of $A_{2d-1}$ singularities in dimension $3$. In fact, the mirror of a smoothing of an $A_{2d-1}$ singularity in dimension $3$ is a toric Calabi-Yau. We can view the $2d-1$ vanishing cycles using our construction above.  For instance, in the case $d=3$, the smoothing $X$ has a Lagrangian fibration with discriminant locus the tropical curve $\Gamma$ as in Figure \ref{a2d:trop}. Then, the complement of $\Gamma$ has two bounded regions. We obtain one vanishing cycle for each of these bounded regions. There are another three vanishing cycles constructed over some one dimensional edges of $\Gamma$ (see \S \ref{vc:edges}).  
Our correspondence gives us the conjectural mirror objects. 

In Proposition \ref{a2d:sequence} we prove that the mirror objects form an $A_{2d-1}$-sequence in the sense of Seidel and Thomas \cite{ST}. 

\subsection*{Acknowledgments} The authors would like to thank
Mohammed Abouzaid, Ricardo Casta\~no-Bernard and Bernd Siebert for many useful 
conversations.  Mark Gross was partially
supported by the NSF grant DMS-1105871. 
Diego Matessi was partially supported by the grant FIRB 2012 ``Moduli spaces and their applications'' and by the national research project ``Geometria delle variet\`a proiettive'' PRIN 2010-11. Both authors were supported by the NSF
grant DMS-0854987.

\section{Local mirror symmetry and torus fibrations} \label{local_msym}

\subsection{Local mirror symmetry of toric singularities} 
Let $N \cong \Z^{n-1}$ be a lattice and $M = \Hom(N, \Z)$ its dual lattice.  Let 
$N_{\R} = N \otimes_{\Z} \R$ and $M_{\R} =  M \otimes_{\Z} \R$. Identify $(\Z \oplus N) \otimes \R$ with $ \R \times N_{\R}$. We also denote by $T_M$ the complex algebraic torus $M \otimes \C^*$. 
Given a convex lattice polytope $P \subseteq N_{\R}$, let $C(P)  \subseteq \R \times N_{\R}$ be the cone over $\{ 1 \} \times P$ and let $V_{P}$ be the $n$-dimensional toric variety defined by $C(P)$. In general $V_P$ is singular with an isolated Gorenstein singularity. Given a subdivision of $P$ in convex lattice polytopes $P_1, \ldots, P_k$, we obtain a fan $\Sigma$ by considering the cones over the faces of the $P_j$'s . Let $V_{\Sigma}$ be the toric variety associated to this fan. When the subdivision of $P$ is smooth, i.e., all $P_j$'s are elementary simplices, then $V_\Sigma$ is a smooth Calabi-Yau, providing a crepant resolution of $V_{P}$. We will always assume that the subdivision is smooth. The lattice point $(1, 0) \in \Z \times M$ defines a regular monomial $z^{(1,0)}: V_{\Sigma} \rightarrow \C$. Define 
\begin{equation} \label{Xcek}
 \check X = V_{\Sigma} - \{ z^{(1,0)} = 1 \}.
\end{equation}

A mirror of $\check X$ was first predicted by phycists, e.g. in \cite{kkv} or \cite{CKYZ}. In general it will be a family of varieties roughly parameterized by the
K\"ahler moduli space of $\check X$. Because we will only be concerned about the symplectic geometry of the mirror, we define a convenient one-real parameter
sub-family, as follows. Consider a function $\nu: P \rightarrow \R$ which is piecewise affine and strictly convex with respect to the given subdivision of $P$ (i.e., the domains of linearity coincide precisely with the polytopes $P_j$). Assume also that $\nu$, restricted to $P \cap N$, has integer values.  Elements $j \in N$ correspond to monomials, or characters, on the torus $T_M$ which we denote by $z^j$. For $t \in \R_{>0}$, consider the family of Laurent polynomials
\[ h_t = \sum_{j \in P \cap N} t^{\nu(j)} a_j z^j, \]
where the coefficients $a_j \in \C^{\ast}$. We will construct the mirror of $\check X$ by fixing $t$ and taking the $n$-dimensional variety
\begin{equation} \label{mirror}
   X = \{ (x,y, z) \in \C^2 \times T_M \, | \, xy = h_t(z) \} 
\end{equation}
We will typically be interested in the case that $t$ is very close to $0$.
It can be shown that $X$ is Calabi-Yau, i.e. it has trivial canononical bundle (see \cite{Gross_spLagEx} for an explicit global holomorphic $n$-form). 

\subsection{The tropical hypersurface $\Gamma$} \label{trop_hyp}
Let $\nu: P \rightarrow \R$ be the function defined above. Then it extends to a piecewise linear strictly convex function $\tilde{\nu}: |\Sigma| \rightarrow \R$, by defining $\tilde{\nu}(1,v) = \nu(v)$ for all $v \in P$ and extending linearly. The function $\tilde{\nu}$ also corresponds to a choice of ample line bundle on $V_{\Sigma}$, whose Newton polyhedron $\sigma$ is given by
\begin{equation} \label{image_moment}
 \sigma = \{ (t, m) \in \R \times M_{\R} \, | \, \inn{(1,v)}{(t,m)} + \nu(v) \geq 0, \ 
             \text{for all} \ v \in P \cap N \}.
\end{equation}
Now, over $M_{\R}$, define the following piecewise affine function
\begin{equation} \label{lt:nu} 
\check{\nu}(m) = \min \{ \inn{v}{m} + \nu(v), \,  v \in P \cap N \}.
\end{equation}
This function is the discrete Legendre transform of $\nu$. Clearly 
\[ \sigma = \{ (t, m) \in \R \times M_{\R} \, | \, t \geq - \check{\nu}(m) \}. \]
The subset of $M_{\R}$ where $\check{\nu}$ fails to be smooth is a polyhedral complex $\Gamma$ whose maximal cells have dimension $n-2$ and lie on affine subspaces of rational slope. The complex $\Gamma$ is also called the non-archimedean amoeba of the polynomial $h_t$ or the {\it tropical hypersurface} defined by $h_t$. In the case $n=2$, $\Gamma$ is just a finite set of points.  In the case $n = 3$, $\Gamma$ is a tropical curve, topologically a graph with trivalent vertices. 

We have that $\Gamma$ defines a polyhedral subdivision of $M_{\R}$, whose maximal cells are the closures of the connected components of $M_{\R} - \Gamma$. This subdivision is dual to the subdivision of $P$, in the sense that there is a one-to-one, inclusion reversing correspondence between $k$-dimensional cells in the subdivision of $P$ and codimension $k$ cells in the subdivision of $M_{\R}$.  In particular, there is a one-to-one correspondence between connected components of $M_{\R} - \Gamma$ and vertices of the subdivision of $P$. If $C$ is a connected component of $M_{\R}- \Gamma$, we denote by $v_{C}$ the corresponding vertex. Vertices on the boundary (resp. in the interior) of $P$ correspond to unbounded (resp. bounded) components. 

In this paper we will mostly consider the case $\dim M_{\R} = 2$, i.e., $n=3$, so let us fix some notation for this case. 
An edge $\check e$ in the subdivision of $P$ corresponds to an edge $e$ of $\Gamma$. If $v_C$ and $v_{C'}$ are the vertices of $\check e$, then $e$ is the common edge of $C$ and $C'$. Clearly $e$ is an infinite ray if and only if $v_C$ and $v_{C'}$ both lie on the boundary of $P$. If $n_e \in M_{\R}$ is a primitive integral tangent vector to $e$ and $n_{\check e}$ is a primitive integral tangent vector to $\check e$ then 
\begin{equation} \label{edge_dual}
\inn{n_e}{n_{\check e}}= 0.
\end{equation}
For every edge $\check e$ in the subdivision of $P$, we fix the following choices:
\begin{itemize}
\item[a)] we fix $n_{\check e}$ and $n_{e}$, primitive integral tangent vectors to $\check e$ and $e$ respectively; 
\item[b)] if $\check e$ is an interior edge, we label the two $2$-dimensional simplices containing $\check e$ by $P^+_{e}$ and $P^-_{e}$,  so that for every $q^+ \in P^+_e$ and $q^- \in P^-_e$ we have
\[ \inn{n_{e}}{q^+-q^-} \geq 0; \]
\item[c)] if $\check e$ is a boundary edge, we label by $P_e$ the unique simplex containing $\check e$.
\end{itemize}
Observe that if $p^+_e$ is the vertex of $\Gamma$ corresponding to $P^+_e$ and $p^-_e$ is the vertex corresponding to $P^-_e$ then $n_e$ points outward from $p^+_e$ in the direction of $p^-_e$. This can be deduced from the definition of $\Gamma$. 

Given a vertex $p$ of $\Gamma$, the following balancing condition holds
\begin{equation} \label{balance}
 \sum_{p \in e} \epsilon_e n_{e} = 0, 
\end{equation}
where $\epsilon_e = 1$ if $n_e$ points outward from $p$, otherwise $\epsilon_e = -1$. 
The fact that this holds for such a choice of $\epsilon_e$ follows from the
fact that all edges of $P$ are affine length $1$, as the subdivision of
$P$ is smooth.
Moreover, for any pair of edges $e_1$ and $e_2$ emanating from $p$, $n_{e_1}$ and $n_{e_2}$ form a basis of $M$, again because the subdivision of $P$ is smooth. 

\subsection{Torus fibrations} \label{torus_fibrations}
In the following we restrict to dimension $n=2$ or $3$. The claim that $X$ and $\check X$ are mirror to each other has been proved to various degrees of precision in the literature. See for instance \cite{ChanLauLeung}, \cite{ChanChoLauTseng}, \cite{AAK}, \cite{GS:ICM}.  In \cite{Gross_spLagEx}, the first author shows that $X$ and $\check{X}$ admit torus fibrations which are dual to each other in the sense of the SYZ-conjecture, as refined in \cite{TMS}.  For our purposes we will only need the fibration on $X$, so we give a description of this fibration only. In fact we will give three constructions.

Choosing an isomorphism $M \cong \Z^{n-1}$ identifies the torus $T_M$ with $(\C^*)^{n-1}$. We denote by $(z_1, \ldots, z_{n-1})$ the standard coordinates on $(\C^*)^{n-1}$. On $\C^2 \times (\C^{\ast})^{n-1}$ consider the 
symplectic form 
\begin{equation} \label{stsymp}
  \omega = \frac{i}{2} \left( dx \wedge d \bar x + dy \wedge d \bar y +  \sum_{j=1}^{n-1} \frac{dz_j \wedge d \bar z_j}{|z_j|^2}  \right) 
\end{equation}
and restrict it to $X$. We have a Hamiltonian $S^1$-action on $X$ given by
\[ e^{i\theta} \cdot (x,y,z) = (  e^{i\theta} x,  e^{-i\theta}y, z) \]
with moment map 
\[ (x,y, z) \mapsto \frac{|x|^2 - |y|^2}{2}. \]
Consider the $(n-1)$-torus fibration $\Log: (\C^{\ast})^{n-1} \rightarrow \R^{n-1}$ defined by 
\[ \Log z = (\log |z_1|, \ldots, \log |z_{n-1}|). \]
 Then the fibration $f: X \rightarrow \R \times \R^{n-1}$ is defined by 
\begin{equation} \label{dualfib}
 f (x,y,z) = (|x|^2 - |y|^2, \Log z). 
\end{equation}
When $n \geq 3$ this fibration is not Lagrangian, but later we will describe a different construction in dimension $n=3$ of an equivalent fibration which is Lagrangian (after \cite{CB-M}).  In this article we will not use this model of fibration but its simple and explicit form motivates the other constructions. 

Let $Y = X / S^1$. Notice that $ Y \cong \R \times (\C^{\ast})^{n-1}$ and $f$ is the composition of the projection $\alpha: X \rightarrow Y$ with the map  $\R \times  (\C^{\ast})^{n-1} \rightarrow \R \times \R^{n-1}$ given by $(s, z) \mapsto (s, \Log z)$. The fixed point locus of the $S^1$-action is the set of points where $x=y=0$. So, let $S_t \subset (\C^{\ast})^{n-1}$ be the zero set of the polynomial $h_t$ and let $Y'= Y - (\{ 0 \} \times S_t)$. Then $\alpha: \alpha^{-1}(Y') \rightarrow Y'$ forms a principal $S^1$-bundle. The Chern class of this
bundle is described as follows. The relative cohomology exact sequence yields
\[
0\rightarrow H^2(Y,\Z)\rightarrow H^2(Y',\Z)\rightarrow
H^3(Y, Y\setminus S_t,\Z)\cong \Z\rightarrow 0
\]
which is split by choosing a fibre $T^{n-1}\subset Y$ of $\R\times (\C^*)^{n-1}\rightarrow \R\times \R^{n-1}$ disjoint from
$S_t$ and using the composition $H^2(Y',\Z)\rightarrow H^2(T^{n-1},\Z)
\cong H^2(Y,\Z)$. Then the Chern class of $\alpha$ is
$(0, \pm 1 ) \in H^{2}(Y', \Z) \cong H^2(Y, \Z) \oplus \Z$. Let 
\[ \mathcal A = \Log (S_t). \] 
Then the singular fibres of $f$ lie over the set $\{ 0 \} \times \mathcal A$. The set $\mathcal A$ is called the amoeba of $S_t$ (in dimension $n=2$ it is just a finite set of points). 

\subsection{The complex tropical model} \label{topological}
We now describe a topological model of a fibration similar to the one given above. Since the total space of this fibration is homeomorphic to $X$, we will continue to denote it by $X$, although it is constructed in a different way. Here we consider only the cases $n=2$ or $3$. Let $\Gamma \subset M_{\R}$ be the tropical hypersurface defined in (\ref{trop_hyp}). One can identify $(\C^*)^{n-1}$ with $M_{\R} \times (M_{\R} / M)$, where $\Log$ becomes the projection onto $M_{\R}$. 
Suppose that $S \subset M_{\R} \times (M_{\R} / M)$ is a $2n-4$-dimensional real submanifold such that $\Log(S) = \Gamma$.  The first author, in  \cite{TMS}, makes the following construction. Let  $Y = \R \times M_{\R} \times (M_{\R} / M)$ and $Y' = Y - (\{ 0 \} \times S)$. If $\alpha': X' \rightarrow Y'$ is a principal $S^1$-bundle over $Y'$ with Chern class $(0, \pm 1 ) \in H^{2}(Y', \Z) \cong H^2(Y, \Z) \oplus \Z$ as before, then there is a topological manifold $X$ containing $X'$ and a commutative diagram 
\begin{equation*} \begin{CD}
 X' @>>>  X\\
@V \alpha' VV @VV \alpha V \\
Y' @>>> Y
\end{CD}\end{equation*}
such that $\alpha$ is proper and the $S^1$-action on $X'$ extends to an $S^1$-action on $X$ with $\alpha^{-1}(\{ 0 \} \times S) \cong S$, i.e., such that $S$ coincides with the fixed point locus of the $S^1$-action. Now a fibration $f:  X \rightarrow \R \times M_{\R}$ is defined by composing $\alpha:  X \rightarrow  Y$ with the $T^{n-1}$-fibration $\R \times M_{\R} \times (M_{\R} / M) \rightarrow \R \times M_{\R}$. 

In the two-dimensional case $S$ is just a finite set of points. If $\Log$ maps $S$ bijectively to $\Gamma$, then the singular fibres of $f$ are just once pinched $2$-tori.  

Let us treat the case $n=3$ and construct a suitable surface $S$ as follows. We use the notation of \S \ref{trop_hyp}.  
For an edge $e$ of $\Gamma$, consider the circle in $M_{\R}/ M$ given by 
\begin{equation} \label{cylinder_cycle}
 \delta_e = \left \{ [m] \in M_{\R}/M \, | \, \inn{n_{\check e}}{m} = \frac{1}{2}  \mod \Z \right \}. 
\end{equation}
Observe that from (\ref{edge_dual}) it follows that the slope of $\delta_e$ is $n_e$. 
Then consider the following cylinder inside $M_{\R} \times (M_{\R}/ M)$:
\begin{equation} \label{cylinder:e}
 S_e := e \times \delta_e.
\end{equation}
Clearly $\Log(S_e) = e$. Now we want to glue together all these cylinders to form a topological surface $S$. We do this by filling in what is left out at the vertices. More precisely, identifying $H_1(M_{\R}/M, \Z)$ with $M$, for every vertex $p \in \Gamma$ and edge $e$ containing $p$, orient the circle $S_e \cap \Log^{-1}(p)$ in the direction of $\epsilon_e n_e$, where $\epsilon_e=1$ if $n_e$ points outward from $p$ and $\epsilon_e = -1$ otherwise. Then the circle $S_e \cap \Log^{-1}(p)$ represents the class $\epsilon_en_e$. It then follows from (\ref{balance}) that there is a suitable $2$-chain $T_p$ in $\Log^{-1}(p)$ such that 
\[ \partial T_p = \bigcup_{p \in e} S_e \cap \Log^{-1}(p). \]
The topological surface $S$ is then defined as
\[ S = \bigcup_{\text{edges}} S_e \cup  \bigcup_{\text{vertices}} T_p. \]
Clearly $\Log(S) = \Gamma$. 

We have that $X$ is homeomorphic to the space defined in (\ref{mirror}) and the fibration $f$ constructed here is isotopic to the map defined in (\ref{dualfib}). In the two-dimensional case this is straight forward. In dimension $3$, this can be proved using Mikhalkin's results in \cite{mikh_pants}. The argument is as follows. Let $\Log_t: (\C^{\ast})^2 \rightarrow \R^2$ be the map $\Log_t(z) = ( \log_t |z_1|, \log_t |z_2| )$ and define 
\[ \mathcal A_t = \Log_t(S_t). \]
Then, using Viro's patchworking technique \cite{viro_patch}, Mikhalkin proves the following 
\begin{thm}
The sets $\mathcal A_t$ converge in the Hausdorff topology to $\Gamma$ as $t \rightarrow 0$. 
\end{thm}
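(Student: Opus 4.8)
The plan is to establish directly the two containments that define Hausdorff convergence, starting from the elementary observation that, in the rescaled logarithmic coordinates, a single monomial of $h_t$ dominates the whole sum away from the corner locus of $\check{\nu}$. Precisely, after the identification $N \cong \Z^2$, if $z \in (\C^{\ast})^2$ satisfies $\Log_t(z) = u$ then $|z^v| = t^{\inn{v}{u}}$, so the $v$-th term of $h_t$ has modulus $|a_v|\,t^{\nu(v)+\inn{v}{u}}$; since $0 < t < 1$, the term of largest modulus is the one minimising the affine function $v \mapsto \nu(v)+\inn{v}{u}$, and its value is exactly $\check{\nu}(u)$. Moreover it suffices to prove the convergence over an arbitrarily large closed ball $\overline{B}_R \subset M_{\R} \cong \R^2$: the ends of $\Gamma$ and of $\mathcal{A}_t$ run off to infinity only along the rays of the normal fan of $\conv(P \cap N)$, and do so with a bound uniform in $t$, so Hausdorff control over $\overline{B}_R$ for $R$ large upgrades to global control (equivalently, one may simply work in the local Hausdorff topology).

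\emph{Containment $\mathcal{A}_t \subset$ ($\varepsilon$-neighbourhood of $\Gamma$).} Fix $\varepsilon > 0$ and put $K = \overline{B}_R \setminus \{\, u : \mathrm{dist}(u,\Gamma) < \varepsilon \,\}$. On the compact set $K$ the minimum defining $\check{\nu}$ is attained at a single index $v_0 = v_0(u)$, and the gap $\delta(u) := \min_{v \ne v_0}\big(\nu(v)+\inn{v}{u}\big) - \check{\nu}(u)$ is continuous and strictly positive on $K$, hence $\delta \ge \delta_0 > 0$ there. For $z$ with $\Log_t(z) = u \in K$ one then has $|h_t(z)| \ge |a_{v_0}|\,t^{\check{\nu}(u)} - C\,t^{\check{\nu}(u)+\delta_0} = |a_{v_0}|\,t^{\check{\nu}(u)}\big(1 - C'\,t^{\delta_0}\big)$, which is nonzero once $t$ is small enough, uniformly in $u \in K$. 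Hence $\mathcal{A}_t \cap K = \emptyset$, that is, $\mathcal{A}_t \cap \overline{B}_R$ lies in the $\varepsilon$-neighbourhood of $\Gamma$ for $t$ small.

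\emph{Containment $\Gamma \subset$ ($\varepsilon$-neighbourhood of $\mathcal{A}_t$).} As $\mathcal{A}_t$ is closed and the relative interiors of the maximal cells of $\Gamma$ are dense in $\Gamma$, it is enough to produce, for $t$ small, a point of $\mathcal{A}_t$ within distance $\varepsilon$ of any given $u_0$ in the relative interior of a codimension-one cell. At such a $u_0$ exactly two indices $v_1 \ne v_2$ realise the minimum $\check{\nu}(u_0)$. Consider the binomial $b_t(z) = t^{\nu(v_1)} a_{v_1} z^{v_1} + t^{\nu(v_2)} a_{v_2} z^{v_2}$; its zero set is a single coset of the kernel of the character $z \mapsto z^{v_1-v_2}$, and $\Log_t$ maps it onto an affine hyperplane which converges, as $t \to 0$, to $\{\, u : \nu(v_1)+\inn{v_1}{u} = \nu(v_2)+\inn{v_2}{u}\,\}$, the hyperplane through $u_0$ cutting out the cell. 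On a small loop $\gamma$ in the torus encircling a zero of $b_t$ the map $b_t$ winds once about $0$, while along $\gamma$ every remaining monomial of $h_t$ is smaller by a factor $O(t^{\delta_0})$, with $\delta_0 > 0$ the gap at $u_0$; so Rouché's theorem yields a zero of $h_t$ enclosed by $\gamma$, whose $\Log_t$-image is within a prescribed distance of $u_0$. Shrinking the loops as $t \to 0$ gives the containment, and together with the previous one this proves $\mathcal{A}_t \to \Gamma$ in the Hausdorff topology.

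\emph{Main difficulty.} The delicate point is the uniformity of the Rouché step: one needs a single threshold $t_0$ that works as $u_0$ ranges over the noncompact complex $\Gamma$ and over cells of every dimension, and one must verify that the winding on the fibre torus genuinely detects a zero of the full Laurent polynomial and not merely of the binomial. This is exactly what Mikhalkin organises by means of Viro's patchworking; alternatively, since $h_t$ is an honest one-parameter family of Laurent polynomials with fixed Newton polytope, the statement can also be deduced from the general amoeba-convergence theorems of Rullg{\aa}rd and of Passare--Rullg{\aa}rd.
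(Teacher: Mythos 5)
The paper does not actually prove this theorem: it is quoted from Mikhalkin \cite{mikh_pants}, and the paper's only remark on the proof is that Mikhalkin ``uses Viro's patchworking technique.'' Your argument is therefore a genuinely different route --- a direct, elementary proof from the tropical dominant-term estimate together with Rouch\'e --- rather than a reproduction of the cited patchworking argument.

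Your proof is essentially sound and is the ``expected'' elementary argument: the compactness/gap argument on $K = \overline{B}_R \setminus \{\mathrm{dist}(\cdot,\Gamma)<\varepsilon\}$ gives the outer containment, and binomial-plus-Rouch\'e gives the inner one, with the density of relative interiors of $1$-cells and compactness of $\Gamma\cap\overline{B}_R$ taking care of vertices and uniformity in $t$. What patchworking buys (and what this argument does not) is a \emph{structured} description of $S_t$ up to isotopy --- the pair-of-pants decomposition in \cite{mikh_pants}, which the paper immediately invokes to identify the surface $S$ with $S_t$ up to $C^0$-isotopy. Your argument establishes only the Hausdorff statement, which is all that is asserted here. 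For a citation-level alternative, you correctly note that this is a special case of the theorems of Rullg{\aa}rd and Passare--Rullg{\aa}rd on spines of amoebas.

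One small imprecision worth fixing: the phrase ``a small loop $\gamma$ in the torus encircling a zero of $b_t$'' is ambiguous. Inside the fibre torus $\Log_t^{-1}(u)$ the binomial $b_t$ vanishes on a circle, not a point, so one cannot encircle it there; and in $(\C^*)^2$ the zero locus has real codimension two, so ``encircling'' is again not the right picture. The correct formulation is to choose coordinates with $v_1-v_2$ primitive in the $z_1$-direction, fix $z_2$ on the fibre torus, and apply Rouch\'e to $z_1\mapsto h_t(z_1,z_2)$ on a small circle around the unique zero $z_1 = -1/c'$ of $z_1\mapsto b_t(z_1,z_2)$; shrinking this circle keeps $\Log_t$ of the circle close to $u_0$ and the gap estimate then makes the remaining terms subdominant. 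With that rephrasing the argument is complete.
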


In fact, it is true that for small $t$, there is a $C^0$ isotopy of $(\C^*)^2$ 
which takes $S_t$ to $S$. It does not appear that there is a good reference for
this in the literature; however, it can be shown using techniques of
\cite{viro}. See \cite{caputo} for a complete proof of
this fact.

\begin{rem} \label{algae:rem} 
Let us identify $M_{\R} \times (M_{\R} / M)$ with $(\C^*)^2 \cong \R^{2} \times T^2$. Let $v$ and $v'$ be the vertices of the edge $\check e$. We then have that $\delta_e$, as defined in (\ref{cylinder_cycle}), is the projection onto $T^2$ of the 
subset of $(\C^*)^2$ given by the equation
\[ z^v +  z^{v'} = 0. \]
Similarly, every vertex $p \in \Gamma$ corresponds to a simplex $P_j$ of the subdivision of $P$. Suppose that $v_0, v_1$ and $v_2$ are the vertices of $P_j$, then it can be verified that a $2$-chain $T_p$ is given by the closure of the projection onto $T^2$ of the set defined by the equation
\[ z^{v_0} + z^{v_1} + z^{v_2} = 0. \]
Figure~\ref{algae} depicts the set $T_p$ when $v_0=(0,0)$, $v_1=(1,0)$ and $v_2 = (0,1)$. 
\begin{figure}[!ht] 
\begin{center}
\includegraphics{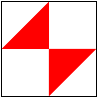}
\caption{A $2$-chain $T_p = S \cap \Log^{-1}(p)$ for a vertex $p \in \Gamma$. } \label{algae}
\end{center}
\end{figure}
The surface $S$ resulting from this construction is also called the complex tropical curve associated to the polynomial $h_t$, when all coefficients $a_j = 1$ (see \cite{Mikh-tropical}). 
\end{rem}

\subsection{Affine manifolds with singularities} \label{affine manifolds} Gross and Siebert have developed a program where mirror symmetry can be understood in terms of a duality of so-called  affine manifolds with singularities (see \cite{G-Siebert2003}, \cite{GrSi_re_aff_cx} or \cite{Gross_SYZ_rev} for a survey). We show here how these manifolds can be used to construct mirror symmetric torus fibrations. In fact these fibrations can be made into Lagrangian torus fibrations, as shown in \cite{CB-M}. An integral affine structure $\mathscr A$ on a topological manifold is an atlas of charts whose change of coordinate maps are affine maps with integral linear part, i.e., elements of $\R^n \rtimes \operatorname{SL}_n(\Z)$. Observe that an affine manifold comes with a natural flat connection $\nabla$. If $(x_1, \ldots, x_n)$ are affine coordinates, then the tangent vectors $\partial_{x_1}, \ldots, \partial_{x_n}$ form a basis of parallel sections of the tangent bundle, while the parallel one-forms $dx_1, \ldots, dx_n$ yield the dual basis. The $\Z$-span of the vectors $\partial_{x_j}$ forms a well-defined maximal lattice $\Lambda \subseteq TM$ (this is due to integrality of the affine structure). The dual lattice $\Lambda^*\subseteq T^*M$ is the $\Z$-span of the forms $dx_j$. 
The basic idea of the construction is to start with an integral affine manifold with singularities, $(B,\Delta,\mathscr A)$, where $B$ 
is a topological $n$-manifold and $\Delta$ is a closed set such that $B_0=B-\Delta$ is dense and has an integral affine structure $\mathscr A$. The set $\Delta$ is called the discriminant locus and we require that it has codimension $2$.  Then we have a symplectic manifold $X_0(B)$ defined by the exact sequence
\[
0\rightarrow \Lambda^* \rightarrow T^\ast B_0\rightarrow X_0(B) \rightarrow
0.
\]
The symplectic form on $X_0(B)$ is induced from the standard symplectic form on $T^*B_0$. This gives us a Lagrangian $T^n$ bundle $f_0:X_0(B) \rightarrow B_0$. Under certain hypotheses on $\Delta$ and on the affine structure one can (partially) compactify $X_0(B)$, in the sense that one can find a smooth $2n$-manifold $X(B)$ and a proper surjective map $f: X(B) \rightarrow B$ such that there is commutative diagram
\begin{equation} \label{compactify}
\begin{array}{ccc}
X_0(B) & \hookrightarrow & X(B) \\ 
\downarrow & \  & \downarrow \\ 
B_0& \hookrightarrow & B
\end{array}
\end{equation}
where the upper arrow is an open embedding and the lower arrow is the inclusion. Gross and Siebert define the notion of {\it positive} and {\it simple} integral affine manifold with singularities. These conditions are equivalent (in dimension $2$ and $3$) to certain restrictions on $\Delta$ and on the monodromy of $\Lambda$ around $\Delta$. With this assumption, a topological compactification was found by the first author in \cite{TMS}. In \cite{CB-M}, the second author and R. Casta\~no-Bernard found a symplectic compactification of $X_0(B)$, i.e., a symplectic structure on $X(B)$ which extends the standard one on $X_0(B)$ and such that $f$ is a Lagrangian fibration. In fact, in dimension $n=3$ the precise statement of the result is slightly more complicated. In dimension $n=2$, $\Delta$ consists of a finite collection of points and the symplectic compactification of $X_0(B)$ is achieved by gluing a standard model of a Lagrangian fibration over a disc with a nodal central fibre; 
this model is known in symplectic geometry as a simple focus-focus fibration. This construction gives compact symplectic 4-manifolds with Lagrangian 2-torus fibrations (e.g., a $K3$ surface). In dimension $n=3$, the positive and simple assumptions imply that $\Delta$ is a graph with trivalent vertices, labeled either positive or negative. In this case the affine structure around edges, positive, and negative vertices is isomorphic
to the one induced on the base of three different models of local Lagrangian fibrations: respectively the so-called \textit{generic}, \textit{positive} and \textit{negative} fibrations. The models for generic and positive fibrations can be regarded as $3$-dimensional analogues of focus-focus fibrations; in particular, they have a $T^2$-symmetry, they
have codimension 2 discriminant and are given by smooth fibration maps. On the other hand, the model for a negative fibration is $S^1$-invariant, the fibration is piecewise smooth and its discriminant locus has mixed codimension 1 and 2. 
This model can be regarded as a perturbation Gross's topological version of the negative fibration used in \cite{TMS}. 
These difficulties require us to redefine $\Delta$ by locally fattening the graph near negative vertices (see Figure \ref{negative_thick}) so that it has codimension $1$. As a consequence, the result in  \cite{CB-M} must be formulated as follows:

\begin{thm} \label{lagfib_thm} Let $(B,\Delta,\mathscr A)$ be a three-dimensional integral affine manifold with singularities which is positive and simple. For every negative vertex $v^- \in \Delta$ there is a small embedded $2$-disc $D_{v^-} \subset B$, containing a neighborhood of $v^-$ in $\Delta$, such that if we let $\Delta_{\blacklozenge} = \left( \bigcup_{v^{-}} D_{v^-} \right) \cup \Delta$, $B_{\blacklozenge} = B - \Delta_{\blacklozenge}$ and $X_{\blacklozenge}(B) = T^*B_{\blacklozenge}/ \Lambda^*$, then we can find a symplectic $X(B)$ and a proper surjective Lagrangian fibration $f: X(B) \rightarrow B$ such that diagram (\ref{compactify}) holds if we replace $B_0$ with $B_{\blacklozenge}$ and $X_0(B)$ with $X_{\blacklozenge}(B)$. Moreover $f$ is smooth over $X(B) - f^{-1} ( \bigcup_{v^{-}} D_{v^-})$.
\end{thm}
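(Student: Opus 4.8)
The plan is to prove the theorem by a local-to-global construction, extending the standard Lagrangian torus bundle $X_0(B) = T^\ast B_0/\Lambda^\ast \to B_0$ across the discriminant locus $\Delta$ one stratum at a time. The point is that the positive and simple hypotheses are, by their very definition, conditions on the monodromy of $\Lambda$ in a punctured neighbourhood of each point of $\Delta$, and in dimension three they pin down this local affine germ up to $\mathrm{SL}_3(\Z)$-equivalence on each of the three strata: the interior of an edge, a positive trivalent vertex, and a negative trivalent vertex. It therefore suffices to (i) build, for each of the three model germs of affine manifold with singularities, an explicit local Lagrangian fibration realizing it; (ii) identify the affine structure induced on the regular part of each local base with $\mathscr A$; and (iii) glue these local pieces to $X_0(B)$ along collar neighbourhoods.

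For step (i) I would take the three standard local models. Near a generic point of an edge the model is essentially the product of a two-dimensional focus-focus fibration with the trivial Lagrangian $S^1$-bundle $T^\ast S^1/\Lambda^\ast$; it carries a $T^2$-symmetry, is a smooth map, and has codimension-two discriminant. Near a positive vertex one uses the genuinely three-dimensional \emph{positive} model: again $T^2$-symmetric, smooth, with codimension-two discriminant, and viewable as a higher-dimensional analogue of focus-focus. The delicate case is the \emph{negative} vertex: here one starts from Gross's topological negative fibration of \cite{TMS}, which is only $S^1$-invariant and piecewise smooth, and perturbs it so as to carry a genuine symplectic form making the projection Lagrangian; the price is that its discriminant acquires a codimension-one wall issuing from the vertex. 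This is exactly why one cannot keep $\Delta$ as a graph: one must replace a neighbourhood of each negative vertex by a small embedded $2$-disc $D_{v^-}$ swallowing both the local portion of $\Delta$ and this wall, set $\Delta_\blacklozenge = \Delta \cup \bigcup_{v^-} D_{v^-}$, and only claim that the standard bundle $X_\blacklozenge(B) = T^\ast B_\blacklozenge/\Lambda^\ast$ — not $X_0(B)$ — sits inside the compactification.

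For steps (ii) and (iii): over the complement of its discriminant each local model is a Lagrangian $T^n$-bundle, hence of the form $T^\ast U/\Lambda^\ast_U$ for the affine structure it induces on its base $U$, and by construction this structure is isomorphic to $\mathscr A$ on the corresponding chart of $B$. The residual freedom in a symplectic $T^n$-bundle over a fixed affine base is a class in $H^1$ of the base with coefficients in the sheaf of Lagrangian sections (the period/section ambiguity), which vanishes on the contractible collars used here; so the local model and $X_\blacklozenge(B)$ are fibrewise symplectomorphic over each collar. A Moser argument then lets one interpolate between the two symplectic forms on each collar and patch all pieces into a single smooth symplectic manifold $X(B)$ together with a proper surjective map $f: X(B) \to B$ restricting to $f_0$ over $B_\blacklozenge$. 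Properness and surjectivity follow because every fibre is compact — a torus over $B_\blacklozenge$, a pinched torus or its negative-vertex analogue over $\Delta_\blacklozenge$ — and smoothness of $f$ over $X(B) - f^{-1}(\bigcup_{v^-} D_{v^-})$ is inherited from the generic-edge and positive-vertex models, the only non-smooth model being the negative one, which now lives entirely over the thickened discs.

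The main obstacle is the negative vertex: constructing the perturbed $S^1$-invariant symplectic model with the correct monodromy, understanding the codimension-one wall precisely enough to enclose it in an embedded disc, and — crucially — verifying that the fibrewise symplectomorphisms gluing the three local models to $X_\blacklozenge(B)$ can be chosen compatibly along the entire graph $\Delta$, so that no global obstruction to gluing appears. This is where the bulk of the work of \cite{CB-M} lies; everything else reduces to Moser-type interpolation and bookkeeping.
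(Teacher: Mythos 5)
This theorem is not actually proved in the paper: it is stated as a reformulation of the main result of Casta\~no-Bernard and Matessi \cite{CB-M}, and the paper explicitly defers to that reference for the construction of the discs $D_{v^-}$ and of the local models. What the paper does provide is the surrounding expository paragraph in \S 2.4, which describes the three local models (generic edge, positive vertex, negative vertex), their symmetry and smoothness properties, the mixed codimension-$1$/$2$ discriminant at negative vertices, and the consequent need to fatten $\Delta$ to a codimension-$1$ set by the discs $D_{v^-}$. Your sketch reproduces precisely this outline --- the three local models, the $T^2$-symmetry of the generic and positive fibrations versus the $S^1$-symmetric piecewise-smooth negative fibration perturbed from Gross's topological model, the wall that forces the thickening, and the Moser-type gluing along collars --- so it is faithful to the strategy the paper attributes to \cite{CB-M}, and you correctly identify the negative-vertex model and the compatibility of the fibrewise gluings as the genuinely hard part. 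Since there is no proof in this paper to compare against in detail, the only substantive comment is that your step (iii) should be stated with a bit more care: the residual freedom of a Lagrangian $T^n$-bundle over a \emph{fixed} affine base is governed by $H^1$ of the base with coefficients in the sheaf of sections of $T^\ast B/\Lambda^\ast$ (Duistermaat's Lagrangian class), and one must check this vanishes on the overlaps being used; on contractible collars it does, as you say, but the collars around the three strata overlap one another along noncontractible punctured neighbourhoods of $\Delta$, and it is there that the compatibility claim actually has content. That is exactly the bookkeeping you flag at the end, so the sketch is coherent; a full proof would require carrying out the negative-vertex perturbation and verifying the gluing cocycle, which is the content of \cite{CB-M} and beyond the scope of this paper.
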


Notice that outside the discs $D_{v^-}$ the discriminant locus is codimension $2$. We refer to \cite{CB-M} for a more detailed description of these discs $D_{v^-}$ and the construction of the local models. 

\begin{figure}[!ht] 
\begin{center}
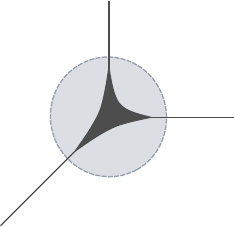
\caption{The discs $D_{v^-}$ containing the codimension $1$ part of $\Delta$ at negative vertices. Outside these discs the fibration is smooth.} \label{negative_thick}
\end{center}
\end{figure}
An important invariant of integral affine manifolds with singularities is the monodromy representation of the fundamental group of $B_0$. In fact the existence of the flat connection and integrality gives a representation of $$ \rho: \pi_1(B_0,p) \rightarrow \Gl(\Lambda_p) \cong \Gl(\Z, n).$$ 
Similarly we have the dual representation $\rho^*: \pi_1(B_0,p) \rightarrow \Gl(\Lambda^*_p)$.
 
In \cite{G-Siebert2003} and \cite{GrSi_re_aff_cx}, Gross and Siebert describe how, in certain cases, from an affine manifold with singularities $(B, \Delta, \mathscr A)$ one can construct a mirror one, denoted $(\check{B}, \check{\Delta}, \check{\mathscr A})$. The associated spaces $X(B)$ and $X(\check B)$ are mirror to each other. We do not wish to go into the details of the general construction, but only give ad hoc constructions for the examples we need. 

\subsection{The Lagrangian model} \label{lag_model} Here we define a mirror pair of affine manifolds with singularities whose spaces $X(B)$ and $X(\check B)$ from Theorem \ref{lagfib_thm} are homeomorphic, respectively, to $X$ and $\check X$ and whose Lagrangian fibrations are equivalent to the ones already described.  

\begin{ex} \label{bpos} $\mathbf{(X(\check B) \cong \check{X})}$ We describe $(\check B, \check \Delta, \check A)$. Let $\check{B} = \R \times M_{\R}$. Consider the function $\check \nu$ defined in (\ref{lt:nu}) and the associated tropical hypersurface $\Gamma$. Then $\check{\Delta} = \{ 0 \} \times \Gamma$. Define open subsets of $\check B$: 
\[ U^{+} = \check{B} - (\R_{\geq 0} \times \Gamma) \ \ \text{and} \ \  U^{-} = \check{B} - (\R_{\leq 0} \times \Gamma) \]
Let $\phi^-: U^- \rightarrow \R \times M_{\R}$ be the inclusion and define $\phi^+: U^+ \rightarrow \R \times M_{\R}$ by
\[ \phi^+(t,m) = (t + \check \nu(m), m), \]
where $t \in \R$ and $m \in M_{\R}$. The charts $(U^{+}, \phi^+)$ and $(U^{-}, \phi^-)$ define an integral affine structure on $\check{B} - \check{\Delta}$. Notice that at every point $p \in \check{B}_0$, we can identify $T^{\ast}_p \check{B}_0$ with $\R \times N_{\R}$ and we write a $1$-form as $a dt + v$, where $a \in \R$ and $v \in N_{\R}$. The space of monodromy invariant one-forms with respect to the monodromy representation $\rho^*$ can be identified with $\{ 0 \} \times N_{\R}$. All vertices of $\check{\Delta}$ are of positive type (see Example~\ref{aff+} below). It follows from \cite{TMS}, \S 3, that $X(\check{B})$ is homeomorphic to $\check{X}$.
\end{ex}

\begin{ex} \label{bneg} $\mathbf{(X(B) \cong X)}$.
Let us now describe $(B, \Delta, \mathscr{A})$. Here $B = \R \times N_{\R}$. The discriminant locus $\Delta \subset \{ 0 \} \times N_{\R}$ is constructed as follows. When $n=2$, $\Delta$ is just the set of barycenters of the segments $P_j$ forming the subdivision of $P$. When $n=3$, then $\Delta$ is the union of the following segments and straight rays. For every interior edge $\check e$ in the subdivision of $P$, take the two segments from the barycenter of $\check e$ to the barycenters of $P_e^+$ and $P_e^-$ respectively (see end of \S \ref{trop_hyp} for notation). The union of these two segments forms an edge of $\Delta$ which we denote by $e$. For every boundary edge $\check e$ in the subdivision of $P$, take the straight ray emanating from the barycenter of $P_e$ and passing through the barycenter of $\check e$ (see Figure~\ref{cek_delta}). Also in this case we denote this ray by $e$.

\begin{figure}[!ht] 
\begin{center}
\includegraphics{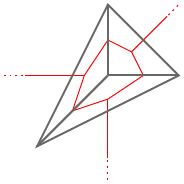}
\caption{The polytope $P$, its subdivision and the set $\Delta$.} \label{cek_delta}
\end{center}
\end{figure}

Observe that $\Delta$ is homeomorphic to the tropical curve $\Gamma$, and moreover it shares with $\Gamma$ the same combinatorial relationship with the subdivision of $P$. Every connected component $C$ of $N_{\R} - \Delta$ contains precisely one vertex  of the subdivision of $P$ which we denote by $v_C$. For every $C$, form the following open set of $B$: 
\[ V_C = C \cup \{ (t,v) \in \R \times N_{\R} \ | \ t \neq 0 \}, \]
and on $V_C$ define the map 
\[ \Phi_C(t,v) = \begin{cases}
	(t,v) &\ t< 0 \\
			(t, v + tv_C) &\ t \geq 0.
                 \end{cases} \]
The charts $(V_C, \Phi_C)$ define an integral affine structure on $B - \Delta$. We now compute monodromy. Given the point $p = (0,v_C) \in B_0$, we can identify $T^{*}_pB_0$ with $\R \times M_{\R}$ and denote $1$-forms by $a dt + m$, with $a \in \R$ and $m \in M_{\R}$.  Now let 
\[ V^+ = \{ t > 0 \} \ \ \text{and} \ \ V^- = \{ t < 0 \}. \]
Consider an edge $e$ of $\Delta$. It passes through the barycenter of the edge $\check e$. If $v$ and $v'$ are the vertices of $\check e$ such that $n_{\check e} = v'-v$, consider a path $\gamma_e$ going from $v'$ to $v$ passing inside $V^+$ and then going back to $v'$ passing inside $V^-$. It can then be calculated that 
\begin{equation} \label{bneg:mon}
 \rho^*(\gamma_e): a dt + m \mapsto (a - \inn{m}{n_{\check e}}) dt + m. 
\end{equation}
Then we have that the $\rho^*(\gamma_e)$-invariant one-forms are of type $a dt + m$, where $m \in \ker(n_{\check e})$. All trivalent vertices of $\Delta$ are of negative type. 
\end{ex}

\begin{ex} \label{ff-aff}Let $P = [0,1] \subseteq \R$ and $\nu: P \rightarrow \R$ the zero map. Then
\[ \check \nu(x) = \min \{0,x \} \]
and $\Gamma = \{ 0 \}$. Applying the constructions of Examples~\ref{bpos} and \ref{bneg} to this case we obtain isomorphic affine manifolds with singularities with only one singular point. These are called focus-focus models and all points of $\check{\Delta}$ or $\Delta$ in the $2$-dimensional versions of Examples~\ref{bpos} and \ref{bneg} are locally isomorphic to this example. Notice that at every point $p \in B_0$, the space of monodromy invariant tangent vectors is one-dimensional and the distribution of monodromy invariant tangent vectors is integrable. We call the integral lines of this distribution eigenlines. Notice that there is one eigenline passing through the singular point. When $\Delta$ has more than one point, each point has its own eigenlines given by local monodromy around it. 
\end{ex}

\begin{ex} \label{aff+} Suppose that $P$ is the standard simplex in $\R^2$ with vertices $(0,0)$, $(1,0)$ and $(0,1)$, and let $\nu: P \rightarrow \R$ the zero map. Then we have
\[ \check \nu(x,y) = \min \{ 0, x, y \} \] 
and $\Gamma$ is the set 
\[ \Gamma = \{ (-t,-t), \ t \geq 0 \} \cup \{ (t,0), \ t \geq 0 \} \cup \{ (0,t), \ t \geq 0 \}. \] 
Applying the construction of Example~\ref{bpos} to this case, we obtain $\check{\Delta}$ with a vertex of positive type. The corresponding Lagrangian torus fibration $\check{f}: X(\check{B}) \rightarrow \R^3$ has a positive singular fibre over the vertex of $\check{\Delta}$; see for example
\cite{splagI}, Example 1.2 for a full analysis of this fibration. The fibres over the edges of $\check{\Delta}$ are of generic type. Applying the construction of Example~\ref{bneg}, we obtain a vertex of negative type. 
\end{ex}

When the subdivision of $P$ is maximal, then all vertices of $\check{\Delta}$ (resp. $\Delta$) arising from Example~\ref{bpos} (resp. Example~\ref{bneg}) are of positive type (resp. negative type). 

The fibration $f: X(B) \rightarrow B$ constructed from Example \ref{bneg} via Theorem~\ref{lagfib_thm} has the following properties: 
\begin{itemize}
\item[a)] There is a Hamiltonian $S^1$-action on $X(B)$ whose fixed point set coincides with $\Crit f$. Let $\mu: X(B) \rightarrow \R$ be the moment map of this action. Then, for all $t \in \R$, $f$ restricted to $\mu^{-1}(t)$ is the composition of the quotient map $\alpha: \mu^{-1}(t) \rightarrow \mu^{-1}(t)/S^1$ with a regular $T^2$ fibration $\bar f: \mu^{-1}(t)/S^1 \rightarrow \{t \} \times N_{\R} \subset B$. Moreover $\Crit f \subseteq \mu^{-1}(0)$. 
\item[b)] Also $X_0(B)$ has an $S^1$-action given by translations in the $dt$ direction, i.e., $e^{2 \pi i s}\cdot (b, [\eta])= (b, [\eta + s dt])$, for all $b \in B_0$ and $\eta \in T^*_bB_0$. The restriction of this action to $X_\blacklozenge(B)$ coincides with the restriction of the $S^1$-action on $X(B)$ in point (a). 
\item[c)] There exists a smooth Lagrangian section $\sigma_0: B \rightarrow X(B)$ which extends the zero section on $X_{\blacklozenge}(B)$. 
\end{itemize}

Notice that from $(b)$ it follows that $\mu$ restricted to $X_{\blacklozenge}(B)$ is just the coordinate $t$. 
Let us denote
\[ X(B)_{\rd} = \mu^{-1}(0)/S^1 \ \ \ \text{and} \ \ \ X_0(B)_{\rd} = (\mu^{-1}(0) \cap X_0(B))/S^1.\]
It follows from $(b)$ that 
\begin{equation} \label{red_smooth}
  X_0(B)_{\rd} = (N_{\R}- \Delta) \times M_{\R}/M
\end{equation}
and the fibration $\bar f: X_0(B)_{\rd} \rightarrow \{0 \} \times N_{\R}$ is just the projection. 
Inside  $X(B)_{\rd}$  define the surface
\[ S := \alpha( \Crit f). \]
Via symplectic reduction $X(B)_{\rd}$ comes with a reduced symplectic form which blows up along $S$. 
The section $\sigma_0$ induces a reduced section $\bar \sigma_0: N_{\R} \rightarrow X(B)_{\rd}$ given by $\bar \sigma_0(b) = \alpha(\sigma_0(0, b))$. 
Since $\sigma_0$  is a section, it must avoid $\Crit f$, therefore $\bar \sigma_0(N_{\R}) \cap S = \emptyset$.

\begin{rem} As mentioned, $X(B)$ is homeomorphic to $X$ as defined in \eqref{mirror}. We also believe that it is symplectomorphic to it with the standard symplectic form \eqref{stsymp}, but this has not been proved yet. Abouzaid, Auroux and Katzarkov
\cite{AAK} use a different method to construct a piecewise smooth Lagrangian fibration on $X$ with the standard symplectic form and in all dimensions. Their fibration is a perturbation of the one in \eqref{dualfib}. It is possible that one can generalize our results to all dimensions using their Lagrangian fibration, but this requires some additional work since it does not yet give a detailed enough description to obtain our strongest results. One important advantage of our method is that it can be more easily generalized to construct Lagrangian spheres and sections in compact Calabi-Yau $3$-folds, see Example \ref{spheres_quintic}. 
\end{rem}

\subsection{Examples}

Let us discuss some simple $2$-dimensional examples.

\begin{ex} Let $P = [0,1]$ and choose $\nu = 0$. Then 
\[ \check{X}= \C^2 - \{ z_1z_2 - 1 = 0 \} \]
and there is a Lagrangian torus fibration on $\check X$ given
by
\[ \check{f} (z_1, z_2) = \left( \log|z_1 z_2 - 1|, \frac{|z_1|^2 - |z_2|^2 }{2} \right). \]
This fibration is just a standard example of focus-focus fibration, with a singular fibre over $(0,0)$ having the topology of a pinched torus (see 
\cite{splagI}, 
\cite{Gross_spLagEx}, and \cite{CB-M}). The mirror is
\[ X= \{ xy = z + 1 \}.\]
The fibration on $X$ is
\[ f(x,y,z) = \left( |x|^2 - |y|^2 , \log |z| \right). \]  
\end{ex}

\begin{ex} \label{fib:d2}
Now let $P = [0,m]$ for some positive integer $m$, with the maximal subdivision where $P_j=[j,j+1]$, for all $j=0, \ldots, m-1$. Consider the unique piecewise affine strictly convex function defined by $\nu(k) = \sum_{j=0}^{k} j $ for all $k \in P \cap \Z$. Then the affine toric variety associated to the cone over $P$ is
\[ V_P = \{ xy = z^m \}, \]
which is a singular variety with a $2$-dimensional $A_m$-singularity. The toric variety $V_{\Sigma}$ associated to the subdivision of $P$ is a crepant resolution of this singularity. We have that $\check X$ is obtained by removing a principal divisor from $V_{\Sigma}$ (as in formula (\ref{Xcek})). For instance, when $m=2$, then $\check{X}$ is an open set of $\mathcal O_{\PP^1}(-2)$. Notice that 
\[ \check{\nu}(t) = \min \left\{ k t + \sum_{j=0}^{k} j, \ \ k = 0, \ldots, m \right\}. \]


On the mirror side, we have 
\[ X = \left \{ xy = \sum_{j=0}^{m} t^{\nu(j)}  z^j \right \}. \]
Notice that $X$ is a smoothing of the singularity $\check{X}_P$. If $\alpha_1(t), \ldots, \alpha_m(t)$ are the complex roots of the polynomial, let $r_j(t) = \log | \alpha_j(t)|$. Then the discriminant locus of $f$ is the set of points $\Delta = \{ (0, r_j(t))\,|\, j=1, \ldots, m \}$ (see Figure~\ref{twodim_aff_mir}).
\enlargethispage{1em}
\begin{figure}[!ht] 
\begin{center}
\input{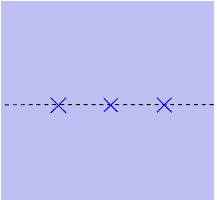_t}
\caption{The base of the Lagrangian fibration on $X$.} \label{twodim_aff_mir}
\end{center}
\end{figure}
\end{ex}

\begin{figure}[!ht] 
\begin{center}
\input{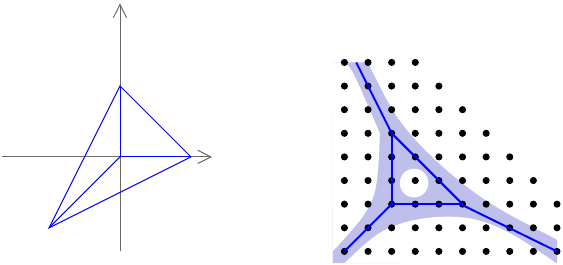_t}
\caption{The polytope $P$ (left), the tropical curve $\Gamma$ and the amoeba (right).} \label{can_p2}
\end{center}
\end{figure}

Let us now do some $3$-dimensional examples.

\begin{ex} \label{triangle}
Let $P = \conv \{ (-1,-1), (1,0), (0,1) \}$ with its unique maximal subdivision (see Figure \ref{can_p2}). Then $\check{X}$ is an open set in the total space of the canonical bundle of $\PP^2$. Let $\nu$ be the unique piecewise affine strictly convex function such that $\nu(-1,-1) = \nu(1,0) = \nu(0,1) = 1$ and $\nu(0,0) = 0$. Then we have 
\[ \check \nu(s,t) = \min \{ 0, -s-t+1, s+1, t+1 \} \]
and the corresponding tropical curve $\Gamma$ is as in Figure~\ref{can_p2}.  The mirror is 
\[ X = \{ xy = t z_1^{-1} z_2^{-1} + tz_1 +tz_2 + 1 \} \]
and the discriminant locus of $f: X_{\Sigma} \rightarrow \R^3$ is a thickening of $\Gamma$.  
\end{ex}

\begin{figure}[!ht] 
\begin{center}
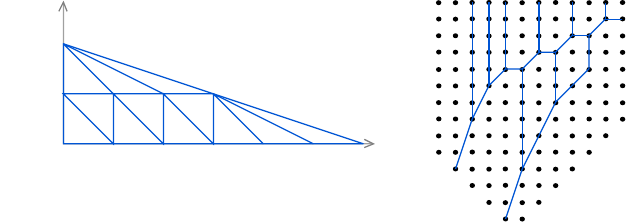
\caption{The polytope $P$ and the corresponding tropical curve.} 
\label{a2d:trop}
\end{center}
\end{figure}

\enlargethispage{1em}
\begin{ex} \label{a2d:mirror}
For some positive integer $d$, let 
\begin{equation} \label{a2d:polytope}
P = \conv \{ (0,0), (0,2), (2d,0) \} 
\end{equation}
together with some maximal subdivision and a piecewise affine strictly convex function $\nu$ (see Figure~\ref{a2d:trop} for an example when $d = 3$). We then have $\check{X}$ as an open set in the corresponding toric variety.  Its mirror $X$ has equation 
\begin{equation} \label{a2d:smoothing}
  X= \left\{ xy = \sum_{j \in P \cap \Z^2} t^{\nu(j)} z^j \right\}.
\end{equation}
Notice that $X$ is a smoothing of the $A_{2d-1}$-singularity whose equation is given by $xy=z^2+w^{2d}$. 
\end{ex}

\section{Lagrangian sections and line bundles}
In this section we give a precise and explicit classification of the space of sections, up to isotopy, of the torus fibration $f: X \rightarrow \R^n$ and we construct a Lagrangian representative for each class.  We then give a conjectural homological mirror symmetry correspondence between sections of $f$ and line bundles on $\check{X}_{\Sigma}$. In dimension $2$, Chan and Ueda \cite{chan:ueda} construct a wrapped Fukaya category generated by these sections and prove that the correspondence gives an embedding of the category into the derived category of coherent sheaves of $\check X$.

\subsection{Line bundles on $\check X$} \label{ss:line_bundles}
Line bundles on toric varieties correspond to support functions modulo linear functions.  Recall that a support function is an integral piecewise linear function $\phi: | \Sigma | \rightarrow \R$ defined on the support $|\Sigma|$ of the fan. Denote by $\Sigma(1)$ the set of $1$-dimensional cones of $\Sigma$ and for every $\rho \in \Sigma(1)$, let $u_{\rho}$ be the primitive integral generator of $\rho$ and let $D_{\rho}$ be the toric divisor corresponding to $\rho$. Then $\phi$ gives the Cartier divisor 
\begin{equation} \label{function:divisor}
   D_{\phi} = \sum_{\rho \in \Sigma(1)} \phi(u_{\rho}) D_{\rho}.
\end{equation}
Here we use a sign convention which is opposite to the one in \cite{fulton:toric} and \cite{cox:little:schenck}. We denote the line bundle corresponding to $D_{\phi}$ by $\mathcal L_{\phi}$.

Two support functions define the same line bundle if and only if their difference is a linear function.  In our case, where the fan $\Sigma$ in $\R \times N_{\R}$ is defined from a convex lattice polytope $P \subset N_{\R}$ and a smooth subdivision of $P$, support functions on $\Sigma$ are in a one-to-one correspondence with integral piecewise affine maps on $P$, whose domains of affineness are unions of the simplices $P_j$ of the subdivision. In fact, given a piecewise affine map $\phi: P \rightarrow \R$, the corresponding support function is defined as $\tilde \phi(t,tp) = t \phi(p)$ for all $p \in P$. Vice versa, given $\tilde \phi$, we have that $\phi = \tilde \phi|_{\{1 \} \times P}$ is piecewise affine. By slight abuse of notation we will continue to call a piecewise affine function $\phi$ on $P$ a support function.

\begin{defi} \label{kinks}
In the two-dimensional case, $P = [0,m]$ with $P_j = [j,j+1]$ for $j=0, \ldots, m-1$. 
Given a support function $\phi$ on $P$, let $k_{j} \in \Z$ be the difference between the slopes of $\phi|_{P_{j}}$ and $\phi|_{P_{j-1}}$ for $j = 1, \ldots, m-1$. 
Then the $m-1$-tuple $K = (k_1, \ldots, k_{m-1})$ uniquely determines the function $\phi$ up to addition of an affine function.
In dimension $3$, let $\phi$ be a support function on $P$ and let $\check e$ be an interior edge in the subdivision of $P$.  
Let $m^+_e, m^-_e \in M$ be the linear parts of $\phi|_{P^+_e}$ and $\phi|_{P^-_e}$ respectively (see \S\ref{trop_hyp} for notation). Then 
\[ \inn{m^+_e-m^-_e}{n_{\check e}} = 0, \]
i.e., we must have that 
\begin{equation} \label{bundles:cond}
 m^+_e - m^-_e = k_e n_e 
\end{equation}
for some $k_e \in \Z$. Notice also that $k_e$ does not depend on the sign of $n_e$. We call $k_e$ the {\it kink} of $\phi$ along $\check e$.
If we let $\mathscr E$ be the collection of all bounded edges of $\Gamma$, we then have a collection of integer numbers $K = (k_{e})_{e \in \mathscr E}$. We call a collection $K$ of integer numbers associated to a support function on $P$ the {\it kinks} of $\phi$.
\end{defi}

In dimension $2$, every interior point $j=1, \ldots, m-1$ of $P$ corresponds to toric divisors $D_j \cong \PP^1$ of $\check X$. 
Let $\mathcal L_{K}$ be the line bundle on $\check X$ corresponding to $K = (k_1, \ldots, k_{m-1})$.
Then we have
\begin{equation} \label{dim2_inters}
\mathcal L_{K}|_{D_j} = \mathcal{O}_{\PP^1}(k_j). 
\end{equation}
Similarly, in dimension $3$, every vertex in the subdivision of $P$ corresponds to a toric divisor of $\check X$ and every interior edge $\check e$ corresponds to a compact $1$-dimensional toric stratum, which we denote by $\PP^1_{\check e}$. If $D$ and $D'$ are the two toric divisors corresponding to the vertices of $\check e$, then $\PP^1_{\check e} = D \cap D'$.  If $K = (k_{e})_{e \in \mathscr E}$ is the set of kinks of a support function $\phi$ and $\mathcal L_{K}$ is the corresponding line bundle, then
\[ \mathcal L_{K}|_{\PP^1_{\check e}} = \mathcal{O}_{\PP^1}(k_e). \]
So the kinks $k_e$ are the intersection numbers of the line bundle 
$\mathcal L_{K}$ with one-dimensional strata.

Let $\mathscr C$ be the set of bounded components of $M_{\R} - \Gamma$. Clearly $C \in \mathscr C$ if and only if $v_C$ is an interior vertex of the subdivision of $P$. Notice also that $e$ is an edge of $\partial\bar C$ if and only if $v_C$ is a vertex of  $\check e$. 
Fix an orientation of $C$. If $(k_e)_{e \in \mathscr E}$ is a set of kinks of a support function $\phi$, then we must have
\begin{equation} \label{rel_inters}
 \sum_{e \subseteq \partial\bar C} \epsilon_e \, k_e \, n_e = 0, 
\end{equation}
where $\epsilon_e =1$ if $n_e$ agrees with the orientation of $\partial\bar C$, otherwise $\epsilon_e = -1$. 
For every $C \in \mathscr C$ define the map $\phi_C: \Z^{\mathscr E} \rightarrow M$ by 
\[ \phi_{C}( (k_e)_{e \in \mathscr E}) = \sum_{e \subseteq \partial\bar C} \epsilon_e \, k_e \, n_e. \]
 Now define $\Phi: \Z^{\mathscr E} \rightarrow M^{\mathscr C}$ by 
\begin{equation} \label{sec:fi}
 \Phi = (\phi_{C})_{C \in \mathscr C}.
\end{equation}

\begin{lem} \label{sect-bundles}
The group $\operatorname{Pic}(\check X)$ is isomorphic to $\ker \Phi$. 
\end{lem}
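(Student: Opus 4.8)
The plan is to compute $\operatorname{Pic}(\check X)$ through the ambient smooth toric variety $V_\Sigma$ and then to identify the resulting group with $\ker\Phi$ via the kink map. First I would observe that $\check X = V_\Sigma \setminus D$ with $D = \{z^{(1,0)}=1\}$ as in \eqref{Xcek}. Because every ray of $\Sigma$ is generated by a lattice point $(1,v)$ with $v$ a vertex of the subdivision, one has $\langle u_\rho,(1,0)\rangle = 1$ for all $\rho\in\Sigma(1)$; hence $z^{(1,0)}$, and with it $z^{(1,0)}-1$, is a regular function on $V_\Sigma$, so $D=\operatorname{div}(z^{(1,0)}-1)$ is an irreducible \emph{principal} divisor. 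The excision sequence $\Z\cdot[D]\to\operatorname{Cl}(V_\Sigma)\to\operatorname{Cl}(\check X)\to 0$ then shows that restriction induces an isomorphism $\operatorname{Pic}(V_\Sigma)=\operatorname{Cl}(V_\Sigma)\xrightarrow{\ \sim\ }\operatorname{Cl}(\check X)=\operatorname{Pic}(\check X)$ (both spaces are smooth). So it is enough to treat $\operatorname{Pic}(V_\Sigma)$, which by the standard toric description is the group of support functions on $\Sigma$ — integral piecewise affine $\phi$ on $P$ with domains of affineness unions of the $P_j$ — modulo integral affine functions, the line bundle of $\phi$ being $\mathcal L_\phi$ of \eqref{function:divisor}.

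Next I would study the kink homomorphism $K\colon\{\phi\}\to\Z^{\mathscr E}$, $\phi\mapsto(k_e)_{e\in\mathscr E}$, of Definition~\ref{kinks}. Its kernel is exactly the affine functions: if every kink vanishes then, by \eqref{bundles:cond}, the linear part of $\phi$ is unchanged across each interior edge, hence constant on all of $P$ since the subdivision is connected, so $\phi$ is affine and integral; conversely affine functions have no kinks. Therefore $K$ descends to an injection $\bar K\colon\operatorname{Pic}(V_\Sigma)\hookrightarrow\Z^{\mathscr E}$, and the inclusion $\operatorname{im}\bar K\subseteq\ker\Phi$ is precisely relation \eqref{rel_inters}. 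It remains to prove the reverse inclusion: given $(k_e)\in\Z^{\mathscr E}$ with $\phi_C\big((k_e)\big)=0$ for every bounded component $C\in\mathscr C$, to produce a support function with those kinks.

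For this I would reconstruct $\phi$ by propagation across the triangulation: fix one $2$-simplex $P_0$, set $\phi|_{P_0}\equiv 0$, and extend simplex by simplex, requiring the linear part to jump by $\epsilon_e k_e n_e$ across each interior edge $\check e$ (the sign $\epsilon_e$ as in \eqref{rel_inters}) and fixing the additive constant by continuity along $\check e$. The sole issue is well-definedness, i.e.\ that propagating around a closed loop in the graph $G$ whose vertices are the $2$-simplices $P_j$ and whose edges are the interior edges $\check e$ returns the same affine function. The resulting holonomy is a homomorphism from $H_1(G)$ to the group of affine functions, and $H_1(G)$ is generated by the loops $\ell_{v_C}$, one for each $C\in\mathscr C$, formed by the cyclically ordered $2$-simplices in the star of the interior vertex $v_C$ (this is the elementary fact that such loops generate $H_1$ of the simplex-adjacency graph of the triangulated $2$-disc $P$). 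Every simplex met by $\ell_{v_C}$ contains $v_C$, and the gluings are continuous there, so the value at $v_C$ is preserved and the only possible discrepancy lies in the linear part, which changes by $\sum_{e\subseteq\partial\bar C}\epsilon_e k_e n_e=\phi_C\big((k_e)\big)=0$. Hence the holonomy is trivial, $\phi$ is well defined on $P$, and it is integral since each gluing across $\check e$ (which contains the lattice point $v_C$) preserves integrality of the constant term. Thus $(k_e)\in\operatorname{im}\bar K$, and $\bar K$ restricts to an isomorphism $\operatorname{Pic}(\check X)=\operatorname{Pic}(V_\Sigma)\xrightarrow{\ \sim\ }\ker\Phi$.

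I expect the main obstacle to be precisely this last step: setting up the propagation so that the conditions $\phi_C((k_e))=0$ emerge exactly as the obstructions to single-valuedness. The topological input it relies on — that the loops around interior vertices generate $H_1$ of the simplex-adjacency graph of the triangulated disc — is elementary but should be argued carefully (e.g.\ via an Euler-characteristic count of $G$ together with the winding-number pairing of loops in $G$ against the interior vertices); the other ingredients, namely the excision sequence, the toric description of $\operatorname{Pic}$, and the inclusion $\operatorname{im}\bar K\subseteq\ker\Phi$ coming from \eqref{rel_inters}, are routine.
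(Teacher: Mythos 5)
Your proof is correct and follows essentially the same route as the paper's: identify line bundles with support functions modulo affine ones, map to kinks, use \eqref{rel_inters} for one inclusion, and for the other fix $\phi|_{P_0}=0$ and propagate across interior edges using \eqref{bundles:cond}. You go further than the paper's one-line assertion that ``equations \eqref{bundles:cond} uniquely determine a support function $\phi$'' by making the well-definedness of the propagation explicit via the holonomy argument on the dual graph (whose $H_1$ is generated by the loops around interior vertices, so that $\phi_C((k_e))=0$ is exactly the vanishing of the obstruction), and by supplying the excision-sequence step $\operatorname{Pic}(V_\Sigma)\xrightarrow{\sim}\operatorname{Pic}(\check X)$ which the paper leaves implicit; both additions are sound and welcome.
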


\begin{proof} Given a support function $\phi$, from (\ref{rel_inters}) it follows that its kinks $(k_e)_{e \in \mathscr E}$ belong to $\ker \Phi$. Moreover if we add to $\phi$ an affine function, these numbers do not change.  On the other hand, given a collection $(k_e)_{e \in  \mathscr E}$ in $\ker \Phi$, fix a simplex $P_0$ in the subdivision and set $\phi|_{P_0}= 0$. Then equations (\ref{bundles:cond}) uniquely determine a support function $\phi$.
\end{proof}

\subsection{The space of sections on $X$} We have given three slightly different constructions of the space $X$ and the fibration $f$: through the formulas (\ref{mirror}) and (\ref{dualfib}), the complex tropical model of \S\ref{topological} or the Lagrangian model $X(B)$ in Example~\ref{bneg}. We will freely switch from one construction to the other. 

Let us consider the Lagrangian model $f: X(B) \rightarrow B$  from Example~\ref{bneg}. Denote by $\iota: B_0 \rightarrow B$ the inclusion. It can be shown that the space of sections of $f$, up to isotopy, is classified by $H^1(B, \iota_{\ast} \Lambda^*)$. This can be seen as follows.  Let $\{ U_i \}$ be a good cover of $B$. If $\sigma$ is a section, we can locally lift $\sigma|_{U_i}$ to a section $\sigma_i$ of $T^*B$. Then, on overlaps, $s_{ij} = \sigma_i - \sigma_j$ defines a cocycle $[\sigma] \in H^1(B, \iota_{\ast} \Lambda^*)$ (see \cite{splagI}, \S 3). Moreover $[\sigma] = 0$ if and only if $\sigma$ is isotopic to the zero section.  Indeed, let $\sigma_t$ be an isotopy such that $\sigma_0$ is the zero section. Locally we can lift to an isotopy $\sigma_{i,t}$ of sections of $T^*B$ so that $\sigma_{i,0} = 0$. Since $\iota_{\ast} \Lambda$ is discrete, $\sigma_{i,t} - \sigma_{j,t}$ is constant in $t$ and thus equal to zero for all $t$.  Hence $[\sigma_1]=0$. On the other hand if $[\sigma] = 0$, then $\sigma_i-\sigma_j = \lambda_i-\lambda_j$ for local sections $\lambda_i$ and $\lambda_j$ of $\iota_{\ast}\Lambda$. Then $\sigma$ has a global lift $\tilde \sigma: B \rightarrow T^*B$ defined by $\tilde \sigma|_{U_i} = \sigma_i - \lambda_i$. An isotopy with the zero section is then defined by $\tilde \sigma_t = t \tilde \sigma$ projected to $T^*B/\Lambda$.

We now explicitly compute $H^1(B, \iota_{\ast} \Lambda^*)$ in the cases $\dim M_{\R} = 1$ and $2$. Observe that $\iota_{\ast} \Lambda^*$ has a globally defined section, namely $dt$, corresponding to the fact that $X(B)$ has an $S^1$-action. Let us take the quotient by the constant subsheaf spanned by this section ($\cong \Z$), i.e., we have a short exact sequence
\[ 0 \rightarrow \Z \rightarrow \iota_{\ast} \Lambda^* \rightarrow \mathcal G \rightarrow 0 \]
for some sheaf $\mathcal G$. Now notice that at a vertex $p$ of $\Delta$, $\mathcal G_p = 0$, since $\iota_{\ast} \Lambda^*$ has stalk $\Z$ at $p$. Therefore $H^0(B, \mathcal G) = 0$.  The long exact sequence in the cohomology of sheaves gives
\[ H^1(B, \iota_{\ast} \Lambda^*) \cong H^1(B, \mathcal G). \]

We now show how to compute $H^1(B, \mathcal G)$.  Let us construct a 
constructible sheaf $\mathcal G'$ on $B$, supported over $\Gamma$, as follows. Assume first $\dim M_{\R} = 2$. On a small neighborhood $U$ of a vertex of $\Gamma$, $\mathcal G'(U) = M$. When $U$ is a neighborhood of a point in the interior of an edge $e$, then $\mathcal G'(U) = M / \Z n_e$. Restriction functions are just projections to the quotients. We set $\mathcal G'$ to be zero outside of $\Gamma$. When $\dim M_{\R} = 1$ and $\Delta$ has $k$ points, then we let $\mathcal G'$ be $M \cong \Z$ at the points of $\Delta$ and $0$ away from them. We have the following:

\begin{lem} \label{sections} 
In both cases, $\dim M_{\R} = 1$ or $2$, there is a short exact sequence
\begin{equation} \label{sect_seq}
 0 \rightarrow M \rightarrow H^0(\mathcal G', B) \rightarrow H^1( \mathcal G, B) \rightarrow 0. 
\end{equation}
\end{lem}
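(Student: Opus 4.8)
The plan is to build the short exact sequence by relating the constructible sheaf $\mathcal G$ to the sheaf $\mathcal G'$ supported on $\Gamma$. The key observation is that away from $\Gamma$ (equivalently, away from $\Delta$, since $\Delta$ is homeomorphic to $\Gamma$ and shares its combinatorics), the affine structure on $B$ is trivial: over each connected component $C$ of $N_{\R}-\Delta$, the chart $\Phi_C$ trivializes $\Lambda^*$, so $\iota_*\Lambda^* $ restricted to the open set $\{t\ne 0\}$, and more relevantly on a neighborhood of $\{0\}\times C$, is the constant sheaf $\R\times M$ (in $1$-forms $a\,dt + m$). Taking the quotient by the global section $dt$, the sheaf $\mathcal G$ restricted to $B$ minus $\Gamma$ (thickened appropriately, i.e.\ over the open components $C$) is the constant sheaf $M$. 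So the idea is: $\mathcal G$ looks like the constant sheaf $\underline M$ away from $\Gamma$, and $\mathcal G'$ measures exactly the "defect" of $\mathcal G$ from being globally constant, caused by the monodromy around the edges of $\Delta$.

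Concretely, first I would construct a sheaf map from the constant sheaf $\underline M$ on $B$ into $\mathcal G$ — or rather, since monodromy obstructs a global constant subsheaf isomorphic to $\underline M$, I would instead work with a map $\underline M \to \mathcal G$ whose image is the monodromy-invariant part. Using the monodromy computation (\ref{bneg:mon}), the $\rho^*(\gamma_e)$-invariant one-forms are $a\,dt + m$ with $m\in\ker(n_{\check e})$, i.e.\ $m\in \Z n_e$ up to the $dt$ factor after passing to $\mathcal G$; so near an edge $e$, the stalk of $\mathcal G$ is $M/\Z n_e$ plus whatever survives, and near a vertex $p$ of $\Delta$ the stalk of $\iota_*\Lambda^*$ is just $\Z$ (so $\mathcal G_p=0$, as already noted). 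The cleanest route: exhibit an exact sequence of sheaves on $B$
\[ 0 \longrightarrow \underline M \longrightarrow \mathcal G \longrightarrow \mathcal G' \longrightarrow 0, \]
where $\underline M$ is the constant sheaf with stalk $M$, the first map sends a (locally constant) $m$ to the class of the one-form $0\cdot dt + m$ in $\mathcal G$, and the quotient is checked stalk-by-stalk: over the open components $C$ both $\mathcal G$ and $\underline M$ have stalk $M$ and $\mathcal G'=0$; at an interior point of an edge $e$, $\mathcal G_x$ is (a quotient of) $M$ whose invariant subsheaf is $\Z n_e$, with quotient $M/\Z n_e = \mathcal G'_x$; at a vertex $p$ of $\Delta$, $\mathcal G_p = 0$ but $\underline M_p = M = \mathcal G'_p$, so the sequence reads $0\to M\to 0\to M\to 0$ — which fails exactness. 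So the correct statement must instead map $\underline M$ into something and I should be more careful: the right sequence is presumably with $\mathcal G$ in the middle only after noticing the vertex contribution cancels in cohomology, or one uses a flabby/acyclic resolution. The honest approach is: take the sequence $0\to \mathcal K \to \underline M \to \mathcal G' \to 0$ for an appropriate kernel, or dualize — I would set up a map $\mathcal G \to \mathcal G'$ (restriction to the "transverse monodromy defect") with kernel the constant sheaf $\underline M$, verify exactness at every stratum (open cells, edge interiors, vertices), and then take the long exact cohomology sequence. Since $B\cong \R\times N_{\R}$ is contractible, $H^0(B,\underline M) = M$ and $H^1(B,\underline M)=0$, so the long exact sequence
\[ 0\to H^0(B,\underline M)\to H^0(B,\mathcal G)\to H^0(B,\mathcal G')\to H^1(B,\underline M)\to H^1(B,\mathcal G)\to H^1(B,\mathcal G') \]
collapses, using $H^1(B,\underline M)=0$ and the (to-be-checked) vanishing of $H^0(B,\mathcal G)$ — wait, $H^0(B,\mathcal G)=0$ was proved above, so actually I get $0 \to H^0(B,\mathcal G') \to 0 \to H^1(B,\mathcal G)\to \cdots$, which is wrong too. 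Let me reconsider: the sequence I actually want, matching the claim (\ref{sect_seq}), is $0\to M\to H^0(\mathcal G',B)\to H^1(\mathcal G,B)\to 0$, which is exactly the tail of the long exact sequence of $0\to \underline M\to \mathcal F\to \mathcal G'\to 0$ for some acyclic (e.g.\ flabby or soft) $\mathcal F$ with $H^0(B,\mathcal F)=M$ and $H^i(B,\mathcal F)=0$ for $i>0$; but since $\mathcal G$ itself has $H^0=0$, $\mathcal F$ cannot be $\mathcal G$. The resolution: the sequence is $0\to \underline\Z \to \iota_*\Lambda^* \to \mathcal G\to 0$ combined with a Čech/Leray argument — I'll instead derive (\ref{sect_seq}) from a different short exact sequence, namely $0 \to \underline M \to \mathcal S \to \mathcal G' \to 0$ where $\mathcal S$ is the sheaf whose sections are "$M$-valued functions constant on each component of $N_{\R}-\Delta$ and agreeing modulo $\Z n_e$ across each edge" — that sheaf is visibly soft/acyclic on the contractible $B$ with $H^0 = M$ and $H^{>0}=0$, and its quotient by $\underline M$ is exactly $\mathcal G'$, while $H^1(B,\mathcal G) \cong H^1(B,\iota_*\Lambda^*)$ independently identifies the section group with $H^1(B,\mathcal S/\underline M\text{-type data})$; matching the two gives (\ref{sect_seq}).

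The main obstacle, then, is bookkeeping the local structure of $\mathcal G$ at the trivalent vertices of $\Delta$ and at the edges, and choosing the intermediate sheaf correctly so that it is cohomologically trivial on $B$ with $H^0 = M$. I would handle the vertices using the fact established in \S\ref{trop_hyp} that any two edges $e_1,e_2$ at a vertex $p$ give $n_{e_1}, n_{e_2}$ a basis of $M$, so the stalk of the relevant sheaf at $p$ is $M/(\Z n_{e_1}+\Z n_{e_2}) = 0$, consistent with $\mathcal G_p = 0$ and with $\mathcal G'$ having stalk $M$ there (the vertex is where the constructible sheaf $\mathcal G'$ is "largest", recording all the gluing data). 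I expect the two-dimensional base case ($\Delta$ = finitely many points, $\mathcal G' = \bigoplus \underline{\Z}$ skyscrapers) to be an easy warm-up that fixes the shape of the argument, and the $n=3$ case to follow by the same homological mechanism once the sheaf $\mathcal G'$ is correctly identified as the cokernel. I would then conclude by writing out the long exact sequence of the chosen short exact sequence, inserting the contractibility of $B$ and the computed $H^0$, $H^1$ of the acyclic intermediate sheaf, and reading off (\ref{sect_seq}).
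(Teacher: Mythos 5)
You have the right ingredients but the direction of the sheaf map is backwards, and that is precisely why both of your attempted short exact sequences broke at the vertices. You repeatedly tried to make $\underline M$ a \emph{subsheaf} of $\mathcal G$; but the local computations you yourself quote point the other way. Over $B_0$ the identification $\mathcal G \cong \underline M$ holds, but near a point in the interior of an edge $e$ the monodromy formula (\ref{bneg:mon}) gives $\mathcal G(U) = \Z n_e$, which is a \emph{subgroup} of $M$, not a quotient; and at a vertex $p$ the stalk $\mathcal G_p = 0$, again a subgroup of $M$. So there is a natural injection of sheaves $\mathcal G \hookrightarrow \underline M$, and the cokernel computed stalkwise is exactly $\mathcal G'$: it is $0$ over $B_0$, it is $M/\Z n_e$ over the interior of each edge, and it is $M/0 = M$ at a vertex. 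In other words the short exact sequence you want is
\[ 0 \longrightarrow \mathcal G \longrightarrow \underline M \longrightarrow \mathcal G' \longrightarrow 0, \]
not the reverse, and in particular no auxiliary intermediate sheaf $\mathcal S$ is needed.

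Once that is in place the rest is immediate, and close to what you anticipated. The long exact sequence in sheaf cohomology reads
\[ 0 \to H^0(B,\mathcal G) \to H^0(B,\underline M) \to H^0(B,\mathcal G') \to H^1(B,\mathcal G) \to H^1(B,\underline M) \to \cdots \]
Since $B$ is contractible, $H^0(B,\underline M)=M$ and $H^1(B,\underline M)=0$, and it was shown just before the lemma that $H^0(B,\mathcal G)=0$ (because $\mathcal G$ has zero stalk at the vertices). Substituting gives exactly
\[ 0 \to M \to H^0(B,\mathcal G') \to H^1(B,\mathcal G) \to 0, \]
which is (\ref{sect_seq}). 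The same sequence, with $\mathcal G'$ the skyscraper sheaf on the points of $\Delta$, handles $\dim M_\R = 1$. Your instinct that the vertex stalks are what make the naive inclusion $\underline M\hookrightarrow\mathcal G$ fail was exactly correct; the fix is not to invent a new middle term but simply to swap the roles of $\mathcal G$ and $\underline M$.
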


\begin{proof}
It can be easily seen, from the expression of monodromy (\ref{bneg:mon}), that $\mathcal G|_{B_0}$ can be identified with the constant sheaf $M$. If $\dim M_{\R} = 2$, recall that $\Delta$ is homeomorphic the tropical curve $\Gamma \subset M_{\R}$. It is convenient here to identify $\Delta$ with $\Gamma$. 
It follows from the monodromy formula (\ref{bneg:mon}), that over a small neighborhood $U$ of a point in the interior of $e$, $\mathcal G(U) = \Z n_e$. Therefore we have a short exact sequence
\[ 0 \rightarrow \mathcal G \rightarrow M \rightarrow \mathcal G' \rightarrow 0. \]
The corresponding long exact sequence gives (\ref{sect_seq}). When $\dim M_{\R} = 1$ and $\mathcal G'$ is defined as above, then the same sequence also holds. 
\end{proof} 

It is clear that when $M \cong \Z$ and $\Delta$ has $k$ points, then $H^0(\mathcal G', B) \cong \Z^k$. So we conclude that $H^1(B, \iota_{\ast} \Lambda^*) \cong \Z^{k-1}$. 

\begin{ex} \label{p2:sections}
It is easy to compute $H^{1}(B, \iota_{\ast} \Lambda^*)$ in the case $B$ is constructed from Example~\ref{triangle}. Observe that in this case $\R^2 - \Gamma$ has only one bounded component, which we denote $C$. Let $e_1=(1,0)$, $e_2=(-1,1)$ and $e_3 = (0,-1)$ be integral primitive tangent vectors to the edges of $\Gamma$ which bound $C$. Let $p_1 = (-1,-1)$, $p_2 = (2,-1)$ and $p_3 = (-1,2)$ be the vertices of $\Gamma$. 
Elements of $H^{0}(B, \mathcal G')$ can be found as follows. For every vertex $p_j$ of $\Gamma$ choose an element $m_j\in\Z^2$: 
$p_j \mapsto m_j$. Then, these choices give an element of $H^{0}(B, \mathcal G')$ if and only if there exist 
$n_j \in \Z$ such that 
\[ m_{j+1} - m_{j} = n_j e_j, \ \ j=1,2,3 \]
and the indices are assumed cyclic. It is clear that such a system has solutions if and only if 
\[ \sum_{j=1}^3 n_j e_j = 0, \]
which holds if and only if $n_1 = n_2 = n_3 = k$ for some $k \in \Z$. Once this condition is satisfied, then we have a unique solution for each choice of $k$ and initial choice $m_1 \in \Z^2$. So
$H^{0}(B, \mathcal G') \cong \Z^3$, corresponding to a choice of $k$ and of $m_1$. We conclude, from Lemma~\ref{sections}, that for this example
\[ H^{1}(B, \iota_{\ast} \Lambda^*) = \Z. \]
\end{ex}

The argument of the previous example can be generalized as follows. Let $\Phi: \Z^{\mathscr E} \rightarrow M^{\mathscr C}$ be the map defined in (\ref{sec:fi}). Then we have 
\begin{lem} \label{sect:cond} When $(B, \Delta, \mathcal A)$ is as in Example~\ref{bneg}, then
\[ H^{1}(B, \iota_{\ast} \Lambda^*) \cong \ker \Phi. \]
This establishes a one-to-one correspondence between isomorphism classes of line bundles on $\check X$ and isotopy classes of sections of $X$.
\end{lem}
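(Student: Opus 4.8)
The plan is to deduce this from the two preceding lemmas. By the discussion preceding Lemma~\ref{sections} we have $H^1(B,\iota_\ast\Lambda^\ast)\cong H^1(\mathcal G,B)$, and the exact sequence (\ref{sect_seq}) of Lemma~\ref{sections} identifies $H^1(\mathcal G,B)$ with $H^0(\mathcal G',B)/M$; on the other hand Lemma~\ref{sect-bundles} gives $\operatorname{Pic}(\check X)\cong\ker\Phi$, the isomorphism there sending a support function $\phi$ to its tuple of kinks $(k_e)_{e\in\mathscr E}$. So it suffices to produce an isomorphism $H^0(\mathcal G',B)/M\xrightarrow{\sim}\ker\Phi$ and to see that, under the correspondence between support functions and sections (the graph of the differential of $\tilde\phi_\epsilon$), it matches the isomorphism of Lemma~\ref{sect-bundles}. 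I will do this for $\dim M_\R=2$; the case $\dim M_\R=1$ is analogous, and was in effect carried out above, where $H^1(B,\iota_\ast\Lambda^\ast)\cong\Z^{k-1}$.

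First I would make $H^0(\mathcal G',B)$ explicit. Since $\mathcal G'$ has stalk $M$ at each vertex of $\Gamma$, stalk $M/\Z n_e$ along the interior of an edge $e$, vanishes off $\Gamma$, and has the obvious projections as restriction maps, the sheaf axiom applied to the evident open cover of $B$ shows that a global section of $\mathcal G'$ is the same as a tuple $(m_p)$ indexed by the vertices $p$ of $\Gamma$ subject to the single requirement that, for every \emph{bounded} edge $e$ with endpoints labelled $p^\pm_e$ as in \S\ref{trop_hyp}, one has $m_{p^+_e}-m_{p^-_e}\in\Z n_e$ (unbounded rays impose no condition, a section near a ray being determined by the value at its unique vertex). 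As $n_e$ is primitive and nonzero, the integer $k_e$ with $m_{p^+_e}-m_{p^-_e}=k_e n_e$ is determined, so we get a homomorphism $\Psi\colon H^0(\mathcal G',B)\to\Z^{\mathscr E}$, $(m_p)\mapsto(k_e)_{e\in\mathscr E}$, which annihilates the subgroup $M\hookrightarrow H^0(\mathcal G',B)$ of constant tuples (the image of the global section $M\to\mathcal G'$).

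Next I would show $\ker\Psi=M$ and $\operatorname{im}\Psi=\ker\Phi$. For the kernel: if $\Psi(m_p)=0$ then $m$ is constant across every bounded edge of $\Gamma$; the graph formed by the vertices of $\Gamma$ and its bounded edges is exactly the dual graph of the triangulation of $P$ (each edge shared by two $2$-simplices is an interior edge of the subdivision, and conversely), which is connected because $P$ is, so $(m_p)$ is constant. For $\operatorname{im}\Psi\subseteq\ker\Phi$: given $(k_e)=\Psi(m_p)$ and a bounded component $C$, matching the sign conventions ($\epsilon_e$, the orientation of $\partial\bar C$, and the labels $p^\pm_e$) one checks that $\epsilon_e k_e n_e$ is the increment of $m$ along $e$ traversed with the boundary orientation, so $\phi_C((k_e))=\sum_{e\subseteq\partial\bar C}\epsilon_e k_e n_e$ telescopes around the cycle $\partial\bar C$ to $0$. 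Conversely, given $(k_e)\in\ker\Phi$, fix a base vertex $p_0$ of $\Gamma$, put $m_{p_0}=0$, and set $m_p=\sum\pm k_e n_e$ summed over the edges of a path in $\Gamma$ from $p_0$ to $p$, with signs dictated by the direction of traversal relative to $n_e$; this is independent of the path because $H_1(\Gamma,\Z)$ is generated by the classes of the boundary cycles $\partial\bar C$, $C\in\mathscr C$ ($\Gamma$ being a connected graph properly embedded in the plane $M_\R$), and each such cycle contributes $\phi_C((k_e))=0$. The tuple $(m_p)$ so obtained lies in $H^0(\mathcal G',B)$ and $\Psi(m_p)=(k_e)$.

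Consequently $\Psi$ descends to an isomorphism $H^0(\mathcal G',B)/M\xrightarrow{\sim}\ker\Phi$, and hence $H^1(B,\iota_\ast\Lambda^\ast)\cong\ker\Phi\cong\operatorname{Pic}(\check X)$; since $H^1(B,\iota_\ast\Lambda^\ast)$ classifies isotopy classes of sections of $f$, this is the asserted bijection with line bundles on $\check X$, and one checks from the construction of the cocycle $[\sigma]$ that it matches the section $\sigma_\phi$ built from a support function $\phi$ with the bundle $\mathcal L_\phi$. The step I expect to be the main obstacle is the surjectivity of $\Psi$ onto $\ker\Phi$ — concretely the well-definedness of the path-sum defining $(m_p)$, which rests on the planar-graph fact that $H_1(\Gamma,\Z)$ is generated by the bounded-region boundary cycles, together with pinning down the several sign conventions so that $\ker\Phi$ records the vanishing of exactly these holonomies.
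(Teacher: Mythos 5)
Your proof is correct and is precisely the generalization of Example~\ref{p2:sections} that the paper invokes (the paper's proof consists entirely of the sentence ``The proof uses the same argument as in Example~\ref{p2:sections}''): you set up the same short exact sequence $0\to M\to H^0(\mathcal G',B)\to\ker\Phi\to 0$, with the surjectivity step handled by exactly the path-integration and planar-graph-cycle argument that the triangle example makes trivial. The only cosmetic difference is that you parametrize by $(m_p)$ and compute kernel and image of $\Psi$, while Example~\ref{p2:sections} parametrizes by $(n_j)$ and asks when the resulting system for $(m_j)$ is solvable; these are two ways of presenting the same extension.
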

The proof uses the same argument as in Example~\ref{p2:sections}. 

\subsection{Topological construction of sections} \label{top_sec} We give an explicit construction of the sections, which will also explain the result of the last lemma. For this purpose we use the complex tropical model. Recall that the fibration $f: X \rightarrow \R \times M_{\R}$ was defined as the composition of $\alpha: X \rightarrow Y$ and the $T^{n-1}$ fibration $Y \rightarrow \R \times M_{\R}$, where $Y = \R \times M_{\R} \times (M_{\R} / M)$. The map $\alpha$ is the projection on the quotient with respect to an $S^1$-action. The fixed point set of the action is a set $S$, which is a finite set of points in dimension $n=2$ or a surface in dimension $n=3$. The strategy for the construction of a section of $f$ is as follows. 
First construct a section $\bar \sigma$ of $\Log: M_{\R} \times (M_{\R} / M) \rightarrow M_{\R}$ such that $\bar \sigma(M_{\R}) \cap S = \emptyset$, then extend it to a section $\sigma_{Y}$ of $Y \rightarrow \R \times M_{\R}$ which coincides with $\bar \sigma$ on $\{ 0 \} \times M_{\R}$. Finally lift $\sigma_Y$ to a section $\sigma_{X}$ of $f$.  Notice that the last two steps are easy. In fact, to extend $\bar \sigma$ to $\sigma_{Y}$ one can define $\sigma_{Y}(t,q) = (t, \bar \sigma(q)$). Moreover any two extensions are isotopic (outside of $\{ 0 \} \times M_{\R}$ there is no constraint on $\sigma_{Y}$). Notice that $\alpha$ restricted to the image of $\sigma_{Y}$ is a trivial $S^1$-bundle, since the image of $\sigma_{Y}$ avoids $\{ 0 \} \times S$. 
Then a lift $\sigma_{X}$ exists and any two lifts are isotopic. In the following we will denote by $\bar \sigma$ the section of $\Log$ and by $\sigma$ the final section of $f$, i.e., $\sigma = \sigma_X$.  We call $\bar \sigma$ the {\it reduced section}. So let us construct sections $\bar \sigma: M_{\R} \rightarrow M_{\R} \times (M_{\R} / M)$ such that $\bar \sigma(M_\R) \cap S = \emptyset$. 

\subsection{Sections in dimension $2$}\label{sec:d2} In dimension $2$, assume $M = \Z$. Then we have $Y = \R \times \R \times S^1$. Let $\Gamma = \{ 0,1, \dots, m \} \subset \R$ and $S =\{ (0, i), \ldots, (m,i) \} \subset \R \times S^1$ (we think of $S^1 \subset \C$). We construct reduced sections $\bar \sigma: \R \rightarrow \R \times S^1$ avoiding $S$.  For every $m$-tuple of integers $K =(k_1, \ldots, k_m)$ define 
\[ \bar \sigma_K(s) = \begin{cases} 
	(s, e^{2k_r \pi i s}) &\ r-1 \leq s \leq r, \ \ r=1, \ldots, m \\
			     (s, 1) &\ \text{otherwise}.
                  \end{cases}  \]
Then $\bar \sigma_K$ is a well-defined section such that $\bar \sigma_K(\R) \cap S = \emptyset$ (see Figure~\ref{2dsections} for a picture of a section). In the case $K=(0, \ldots, 0)$, we will consider the corresponding section as the zero section and denote it by $\bar \sigma_0$. We will call the numbers $K$ the {\it twisting numbers} of the section.  Two sections $\sigma_K$ and $\sigma_{K'}$  are isotopic if and only if $K=K'$ (see discussion below).

\begin{figure}[!ht] 
\begin{center}
\input{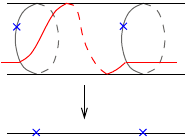_t}
\caption{The section $\bar \sigma$ (red line) and the set $S$ (blue crosses).} \label{2dsections}
\end{center}
\end{figure}

\subsection{Sections in dimension $3$} \label{sec:d3} We consider the surface $S \subseteq M_{\R} \times (M_{\R} / M)$ as constructed in \S\ref{topological}. The zero section $\bar \sigma_0: M_{\R}  \rightarrow M_{\R} \times (M_{\R} / M)$ does not intersect $S$. In fact neither the circles $\delta_e$ nor the $2$-chains $T_p$ contain $0 \in M_{\R} / M$. To construct a general section proceed as follows.  For every bounded edge $e \subset \Gamma$, choose an integer $k_e$.  If $p^+, p^-$ are the two vertices of $e$ corresponding to $P^+_e$ and $P^-_e$ respectively, then define
\begin{equation} \label{sec:edge}
  \bar \sigma((1-t)p^+ + t p^-) = ( (1-t)p^+ + t p^-, - [k_e n_e t]), 
\end{equation}
so that $\bar \sigma|_e$ wraps $- k_e$ times around the cycle $n_e$ in the fibre $M_{\R} / M$. Clearly $\bar \sigma(e) \cap S = \emptyset$. It is also clear that $n_e$ is the only cycle around which $\bar \sigma|_e$ can wrap non-trivially if we want $\bar \sigma(e) \cap S = \emptyset$. When $e$ is an unbounded edge, simply let $\bar \sigma|_e$ be the zero section. Clearly this gives a well-defined section $\bar \sigma: \Gamma \rightarrow M_\R \times M_{\R} / M$, which we want to extend to all of $M_{\R}$. If $C$ is a bounded component of $M_{\R} - \Gamma$, then an extension of $\bar \sigma$ to $\bar \sigma: C  \rightarrow M_{\R} \times (M_{\R} / M)$ exists if and only if 
\begin{equation} \label{sec:comp}
   \sum_{e \subseteq \partial\bar C} \epsilon_e \, k_e \, n_e = 0. 
\end{equation}
where $\epsilon_e =1$ if $n_e$ agrees with a fixed orientation of $\partial\bar C$, otherwise $\epsilon_e = -1$. 
It is now clear that an extension of $\bar \sigma$ to all of $M_{\R}$ exists if and only if the set of choices $(k_e)_{e \in \mathscr E} \in \ker \Phi$ as in Lemma~\ref{sect:cond}. We call the collection $K=(k_e)_{e \in \mathscr E} \in \ker \Phi$ the {\it twisting numbers} of the section.
\begin{lem} \label{sec:iso}
Let $\bar \sigma$ and $\bar \sigma'$ be two sections of the $\Log$ map constructed as above from the twisting numbers $(k_e)_{e \in \mathscr E}$ and $(k'_e)_{e \in \mathscr E}$ respectively. Then the two corresponding sections $\sigma$ and $\sigma'$ of $f$ are isotopic if and only if 
\begin{equation} \label{isotop}
 (k_e)_{e \in \mathscr E} = (k'_e)_{e \in \mathscr E}. 
\end{equation}
\end{lem}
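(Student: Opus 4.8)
The plan is to show both implications of Lemma~\ref{sec:iso}. The ``only if'' direction is the substantive one; the ``if'' direction is essentially the construction. For the ``if'' direction, note that if $(k_e) = (k'_e)$ then $\bar\sigma$ and $\bar\sigma'$ are literally the same section of $\Log$ built from the same data by the recipe \eqref{sec:edge}--\eqref{sec:comp} (the extension over each bounded component $C$ of $M_\R - \Gamma$ is unique up to homotopy since $C$ is contractible, and over unbounded components both are the zero section). By the discussion in \S\ref{top_sec}, any two extensions $\sigma_Y$ of a given reduced section $\bar\sigma$ are isotopic, and any two lifts $\sigma_X$ of $\sigma_Y$ through the trivial $S^1$-bundle $\alpha$ over the image of $\sigma_Y$ are isotopic; hence $\sigma$ and $\sigma'$ are isotopic.

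For the ``only if'' direction I would argue contrapositively, or equivalently extract the twisting numbers as isotopy invariants. The cleanest route is to invoke Lemma~\ref{sect:cond}: the isotopy class of a section of $f$ is classified by the class $[\sigma] \in H^1(B,\iota_*\Lambda^*) \cong \ker\Phi$, and by the identification established there this class corresponds exactly to $(k_e)_{e\in\mathscr E}$. So I would verify that the section $\sigma$ built from twisting numbers $K = (k_e)$ via the topological recipe has $[\sigma]$ equal to $K$ under this isomorphism. Concretely, one takes a good cover $\{U_i\}$ of $B$ adapted to $\Gamma$ (small balls around vertices, tubular neighborhoods of edge-interiors, and the complementary open pieces over the components $C$), locally lifts $\sigma|_{U_i}$ to a $1$-form $\sigma_i$ on $T^*B$, and computes the \v{C}ech cocycle $s_{ij} = \sigma_i - \sigma_j$. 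The winding of $\bar\sigma|_e$ by $-k_e$ times around the cycle $n_e$ in the fibre, recorded in \eqref{sec:edge}, is precisely what produces the cocycle value $k_e$ (up to the sign convention and the choice of $n_e$, which by Definition~\ref{kinks} does not affect $k_e$) along the edge $\check e$; over the vertices and over $C$ the contributions are forced by the compatibility \eqref{sec:comp}, i.e. by membership in $\ker\Phi$. Since $\sigma \simeq \sigma'$ implies $[\sigma] = [\sigma']$ in $H^1(B,\iota_*\Lambda^*)$, we get $(k_e) = (k'_e)$.

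The main obstacle is the bookkeeping in identifying the \v{C}ech class $[\sigma]$ with the tuple $(k_e)$: one must carefully track the monodromy \eqref{bneg:mon} of $\Lambda^*$ around each edge, the quotient by the global section $dt$ used to pass from $\iota_*\Lambda^*$ to $\mathcal G$ and then to $\mathcal G'$ (Lemma~\ref{sections}), and the signs $\epsilon_e$ relative to a chosen orientation of $\partial\bar C$, so that the edge-winding data in \eqref{sec:edge} matches the generators of $H^0(B,\mathcal G')$ appearing in the exact sequence \eqref{sect_seq}. An alternative to the full \v{C}ech computation, which I would mention as a shortcut, is a direct differential-topological argument: if $\sigma \simeq \sigma'$ then the reduced sections $\bar\sigma,\bar\sigma'$ of $\Log$ are isotopic through sections avoiding $S$, and restricting such an isotopy to a loop in $M_\R$ that crosses a single edge $e$ of $\Gamma$ transversally and bounds a region meeting $S$ in exactly the cylinder $S_e$ over that edge, the winding number of $\bar\sigma|_{\text{loop}}$ around the circle $\delta_e$ is an isotopy invariant and equals $-k_e$ for $\sigma$ and $-k'_e$ for $\sigma'$; hence $k_e = k'_e$ for every bounded edge, and the unbounded edges carry no data. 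Either way the two directions combine to give the stated equivalence.
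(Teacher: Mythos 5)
Your ``if'' direction is fine, and your overall framing is reasonable, but neither route you sketch for the ``only if'' direction matches the paper's proof, and the second route has a concrete gap.

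The paper proves ``only if'' directly, with no appeal to Lemma~\ref{sect:cond}. After reducing to an isotopy $\bar\sigma^t$ of reduced sections avoiding $S$, it lifts the compositions $\tau\circ\bar\sigma^t$ to a continuous family $\tilde\sigma^t\colon M_\R\to M_\R$ valued in the universal cover of $M_\R/M$. For a bounded component $C$ the curves $\tilde\sigma(\partial\bar C)$ and $\tilde\sigma'(\partial\bar C)$ are closed polygons with vertices in $M$; the piece over an edge $e$ lies on a line of slope $n_e$, hence at a constant integer value of $\langle\,\cdot\,,n_{\check e}\rangle$, with displacement $\pm k_e n_e$ (resp. $\pm k'_e n_e$). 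Lifts of $\delta_e$ are the parallel lines where $\langle\,\cdot\,,n_{\check e}\rangle$ is a half-integer, so if the kinks differ the two polygons must sit on distinct parallel lines over some $e$, and any isotopy between them is forced (by continuity and the intermediate value theorem applied to $\langle\tilde\sigma^t,n_{\check e}\rangle$) to cross a lift of $\delta_e$, contradicting $\bar\sigma^t(M_\R)\cap S=\emptyset$. Your primary route---invoke Lemma~\ref{sect:cond}, then compute the \v Cech class of $\sigma_K$---is genuinely different and plausible in principle, but you defer exactly the step that carries the content: $H^1(B,\iota_*\Lambda^*)\cong\ker\Phi$ tells you the group of isotopy classes, not where the explicit topological section of \S\ref{top_sec}--\S\ref{sec:d3} lands under that isomorphism. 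Establishing $[\sigma_K]=K$ means reconciling the topological model of \S\ref{topological} (in which $\sigma_K$ is built) with the affine-manifold model (in which $\iota_*\Lambda^*$ lives), and then pushing the cocycle of $\sigma_K$ through $\iota_*\Lambda^*\to\mathcal G\to\mathcal G'$ while tracking the monodromy \eqref{bneg:mon} and the signs $\epsilon_e$; that bookkeeping is the real work, and the paper seems to intend \S\ref{top_sec}--\S\ref{sec:d3} to \emph{explain} Lemma~\ref{sect:cond} rather than rely on it, so the logical order would need care.

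Your alternative direct route has a genuine gap. You claim that for a small loop in $M_\R$ crossing only the edge $e$, the winding number of $\bar\sigma$ restricted to that loop ``around $\delta_e$'' is an isotopy invariant equal to $-k_e$. But $\langle n_e,n_{\check e}\rangle=0$, so the displacement $-k_en_e$ of $\tilde\sigma|_e$ is invisible to the pairing against $n_{\check e}$, which is the only thing the avoidance of $S_e=e\times\delta_e$ protects. Over a single edge the isotopy constraint pins down only the integer level $\langle\tilde\sigma(e),n_{\check e}\rangle$, not $k_e$; moreover the homotopy class of $\tau\circ\bar\sigma$ on such a loop also depends on the chosen extensions over the two adjacent components of $M_\R-\Gamma$, which are a free (contractible) choice. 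To recover $k_e$ one must combine the levels over two consecutive edges $e_{j-1},e_j$ of $\partial\bar C$, using that $n_{\check e_{j-1}},n_{\check e_j}$ form a basis of $N$ to locate the vertex $\tilde\sigma(p_j)\in M$. That global step around $\partial\bar C$ is precisely what the paper's argument supplies and what your single-edge invariant cannot replace.
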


\begin{proof}
It is easy to show that an isotopy of sections between $\sigma$ and $\sigma'$ exists if and only if there exists an isotopy $\bar \sigma^t$, $t \in [0,1]$ between the reduced sections $\bar \sigma$ and $\bar \sigma'$ such that 
\begin{equation} \label{sigma:s}
  \bar \sigma^t(M_{\R}) \cap S = \emptyset, \ \ \text{for all} \ t \in [0,1].
\end{equation}
If (\ref{isotop}) holds, then an isotopy $\bar \sigma^t$ can be constructed as follows.  Given $\bar \sigma$, let $\tau_{\sigma}: M_{\R}  \rightarrow M_{\R} / M$ be such that $\bar \sigma(p) = (p, \tau_{\sigma}(p))$ and denote by $\tilde \tau_{\sigma}: M_{\R} \rightarrow M_{\R}$ a lift of $\tau_{\sigma}$ to the universal cover. Condition \eqref{isotop} implies that $\tau_{\sigma}|_{\Gamma} = \tau_{\sigma'}|_{\Gamma}$, in particular we can assume that also $\tilde \tau_{\sigma}|_{\Gamma} = \tilde \tau_{\sigma'}|_{\Gamma}$. Now define the isotopy $\bar \sigma^t(p) = (p, [ t \tilde \tau_{\sigma'}(p) +(1-t) \tilde \tau_{\sigma}(p)])$. We have that \eqref{sigma:s} holds since $\bar \sigma^t|_{\Gamma} = \bar \sigma|_{\Gamma} = \bar \sigma'|_{\Gamma}$ for all $t \in [0,1]$. 

On the other hand suppose $\sigma^t$ exists, satisfying (\ref{sigma:s}). Fix a bounded edge $e \subset \Gamma$ and let $\delta_e$ be the $1$-cycle defined in (\ref{cylinder_cycle}). Condition (\ref{sigma:s}) implies that $\tau_{\sigma^t}(e) \subseteq T^2 - \delta_e$ for all $t \in [0,1]$ and all edges $e \subset \Gamma$. Now assume that lifts $\tilde \tau_{\sigma^t}$, $\tilde \tau_{\sigma}$ and $\tilde \tau_{\sigma'}$ have been chosen so that $\tilde \tau_{\sigma^t}$ is an isotopy between $\tilde \tau_{\sigma}$ and $\tilde \tau_{\sigma'}$.
\begin{figure}[!ht] 
\begin{center}
\includegraphics{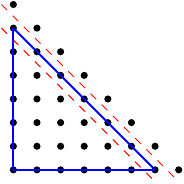}
\caption{The lift $\tilde \tau_{\sigma} (\partial\bar C)$ (continuous line) and the lifts of $\delta_e$ (dashed).} \label{sec:isotop}
\end{center}
\end{figure}
Consider a bounded component $C$ of $M_\R - \Gamma$. Then $\tilde \tau_\sigma(\partial\bar C)$ (resp. $\tilde \tau_{\sigma'}(\partial\bar C)$) is a closed polygonal curve in $M_\R$ with vertices in $M$ and with an edge parallel to $n_e$ and of length $|k_e|$ (resp. $|k'_e|$), for every edge $e \subset \partial\bar C$. 
Suppose that (\ref{isotop}) doesn't hold. Then there must be at least one bounded edge $e$ of $\Gamma$ such that $\tilde \tau_\sigma(e)$ and $\tilde \tau_{\sigma'}(e)$ lie on parallel but distinct lines in $M_\R$ with slope $n_e$ and containing integral points. Notice that lifts of $\delta_e$ are also parallel lines with slope $n_e$, but never intersecting $M$. Figure~\ref{sec:isotop} depicts this situation when $C$ is a standard simplex. It can now be seen that if $\tilde \tau_\sigma(e)$ and $\tilde \tau_{\sigma'}(e)$ lie on parallel but distinct lines, then $\tilde \tau_{\sigma^t}(e)$ must intersect a lift of $\delta_e$ at some point, for some value of $t$. This is a contradiction and thus we have proved the lemma. 
\end{proof}
Thus the isotopy class of a section is uniquely determined by the twisting numbers $K = (k_e)_{e \in \mathscr E} \in \ker \Phi$. We denote by $\bar \sigma_K$ the reduced section and by $\sigma_K$ the corresponding section of $f$. 

\subsection{Lagrangian sections} \label{lag_sections}
Let $f:X(B) \rightarrow B$ be the Lagrangian model of the fibration given in Example \ref{bneg}. Here we will construct an explicit Lagrangian representative for any class $[\sigma]$ of a section in the space $H^1(B, \iota_{\ast} \Lambda^*)$. We point out that $X(B)$, as constructed in \cite{CB-M}, does not contain the full $X_0(B)$, but the slightly smaller subset $X_{\blacklozenge}(B)$. Nevertheless our sections will be constructed so that they coincide with the fixed zero section on a neighborhood of the disks $D_{v^-}$ containing the vertices of $\Delta$, so that we will never have to worry about the technicalities of the fibration near a vertex of $\Delta$. For this reason, in what follows, we will continue to work on $X_0(B)$ instead of the smaller set $X_{\blacklozenge}(B)$. We will also work with the $S^1$-actions and the reduced spaces $X(B)_{\rd}$ and $X_0(B)_{\rd}$ described at the end of \S \ref{lag_model}.

Let $K = (k_e)_{e \in \mathscr E} \in \ker \Phi$ be a set of twisting numbers determining the isotopy class of a section in $H^1(B, \iota_{\ast} \Lambda^*)$. These numbers also determine the support function $\phi_K: P \rightarrow \R$ of a line bundle on $\check X$. Since we now fix the support function, in the following we set $\phi:=\phi_K$. 

\begin{lem} \label{subdiv_P}
The function $\phi$ can be extended, as a continuous piecewise affine map, to the whole of $N_{\R}$. 
\end{lem}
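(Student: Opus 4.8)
\subsection*{Proof proposal for Lemma \ref{subdiv_P}}

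The plan is to extend $\phi$ by a radial construction from an interior point of $P$. Fix a point $x_{0}$ in the interior of $P$ (which is nonempty since $P$ is full-dimensional in $N_{\R}$), and let $\mathcal{T}$ denote the given smooth subdivision of $P$ into elementary simplices $P_{j}$. Every $(n-2)$-dimensional simplex of $\mathcal{T}$ lying on $\partial P$ is a facet of exactly one $P_{j}$; list these boundary simplices as $G_{1},\dots,G_{r}$, with $G_{s}$ having vertices $w_{1}^{s},\dots,w_{n-1}^{s}$. For each $s$, consider the affine cone with apex $x_{0}$,
\[ C_{s}=\Big\{\, x_{0}+\sum_{i=1}^{n-1}t_{i}(w_{i}^{s}-x_{0})\ :\ t_{i}\ge 0 \,\Big\}\subset N_{\R}. \]
Since $x_{0}\in\mathrm{int}\,P$, every ray from $x_{0}$ meets $\partial P$ in a single point, which lies on a unique $G_{s}$; hence the $C_{s}$ cover $N_{\R}$, one has $C_{s}\cap P=\conv(\{x_{0}\}\cup G_{s})$, and $R_{s}:=\overline{C_{s}\setminus P}$ meets $P$ exactly in $G_{s}$. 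On $R_{s}$ I define $\hat\phi$ to be the unique affine function on $N_{\R}$ agreeing with $\phi$ at the $n$ points $x_{0},w_{1}^{s},\dots,w_{n-1}^{s}$; these are affinely independent because the facet of $P$ containing $G_{s}$ is a supporting hyperplane of $P$ and so does not contain $x_{0}$. On $P$ itself I set $\hat\phi=\phi$.

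What remains is to check that $\hat\phi$ is well defined, continuous, and piecewise affine with finitely many domains. Each $G_{s}$ is a face of some $P_{j}$, so $\phi$ is affine on $G_{s}$; since $\hat\phi|_{R_{s}}$ takes the value $\phi(w_{i}^{s})$ at each vertex of $G_{s}$, it agrees with $\phi$ on $R_{s}\cap P=G_{s}$, giving continuity across $\partial P$. If $G_{s}$ and $G_{s'}$ are adjacent boundary simplices, sharing the $(n-3)$-dimensional face with vertices $v_{1},\dots,v_{n-2}$, then $C_{s}$ and $C_{s'}$ meet along the common facet lying in the affine span of $x_{0},v_{1},\dots,v_{n-2}$; both $\hat\phi|_{R_{s}}$ and $\hat\phi|_{R_{s'}}$ are affine and agree with $\phi$ at the $n-1$ affinely independent points $x_{0},v_{1},\dots,v_{n-2}$, hence they coincide on that affine span, and in particular on the shared facet. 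Thus $\hat\phi$ is a single continuous function on $N_{\R}$. Finally $\hat\phi$ is affine on each of the finitely many $R_{s}$ and piecewise affine, with respect to $\mathcal{T}$, on $P$, so it is continuous piecewise affine on all of $N_{\R}$.

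The only delicate point is the compatibility of the various affine interpolants along the shared facets of the cones $C_{s}$ and along $\partial P$; I expect this to be the main thing to get right, and it is handled above by the elementary fact that two affine functions on $N_{\R}$ agreeing at $n-1$ (respectively $n$) affinely independent points already coincide on the affine subspace they span, together with the observation that $\phi$ is affine on each boundary simplex $G_{s}$. An alternative, essentially construction-free route is to take the nearest-point retraction $\pi:N_{\R}\to P$ onto the convex set $P$ with respect to any fixed inner product: $\pi$ is piecewise affine with finitely many domains (indexed by the faces of $P$ through their normal cones) and restricts to the identity on $P$, so $\hat\phi:=\phi\circ\pi$ is automatically continuous, extends $\phi$, and is piecewise affine. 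Either construction proves the lemma.
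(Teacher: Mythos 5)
Your proposal is correct, and your first (primary) construction is genuinely different from the paper's. The paper works in $\dim N_{\R}=2$ and extends $\phi$ by making it constant along outward normal rays to $\partial P$: for each boundary edge $E$ it chooses an outward normal $n'_E$, forms the normal cones $Q_v$ at vertices and the strips $Q_e$ over boundary subdivision edges, and sets $\phi(p+tn'_E)=\phi(p)$ on $Q_e$ and $\phi\equiv\phi(v)$ on $Q_v$. This is precisely $\phi\circ\pi$ for the nearest-point retraction $\pi$, so your ``alternative, essentially construction-free route'' is in fact exactly the paper's argument, just phrased more abstractly and in a way that works in any dimension. Your primary construction -- coning off the boundary simplices $G_s$ from an interior basepoint $x_0$ and extending affinely on each $R_s=\overline{C_s\setminus P}$ -- is a different subdivision of $N_\R\setminus P$ into finitely many polyhedra, and your compatibility check (two affine maps agreeing at enough affinely independent points coincide on their span) is exactly the right ingredient and is carried out correctly; your verification that $C_s\cap P=\conv(\{x_0\}\cup G_s)$ and that the $C_s$ form a complete simplicial fan with apex $x_0$ is what makes the gluing go through. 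One thing worth keeping in mind, though not part of the lemma itself: the paper's normal-cone extension has the feature that it is constant on the unbounded regions $Q_v$, which is convenient for the subsequent analysis of Lagrangian sections (Remark \ref{extend:pe} and Lemmas \ref{phi_bend}--\ref{red_sect}); your radial extension grows linearly at infinity on $R_s$, so it proves the lemma as stated but would require a slightly modified bookkeeping of $m^\pm_e$, $k_e$ on the unbounded regions if one tried to substitute it into the later arguments.
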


\begin{proof} First we define a subdivision of $N_{\R}$. Fix some euclidean metric on $N_{\R}$ and orientation of $P$. For every edge $E$ of $P$, let $n'_{E} \in N_{\R}$ be a non-zero vector which is orthogonal to $E$ and points outside of $P$ .  If $F$ is another edge of $P$ and $v \in E \cap F$ is a vertex of $P$, let $Q_{v}$ be the infinite polyhedron bounded by the two rays emanating from $v$ in the direction of $n'_{E}$ and $n'_{F}$.  If $\check e$ is an edge in the subdivision of $P$ contained in an edge $E$ of $P$, define the infinite polyhedron
\[ Q_e = \{ p + t n'_{E},  \ \ p \in \check e, \ \ t \geq 0 \}. \]
Clearly the polyhedra $Q_v$ and $Q_e$, together with the polyhedra of the subdivision of $P$, define a subdivision of $N_{\R}$. Notice that $Q_v$ and $Q_e$ are not necessarily integral polyhedra, but this will not matter. Let us extend $\phi$ as follows. On the polyhedron $Q_v$ define $\phi$ to be the constant function equal to $\phi(v)$. On $Q_e$ define it to be 
\[ \phi(p+t n'_{E}) = \phi(p). \]
This defines an extension of $\phi$. 
\end{proof}

\begin{rem} \label{extend:pe} Given the above subdivision of $N_{\R}$ and the extension of $\phi$, we can extend the definitions of $P^{\pm}_e$ (see the end of \S \ref{trop_hyp}), and the definitions of $m^{\pm}_e$ and $k_e$ (see Definition \ref{kinks}) also to the boundary edges in the subdivision of $P$. The only difference is that in the case $P^{\pm}_e$ is one of the unbounded polyhedra, then $m^{\pm}_e$ and $k_e$ are not necessarily integral. In fact, if $\check e$ is an edge in the subdivision of $P$ contained in the boundary edge $E$ and  $P^+_e=Q_e$, then the construction above implies that $m^+_e$ maps $n_{\check e}$ to some integer and $n'_E$ to zero. Since $n_{\check e}$ and $n'_E$ do not necessarily form a basis of $N$, $m^+_e$ is not necessarily integral (e.g. we could have $n_{\check e} =(1,1)$ and $n'_E =(1,-1)$, if $n_{\check e}$ is mapped to $1$ then $m^+_e$ is not integral). This, however, will not be a problem in the following construction.
\end{rem}

We now want to define suitable smooth approximations $\tilde \phi_{\epsilon}$ of $\phi$ depending on $\epsilon >0$ and then define the reduced section as the graph of $d \tilde \phi_{\epsilon}$. The main idea is to modify $\phi$ only near the $1$-skeleton of the subdivision of $N_{\R}$, i.e. we only smooth out the corners of $\phi$, so that sufficiently away from the $1$-skeleton it remains affine. This implies that the reduced section will coincide with the zero section on a neighborhood of the negative vertices of the discriminant locus $\Delta$. This is convenient since it automatically implies that it extends to all singular fibres in this neighborhood, without further work. The most technical part will be to show that the section extends to singular fibres over the edges of $\Delta$, this is proved in Lemma \ref{red_sect}.

A standard way to smooth a function is to use the convolution product with a so-called ``mollifier''. Choose on $N_{\R}$ an auxiliary inner product, giving a norm $\| \cdot \| $. Define the following smooth, compactly supported function on $N_{\R}$: 
\begin{equation} \label{mollifier}
\mu_{\epsilon}(x) = \begin{cases} 
	A_{\epsilon}^{-1} \exp \left( \frac{1}{\| x \|^2- \epsilon^2} \right), &\ \text{if} \ \|x\| < \epsilon; \\
				 0, &\ \text{otherwise}.
                               \end{cases}
\end{equation}
where $A_{\epsilon}$ is some constant chosen so that $\int_{N_{\R}} \mu_{\epsilon} = 1$. This is an example of a mollifier.

Now recall that the convolution product of $\mu_{\epsilon}$ with a continuous function $h: N_{\R} \rightarrow \R$ is defined by
\begin{equation} \label{convolution}
 h  \ast \mu_{\epsilon} (x) := \int_{N_{\R}} h(y)\mu_{\epsilon}(x-y) dy = \int_{N_{\R}} h(x-y)\mu_{\epsilon}(y) dy.
\end{equation}
It is well known that $h \ast \mu_{\epsilon}$ is smooth and converges to $h$ in the $C^0$-topology on every compact subset of $N_{\R}$ as $\epsilon \rightarrow 0$. We also have the following easy fact whose proof we leave to the reader:
\begin{lem} \label{conv_aff}
If $h$ is affine, then $h \ast \mu_{\epsilon} = h$. 
\end{lem}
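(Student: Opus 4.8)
The plan is to reduce everything to the two defining features of the mollifier used in \eqref{mollifier}: that $\int_{N_{\R}}\mu_{\epsilon}=1$ and that $\mu_{\epsilon}$ is an even function of its argument. Write the affine function $h$ as the sum of its linear part and a constant, say $h(z)=\inn{z}{m_0}+c$ with $m_0\in M_{\R}$ and $c\in\R$. Since $\mu_{\epsilon}$ is continuous with compact support, all the integrands appearing below are bounded and compactly supported, so there are no convergence issues and the integral may be split freely.

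Starting from the first expression for the convolution in \eqref{convolution}, substitute $h(x-y)=\inn{x}{m_0}-\inn{y}{m_0}+c$ and split:
\[
h\ast\mu_{\epsilon}(x)=\int_{N_{\R}}\big(\inn{x}{m_0}-\inn{y}{m_0}+c\big)\,\mu_{\epsilon}(y)\,dy
=\big(\inn{x}{m_0}+c\big)\int_{N_{\R}}\mu_{\epsilon}(y)\,dy-\inn{\int_{N_{\R}}y\,\mu_{\epsilon}(y)\,dy}{m_0}.
\]
The first integral equals $1$ by the normalization built into $\mu_{\epsilon}$, so it remains only to check that the $N_{\R}$-valued integral $\int_{N_{\R}}y\,\mu_{\epsilon}(y)\,dy$ vanishes; once this is known, the right-hand side collapses to $\inn{x}{m_0}+c=h(x)$, which is the claim.

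The vanishing of that first moment is the only substantive point, and it is where the shape of the mollifier enters: by \eqref{mollifier}, $\mu_{\epsilon}(y)$ depends on $y$ only through $\|y\|$, hence $\mu_{\epsilon}(-y)=\mu_{\epsilon}(y)$, so the integrand $y\,\mu_{\epsilon}(y)$ is odd. Applying the change of variables $y\mapsto -y$ then gives $\int_{N_{\R}}y\,\mu_{\epsilon}(y)\,dy=-\int_{N_{\R}}y\,\mu_{\epsilon}(y)\,dy$, forcing this integral to be $0$. I do not anticipate any real obstacle: the only mild care needed is to recall that $\mu_{\epsilon}$ was built from the auxiliary norm $\|\cdot\|$, so the symmetry being used is invariance under $y\mapsto -y$, which holds for any norm and is exactly what the argument requires. (The same computation also shows that higher-degree polynomials are, in general, not fixed, so affineness of $h$ is genuinely used.)
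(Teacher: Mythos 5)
Your proof is correct and complete. The paper itself states this lemma as ``an easy fact whose proof we leave to the reader,'' so there is no official argument to compare against; your decomposition into the normalization $\int\mu_\epsilon=1$ and the vanishing first moment (forced by the radial symmetry $\mu_\epsilon(-y)=\mu_\epsilon(y)$ coming from the dependence on $\|y\|$ alone in \eqref{mollifier}) is exactly the standard and intended argument.
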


We now define 
\[ \tilde \phi_{\epsilon} = \phi \ast \mu_{\epsilon}. \]
These functions define smooth approximations to $\phi$. 

\begin{defi} \label{skel}
We will denote by $\Sk_{1}$ the $1$-skeleton of the subdivision of $N_{\R}$ described Lemma \ref{subdiv_P} and by 
$$ V_{\epsilon} =  \epsilon\text{-neighborhood of} \  \Sk_1. $$ 
\end{defi}

Then we have 
\begin{lem} \label{outside_epsilon}
The function $\tilde \phi_{\epsilon}$ coincides with $\phi$ outside $V_{\epsilon}$.
\end{lem}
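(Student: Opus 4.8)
The plan is to exploit the \emph{locality} of the convolution operator together with the fact that $\phi$ is \emph{affine on each maximal cell} of the subdivision of $N_{\R}$ constructed in Lemma \ref{subdiv_P}. Recall that $\tilde\phi_{\epsilon}(x) = \phi\ast\mu_{\epsilon}(x) = \int_{N_{\R}} \phi(x-y)\mu_{\epsilon}(y)\,dy$, and that $\mu_{\epsilon}$ is supported in the ball of radius $\epsilon$ about the origin. Hence the value $\tilde\phi_{\epsilon}(x)$ depends only on the restriction of $\phi$ to the closed ball $\bar B(x,\epsilon)$. The key observation is that if $x\notin V_{\epsilon}$, i.e.\ $\operatorname{dist}(x,\Sk_1)\ge\epsilon$, then $\bar B(x,\epsilon)$ does not meet $\Sk_1$ (or meets it only on its boundary sphere, a measure-zero set), so $\bar B(x,\epsilon)$ is contained in the closure of a single top-dimensional cell $Q$ of the subdivision of $N_{\R}$ — one of the simplices $P_j$, or one of the polyhedra $Q_v$, $Q_e$ from the proof of Lemma \ref{subdiv_P}. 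On each such cell $\phi$ agrees with an affine function $\ell_Q\colon N_{\R}\to\R$.

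First I would record precisely why $\bar B(x,\epsilon)\subseteq \bar Q$ for a unique top cell $Q$: the interiors of the top-dimensional cells are the connected components of $N_{\R}\setminus\Sk_1$ (since $\Sk_1$ contains all the lower-dimensional faces of the subdivision), and a ball whose \emph{open} interior misses $\Sk_1$ lies entirely in one such component; its closure then lies in the closure of that component. Next, on $\bar B(x,\epsilon)$ we have $\phi=\ell_Q$, an affine function. Therefore, using that $\mu_{\epsilon}$ is supported in $\bar B(0,\epsilon)$,
\[
\tilde\phi_{\epsilon}(x)=\int_{\bar B(0,\epsilon)}\phi(x-y)\mu_{\epsilon}(y)\,dy
=\int_{\bar B(0,\epsilon)}\ell_Q(x-y)\mu_{\epsilon}(y)\,dy
=\ell_Q\ast\mu_{\epsilon}(x).
\]
Now apply Lemma \ref{conv_aff}: since $\ell_Q$ is affine, $\ell_Q\ast\mu_{\epsilon}=\ell_Q$, so $\tilde\phi_{\epsilon}(x)=\ell_Q(x)=\phi(x)$. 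This holds for every $x\notin V_{\epsilon}$, which is the claim. (A small care point: Lemma \ref{conv_aff} is stated for a globally affine $h$; here I am applying it to the auxiliary \emph{global} affine function $\ell_Q$ and then using that $\ell_Q(x-y)=\phi(x-y)$ only for $y$ in the support of $\mu_{\epsilon}$, which is all that enters the integral defining $\tilde\phi_{\epsilon}(x)$.)

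The only mild obstacle is the bookkeeping at the boundary: if $\operatorname{dist}(x,\Sk_1)=\epsilon$ exactly, the sphere $\partial B(x,\epsilon)$ may touch $\Sk_1$. This is harmless because $\mu_{\epsilon}$ vanishes on $\|y\|=\epsilon$ and $\Sk_1\cap\partial B(x,\epsilon)$ has Lebesgue measure zero, so it does not affect the integral; alternatively one simply notes $V_{\epsilon}$ can be taken to be the \emph{open} $\epsilon$-neighborhood and works with $x$ in its complement, where $\operatorname{dist}(x,\Sk_1)\ge\epsilon$ and the argument above goes through verbatim. No other difficulty arises; the statement is essentially immediate once one isolates the locality of convolution and the cellwise-affineness of $\phi$.
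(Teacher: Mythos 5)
Your proof is correct and follows the same argument as the paper's: locality of the convolution (depends only on values of $\phi$ in $B_\epsilon(x)$), $\phi$ affine on that ball when $x\notin V_\epsilon$, and then Lemma~\ref{conv_aff}. The extra care you take about extending the cellwise-affine piece to a global affine function before invoking Lemma~\ref{conv_aff}, and about the measure-zero boundary case, are sensible elaborations but not a different route.
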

\begin{proof} It is clear from the definition of $\mu_{\epsilon}$ and of the convolution product that $\tilde \phi_{\epsilon}(x)$ only depends on the values of $\phi$ in an $\epsilon$-ball centered in $x$. If the distance between $x$ and $\Sk_1$ is greater than $\epsilon$ then $\phi$ is affine on an $\epsilon$-ball centered at $x$. Therefore $\tilde \phi_{\epsilon}(x) = \phi(x)$ by Lemma \ref{conv_aff}. 
\end{proof}
 
For $x \in N_{\R}$ and $\epsilon > 0$ denote by $B_{\epsilon}(x)$ the open ball of radius $\epsilon$ centered at $x$. Given an edge $\check e$ of the subdivision of $P$, let
\begin{equation} \label{ue}
 U_e =\{ x \in N_{\R} \, | \, B_{\epsilon}(x) \ \text{is contained in the interior of} \ P^+_e \cup P^-_e \},  
\end{equation}
where $P^+_e$ and $P^-_e$ are as in \S \ref{trop_hyp} (see also Remark \ref{extend:pe}). 

\begin{lem} \label{phi_bend} If $\epsilon$ is such that $U_e$ is not empty, then 
\begin{equation} \label{bend}
d \tilde \phi_{\epsilon}|_{U_e}(n_{\check e}) = \inn{m^+_e}{n_{\check e}}= \inn{m^-_e}{n_{\check e}},
\end{equation}
where $m^{\pm}_e$ are as in Definition \ref{kinks} (see also Remark \ref{extend:pe}). 
In particular, for every $x \in U_e$, $(d \tilde \phi_{\epsilon})_x$ is a point in the affine line
\[ \{ m \in M_{\R}  \, | \, \inn{m-m^+_e}{n_{\check e}}=0 \}. \]
\end{lem}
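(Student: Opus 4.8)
The plan is to reduce the statement to the one-variable fact that a convolution commutes with directional derivatives in a direction along which the function is piecewise affine with a single ``break'' hyperplane, and then to use the defining property of the labels $m^{\pm}_e$. Concretely, fix $x \in U_e$. First I would observe that on $B_{\epsilon}(x)$ the function $\phi$, by definition of $U_e$, agrees with a function that is affine on each side of the affine hyperplane $H = \{ p : \inn{p - q}{n'} = 0\}$ through $\check e$ (here $n'$ is a suitable normal, and $H$ separates $P^+_e$ from $P^-_e$ near $x$), with linear parts $m^+_e$ on the $P^+_e$ side and $m^-_e$ on the $P^-_e$ side. This is the content of the extension in Lemma \ref{subdiv_P} together with the labelling conventions of Definition \ref{kinks} and Remark \ref{extend:pe}. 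In particular, since $m^+_e - m^-_e = k_e n_e$ and $\inn{n_e}{n_{\check e}} = 0$ by \eqref{edge_dual}, we have $\inn{m^+_e}{n_{\check e}} = \inn{m^-_e}{n_{\check e}}$ before we even take the convolution; call this common value $c_e$.

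Next I would compute $d\tilde\phi_\epsilon|_x(n_{\check e})$ directly from \eqref{convolution}. Using the second form $\tilde\phi_\epsilon(x) = \int \phi(x-y)\mu_\epsilon(y)\,dy$ and differentiating under the integral sign in the direction $n_{\check e}$ gives
\[
 d\tilde\phi_\epsilon|_x(n_{\check e}) = \int_{N_{\R}} \left( d\phi|_{x-y}(n_{\check e}) \right) \mu_\epsilon(y)\, dy,
\]
which is legitimate because $\phi$ is Lipschitz and piecewise smooth, so the directional derivative exists for almost every $y$ and is bounded; the support of $\mu_\epsilon$ has radius $\epsilon$, so $x - y$ ranges over $B_\epsilon(x) \subset \mathrm{int}(P^+_e \cup P^-_e)$. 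On that ball, for a.e.\ point, $d\phi(n_{\check e})$ equals $\inn{m^+_e}{n_{\check e}}$ or $\inn{m^-_e}{n_{\check e}}$ according to which side of $H$ the point lies on — but by the previous paragraph these two numbers both equal $c_e$. Hence the integrand is the constant $c_e$ almost everywhere, and since $\int \mu_\epsilon = 1$ we get $d\tilde\phi_\epsilon|_x(n_{\check e}) = c_e = \inn{m^+_e}{n_{\check e}} = \inn{m^-_e}{n_{\check e}}$, which is \eqref{bend}. The final assertion is then immediate: writing $m = (d\tilde\phi_\epsilon)_x$, equation \eqref{bend} says exactly $\inn{m}{n_{\check e}} = \inn{m^+_e}{n_{\check e}}$, i.e.\ $\inn{m - m^+_e}{n_{\check e}} = 0$.

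The only real subtlety — and the step I would be most careful about — is justifying differentiation under the integral sign and the ``a.e.'' bookkeeping across the break hyperplane $H$: one must note that $\phi$ restricted to $B_\epsilon(x)$ is genuinely the restriction of a globally defined piecewise affine function with exactly the two pieces $P^+_e, P^-_e$ meeting along $H$ (no other corners enter, precisely because $B_\epsilon(x) \subset \mathrm{int}(P^+_e \cup P^-_e)$ by the definition \eqref{ue} of $U_e$), so that $d\phi(n_{\check e})$ is locally constant off the measure-zero set $H$ and the dominated convergence hypotheses hold with the obvious Lipschitz bound. Everything else is a routine application of Lemma \ref{conv_aff}-style reasoning restricted to the relevant direction.
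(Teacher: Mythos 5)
Your proposal is correct and follows essentially the same route as the paper: observe that $\inn{m^+_e - m^-_e}{n_{\check e}}=0$ so the directional derivative of $\phi$ in the direction $n_{\check e}$ takes the common value $\inn{m^\pm_e}{n_{\check e}}$ on all of $B_\epsilon(x)\subset\operatorname{int}(P^+_e\cup P^-_e)$, then differentiate under the integral sign and use $\int\mu_\epsilon=1$. The only cosmetic difference is that the paper notes the directional derivative in the $n_{\check e}$-direction is actually well-defined \emph{everywhere} on $P^+_e\cup P^-_e$ (since $n_{\check e}$ is tangent to the break hyperplane and the one-sided slopes agree), so no ``a.e.'' bookkeeping is needed.
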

\begin{proof}
Since $\inn{m^+_e-m^-_e}{n_{\check e}} = 0$, we have that $\phi - m^+_e$ is constant along the edge $\check e$. Moreover, the directional derivative of $\phi$  in the direction $n_{\check e}$ is well-defined on the subset $P^+_e \cup P^-_e$ and it is equal to $ \inn{m^+_e}{n_{\check e}}$. In particular it is well-defined and equal to $\inn{m^+_e}{n_{\check e}}$ on $B_{\epsilon}(x)$ for every $x \in U_e$. Therefore for every $x \in U_e$:
\begin{align*}
 (d \tilde \phi_{\epsilon})_x(n_{\check e}) & =  \frac{d}{dt}|_{t=0} \int_{N_{\R}} \phi(x-y+tn_{\check e} )\mu_{\epsilon}(y) dy  \\
  & =   \int_{N_{\R}}\inn{m^+_e}{n_{\check e}}\mu_{\epsilon}(y) dy = \inn{m^+_e}{n_{\check e}}, 
\end{align*}
where the second equality follows since we can assume $x-y \in B_{\epsilon}(x)$.
\end{proof}

At the end of Example \ref{bneg} we observed that the one-forms in a fibre $T_x^*B / \Lambda^*_x$ which are monodromy invariant with respect to monodromy around $e$ are of the form $[a dt + m]$, where $m \in \ker n_{\check e}$. As explained in point (b) at the end of \S \ref{lag_model}, translations in the $dt$ direction generate the $S^1$-action, so we can view $\ker n_{\check e}$ as the quotient by the $S^1$-action of the space of monodromy invariant one forms around $e$. Notice also that  
$[d(\tilde \phi_{\epsilon} - m^+_e)_x]$ naturally lives in the quotient by the $S^1$-action (i.e. $\bmod\,\, dt$). So we have:

\begin{cor} \label{bend_cor} Let $U_e$ be the open set defined in \eqref{ue}. Then for every $x \in U_e$, $[d(\tilde \phi_{\epsilon} - m^+_e)_x]$ is contained in $\ker n_{\check e}$ ,  i.e. in the quotient by the $S^1$-action of the space of monodromy invariant one-forms around $e$. 
\end{cor}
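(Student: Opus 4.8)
The plan is to read off the Corollary directly from Lemma~\ref{phi_bend} and from the description of monodromy-invariant one-forms given in the paragraph immediately preceding the statement; this is a bookkeeping consequence rather than a result requiring new ideas. First I would observe that $m^+_e \in M_{\R}$ is a \emph{linear} functional on $N_{\R}$ (it is the linear part of the affine function $\phi|_{P^+_e}$, cf.\ Definition~\ref{kinks} and Remark~\ref{extend:pe}), so its differential at any point $x$ is the constant one-form $m^+_e$ itself; in particular $(dm^+_e)_x(n_{\check e}) = \inn{m^+_e}{n_{\check e}}$. Combining this with the conclusion \eqref{bend} of Lemma~\ref{phi_bend}, which says that $(d\tilde\phi_{\epsilon})_x(n_{\check e}) = \inn{m^+_e}{n_{\check e}}$ for every $x \in U_e$, yields $(d(\tilde\phi_{\epsilon} - m^+_e))_x(n_{\check e}) = 0$.

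Then I would interpret this vanishing. Regarding the class $[d(\tilde\phi_{\epsilon} - m^+_e)_x]$ as an element of $T^*_x B_0 / \R\,dt \cong M_{\R}$ (i.e.\ working modulo $dt$, which by property (b) at the start of Section~\ref{lag_sections} is exactly the passage to the quotient by the $S^1$-action), the condition for lying in $\ker n_{\check e}$ is precisely that the pairing with $n_{\check e}$ be zero. The computation above is exactly this condition, so $[d(\tilde\phi_{\epsilon} - m^+_e)_x] \in \ker n_{\check e}$.

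Finally I would invoke the identification recalled just before the Corollary: by the monodromy formula \eqref{bneg:mon} of Example~\ref{bneg}, the $\rho^*(\gamma_e)$-invariant one-forms in $T^*_x B_0$ are those of the shape $a\,dt + m$ with $m \in \ker n_{\check e}$, and quotienting by the $S^1$-action (translation in the $dt$-direction) sends $a\,dt + m \mapsto m$. Hence $\ker n_{\check e}$ is literally the quotient by the $S^1$-action of the space of monodromy-invariant one-forms around $e$, and the membership just established is the assertion of the Corollary.

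I do not expect any genuine obstacle here. The only two points deserving a word of care are (i) that $m^+_e$, being a linear functional, equals its own differential, and (ii) that the $S^1$-quotient is exactly the reduction $\bmod dt$; both are already set up, respectively in Definition~\ref{kinks}/Example~\ref{bneg} and in property (b) of Section~\ref{lag_sections}, so the proof is a two-line deduction.
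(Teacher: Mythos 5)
Your proposal is correct and follows exactly the route the paper takes: the corollary is not given a formal proof in the text but is derived in the paragraph immediately preceding it from Lemma~\ref{phi_bend} together with the identification (from the monodromy formula \eqref{bneg:mon} in Example~\ref{bneg} and property (b) of \S\ref{lag_sections}) of $\ker n_{\check e}$ with the $S^1$-quotient of the monodromy-invariant one-forms. Your unpacking of the two steps — the vanishing $(d(\tilde\phi_\epsilon - m^+_e))_x(n_{\check e})=0$ from \eqref{bend}, and the $\bmod\,dt$ reading of membership in $\ker n_{\check e}$ — matches the paper's reasoning.
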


We now fix $\epsilon$ so that: 
\begin{itemize}
\item[i)] for every edge $e$ of $\Delta$,  $e \subset U_e$; 
\item[ii)] the closure $\bar V_{2\epsilon}$ of $V_{2\epsilon}$ does not contain any vertex of $\Delta$. For every vertex $p \in \Delta$, we let $W_p$ be the connected component of $N_{\R} - \bar V_{\epsilon}$ containing~$p$. 
\end{itemize}
We can also assume the following
\begin{itemize}
	\item[iii)] the disks $D_{v^-}$ of Theorem \ref{lagfib_thm}, where $f$ fails to be smooth, are contained in $N_{\R} - V_{2\epsilon}$, which is a closed set inside the union of the $W_p$'s (see also \cite{CB-M}).
\end{itemize}
From (\ref{red_smooth}) it follows that the map 
\[ \begin{split}
        \bar \sigma_{\phi}:  N_{\R}-\Delta & \rightarrow X_0(B)_{\rd} \\
           \                                  x   & \mapsto (x, [(d\tilde \phi_{\epsilon})_x])
    \end{split} \]
defines a Lagrangian section, which we call the graph of $d \tilde \phi_{\epsilon}$. Notice also that the same section is defined by the graph of $d(\tilde \phi_{\epsilon}-m^+_e)$. 
\begin{lem} \label{red_sect} With the assumptions (i)-(iii) above, the Lagrangian section $\bar \sigma_{\phi}: N_{\R}-\Delta \rightarrow X_0(B)_{\rd}$ extends to a smooth Lagrangian section $\bar \sigma_{\phi}: N_{\R} \rightarrow X(B)_{\rd}$ such that 
\[ \bar \sigma_{\phi}(N_\R) \cap S = \emptyset. \]
\end{lem}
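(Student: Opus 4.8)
The plan is to extend $\bar\sigma_\phi$ across $\Delta$ piece by piece, treating separately a neighbourhood of each vertex of $\Delta$ and a neighbourhood of each edge; since every such local extension will restrict to the already-defined map $\bar\sigma_\phi$ on $N_\R-\Delta$, the pieces will glue to a global section. One first observes that it suffices to produce a \emph{smooth} section with image disjoint from $S$: such a section is automatically Lagrangian, because the reduced symplectic form is smooth away from $S$, the Lagrangian condition is pointwise and closed, and $\bar\sigma_\phi$ is Lagrangian on the dense open set lying over $N_\R-\Delta$. So the only things to verify are smoothness of the extension and disjointness from $S$.

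Near a vertex $p$ of $\Delta$ — the barycentre of a $2$-simplex $P_j$ of the subdivision — I would argue as follows. By assumption (ii), $W_p$ is a connected neighbourhood of $p$ inside $N_\R-\bar V_\epsilon$; since leaving $P_j$ forces one to cross an edge of the subdivision, which lies in $\Sk_1\subset\bar V_\epsilon$, we get $W_p\subset\operatorname{int}P_j$. By Lemma~\ref{outside_epsilon}, $\tilde\phi_\epsilon=\phi$ on $W_p$, and $\phi|_{P_j}$ is affine with integral linear part $m_j\in M$, so $d\tilde\phi_\epsilon\equiv m_j$ on $W_p$ and $[(d\tilde\phi_\epsilon)_x]=0$ in $M_\R/M$ for $x\in W_p\setminus\Delta$. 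Hence on $W_p\setminus\Delta$ the section $\bar\sigma_\phi$ coincides with the reduced zero section $\bar\sigma_0$, which is a smooth section of $\bar f$ defined on all of $N_\R$ with $\bar\sigma_0(N_\R)\cap S=\emptyset$, and we extend $\bar\sigma_\phi$ over $W_p$ by $\bar\sigma_0$. By assumption (iii) the discs $D_{v^-}$ where $f$ fails to be smooth lie in $\bigcup_pW_p$, so this step also settles the points at which the fibration itself is not smooth.

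Near an edge $e\subset\Delta$, dual to the edge $\check e$ of the subdivision, assumption (i) gives $e\subset U_e$, so Lemma~\ref{phi_bend} applies along $e$: for $x\in U_e$ the covector $(d\tilde\phi_\epsilon)_x$ moves only within the affine line $m^+_e+\ker n_{\check e}\subset M_\R$, and in particular $\inn{n_{\check e}}{(d\tilde\phi_\epsilon)_x}=\inn{n_{\check e}}{m^+_e}=:c_e$. One checks $c_e\in\Z$ — for an interior edge because $m^+_e\in M$, and for a boundary edge because the extension of $\phi$ in Lemma~\ref{subdiv_P} has the same $n_{\check e}$-directional derivative as the integral function $\phi|_{P_e}$. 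Thus on $U_e$ the section $\bar\sigma_\phi$ is valued in the circle $\{[m]\in M_\R/M:\inn{n_{\check e}}{m}=c_e\}$, which has slope $n_e$ by \eqref{edge_dual}, hence is parallel to the circle $\delta_e$ of \eqref{cylinder_cycle} but disjoint from it, since $c_e$ is an integer whereas $\delta_e=\{\inn{n_{\check e}}{m}=\half\mod\Z\}$; equivalently, by Corollary~\ref{bend_cor}, $\bar\sigma_\phi$ is valued in the reduction by the $S^1$-action of the space of monodromy-invariant one-forms around $e$. This is precisely the hypothesis under which, in the explicit local model of the reduced Lagrangian fibration near an edge of the discriminant from \cite{CB-M} (valid over $U_e\setminus\bigcup_pW_p$, where $f$ is smooth by assumption (iii)), the graph of $d\tilde\phi_\epsilon$ prolongs to a \emph{smooth} section across the fibres over $e$ whose image stays off $S$: the portion of $S$ over $e$ is $e\times\delta_e$, from which the section remains separated by the computation just made, and smoothness of the prolongation uses that $\tilde\phi_\epsilon$ is a smooth function on all of $N_\R$.

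Finally, since all these local extensions agree with $\bar\sigma_\phi$ on their overlaps, which lie in $N_\R-\Delta$, they glue to a smooth section $\bar\sigma_\phi:N_\R\to X(B)_{\rd}$ with $\bar\sigma_\phi(N_\R)\cap S=\emptyset$, Lagrangian by the first paragraph. The main obstacle is this last step — reconciling the monodromy-invariance supplied by Lemma~\ref{phi_bend} and Corollary~\ref{bend_cor} with the precise condition for smooth prolongation across a singular fibre in the \cite{CB-M} local model near an edge of $\Delta$; this is the only place where one genuinely needs to understand the reduced fibration near the critical surface $S$, the vertex case and the disjointness-from-$S$ check being elementary once the piecewise-affine extension of $\phi$ and its mollification $\tilde\phi_\epsilon$ are in hand.
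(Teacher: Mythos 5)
Your overall architecture mirrors the paper's proof quite closely: handle the vertices of $\Delta$ via constancy of $d\tilde\phi_\epsilon$ inside $W_p$ (so the section coincides with $\bar\sigma_0$ there, and the non-smooth discs $D_{v^-}$ are absorbed into the $W_p$'s by assumption (iii)); handle the edges of $\Delta$ via Lemma~\ref{phi_bend} and Corollary~\ref{bend_cor}; note that the Lagrangian condition is automatic by density. The vertex case and the disjointness from $S$ (integer versus half-integer value of $\inn{n_{\check e}}{\cdot}$) are correct and match the paper.

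However, there is a genuine gap at the step you yourself flag as "the main obstacle." You assert that being valued in the $S^1$-quotient of the monodromy-invariant one-forms \emph{is precisely the hypothesis} under which the graph of $d\tilde\phi_\epsilon$ prolongs smoothly across the singular fibres over $e$, and that "smoothness of the prolongation uses that $\tilde\phi_\epsilon$ is a smooth function on all of $N_\R$." Neither of these is an available black box: the reduced fibration $\bar f$ degenerates over $\Delta$ (the period lattice drops rank and the reduced symplectic form blows up along $S$), so smoothness of $\tilde\phi_\epsilon$ alone does not give a smooth section across $\Delta$, and one has to actually see in the degenerate local model why the graph stays bounded. What the paper does here, and what your proof is missing, is the explicit computation in the generic-singular local model of \cite{CB-M}/\cite{RCB1}: introduce coordinates $U\cong D\times(0,1)$ with $\Delta\cap U=\{0\}\times(0,1)$, write the period lattice as $\Lambda^*=\spn_\Z(\lambda_1,\lambda_2,\lambda_3)$ with $\lambda_1=-\log|b|\,db_1+\Arg(b)\,db_2+dH$, $\lambda_2=2\pi\,db_2$, $\lambda_3=dr$, note that $\lambda_1$ blows up over $\Delta$ while $\lambda_2,\lambda_3$ do not, identify the $S^1$-direction with $\lambda_2$, and then deduce from Corollary~\ref{bend_cor} that $d(\tilde\phi_\epsilon-m^+_e)=h(r)\,dr$ for some smooth function $h$, i.e.\ a multiple of the \emph{bounded} period $\lambda_3$ with no $\lambda_1$-component. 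It is this last algebraic fact — not merely monodromy invariance in the abstract, and not merely smoothness of $\tilde\phi_\epsilon$ — that guarantees a smooth, bounded limit as $b\to 0$ and hence a smooth extension over $\Delta$ avoiding $\Crit f_U$. Without this derivation the core content of the lemma is being assumed rather than proved.
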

\begin{proof} 
On every connected component of $N_{\R} - V_{\epsilon}$, we have that the map $x \mapsto (d \tilde \phi_{\epsilon})_x$ is constant and it maps $x$ to the slope of $\phi$. This follows from Lemma \ref{outside_epsilon}, since $\phi$ is affine on every connected component of $N_{\R}- V_{\epsilon}$. Moreover, if $x$ is in a bounded component of $N_{\R}-V_{\epsilon}$, then $(d \tilde \phi_{\epsilon})_x \in M$. This implies that on bounded components of $N_{\R} - V_{\epsilon}$, $\bar \sigma$ coincides with the reduced zero section $\bar \sigma_0$. Therefore, on bounded components of $N_{\R}- V_{\epsilon}$, i.e., those which contain the vertices of $\Delta$, $\bar \sigma_{\phi}$ extends as required.  We need to show that $\bar \sigma_{\phi}$ extends also over $\Delta \cap V_{\epsilon}$ and on unbounded components of $N_{\R} - V_{\epsilon}$. 
  
Consider an edge $e$ of $\Delta$. It follows from (i)-(iii) above that, inside $\R \times N_{\R}$, there is an open neighborhood $U$ of the closure of $e \cap V_{\epsilon}$, over which the Lagrangian fibration $f$ is smooth and such that $U \cap ( \{ 0 \} \times N_{\R})$ is contained in $U_e$. Let us restrict our attention to the fibration $f_{U} := f|_{f^{-1}(U)}$. The singular fibres of $f_U$ are of the type which are called generic-singular in \cite{CB-M}. This means we can describe $f_U$ as is done in Section 3 of \cite{RCB1}. Namely, let $D \subseteq \C$ be the unit disc. Then, by taking a smaller $U$ if necessary, we can assume $U \cong D \times (0,1)$ and $\Delta \cap U \cong \{ 0 \} \times (0,1)$.  Let $b = b_1 + i b_2$ be a coordinate on $D$ and $r \in (0,1)$, then $(b, r)$ are coordinates on $U$. The periods of the fibration $f_{U}$ are given by the multivalued one-forms
\begin{equation}
\begin{aligned} \label{periods}
\lambda_1 & =  - \log |b| db_1 + \Arg(b) db_2 + dH, \\
\lambda_2 & =  2 \pi db_2, \\
\lambda_3 & =  dr,
\end{aligned}
\end{equation}
where $H$ is a smooth function on $U$. Now let $\Lambda^* = \spn_{\Z}(\lambda_1, \lambda_2, \lambda_3)$. Notice that $\lambda_1$ blows up over $\Delta$. Therefore, $\Lambda^*$ has rank $2$ over $\Delta$. There is a fibre preserving symplectomorphism  between $T^*U/ \Lambda^*$ and $f^{-1}(U)- \Crit f_U$. The $S^1$-action corresponds to translation along $\lambda_2$. Therefore, in these coordinates the moment map is given by $b_2$. The space of monodromy invariant one-forms is spanned by $\lambda_2$ and $\lambda_3$. Now let us consider the function $\tilde \phi_{\epsilon}$ in these coordinates. First of all it is defined only on the slice $\{ b_2 = 0 \}$, so it depends on $b_1$ and $r$. Then Corollary \ref{bend_cor} implies that 
\[ d(\tilde \phi_{\epsilon}-m^+_e) = h(r) dr \]
for some function $h(r)$.  This section clearly extends over $\Delta$ to a section which avoids $\Crit f_U$.  Since unbounded components of $N_{\R} - V_{\epsilon}$ only contain parts of unbounded edges of $\Delta$ over which the fibration is smooth, the same argument shows that $\bar \sigma_{\phi}$ extends also here. 
\end{proof}

Finally we can prove the following

\begin{thm} \label{sections_thm}
Let $K = (k_{e})_{e \in \mathscr E}$ be a collection of integers representing a class $[\sigma_K]$ in the space of smooth sections $H^1(B, \iota_{\ast} \Lambda^*)$. Then this class has a smooth Lagrangian representative. 
\end{thm}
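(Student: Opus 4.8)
The plan is to build, directly from the support function $\phi_K$, an explicit smooth Lagrangian section of $f$ and then identify its class. Since $K=(k_e)_{e\in\mathscr E}$ represents a class in $H^1(B,\iota_\ast\Lambda^\ast)\cong\ker\Phi$ (Lemma~\ref{sect:cond}), Lemma~\ref{sect-bundles} produces a support function $\phi:=\phi_K$ on $P$ whose kinks are exactly $K$; by Lemma~\ref{subdiv_P} it extends to a continuous piecewise affine function on $N_\R$, which I smooth to $\tilde\phi_\epsilon=\phi\ast\mu_\epsilon$, with $\epsilon$ chosen small enough that conditions (i)--(iii) of \S\ref{lag_sections} hold. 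By Lemma~\ref{red_sect} the graph of $d\tilde\phi_\epsilon$ is then a smooth Lagrangian section $\bar\sigma_\phi\colon N_\R\to X(B)_{\rd}$ with $\bar\sigma_\phi(N_\R)\cap S=\emptyset$. What remains is (1) to lift $\bar\sigma_\phi$ to a smooth Lagrangian section $\sigma_\phi$ of $f\colon X(B)\to B$, and (2) to check $[\sigma_\phi]=[\sigma_K]$.

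For (1) I would avoid re-entering the reduced picture and instead work with the function $\Psi:=\tilde\phi_\epsilon\circ\pi\colon B\to\R$, where $\pi\colon B=\R\times N_\R\to N_\R$ is the projection. Then $d\Psi=\pi^\ast d\tilde\phi_\epsilon$ is a closed, single-valued $1$-form on $B$ with no $dt$-component, so over $B_\blacklozenge=B-\Delta_\blacklozenge$ its graph inside $T^\ast B_\blacklozenge/\Lambda^\ast=X_\blacklozenge(B)$ is a smooth Lagrangian section $\sigma_\phi$ of $f$ (closedness gives the Lagrangian condition, single-valuedness the section condition), and by construction its image in $\mu^{-1}(0)/S^1$ is exactly $\bar\sigma_\phi$. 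One then checks that $\sigma_\phi$ extends smoothly and Lagrangianly over all of $\Delta_\blacklozenge$. Over each bounded component of $N_\R-V_\epsilon$ --- which by (iii) contains the disc $D_{v^-}$ around a negative vertex --- Lemmas~\ref{outside_epsilon} and \ref{conv_aff} show that $d\Psi$ is a constant $M$-valued form, so $\sigma_\phi$ agrees there with the zero section, which extends by property (c) of \S\ref{lag_sections}. Over a neighbourhood of an edge $e$ of $\Delta$, Corollary~\ref{bend_cor} reduces $d\Psi$ modulo $\Lambda^\ast$ to $h(r)\,dr=h(r)\lambda_3$ in the generic-singular local model of \cite{CB-M}, \cite{RCB1} (this is precisely the computation in the proof of Lemma~\ref{red_sect}), and such a section extends over $\Delta$ while avoiding $\Crit f$. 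Gluing these local extensions yields the required global smooth Lagrangian section $\sigma_\phi\colon B\to X(B)$; we only need existence, so non-uniqueness of the extension is harmless.

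For (2), recall (Lemma~\ref{sect:cond} and \S\ref{sec:d3}) that the class of a section in $H^1(B,\iota_\ast\Lambda^\ast)\cong\ker\Phi$ is determined by its twisting numbers, i.e. by how many times the reduced section winds around the monodromy-invariant cycle $n_e$ as it crosses each bounded edge $e$ of $\Delta$. Near such an edge, Lemma~\ref{phi_bend} and Corollary~\ref{bend_cor} say that $[d(\tilde\phi_\epsilon-m^+_e)_x]$ stays inside the circle $\R n_e/\Z n_e$, equals $[0]$ on the $P^+_e$-side, and equals $[m^-_e-m^+_e]=[-k_e n_e]=[0]$ on the $P^-_e$-side; since $m^+_e-m^-_e=k_e n_e$ by \eqref{bundles:cond}, the path traced out between these sides winds around $n_e$ with multiplicity $k_e$, matching the convention of \eqref{sec:edge}. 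Hence the twisting numbers of $\sigma_\phi$ are exactly $K$, so $[\sigma_\phi]=[\sigma_K]$, which completes the argument.

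I expect the genuinely delicate point to be the extension of the Lagrangian section across the discriminant locus --- passing from the open affine model $T^\ast B_\blacklozenge/\Lambda^\ast$ to a smooth section of the compactified Lagrangian fibration near the edges of $\Delta$ --- which forces one into the explicit local models of \cite{CB-M}. That difficulty, however, has already been isolated and resolved in Lemma~\ref{red_sect}; modulo invoking it, the new content of the theorem reduces to the essentially routine tasks of realizing the lift via the pulled-back function $\Psi$ and of bookkeeping the twisting numbers against the kinks via Lemma~\ref{phi_bend}.
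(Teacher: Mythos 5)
Your proposal follows the paper's overall strategy exactly: pass from $K$ to the support function $\phi_K$, smooth by convolution to get $\tilde\phi_\epsilon$, take the graph of its differential, control the extension across $\Delta$ via conditions (i)--(iii), Corollary~\ref{bend_cor} and the generic-singular local model (Lemma~\ref{red_sect}), and finally match twisting numbers against kinks using Lemma~\ref{phi_bend}. The one step where you genuinely diverge is the passage from the reduced section to a full section of $f$. The paper chooses a lift of $\bar\sigma_\phi$ to a $2$-dimensional isotropic submanifold of $\mu^{-1}(0)$ and then invokes the isotropic neighborhood theorem to extend it to a Lagrangian section over all of $B$, with the details left implicit (``it is easy to show''). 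You instead construct the full section in one stroke as the graph of $d\Psi$ with $\Psi=\tilde\phi_\epsilon\circ\pi$, which is automatically a Lagrangian section of $X_\blacklozenge(B)\to B_\blacklozenge$ since $d\Psi$ is exact and single-valued, and whose restriction to the slice $\{t=0\}$ is exactly a lift of $\bar\sigma_\phi$. This is a cleaner and more explicit route: it replaces the soft neighbourhood-theorem argument with a concrete formula, and the extension across $\Delta_\blacklozenge$ then reduces to the same local model computation already carried out in Lemma~\ref{red_sect}, together with property (c) near the discs $D_{v^-}$ where $d\Psi$ is a constant integral form. Both approaches carry the same essential content; yours just packages the lift and the extension into a single explicitly-defined object, at the cost of having to observe that the computation $d(\tilde\phi_\epsilon-m^+_e)=h(r)\,dr$ from Lemma~\ref{red_sect}, being independent of the moment-map coordinate $b_2$, applies verbatim to $d(\Psi-m^+_e)$ on all of the local chart $U$ and not just on the slice $\{b_2=0\}$.
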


\begin{proof} Using a smooth approximation $\tilde \phi_{\epsilon}$ of the support function $\phi$ associated to $K$, we have constructed in Lemma \ref{red_sect} a section $\bar \sigma_{\phi}: N_{\R} \rightarrow X(B)_{\rd}$ of the reduced fibration $\bar f$. Any lift $\sigma_{\phi}: N_{\R} \rightarrow \mu^{-1}(0)$ of $\bar \sigma_{\phi}$ to the $S^1$-fibres of the $S^1$-action defines a $2$-dimensional isotropic submanifold of $X(B)$. Using the isotropic neighborhood theorem it is easy to show that this lift can be extended to a genuine section $\sigma_{\phi}: \R \times N_{\R} \rightarrow X(B)$. 

We need to show that $\sigma_{\phi}$ is in the class represented by $K$. Let $C$ be some bounded component of $N_{\R} - \Delta$ and let $e$ be an edge in the boundary of $C$, whose vertices are $p^+ \in P^+_e$ and $p^- \in P^-_e$. By construction $m^+_e$, $m^-_e$, $k_e$ and $n_e$ satisfy (\ref{bundles:cond}). Lemmas \ref{outside_epsilon} and \ref{phi_bend} imply that as $x$ moves along a curve in $U_e$ from a point in $W_{p^-}$ to a point in $W_{p^+}$, $(d\tilde \phi_{\epsilon})_x$   moves on a straight line with slope $n_e$ from the point $m^-_e$ to the point $m^+_e$. In other words, along this curve, the section $\bar \sigma_{\phi}$ winds $k_e$ times around the cycle $n_e$. 
\end{proof}
\subsection{The mirror correspondence}
We have seen that line bundles on $\check X$ and Lagrangian sections on $X$ are both characterized by an $(m-1)$-tuple of numbers $K = (k_1, \ldots , k_{m-1})$ in the $2$-dimensional case or by a collection $K= (k_e)_{e \in \mathscr E}$ in $\ker \Phi$ in the $3$-dimensional case. We conjecture the following:
\begin{con} \label{hms:dim2} The Lagrangian sections generate a suitable derived Fukaya category such that the correspondence which maps the sections $\sigma_K$ to the line bundle $\mathscr L_{-K}$ induces an embedding of this category in $\DC(\check X)$.
\end{con}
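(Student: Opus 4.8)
The plan is to follow the standard template for homological mirror symmetry statements of SYZ type: fix dg-enhancements and generating sets on both sides, build an $A_\infty$-functor matching the distinguished objects, prove it is cohomologically full and faithful, and deduce that the triangulated category it generates embeds into $\DC(\check X)$. On the coherent side one fixes a dg-enhancement of $\DC(\check X)$ and records the toric input that the line bundles $\{\mathscr L_K\}_{K\in\ker\Phi}$ generate it: $\check X$ is obtained from the smooth toric Calabi--Yau $V_\Sigma$ by removing a principal divisor, line bundles generate $\DC(V_\Sigma)$, and restriction is essentially surjective. On the symplectic side the ``suitable derived Fukaya category'' should be taken to be a dg model of the wrapped Fukaya category of the open manifold $X$ --- the category used by Chan--Ueda \cite{chan:ueda} in dimension $2$ --- with objects the Lagrangian sections $\sigma_K$ produced by Theorem~\ref{sections_thm}, equipped with gradings and spin structures. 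A preliminary step is to verify that these $\sigma_K$ are admissible objects of the wrapped category and to fix, once and for all, Hamiltonian perturbations making the relevant tuples pairwise transverse, exploiting that $\sigma_K$ is the graph of $d\tilde\phi_{\epsilon}$ for the support function $\phi_K$ over the reduced space away from $\Delta$.

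The heart of the argument is the construction and full faithfulness of the functor $\mathcal F$ sending $\sigma_K\mapsto\mathscr L_{-K}$. I would build $\mathcal F$ by family Floer theory (compare Abouzaid's parallel construction for toric varieties \cite{Ab_HMS}): since $f\colon X\to\R^3$ is a Lagrangian torus fibration admitting the section $\sigma_0$, each $\sigma_K$ determines a rank-one unitary local system over the base, which --- after incorporating the holomorphic-disk corrections --- defines a coherent sheaf on the algebraic mirror, and one identifies this sheaf with $\mathscr L_{-K}$ by comparing its restriction to the torus fibres with the monomial data encoding the support function $-\phi_K$. One must then show that $\mathcal F$ is fully faithful on cohomology, i.e.
\[
HW^*(\sigma_K,\sigma_{K'})\;\cong\;\bigoplus_i\Ext^i_{\check X}(\mathscr L_{-K},\mathscr L_{-K'})\;=\;\bigoplus_i H^i(\check X,\mathscr L_{K-K'})
\]
as graded vector spaces, and moreover that every $A_\infty$ operation on the left corresponds under $\mathcal F$ to the Yoneda product on the right. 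For the vector-space isomorphism the point is that the generators of the wrapped complex $CW^*(\sigma_K,\sigma_{K'})$ are indexed by lattice points governed by the difference of support functions $\phi_{K'}-\phi_K$, exactly as in the combinatorial (polytope and support-function) description of $H^*(\check X,\mathscr L_{K-K'})$, with the Maslov index of a generator computing its cohomological degree; for degree reasons --- as the authors already observe for compact sections in Corollary~\ref{convex:intersection} and Remark~\ref{floer} --- the Floer differential, and many higher products, vanish, reducing the comparison to a lattice-point count together with a finite product computation.

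Matching the full $A_\infty$-structure is the decisive point. One counts rigid holomorphic polygons with boundary on the $\sigma_{K_i}$; for an almost complex structure adapted to the torus fibration these counts should localize to tropical data --- such a polygon projects to a tropical polygon in $\R^3$ and contributes a single monomial --- so that the structure constants of the $A_\infty$ operations reproduce the structure constants of multiplication in the Cox ring of $\check X$, equivalently of the \v{C}ech complexes computing $H^*(\check X,-)$. This is where the combinatorics developed in the paper --- the kinks $k_e$, the map $\Phi$, and the subdivision of $M_\R$ dual to that of $P$ --- feeds in directly. Having a cohomologically fully faithful $A_\infty$-functor whose image contains a generating set of $\DC(\check X)$, one concludes by defining the ``suitable derived Fukaya category'' to be the triangulated subcategory split-generated by the $\sigma_K$ (to be enlarged later by the Lagrangian spheres) and observing that $\mathcal F$ restricts to a fully faithful embedding of it into $\DC(\check X)$; it need not be essentially surjective, which is why only an embedding is claimed.

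The main obstacle is exactly the content of the previous two paragraphs: rigorously setting up the wrapped Fukaya category of the noncompact $X$ with the noncompact sections, controlling the moduli spaces of holomorphic polygons (Gromov compactness, transversality, coherent orientations, grading conventions) over the fibration $f$ with its generic, positive and negative singular fibres from \cite{CB-M} --- in particular near the negative vertices of $\Delta$, where $f$ is only piecewise smooth and the discriminant has mixed codimension, as recorded in Theorem~\ref{lagfib_thm} --- and proving the tropical localization of the disk counts. In dimension $2$ all of this is carried out by Chan \cite{chan:an} and Chan--Ueda \cite{chan:ueda}; in dimension $3$ the singular fibres and the richer geometry of the tropical curve $\Gamma$ make the disk-counting analysis substantially harder, which is why the statement is only a conjecture here.
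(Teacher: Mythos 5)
The statement you were asked to address is Conjecture~\ref{hms:dim2}, which the paper itself does not prove. The authors explicitly record that it is known only in dimension two, by Chan--Ueda \cite{chan:ueda} (with the companion result of Chan \cite{chan:an} for the Lagrangian spheres), and they offer no argument in dimension three. There is therefore no ``paper's own proof'' to compare your proposal against. What you have written is a strategy, and you acknowledge as much in your final paragraph; it is not an executed proof.

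As a strategy it is accurate and consistent with the framework the authors and their references point to: take the ``suitable derived Fukaya category'' to be a dg model of the wrapped Fukaya category of the open $X$ with the sections $\sigma_K$ of Theorem~\ref{sections_thm} as objects; build a functor to $\DC(\check X)$ by family Floer theory in the spirit of Abouzaid \cite{Ab_HMS}; match $HW^*(\sigma_K,\sigma_{K'})$ with $H^*(\check X,\mathscr L_{K-K'})$ via the lattice-point combinatorics of the support function $\phi_{K'-K}$; and recover the $A_\infty$-structure constants by tropical localization of holomorphic-polygon counts. You also correctly identify the obstructions: setting up the wrapped category over the singular Lagrangian fibration of \cite{CB-M} (in particular near the negative vertices of $\Delta$, where $f$ is only piecewise smooth and the discriminant is fattened as in Theorem~\ref{lagfib_thm}), controlling moduli of polygons (compactness, transversality, orientations, gradings), and proving the tropical localization statement. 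None of these steps is carried out; each is left at the level of ``would'' or ``should''. That is precisely why the paper states this as a conjecture, and your write-up should be read as an informed plan of attack rather than as a proof.
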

In the two-dimensional case, this conjecture has been proved by Chan and Ueda \cite{chan:ueda}, where the morphisms in the Fukaya category are defined using a wrapped Floer homology. 

Here are some examples. 

\begin{ex} In the two dimensional case (Examples \ref{fib:d2} and \S \ref{sec:d2}), let $m=2$, then $\check X$ is the total space of $\mathcal{O}_{\PP^1}(-2)$ with only one compact divisor $D$. Then the Picard group of $\check X$ is $\Z$. The zero section corresponds to structure sheaf $\mathcal{O}_{\check X}$. Line bundles of the type $\mathcal{O}_{\check X}(l D)$ correspond to sections~$\sigma_{2l}$ . 
\end{ex}

\begin{ex} \label{a2d:bundls}
Consider $P$ as in Example~\ref{triangle}, where $\check X$ is an open set in the total space of $\mathcal{O}_{\PP^2}(-3)$. Then $\operatorname{Pic}(\check X)$ is $\Z$ as well as the group of sections (cfr. Example~\ref{p2:sections}). Let $e_1$, $e_2$ and $e_3$ be the integral tangent vectors to the bounded edges of $\Gamma$ chosen as in Example~\ref{p2:sections}. A support function $\phi$ on $P$, up to the sum of a linear function, is uniquely determined by an integer $k$. Let $\mathcal L_{k}$ be the corresponding line bundle on $\check X$. The corresponding section of $X$ is determined by twisting numbers $k_{e_1} = k_{e_2} = k_{e_3} = - k$. Letting $D \cong \PP^2$ be the only compact toric divisor of $\check X$, the line bundles on $\check X$ of the type $\mathcal{O}_{\check X}(l D)$ correspond to $k = -3 l$. 
\end{ex}

\begin{ex} \label{a2d:lineb}
Let us consider the case of the smoothing of $A_{2d}$-sin\-gu\-la\-ri\-ties of Example~\ref{a2d:mirror}. Here the polytope $P$ is given by (\ref{a2d:polytope}) and the smoothing $X$ is given by equation (\ref{a2d:smoothing}). In this case a simplicial subdivision of $P$ has $d-1$ interior points, which we denote $c_j$. Hence $\check X$ has $d-1$ compact toric divisors $D_1, \ldots, D_{d-1}$. Notice that the subdivision can be chosen so that 
$D_j$ is isomorphic to the one point blowup of the Hirzebruch surface $\Sigma_j$, e.g., as in Figure \ref{a2d:trop} for the case $d=3$. Using Lemmas~\ref{sect-bundles} and \ref{sect:cond} one can show that the group of sections on $X$ and $\operatorname{Pic}(\check X)$ are isomorphic to $\Z^{3d}$. Let $C_1, \ldots, C_{d-1}$ be the bounded components of $M_{\R}- \Gamma$. Label the edges of $C_j$ by $e_{j1}, \ldots, e_{j5}$ as in Figure~\ref{sections_a2d} and orient them in anticlockwise order. Clearly we have $e_{j1} = e_{(j-1)4}$ for $j=2, \ldots d-1$. 
\begin{figure}[!ht] 
\begin{center}
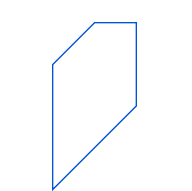\\
\vspace{1ex}\refstepcounter{figure}Figure \arabic{figure}.
\label{sections_a2d}
\end{center}
\end{figure}
Bundles on $\check X$ of the type $\mathcal{O}_{\check X}(lD_j)$ correspond to the support function $\phi$ on $P$ such that $\phi(c_j) = l$ and $\phi$ is zero on all other vertices of the subdivision. A calculation shows that these line bundles have the following intersection numbers:
\begin{gather*} 
k_{j1} = (j-2) l, \ \ k_{j2} = k_{j3} = - l, \ \ k_{j4} = -(j+1)l, \ \ k_{j5} = -2l \\
 k_{(j-1)3} = k_{(j-1)5} = l, \ \ \ k_{(j+1)2} = k_{(j+1)5} = l. 
\end{gather*}
All other intersection numbers are zero.
\end{ex}

\subsection{Translation by a section}  \label{ts}
Notice that the sections constructed in Theorem \ref{sections_thm} have the property that they all coincide with the zero section in a neighborhood of every negative vertex. In Theorem 1.2 of \cite{CBMS} it was proved that a Lagrangian section $\sigma$ with this property induces a unique fibre-preserving symplectomorphism $T_{\sigma}: X \rightarrow X$ which maps the zero section to $\sigma$. We call this symplectomorphism ``translation by a section'', since it behaves as a translation on smooth fibres, in particular we have that 
\[ T_{\sigma_K} \sigma_{K'} = \sigma_{K+K'} \]
for all pairs of twisting numbers $K$ and  $K'$.  

It has been conjectured that translation by a section should be mirror to tensoring by the corresponding line bundle. 
Theorem 1.6 of \cite{CBMS} gives some evidence of this by showing that there is only one autoequivalence of the bounded derived category of coherent sheaves which preserves skyscraper sheaves (mirror to fibres) and which sends the structure sheaf to a line bundle. 

\section{Lagrangian spheres}

In this section we will construct families of Lagrangian spheres in $X$. 
As in the case of sections, we first give a topological construction. In this case it is convenient to use the complex tropical model  (\S \ref{topological}) of the fibration $f: X \rightarrow \R \times  M_{\R}$.  Let $C$ 
be a bounded connected component of $M_\R - \Gamma$ and let $\lambda: \bar C \rightarrow M_{\R} \times (M_{\R} / M)$ be a section of $\Log$ such that $\lambda(\partial\bar C) \subseteq S$. Then we define
\begin{equation} \label{lift_sphere}
 L_{\lambda} = \alpha^{-1}(\lambda(\bar C)) 
\end{equation}
where $\alpha: X \rightarrow Y$ is the projection over the $S^1$ quotient and we think of $M_{\R} \times (M_{\R} / M)$ as  $\{ 0 \} \times M_{\R} \times (M_{\R} / M) \subset \R \times M_{\R} \times (M_{\R} / M) = Y$. Since $\bar C$ is homeomorphic to an $(n-1)$-ball and the fibres of $\alpha|_{\lambda(\bar C)}$ are circles collapsing to points at the boundary of $\bar C$, we have that $L_{\lambda}$ is homeomorphic to $S^{n}$. This construction was also suggested by Chan in \cite{chan:an}. 

\subsection{Spheres in dimension $2$} \label{spheres_d2} We look at the case where $\check X$ is an open set in the total space of $\mathcal{O}_{\PP^1}(-2)$ and we use the notation of \S\ref{sec:d2}. Let $\bar C = [0,1]$ and for some $k \in \Z$ define
\[ \lambda_k(s) =  \left( s, e^{\left(2k s + \frac{1}{2} \right) \pi i} \right) \]
Then we have that $\lambda(\partial\bar C) \subseteq S$ and $L_{\lambda_k} \cong S^2$. We simplify notation by renaming $L_{\lambda_k}$ by $L_{k}$ and we call $k$ the {\it twisting number} of the sphere. If we view $X$ as in equation (\ref{mirror}) and deform the polynomial $h_t$ so that it acquires a double root, then $X$ becomes singular with an ordinary double point. As a consequence, the singular points of the $S^1$-action come to coincide and the sphere $L_0$ vanishes, i.e., it represents a vanishing cycle. The other spheres $L_k$ are twists of the vanishing cycle. It is not difficult to show that if $T_{\sigma_\ell}$ denotes translation by the a section $\sigma_{\ell}$ (see \S \ref{ts}), then we have
\begin{equation} \label{transl_d2} 
T_{\sigma_{\ell}}(L_k)=L_{k+\ell}. 
\end{equation}
We also point out that this construction automatically gives Lagrangian spheres. 

\subsection{Spheres in dimension $3$} \label{vc:d3} We use the same notation as in \S\ref{topological}. We fix a bounded component $C$ of $M_{\R}- \Gamma$.  Let $v_C$ be the unique vertex in the decomposition of $P$ corresponding to $C$. The tangent wedges to the simplices in the decomposition of $P$ which contain $v_C$ form a complete fan $\Sigma_C$ in $N_{\R}$. An edge $e \subset \partial\bar C$ corresponds to an edge $\check e$ which emanates from $v_C$.

\begin{defi} \label{semi:integral} For every edge $\check e$ emanating from $v_C$, let $\epsilon_{\check e} \in \{1, -1 \}$ be such that $\epsilon_{\check e} \, n_{\check e}$ points outward from $v_C$. A continuous function $\vartheta: |\Sigma_C| \rightarrow \R$ is said to be a {\it semi-integral support function} if its restriction to every maximal cone of $\Sigma_C$ is linear and for every edge $\check e$ emanating from $v_C$ we have 
\begin{equation} \label{semi-int}
 \vartheta(\epsilon_{\check e} \, n_{\check e})= \frac{1}{2} \ \mod \Z. 
\end{equation}
\end{defi}

Recall that every bounded edge $e$ of $\Gamma$ bounding a connected component $C$ of $M_\R - \Gamma$ represents a $1$-dimensional toric stratum $\PP^1_{e}$ of $\check X$ which is also a subvariety of the divisor $D_C$ corresponding to $C$. 

\begin{defi} \label{twist:spheres}
A set of {\it twisting numbers} for the bounded component $C$ is a collection $\ell := (\ell_e)_{e \subset \partial\bar C}$ given by assigning to every edge $e$ bounding $C$ an integer $\ell_e$ such that

\begin{itemize}
\item[a)] $\ell_e$ has the same parity as the self-intersection number of $\PP^1_{e}$ inside $D_C$;
\item[b)] the numbers $\ell_e$ satisfy the following balancing condition
\begin{equation} \label{vc:balance}
\sum_{e \subset \partial\bar C} \epsilon_e \, \ell_e \, n_e = 0,
\end{equation}
 where $\epsilon_e=1$ if $n_e$ coincides with a fixed orientation of $\partial\bar C$, otherwise $\epsilon_e =-1$.
\end{itemize}
\end{defi}

To any semi-integral support function $\vartheta$, we can associate its set of {\it kinks} as follows. For every edge $\check e$ emanating from $v_C$, denote by $\tilde \theta^+_e$ and $\tilde \theta^-_e$ the restriction of $\vartheta$ to the tangent wedges of $P^+_e$ and $P^-_e$ respectively. Then there exists an integer $\ell_e$, the kink of $\vartheta$ at $\check e$, such that 
\begin{equation} \label{spheres_kinks}
     \tilde \theta^+_e -\tilde \theta^-_e = \frac{\ell_e}{2} n_e.
\end{equation}
We have the following:
\begin{prop} \label{kinks_to_fun} The kinks $(\ell_e)_{e \subset \partial\bar C}$ of a semi-integral support function on $\Sigma_C$ define a set of twisting numbers for $C$. Moreover any set of twisting numbers $(\ell_e)_{e \subset \partial\bar C}$ determines a semi-integral support function on $\Sigma_C$ which is unique up to adding an integral linear function. 
\end{prop}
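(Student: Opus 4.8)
The plan is to reduce the statement to the theory of ordinary integral support functions on the complete fan $\Sigma_C$ (Lemma~\ref{sect-bundles} and the discussion around~(\ref{bundles:cond})--(\ref{rel_inters})), by means of the observation that a semi-integral support function is, after subtracting the canonical one, a half of an integral support function. Concretely, I would first establish the following dictionary: a continuous $\vartheta$ which is linear on the maximal cones of $\Sigma_C$ is semi-integral in the sense of~(\ref{semi-int}) if and only if $2\vartheta$ is an \emph{integral} support function on $\Sigma_C$ whose value on every primitive ray generator $u_\rho$ is odd. Indeed, if $\vartheta$ is semi-integral then on a maximal cone $\sigma$, spanned by adjacent primitive generators $u,u'$, the linear functional $m_\sigma\in M_{\R}$ representing $\vartheta$ satisfies $\inn{u}{m_\sigma},\inn{u'}{m_\sigma}\in\tfrac12+\Z$ by~(\ref{semi-int}); since the subdivision of $P$ is smooth, $\{u,u'\}$ is a $\Z$-basis of $N$, so $2m_\sigma\in M$ and $2\vartheta$ is integral with odd values on the generators. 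The converse is immediate. Now let $\psi_{K_C}$ be the support function of the canonical bundle $K_{D_C}$ of $D_C$, which by~(\ref{function:divisor}) (and $K_{D_C}=-\sum_\rho D_\rho$ on the smooth toric surface $D_C$) has value $-1$ on every $u_\rho$. Then $2\vartheta-\psi_{K_C}$ is an integral support function with even values on all generators, so, dividing cone by cone and again using smoothness of $\Sigma_C$, we get $2\vartheta-\psi_{K_C}=2\psi$ for a unique integral support function $\psi$ on $\Sigma_C$. Thus $\vartheta=\tfrac12\psi_{K_C}+\psi$, and comparing kinks along an edge $\check e$ emanating from $v_C$ (via~(\ref{spheres_kinks}) and the convention of Definition~\ref{kinks}) gives $\ell_e=\kappa_e+2\mu_e$, where $\kappa_e,\mu_e\in\Z$ are the kinks of $\psi_{K_C}$ and of $\psi$ along $\check e$.

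With this dictionary the forward direction is short. Condition~(b) of Definition~\ref{twist:spheres}, that is~(\ref{vc:balance}), just says that the linear parts $m_\sigma$ of $\vartheta$ telescope back to themselves as one goes once around $v_C$, i.e.\ that $\vartheta$ is single-valued; this is the verbatim analogue of~(\ref{rel_inters}), using that by the duality of the two subdivisions consecutive edges of $\partial\bar C$ correspond to consecutive rays of $\Sigma_C$. For condition~(a), note that $\kappa_e\equiv K_{D_C}\cdot\PP^1_e\pmod 2$ (the analogue, for the toric surface $D_C$, of~(\ref{dim2_inters}), by which the kink of an integral support function along $\check e$ is the intersection number of the corresponding line bundle with $\PP^1_e$), and adjunction for the smooth rational curve $\PP^1_e\subset D_C$ gives $K_{D_C}\cdot\PP^1_e=-2-(\PP^1_e)^2$; so $\kappa_e\equiv(\PP^1_e)^2\pmod 2$ and therefore $\ell_e=\kappa_e+2\mu_e$ has the parity of $(\PP^1_e)^2$. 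Hence $(\ell_e)_{e\subseteq\partial\bar C}$ is a set of twisting numbers for $C$.

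Conversely, given twisting numbers $(\ell_e)_{e\subseteq\partial\bar C}$, set $\mu_e:=\tfrac12(\ell_e-\kappa_e)$; by~(a) together with $\kappa_e\equiv(\PP^1_e)^2\pmod 2$ this is an integer, and $(\mu_e)$ satisfies the balancing relation because $(\ell_e)$ does by~(b) and $(\kappa_e)$ does, being the set of kinks of the genuine support function $\psi_{K_C}$ on $\Sigma_C$. Then, exactly as in the proof of Lemma~\ref{sect-bundles} applied to the complete fan $\Sigma_C$, there is an integral support function $\psi$ on $\Sigma_C$ with kinks $(\mu_e)$, unique up to an integral linear function. I would then take $\vartheta:=\tfrac12\psi_{K_C}+\psi$: it is semi-integral, since $\vartheta(u_\rho)=-\tfrac12+\psi(u_\rho)\in\tfrac12+\Z$ for every generator, and, comparing with~(\ref{spheres_kinks}), its kink along $\check e$ is $\kappa_e+2\mu_e=\ell_e$, so its twisting numbers are exactly $(\ell_e)$. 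Finally, if $\vartheta$ and $\vartheta'$ are two semi-integral support functions with the same kinks, then $\vartheta-\tfrac12\psi_{K_C}$ and $\vartheta'-\tfrac12\psi_{K_C}$ are integral support functions with the same kinks, hence differ by an integral linear function, and so do $\vartheta$ and $\vartheta'$; this gives the asserted uniqueness.

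I expect the step requiring the most care to be the dictionary of the first paragraph: recognizing that semi-integrality is precisely the condition ``$2\vartheta\equiv\psi_{K_C}\pmod 2$'' is what collapses the parity statement to a one-line consequence of adjunction, but making this rigorous uses the smoothness of $\Sigma_C$ at several points (to keep both $2\vartheta$ and $\tfrac12(2\vartheta-\psi_{K_C})$ integral) and a careful bookkeeping of the orientation signs $\epsilon_e$, the chosen primitive vectors $n_e$ and $n_{\check e}$, and the labelling $P^\pm_e$, so that the kink identity~(\ref{spheres_kinks}) is matched with the right signs throughout.
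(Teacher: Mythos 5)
Your proof is correct, and it takes a genuinely different route from the paper's. The paper proves the parity statement by a hands-on computation: it picks a ray $\check e$, writes $n_{\check e^-} = -n_{\check e^+} - b_e n_{\check e}$ (equation~(\ref{self:int})) and $\inn{n_e}{n_{\check e^+}} = \pm 1$, and then checks directly via~(\ref{hmz}) that the half-integrality condition~(\ref{halfmodz}) propagates from $P^+_e$ to $P^-_e$ exactly when $\ell_e \equiv b_e \pmod 2$; the converse is handled by the inductive construction $\tilde\theta_{j+1} = \tfrac12\epsilon_{e_j}\ell_{e_j}n_{e_j} + \tilde\theta_j$. You instead package the whole thing as a single structural fact: semi-integrality is equivalent to $\vartheta - \tfrac12\psi_{K_C}$ being an honest integral support function $\psi$ on $\Sigma_C$ (using smoothness of the fan to divide by $2$ cone by cone), after which the parity claim drops out of $\kappa_e = K_{D_C}\cdot\PP^1_e = -2 - b_e$ and $\ell_e = \kappa_e + 2\mu_e$, and existence/uniqueness reduces to the classical classification of integral support functions. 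What your approach buys is that it makes explicit, already at this stage, the decomposition $\vartheta = \tfrac12\psi_{K_C} + \psi$ that the paper only introduces later as equation~(\ref{psi:teta}) (with the opposite sign on $\psi$, which is immaterial), so the mirror dictionary of Conjecture~\ref{hms:spheres} and the cohomology computation of Theorem~\ref{coho_winding} become corollaries of the same bookkeeping; what the paper's direct verification buys is that it stays entirely inside the combinatorics of $\Sigma_C$ without invoking adjunction or the identification of $\psi_{K_C}$ with the canonical bundle. One small point to tighten: you assert $\kappa_e \equiv K_{D_C}\cdot\PP^1_e \pmod 2$, but in fact $\kappa_e = K_{D_C}\cdot\PP^1_e$ exactly (the two-dimensional analogue of the paper's remark following~(\ref{dim2_inters})); stating the equality avoids any worry that a hidden sign in the convention could flip the parity.
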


\begin{proof} Assume for simplicity that for all edges $\check e$ emanating from $v_C$, $n_{\check e}$ points outward from $v_C$. Fix an edge $\check e$ emanating from $v_C$. Let $\check e^+$ and $\check e^-$ be the edges of $P^+_e$ and $P^-_e$ respectively, different from $\check e$, which emanate from $v_C$.  Denote by $b_e$ the self-intersection number of $\PP^1_e$  inside $D_C$. Then, from toric geometry, it follows that 
\begin{equation} \label{self:int}
   n_{\check e^-} = - n_{\check e^+} - b_e  n_{\check e}. 
\end{equation}
Moreover, since $\{n_{\check e}, n_{\check e^+} \}$ form a basis of $N$ and $n_e$ is primitive, we have
\begin{equation} \label{ne:+}
  \inn{n_e}{n_{\check e^+}} = \pm 1. 
\end{equation}
Now let $\tilde \theta^+_e$ and $\tilde \theta^-_e$ be linear functions such that (\ref{spheres_kinks}) holds for some integer $\ell_e$. Assume also that $\tilde \theta^+_e$ satisfies 
\begin{equation} \label{halfmodz}
    \inn{\tilde \theta^+_e}{n_{\check e^+}} =  \frac{1}{2} \mod \Z \ \ \ \text{and} \ \ \  \inn{\tilde \theta^+_e}{n_{\check e}} =  \frac{1}{2} \mod \Z.
\end{equation}
Then we claim that $\ell_e$ and $b_e$ have the same parity if and only if $\tilde \theta_e^-$ satisfies 
 \begin{equation} \label{halfmodz-}
    \inn{\tilde \theta^-_e}{n_{\check e^-}} =  \frac{1}{2} \mod \Z \ \ \ \text{and} \ \ \  \inn{\tilde \theta^-_e}{n_{\check e}} =  \frac{1}{2} \mod \Z.
\end{equation}
The second equality of (\ref{halfmodz-}) follows directly from the second equality of (\ref{halfmodz}) and from (\ref{spheres_kinks}). From (\ref{spheres_kinks}), (\ref{self:int}) and (\ref{ne:+}) it follows that 
\begin{align} \label{hmz}
    \inn{\tilde \theta^-_e}{n_{\check e^-}} & = \inn{\tilde \theta^+_e - \frac{\ell_e}{2} n_e}{- n_{\check e^+}- b_e n_{\check e}}  \\
   \notag & = - \inn{\tilde \theta^+_e}{n_{\check e^+}} - b_e \inn{\tilde \theta^+_e}{n_{\check e}} \pm \frac{\ell_e}{2}.
   \end{align} 
If $b_e$ and $\ell_e$ have the same parity, then this equality and equation (\ref{halfmodz}) imply (\ref{halfmodz-}). On the other hand, if (\ref{halfmodz-}) holds, then this equality and equation (\ref{halfmodz}) imply that $\ell_e$ and $b_e$ have the same parity.

In particular we have proved that the kinks of a semi-integral support function must satisfy (a) of Definition \ref{twist:spheres}. Point (b) follows from (\ref{spheres_kinks}). 

\begin{figure}[!ht] 
\begin{center}
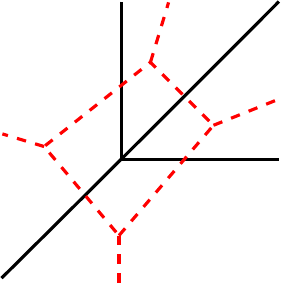
\caption{The cyclic indexing of vertices and edges of $\partial\bar C$.} \label{vc_fan}
\end{center}
\end{figure}

Given a set of twisting numbers $(\ell_e)_{e \subset C}$, we construct a semi-integral support function as follows. 
Let us fix a cyclic, anti-clockwise ordering of the maximal cones of $\Sigma_C$. Let us label the $j$-th maximal cone by $\nu_j$, with $j \in {1, \ldots , r}$ and by $\check e_j$ the edge in the common intersection of $\nu_j$ and $\nu_{j+1}$ (see Figure \ref{vc_fan}).  Let $\tilde \theta_1 \in M_{\R}$ be a linear function such that 
 \begin{equation} \label{kinks:fun}
    \inn{\tilde \theta_1}{n_{\check e_r}} =  \frac{1}{2} \mod \Z \ \ \ \text{and} \ \ \  \inn{\tilde \theta_1}{n_{\check e_1}} =  \frac{1}{2} \mod \Z.
 \end{equation}
Now define inductively
\begin{equation} \label{ell_to_psi}
 \tilde \theta_{j+1} = \frac{1}{2} \epsilon_{e_j} \ell_{e_j} n_{e_{j}} + \tilde \theta_j,
\end{equation}
where $\epsilon_{e_j} = 1$ if $\nu_{j+1}$ is the tangent wedge to $P^+_{e_j}$ and $\nu_{j}$ is the tangent wedge to $P^-_{e_j}$, otherwise $\epsilon_{e_j} = -1$. By the balancing condition (\ref{vc:balance}), $\tilde \theta_{r+1} = \tilde \theta_1$.   Define $\vartheta: |\Sigma_C| \rightarrow \R$ by
\[ \vartheta|_{ \nu_{j}} = \tilde \theta_j.\]
By the argument above, $\vartheta$ is a semi-integral support function and it is unique up to adding a linear function. 
\end{proof}

Finally, given a set of twisting numbers $(\ell_e)_{e \subset \partial\bar C}$ for $C$, we explain how to construct a sphere. Here we use the complex tropical model of the fibration (\S \ref{topological}). First let us define $\lambda: \partial\bar C \rightarrow M_{\R} \times (M_{\R} / M)$ so that $\lambda(\partial\bar C) \subset S$. Let $\check e$, $\check e^+$ and $\check e^-$ be edges emanating from $v_C$ defined as in the proof of the previous proposition and let $e$, $e^+$ and $e^-$ be the dual edges in $\partial\bar C$. 
Let $p^+$ and $p^-$ be the vertices of $\partial\bar C$ which are dual to $P^+_e$ and $P^-_e$ respectively. Clearly $p^+ = e \cap e^+$ and $p^- = e \cap e^-$. See Figure \ref{p+and-}.

\begin{figure}[!ht] 
\begin{center}
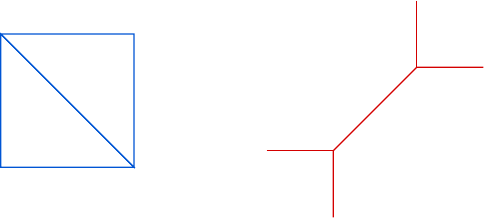\\
\vspace{1ex}\refstepcounter{figure}Figure \arabic{figure}.\label{p+and-}
\end{center}
\end{figure}

Recall equation (\ref{cylinder_cycle}) defining the cycles $\delta_e$ used to construct $S$. Consider the unique points in $M_{\R}/M$ given by
\begin{equation} \label{thetapi}
 \theta_{p^+} = \delta_{e} \cap \delta_{e^+} \ \ \text{and} \ \ \theta_{p^-} = \delta_{e} \cap \delta_{e^-}.
\end{equation}
Now define $\lambda|_{e}$ as follows  
\begin{equation} \label{vc:bound}
  \lambda|_{e} ((1-t)p^+ + tp^-) = \left( (1-t)p^+ + tp^-, -\left [\frac{1}{2} \ell_e n_e t \right] + \theta_{p^+} \right). 
\end{equation} 
Notice that $\lambda|_{e}(e) \subseteq S_e$ (cfr. (\ref{cylinder:e})). Moreover, it follows from \eqref{self:int} and the fact that $\ell_e$ and $b_e$
have the same parity that
\[ \lambda|_{e}(p^-) = (p^-, \theta_{p^-}). \]
In particular, we have that $\lambda(\partial\bar C) \subset S$.

The balancing condition (\ref{vc:balance}) ensures that $\lambda$ can be extended to a section $\lambda: \bar C \rightarrow M_{\R} \times (M_{\R} / M)$. 
Hence for every set of twisting numbers $\ell = (\ell_e)_{e \subset \partial\bar C}$ we have 
an embedded $3$-sphere constructed over $C$, which we denote $L_{\ell}$. It is not difficult to see that $L_{\ell}$ and $L_{\ell'}$ are isotopic if and only if $\ell = \ell'$. 

It is clear from the construction that the spheres $L_{\ell}$ constructed over the same component $C$ are related via a translation by a section (see \S \ref{ts}). In fact suppose $\sigma_{\kappa}$ is a section with twisting numbers $\kappa = (k_e)_{e \in \mathscr E}$, then we 
have the following formula
\begin{equation} \label{trans_sphere} 
T_{\sigma_{\kappa}} (L_{\ell}) = L_{\ell + 2 \kappa},
\end{equation}
where $L_{\ell + 2 \kappa}$ is a sphere with twisting numbers $\ell_e + 2k_e$ for all $e \subseteq \partial\bar C$. 

\begin{ex}
\label{localP2exampleredux}
Let us look at the case when $P$ is as in Example~\ref{triangle}. In this case there is just one bounded connected component $C$ whose corresponding divisor in $\check X$ is $\PP^2$. Thus for every edge $e$ bounding $C$ the self-intersection number of $\PP^1_{e}$ is $1$. We thus have a family of Lagrangian spheres $L_{\ell}$, with $\ell$ an odd integer. Figure~\ref{p2:vc} shows the $2$-chains $T_p$ at all vertices of $\Gamma$. From their shape it is clear why $\lambda|_{e}$ has to wind an odd multiple of $n_e/ 2$ around $S_e$. In fact suppose $\lambda|_{\partial\bar C}$ starts at point $1$ (black dot), then continues to wind along the cylinder over the horizontal edge. When it reaches the next vertex, it has to go to point $2$, so that it can start winding along the diagonal edge. In order to move from point $1$ to point $2$ it has to wind an odd multiple of~$n_e/ 2$.

\begin{figure}[!ht] 
\begin{center}
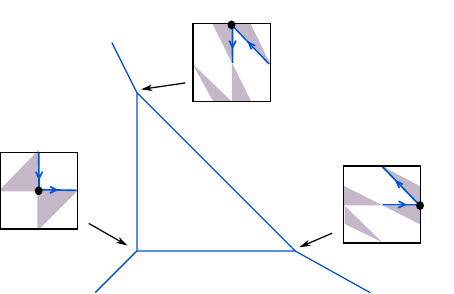 
\caption{The $2$-chains $T_p$ at the vertices of $\Gamma$ and the construction of $L_{\ell}$.} \label{p2:vc}
\end{center}
\end{figure}
\end{ex}

\subsection{Lagrangian spheres} \label{vc:lag}
For every semi-integral support function $\vartheta$ of $\Sigma_C$, with kinks $\ell = (\ell_e)_{e \subset \partial\bar C}$, we construct a Lagrangian sphere in $X(B)$, representing the isotopy class of the sphere $L_{\ell}$. In this case we need to use the Lagrangian model of the fibration in \S \ref{lag_model}. In particular here the discriminant locus $\Delta$ is locally codimension $1$ near negative vertices (see Figure~\ref{negative_thick}). In the following we use the same notations as in \S \ref{lag_model} and \S \ref{lag_sections}. Notice that in this case $C$ will be a bounded component of $N_{\R} - \Delta$ and $e$ will be an edge of $\Delta$. The main idea is to construct a Lagrangian section $\lambda: \bar C \rightarrow X(B)_{\rd}$ such that $\lambda (\partial\bar C) \subset S$. Then $L_{\lambda} = \alpha^{-1}(\lambda(\bar C))$ is a Lagrangian $3$-sphere.

We define smooth approximations to $\vartheta$ using convolution, as in \S \ref{lag_sections}. For $\epsilon > 0$, let
\[ \tilde \vartheta_{\epsilon} = \vartheta \ast \mu_{\epsilon}, \]
where $\mu_{\epsilon}$ and $\ast$ are defined in (\ref{mollifier}) and (\ref{convolution}). Since $C \subset |\Sigma_C|$, we regard $\tilde \vartheta_{\epsilon}$ as defined on $C$. We can now define a Lagrangian section as 
\begin{equation} \label{lag_sph_sec}
\begin{split}
\lambda_{\epsilon} :  \ C & \rightarrow X_0(B)_{\rd} \\
    \ b & \mapsto (b, [(d \tilde \vartheta_{\epsilon})_b]).
\end{split}
\end{equation}
The goal of this section is to prove:

\begin{thm} \label{lag_spheres}
For $\epsilon>0$ sufficiently small, the section $\lambda_{\epsilon}: C \rightarrow X_0(B)_{\rd}$ constructed above extends by continuity to a map $\lambda_{\epsilon}: \bar C \rightarrow X(B)_{\rd}$ such that 
\begin{itemize}
\item[a)] $\lambda_{\epsilon}(\partial\bar C) \subseteq S$;
\item[b)] the smooth Lagrangian sphere $L_{\lambda_{\epsilon}}= \alpha^{-1}(\lambda_{\epsilon}(\bar C))$ is in the same isotopy class of the sphere $L_{\ell}$ constructed in \S \ref{vc:d3}.
\end{itemize}
\end{thm}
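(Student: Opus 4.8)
The strategy parallels the proof of Theorem~\ref{sections_thm}, but now the role of the support function $\phi$ is played by the semi-integral support function $\vartheta$, which is only defined on $|\Sigma_C| = N_\R$ and whose ``kinks'' are half-integral. The first step is to understand the behavior of the smoothed function $\tilde\vartheta_\epsilon = \vartheta * \mu_\epsilon$ near the edges of $\Delta$ and away from the $1$-skeleton of $\Sigma_C$. By the analogue of Lemma~\ref{conv_aff} and Lemma~\ref{outside_epsilon}, $\tilde\vartheta_\epsilon$ coincides with $\vartheta$ (hence is linear, equal to $\tilde\theta_j$) outside an $\epsilon$-neighborhood of the $1$-skeleton of $\Sigma_C$ restricted to $C$; in particular, on each maximal cone of $\Sigma_C$ intersected with $C$ and away from this neighborhood, $d\tilde\vartheta_\epsilon$ is the constant half-integral covector $\tilde\theta_j$. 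The analogue of Lemma~\ref{phi_bend} shows that in a neighborhood $U_e$ of each edge $\check e$, the directional derivative $d\tilde\vartheta_\epsilon(n_{\check e})$ is well-defined and equals $\langle\tilde\theta^\pm_e, n_{\check e}\rangle$, so that $d(\tilde\vartheta_\epsilon - \tilde\theta^+_e)$ lies in $\ker n_{\check e}$ — exactly the span of the monodromy-invariant one-forms modulo $dt$. This is the key local computation that allows the section to extend over $\Delta\cap V_\epsilon$.

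The second step is the extension over the boundary $\partial\bar C$, which is where the half-integrality enters. Fix $\epsilon$ small so that conditions (i)--(iii) of \S\ref{lag_sections} hold (each edge $e$ of $\Delta$ lies in $U_e$, the thickened $1$-skeleton avoids vertices of $\Delta$, and the discs $D_{v^-}$ are avoided). As $b$ moves in $C$ toward a point of an edge $e\subset\partial\bar C$, the covector $(d\tilde\vartheta_\epsilon)_b$ approaches a half-integral covector; more precisely, passing to the local coordinates $(b,r)$ on a neighborhood $U$ of $e$ from \S\ref{lag_sections} with periods $\lambda_1,\lambda_2,\lambda_3$ as in~(\ref{periods}), one has $d(\tilde\vartheta_\epsilon - \tilde\theta^+_e) = h(r)\,dr$ for some function $h$, so $\lambda_\epsilon$ extends continuously over $\Delta$ to a section whose image lies in the fixed locus $S = \alpha(\Crit f)$ precisely when the half-integrality condition~(\ref{semi-int}) holds. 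Concretely, the condition $\vartheta(\epsilon_{\check e}n_{\check e}) = \tfrac12 \bmod\Z$ forces the limiting value of the $M_\R/M$-component of $\lambda_\epsilon$ over the interior of $e$ to land on the circle $\delta_e$ from~(\ref{cylinder_cycle}), which is exactly $S_e$. At the vertices $p^\pm$ of $\partial\bar C$, the two limiting conditions~(\ref{halfmodz}) and~(\ref{halfmodz-}) established in the proof of Proposition~\ref{kinks_to_fun} guarantee that the limit point is $\theta_{p^\pm} = \delta_e\cap\delta_{e^\pm}$, so the extended $\lambda_\epsilon$ agrees with the topological model $\lambda$ of~(\ref{vc:bound}) on $\partial\bar C$ up to the parametrization. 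This proves~(a).

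For part~(b), one checks that the Lagrangian sphere $L_{\lambda_\epsilon} = \alpha^{-1}(\lambda_\epsilon(\bar C))$ is isotopic to $L_\ell$. The winding computation is the same one used at the end of the proof of Theorem~\ref{sections_thm}: Lemmas~\ref{outside_epsilon} and~\ref{phi_bend} (in their $\vartheta$-versions) show that along a curve in $U_e$ crossing the edge $e$ of $\Delta$, the reduced section $\lambda_\epsilon$ winds $\tfrac12\ell_e$ times around the cycle $n_e$ in the fibre $M_\R/M$, i.e.\ it reproduces the winding data~(\ref{spheres_kinks}) of $\vartheta$, which by Proposition~\ref{kinks_to_fun} is the same set of twisting numbers $\ell = (\ell_e)_{e\subset\partial\bar C}$ defining $L_\ell$. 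Since a Lagrangian sphere built over $C$ from $\lambda_\epsilon$ and one built from the topological $\lambda$ with the same twisting numbers are isotopic (by the analogue of Lemma~\ref{sec:iso} for boundary-to-$S$ sections, using that the space of such sections is classified by the $\ell_e$), this gives~(b). Finally one invokes the isotropic neighborhood theorem, exactly as in the proof of Theorem~\ref{sections_thm}, to lift $\lambda_\epsilon$ to $\mu^{-1}(0)$ and conclude that $L_{\lambda_\epsilon}$ is a smooth embedded Lagrangian $3$-sphere.

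\textbf{Main obstacle.} The delicate point is part~(a): verifying that the smoothed section $\lambda_\epsilon$ extends \emph{continuously} all the way to $\partial\bar C$ and lands exactly in $S$, rather than merely approaching $S$ asymptotically or missing it. This requires the careful local analysis in the generic-singular coordinates~(\ref{periods}) near edges of $\Delta$ — showing $d(\tilde\vartheta_\epsilon - \tilde\theta^+_e) = h(r)\,dr$ with no residual component in the $\lambda_1$ direction — together with a matching analysis near the trivalent (negative) vertices $p$ of $\partial\bar C$, where one must check the limit point agrees with $\theta_{p^\pm}$ using the parity relation between $\ell_e$ and the self-intersection number $b_e$ from Proposition~\ref{kinks_to_fun}. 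Everything else is a routine adaptation of the machinery developed for Lagrangian sections in \S\ref{lag_sections}.
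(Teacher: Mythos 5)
Your overall strategy --- smooth $\vartheta$ by convolution, take the graph of $d\tilde\vartheta_\epsilon$, and show this extends over $\Delta\cap\bar C$ --- is the paper's strategy, and your treatment of the edges (working in the generic-singular coordinates with periods $\lambda_1,\lambda_2,\lambda_3$ and using Corollary~\ref{bend_cor} to get $d(\tilde\vartheta_\epsilon-\tilde\theta_j)=h(r)\,dr$) is essentially right, though the paper is more careful: since $\tilde\theta_j$ itself contains a $\tfrac12\lambda_1$ term that blows up as $b_1\to 0$, the claim that ``the $M_\R/M$-component lands on $\delta_e$'' has to be read in the compactified fibration $f^{-1}(U)$ rather than in $T^*U/\Lambda^*$, and the paper verifies this by a direct computation with the gluing map $g=\xi_j\circ\zeta_j^{-1}$ of \cite{RCB1}, showing $g(u,[(d\tilde\vartheta_\epsilon)_u])=(\sqrt{b_1},\sqrt{b_1},e^{r+ih(r)})\to\Crit q$.

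The genuine gap is at the vertices $p_j$ of $\partial\bar C$. There the fibration $f$ is not smooth: by Theorem~\ref{lagfib_thm} the discriminant is fattened to a codimension-$1$ disc $D_{v^-}$ inside $W_{p_j}$, and $X_{\blacklozenge}(B)$ is strictly smaller than $X_0(B)$, so you cannot read off the behavior of $\lambda_\epsilon$ from coordinate formulas for $\Lambda^*$. Your appeal to the half-integrality relations~\eqref{halfmodz}--\eqref{halfmodz-} of Proposition~\ref{kinks_to_fun} establishes that $\tilde\theta_j$ is the \emph{algebraically} correct half-integral covector, but it says nothing about whether the graph of a constant covector actually extends continuously over $D_{v^-}$ to a point of $\Crit f$ in the piecewise-smooth negative model. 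The paper's resolution is the key idea you are missing: over $W_{p_j}\cap C$ the section $\lambda_\epsilon$ lands inside $\alpha(\mu^{-1}(0)\cap J_{e'_j})$, where $J$ is the fixed locus of the antisymplectic involution $\iota$ of \cite{CBMS} and $J_{e'_j}$ is the specific component singled out by Lemma~\ref{local_J}. Because $J_{e'_j}$ is a genuine Lagrangian submanifold of $X(B)$ whose branching locus maps homeomorphically onto $W_{p_j}\cap\partial\bar C$, the extension of $\lambda_\epsilon$ follows from continuity of $J_{e'_j}$ rather than from any control of the fibration over $D_{v^-}$. Without invoking $\iota$ and $J$, the step ``$\lambda_\epsilon$ extends by continuity near the trivalent vertices'' is not justified.
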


The difficulty here is to analyze the behavior of $\lambda_{\epsilon}$ near the discriminant locus. Near the (negative) vertices, the main observation is that the image of $\lambda_{\epsilon}$ coincides with some canonical subset, i.e., the fixed point set of the anti\-symplectic involution constructed in \cite{CBMS}. So, even though the discriminant is fattened around these points, we do not have to worry about the behavior of $\lambda_{\epsilon}$, since it is controlled by this canonical set. Near edges of $\Delta$ one needs to work with the local model and check that $\lambda_{\epsilon}$ behaves as desired.  

Recall that $\Sk_1$ is the $1$-skeleton of the subdivision of $N_{\R}$ constructed in Lemma \ref{subdiv_P} and that $V_{\epsilon}$ is its $\epsilon$-neighborhood (see Definition \ref{skel}). We choose $\epsilon$ so that conditions (i)-(iii) listed after Corollary \ref{bend_cor} hold.   We also let
\[ \tilde W_p = (-\delta, \delta) \times W_p \]
be a thickening of $W_p$ inside $B$, for some $\delta >0$.

It is convenient here to use the fact, proved in \cite{CBMS}, that on $X(B)$ there exists a unique antisymplectic involution $\iota: X(B) \rightarrow X(B)$ such that $f \circ \iota = f$ and which leaves the zero section fixed, i.e., $\iota \circ \sigma_0 = \sigma_0$. Here, antisymplectic means that $\iota^* \omega = - \omega$.  On $X_0(B)$, $\iota$ is simply the map
\[ (b, [\eta]) \mapsto (b, - [\eta]) \]
for all $b \in B_0$ and $\eta \in T^*_bB_0$. In particular this implies that every smooth fibre $f^{-1}(b)$ contains eight fixed points, one of which is the intersection between the fibre and the zero section. Let $J$ be the fixed point locus of $\iota$, i.e., 
\[ J = \{ q \in X(B) \, | \, \iota(q) = q \}. \]
We have that $J$ is a Lagrangian submanifold of $X(B)$ such that $f|_J: J \rightarrow B$ is a degree $8$ branched covering of $B$ which branches over $\Delta$. The branching locus is $J \cap \Crit f$.  It also follows from the construction in \cite{CBMS} that $\iota$ is compatible with the $S^1$-action, in the sense that 
\[ \iota( e^{is} \cdot q) = e^{-is} \cdot \iota(q) \]
for all $e^{is} \in S^1$ and $q \in X(B)$.

 We have the following 

\begin{lem} \label{local_J} For every vertex $p \in \Delta$, the set $f^{-1}(\tilde W_p) \cap J$ has five connected components. Two of these are mapped one-to-one onto $\tilde W_p$ by $f$ and three of them are mapped two-to-one. For every edge $e$ of $\Delta$ emanating from $p$, there is a unique connected component of $f^{-1}(\tilde W_p) \cap J$, which we denote by $J_e$, such that 
\begin{itemize}
\item[a)] for every small simple loop $\gamma_e \subset \tilde W_p$ around $e$, $f^{-1}(\gamma_e) \cap J_e$ consists of two disjoint loops;
\item[b)] if $e'$ is another edge of $\Delta$ emanating from $p$ and $\gamma_{e'} \subset \tilde W_p$ is a small simple loop around $e'$, then $f^{-1}(\gamma_{e'}) \cap J_e$ is a circle which is mapped two-to-one to $\gamma_{e'}$ by $f$.
\end{itemize}
\end{lem}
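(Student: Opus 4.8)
The plan is to reduce the statement to the standard local model of a negative vertex and then to extract the component structure of $J$ from the monodromy of the branched covering $f|_J\colon J\to B$. Recall that every trivalent vertex of $\Delta$ is of negative type (Example~\ref{bneg}), so $p$ is a negative vertex; write $D_p:=D_{v^-}$ for the fattening disc at $v^-=p$ of Theorem~\ref{lagfib_thm}, which we may assume to sit well inside $\tilde W_p$. First I would invoke Theorem~\ref{lagfib_thm} together with the constructions of \cite{CB-M} and \cite{CBMS}: over $\tilde W_p$ the fibration $f$, its $S^1$-action, and the antisymplectic involution $\iota$ are isomorphic to the standard negative local model, so $J\cap f^{-1}(\tilde W_p)$ is the $\iota$-fixed locus there. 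On the locus where $f$ is smooth one has $X(B)=X_0(B)$ and $\iota(b,[\eta])=(b,-[\eta])$, whose fibrewise fixed set is the group of half-periods $\frac12\Lambda^*_b/\Lambda^*_b\cong(\Z/2)^3$; this is exactly the already-quoted fact that $f|_J$ is a degree-$8$ branched covering branching precisely over $\Delta$. Since $\tilde W_p$ is connected and $\tilde W_p\setminus\Delta$ is dense and connected, the components of $J\cap f^{-1}(\tilde W_p)$ correspond bijectively to the orbits of the monodromy action of $\pi_1(\tilde W_p\setminus\Delta)$ on $\frac12\Lambda^*_p/\Lambda^*_p$; and, $\tilde W_p\setminus\Delta$ being a ball with a properly embedded tripod removed, $\pi_1(\tilde W_p\setminus\Delta)$ is generated by small loops $\gamma_{e_1},\gamma_{e_2},\gamma_{e_3}$ around the three edges $e_1,e_2,e_3$ of $\Delta$ emanating from $p$.

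Then I would compute this action. The vertex $p$ is the barycentre of a unimodular triangle $P_j$ with edges $\check e_1,\check e_2,\check e_3$, dual to $e_1,e_2,e_3$. Choosing coordinates so that $\Lambda^*_p=\Z\,dt\oplus M$, a half-period is $[\frac12(a\,dt+m)]$ with $(a,m)\in\Z/2\oplus M/2M$, and by (\ref{bneg:mon}) the monodromy around $e_i$ fixes $m$ and sends $a$ to $a-\inn{m}{n_{\check e_i}}$; so it fixes a half-period when $\inn{m}{n_{\check e_i}}$ is even and exchanges the two half-periods over a given $m$ when $\inn{m}{n_{\check e_i}}$ is odd. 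Since $P_j$ is unimodular, the reductions $\overline{n_{\check e_1}},\overline{n_{\check e_2}},\overline{n_{\check e_3}}$ are the three distinct nonzero elements of $N/2N$, so for $\overline m=0$ all three pairings are even while for each of the three nonzero $\overline m\in M/2M$ the pairing $\inn{\overline m}{\overline{n_{\check e_i}}}$ vanishes for exactly one index $i$ and is nonzero for the other two. Hence the two half-periods with $\overline m=0$ are fixed by the whole monodromy group — two orbits of size $1$, giving two components mapped one-to-one onto $\tilde W_p$ (the zero section and its translate by the half-period $[\frac12\,dt]$) — while over each of the three nonzero classes $\overline m$ the two half-periods form a single orbit, giving three components mapped two-to-one. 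A branched covering neither merges nor splits components over the connected base $\tilde W_p$, and since $f|_J$ branches only over $\Delta$, over $D_p\setminus\Delta$ the cover $J$ is still an unbranched $8$-sheeted covering and contributes nothing further; so $J\cap f^{-1}(\tilde W_p)$ has exactly five components, two of degree $1$ and three of degree $2$.

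For parts (a) and (b) I would index each degree-$2$ component by the nonzero class $\overline m\in M/2M$ over which it lies, and let $J_{e_i}$ be the one lying over the unique nonzero $\overline m_i$ with $\inn{\overline m_i}{\overline{n_{\check e_i}}}=0$; the assignment $i\mapsto\overline m_i$ is a bijection onto the three nonzero classes, so these are pairwise distinct. For a small loop $\gamma_{e_i}$ around $e_i$ the monodromy fixes both half-periods over $\overline m_i$, so $f^{-1}(\gamma_{e_i})\cap J_{e_i}$ is a pair of disjoint loops — this is (a). For $j\neq i$ one has $\inn{\overline m_i}{\overline{n_{\check e_j}}}\neq 0$, so the monodromy around $e_j$ exchanges the two half-periods over $\overline m_i$, whence $f^{-1}(\gamma_{e_j})\cap J_{e_i}$ is a single circle that double-covers $\gamma_{e_j}$ — this is (b). Finally $J_{e_i}$ is the unique component with property (a): over $\gamma_{e_i}$ a degree-$1$ component contributes just one loop, and each of the other two degree-$2$ components $J_{e_j}$ ($j\neq i$) has its two sheets exchanged by the monodromy around $e_i$ (since $\inn{\overline m_j}{\overline{n_{\check e_i}}}\neq 0$ for $j\neq i$) and hence contributes a single doubly-covered circle rather than two disjoint loops.

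The step I expect to require the most care is the first one: one has to unwind the explicit local model of the negative fibration from \cite{CB-M} and the involution from \cite{CBMS} well enough to be certain that near the fattened vertex the relevant monodromy is genuinely (\ref{bneg:mon}), that $\pi_1(\tilde W_p\setminus\Delta)$ is generated by the three edge-loops, and that the codimension-one disc $D_p$ introduces nothing new. These are essentially guaranteed by the already-quoted properties that $f|_J$ has degree $8$ and branches exactly over $\Delta$, so once they are in hand the remainder is just the elementary orbit computation in $(\Z/2)^3$ above.
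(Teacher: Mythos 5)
Your proof is correct and follows essentially the same route as the paper: both use the monodromy formula \eqref{bneg:mon} acting on the eight half-periods in $\frac{1}{2}\Lambda^*_p/\Lambda^*_p$ and identify the components of $f^{-1}(\tilde W_p)\cap J$ with the orbits of this action. The paper simply quotes the five-component count (two of degree one, three of degree two) from Example 3.10 of \cite{CBMS} and then runs the orbit computation with explicit $3\times 3$ matrices, whereas you derive the count from scratch and organize the computation intrinsically via the pairings $\inn{\overline m}{\overline{n_{\check e_i}}}$ in $M/2M$; these are presentational rather than substantive differences.
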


\begin{proof} The fibration $f$ around the vertex $p$ is of negative type. The first two assertions are proved in \cite{CBMS} (in particular in Example 3.10 the five connected components are computed explicitly). 
Let $e, e'$ and $e''$ be the three edges of $\Delta$ emanating from $p$. The monodromy formula (\ref{bneg:mon}) implies that, given $b \in \tilde W_p$, there exists a basis of $\Lambda^*_b$ with respect to which the monodromy transformations associated to simple loops $\gamma_e$, $\gamma_{e'}$ and $\gamma_{e''}$ around the three edges are given respectively by 
\[ \left( \begin{array}{ccc}
             1  & 1 & 0 \\
             0  & 1 & 0 \\
             0  & 0 & 1 
           \end{array} \right),  \  \left( \begin{array}{ccc}
             1  & 0 & 1 \\
             0  & 1 & 0 \\
             0  & 0 & 1 
           \end{array} \right), \  \left( \begin{array}{ccc}
             1  & -1 & -1 \\
             0  & 1 & 0 \\
             0  & 0 & 1 
           \end{array} \right). \]
The eight points of $f^{-1}(b) = T^*_bB/ \Lambda^*$ which are fixed by $\iota$ are given, with respect to this basis, by
  \[ \begin{split} & (0,0,0), \, \left( \half , 0,0 \right),  \, \left(0, 0, \half \right), \, \left(\half, 0, \half \right), \\
                        & \left(0, \half ,0 \right), \, \left(\half, \half, 0\right), \, \left(0, \half, \half \right), \, \left(\half, \half, \half \right). 
     \end{split} \]
The first two points are invariant with respect to all three matrices, and therefore they belong to the two connected components of $f^{-1}(\tilde W_p) \cap J$ which are mapped one-to-one onto $\tilde W_p$. The two points in the second pair are fixed by the first matrix and are exchanged by the second and third one. This implies that they belong to the same connected component of $f^{-1}(\tilde W_p) \cap J$, which we denote by $J_{e}$. Notice that $f^{-1}(\gamma_{e}) \cap J_{e}$ must be a pair of disjoint circles.  On the other hand, $f^{-1}(\gamma_{e'}) \cap J_{e}$ is a single circle which covers $\gamma_{e'}$ twice, since the pair of points is exchanged by monodromy as we move around $\gamma_{e'}$. The same happens for $f^{-1}(\gamma_{e''}) \cap J_{e}$. Therefore $J_e$ satisfies $(a)$ and $(b)$ and clearly it is the unique connected component satisfying them. 
\end{proof}

Given a cyclic, anti-clockwise ordering of the maximal cones of $\Sigma_C$, let $p_j$ be the vertex of $\partial\bar C$ which is contained in the $j$-th cone (see also Figure~\ref{vc_fan}). Let $e'_j$ denote the edge of $\Delta$ emanating from $p_j$ which is not contained in $\partial\bar C$. Then
\begin{lem} The set $\lambda_{\epsilon}(W_{p_j} \cap C) \subset X_0(B)_{\rd}$ is contained  $\alpha( \mu^{-1}(0) \cap J_{e'_j})$, where $J_{e'_j}$ is the connected component  of $f^{-1}(\tilde W_p) \cap J$ described in\linebreak Lemma~\ref{local_J}.
\end{lem}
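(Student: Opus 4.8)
The plan is to exploit the rigidity of $\lambda_\epsilon$ near $p_j$: there it is constant in the fibre direction and takes a $2$-torsion value of the reduced fibre, so identifying that value will pin down the branch of $J$. First I would check that $\lambda_\epsilon$ is affine on $W_{p_j}\cap C$. The vertex $p_j\in\partial\bar C$ corresponds to the maximal cone $\nu_j$ of $\Sigma_C$, i.e.\ to the simplex $P_j$ of the subdivision of $P$ containing $v_C$ whose tangent wedge at $v_C$ is $\nu_j$; under the identification of $C$ with a region of $N_\R=|\Sigma_C|$ carrying $v_C$ to the origin, $p_j$ is the barycentre of $P_j$, hence lies in the interior of $\nu_j$. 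So, taking $\epsilon$ (and $\delta$) small, $W_{p_j}$ and $\tilde W_{p_j}$ lie inside $\nu_j$, where $\vartheta=\tilde\theta_j$ for some linear $\tilde\theta_j\in M_\R$. Since $W_{p_j}\subseteq N_\R-V_\epsilon$, Lemma~\ref{outside_epsilon} gives $\tilde\vartheta_\epsilon=\vartheta$ on $W_{p_j}$, so $(d\tilde\vartheta_\epsilon)_b=\tilde\theta_j$ for all $b\in W_{p_j}\cap C$ and, by \eqref{lag_sph_sec}, $\lambda_\epsilon(b)=(b,[\tilde\theta_j])$ with $[\tilde\theta_j]\in M_\R/M$ independent of $b$.

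Next I would identify $[\tilde\theta_j]$. Let $n_1,n_2$ be primitive tangent vectors to the two edges of $P_j$ through $v_C$; as the subdivision is smooth they form a $\Z$-basis of $N$, with dual basis $u_1,u_2$ of $M$. The semi-integrality condition \eqref{semi-int} (equivalently \eqref{kinks:fun} with \eqref{ell_to_psi}) gives $\inn{\tilde\theta_j}{n_1}\equiv\inn{\tilde\theta_j}{n_2}\equiv\tfrac12\bmod\Z$, hence $2\tilde\theta_j\in M$ and $[\tilde\theta_j]=[\tfrac12 u_1+\tfrac12 u_2]$. For $b\in W_{p_j}\cap C$ we have $(0,b)\in B_0$, so $f^{-1}((0,b))$ is a smooth torus fibre whose eight $\iota$-fixed points are the $2$-torsion points of $T^*_{(0,b)}B/\Lambda^*$; they all lie in $\mu^{-1}(0)$ (the moment map $\mu$ is the base coordinate $t$, which vanishes on $\{0\}\times N_\R\supseteq\Delta$), and their $\alpha$-images are exactly the four $2$-torsion points of the reduced fibre $M_\R/M$. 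Thus $\lambda_\epsilon(W_{p_j}\cap C)\subseteq\alpha(\mu^{-1}(0)\cap J)$, and it remains to single out the branch $J_{e'_j}$.

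Finally I would match this with Lemma~\ref{local_J}. The three edges of $\Delta$ at $p_j$ are $e_{j-1},e_j\subset\partial\bar C$ (dual to the two edges of $P_j$ through $v_C$, carrying $n_1,n_2$) and $e'_j$ (dual to the third edge). By \eqref{bneg:mon} the monodromy of $\Lambda^*$ around each is a transvection $a\,dt+m\mapsto(a-\inn{m}{n_{\check e}})dt+m$; in the basis $(dt,u_1,u_2)$, after possibly changing the signs of $u_1,u_2$, the monodromies around $e_{j-1},e_j,e'_j$ become exactly the three matrices from the proof of Lemma~\ref{local_J}, with $e_{j-1},e_j$ in the role of the second and third edges (whose monodromies exchange the relevant pair of fixed points) and $e'_j$ in the role of the first. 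Among the points $\tfrac12(\varepsilon_0,\varepsilon_1,\varepsilon_2)$, $\varepsilon_i\in\{0,1\}$, the unique pair exchanged by the monodromies around both $e_{j-1}$ and $e_j$ is $\{\tfrac12(0,1,1),\tfrac12(1,1,1)\}$; by Lemma~\ref{local_J} this pair is $J_{e'_j}\cap f^{-1}((0,b))$, with $\alpha$-image $[\tfrac12 u_1+\tfrac12 u_2]$. Combined with the previous paragraph, $\lambda_\epsilon(b)\in\alpha(\mu^{-1}(0)\cap J_{e'_j})$ for all $b\in W_{p_j}\cap C$. The hard part will be this last step: one must track the sign and orientation conventions ($\epsilon_{\check e}$, the orientations of the loops $\gamma_e$ and of $\partial\bar C$, the labelling $P_e^\pm$) carefully enough to know that the basis used here differs from the one in Lemma~\ref{local_J} only by a change in $\operatorname{SL}(3,\Z)$ preserving the list of $\iota$-fixed points, which is what legitimises the identification of $J_{e'_j}\cap f^{-1}((0,b))$ with $\{\tfrac12(0,1,1),\tfrac12(1,1,1)\}$; the remaining ingredients are routine.
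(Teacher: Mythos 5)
Your proposal follows the same route as the paper: use Lemma~\ref{outside_epsilon} to see that $\tilde\vartheta_\epsilon$ agrees with the linear function $\tilde\theta_j$ on $W_{p_j}\cap C$, so $\lambda_\epsilon$ is constant there with value $(b,[\tilde\theta_j])$; use \eqref{semi-int} to see $\tilde\theta_j\in\tfrac12 M$, hence the image lies in $\alpha(\mu^{-1}(0)\cap J)$; then single out the branch $J_{e'_j}$ via the monodromy characterization of Lemma~\ref{local_J}. This is exactly what the paper does, and the key computation — that the $2$-torsion lifts of $[\tfrac12 u_1+\tfrac12 u_2]$ are exchanged by monodromy around $e_{j-1}$ and $e_j$ (since $\inn{\tilde\theta_j}{n_{\check e_{j-1}}}\equiv\inn{\tilde\theta_j}{n_{\check e_j}}\equiv\tfrac12$) and fixed by monodromy around $e'_j$ — is correct. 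One small bookkeeping slip: you say $e'_j$ plays the role of the \emph{first} edge of Lemma~\ref{local_J}'s proof while claiming the pair is $\{\tfrac12(0,1,1),\tfrac12(1,1,1)\}$; but in that basis the first matrix fixes the pair $\{\tfrac12(0,0,1),\tfrac12(1,0,1)\}$, while $\{\tfrac12(0,1,1),\tfrac12(1,1,1)\}$ is the pair fixed by the \emph{third} matrix. Your pair is the right one given $\tilde\theta_j=\tfrac12 u_1+\tfrac12 u_2$, so really $e'_j$ is in the role of the third edge. The paper's proof sidesteps this altogether by not matching bases explicitly: it just observes that $\theta_{p_j}$ is not monodromy-invariant around $e_{j-1}$ or $e_j$ and invokes the defining property of $J_{e'_j}$, which is a cleaner way to close the argument than the explicit basis-tracking you (rightly) flagged as the delicate part.
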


\begin{proof} Lemma \ref{outside_epsilon}, applied to $\tilde \vartheta_{\epsilon}$ instead of $\tilde \phi_{\epsilon}$, implies that $\tilde \vartheta_{\epsilon}$ coincides with $\vartheta$ on $W_{p_j} \cap C$, i.e., with $\tilde \theta_j$, the restriction of $\vartheta$ to the $j$-th cone.  Therefore, for all $b \in W_{p_j} \cap C$
\[ \lambda_{\epsilon}(b) = (b, [ (d \tilde \vartheta_{\epsilon})_b] )= (b, [ \tilde \theta_j]) = (b, \theta_{p_j}), \]
where $\theta_{p_j}$ is defined as in equation (\ref{thetapi}).  A point $(b, [m]) \in X_0(B)_{\rd}$ is in $\alpha( \mu^{-1}(0) \cap J)$ if and only if $m \in \half M$. 
It follows from (\ref{semi-int}) that $\tilde \theta_{j} \in \half M$, so that $(b, \theta_{p_j}) \in \alpha( \mu^{-1}(0) \cap J)$. Moreover, (\ref{bneg:mon}) and (\ref{semi-int}) imply that $\theta_{p_j}$, seen as an element in $T^*_bB / \Lambda^*_b$, is not invariant with respect to monodromy around $e_{j-1}$ and $e_{j}$. From the characterization of $J_{e'_j}$ given in Lemma \ref{local_J} this implies that $(b, \theta_{p_j}) \in  \alpha( \mu^{-1}(0) \cap J_{e'_j})$ and therefore the lemma. 
\end{proof}

\begin{cor} We have that $\lambda_{\epsilon}|_{W_{p_j} \cap C}: W_{p_j} \cap C \rightarrow X_0(B)_{\rd}$ can be ex\-ten\-ded by continuity to $W_{p_j} \cap \bar C$ so that $\lambda_{\epsilon}( W_{p_j} \cap \partial\bar C) \subseteq S$. 
\end{cor}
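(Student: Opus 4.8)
The plan is to read the statement off the two preceding lemmas together with Lemma \ref{local_J}, with no new estimate needed. By the preceding lemma, for every $b\in W_{p_j}\cap C$ one has $\lambda_{\epsilon}(b)=(b,\theta_{p_j})$, a \emph{constant} in the $M_{\R}/M$-direction with $\theta_{p_j}\in\tfrac12 M/M$, and moreover $\lambda_{\epsilon}(W_{p_j}\cap C)\subseteq\alpha(\mu^{-1}(0)\cap J_{e'_j})$. Set $Z:=\mu^{-1}(0)\cap J_{e'_j}$. Since $J$ is a closed Lagrangian submanifold of $X(B)$, $Z$ is closed in $f^{-1}(\tilde W_{p_j})$; as the $S^1$-quotient map is closed, $\alpha(Z)$ is closed in $X(B)_{\rd}$. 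Over $W_{p_j}\cap C\subseteq B_0$ the two local sheets of $J_{e'_j}$ lie in a common $S^1$-orbit — by the monodromy computation in the proof of Lemma \ref{local_J} together with \eqref{bneg:mon}, the $S^1$-direction $dt$ being the shear direction of those monodromy matrices — so $\alpha(Z)$ restricted over $W_{p_j}\cap C$ is a single sheet, which is exactly the image of $\lambda_{\epsilon}|_{W_{p_j}\cap C}$.

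It then remains only to describe $\alpha(Z)$ over $W_{p_j}\cap\partial\bar C$. The part of $\partial\bar C$ lying in $W_{p_j}$ consists of initial segments of the two edges $e_{j-1},e_j$ of $\Delta$ meeting at $p_j$; these are precisely the two edges over which, by Lemma \ref{local_J}(b), $J_{e'_j}$ is a \emph{connected} double cover, i.e. its two sheets are interchanged by the monodromy around them. Hence the ramification arc $J_{e'_j}\cap\Crit f$ maps homeomorphically onto $(e_{j-1}\cup e_j)\cap W_{p_j}$, and $\alpha(Z)$ over $W_{p_j}\cap\partial\bar C$ is exactly $\alpha(J_{e'_j}\cap\Crit f)$, one point over each $b_0$. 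Therefore $\alpha(Z)$ over $W_{p_j}$ is the graph of a continuous map that agrees with $\lambda_{\epsilon}$ over the interior; since $\lambda_{\epsilon}|_{W_{p_j}\cap C}$ is a homeomorphism onto a single connected sheet of the closed set $\alpha(Z)$, it extends continuously to $W_{p_j}\cap\bar C$, sending $b_0\in W_{p_j}\cap\partial\bar C$ to the ramification point over it; this point lies in $\alpha(\Crit f)=S$, giving $\lambda_{\epsilon}(W_{p_j}\cap\partial\bar C)\subseteq S$.

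The step that requires genuine work is the claim that, as $b\to b_0\in e_{j-1}$ within $W_{p_j}\cap C$, the chosen sheet of $J_{e'_j}$ really limits onto $\Crit f$ and not onto a regular point of the fibre over $(0,b_0)$. I would verify this in the explicit local model of a generic singular fibration near $e_{j-1}$ recalled in the proof of Lemma \ref{red_sect}: with periods $\lambda_1=-\log|b|\,db_1+\Arg(b)\,db_2+dH$, $\lambda_2=2\pi\,db_2$, $\lambda_3=dr$, the two $2$-torsion points comprising $J_{e'_j}$ differ by half of the period $\lambda_1$ that degenerates over $\Delta$; hence in the singular fibre over $b_0$ they coincide, at the pinch point, which is a point of $\Crit f$. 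Alternatively one may match this with the topological model of \S\ref{topological}, where the analogous section $\lambda|_{e}$ of \eqref{vc:bound} was by construction valued in $S_e=e\times\delta_e\subset S$; the Lagrangian $\lambda_{\epsilon}$ is designed to realise that picture. Once this local statement is in place the corollary follows, and it moreover identifies $\lambda_{\epsilon}(W_{p_j}\cap\partial\bar C)$ with $\alpha$ of the ramification arc of $f|_{J_{e'_j}}$ over $e_{j-1}\cup e_j$.
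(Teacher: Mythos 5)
Your plan matches the paper's: both arguments run $\lambda_\epsilon(W_{p_j}\cap C)$ into the closed set $\alpha(\mu^{-1}(0)\cap J_{e'_j})$ (via the preceding lemma), then use that the ramification locus $J_{e'_j}\cap\Crit f$ of the branched double cover $f|_{J_{e'_j}}$ maps homeomorphically onto $W_{p_j}\cap\partial\bar C$ to conclude that the limiting value of the section as $b\to b_0\in\partial\bar C$ is $\alpha$ of the ramification point over $b_0$, which lies in $S$. Where the paper justifies the ramification claim by citing Example 3.10 of \cite{CBMS}, you attempt to argue it directly in the local model.

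That attempt, however, contains a slip which also contradicts what you say earlier in the same paragraph. You first correctly observe that the two sheets of $J_{e'_j}$ over a fibre near $\Delta$ lie in a common $S^1$-orbit — because the $S^1$-direction $dt$ (i.e.\ $\lambda_2$ in the edge local model) is the shear direction of the monodromy. That means the two half-lattice points differ by $\tfrac12\lambda_2$. A few lines later you assert they ``differ by half of the period $\lambda_1$ that degenerates over $\Delta$'' and conclude from this that they coincide at the pinch point. This is wrong: the paper later exhibits the two sheets as the graphs of $\tfrac12(dH+\lambda_1)$ and $\tfrac12(dH+\lambda_1+\lambda_2)$, so their difference really is $\tfrac12\lambda_2$ and does not vanish over $\Delta$. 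What forces the limit onto $\Crit f$ is that both sheets contain the summand $\tfrac12\lambda_1$; making this rigorous needs the explicit chart $\xi_1\circ\zeta_1^{-1}$ gluing in $\Crit q$, which the paper carries out in the lemma that immediately follows the corollary (computing $g(u,[(d\tilde\vartheta_\epsilon)_u])=(\sqrt{b_1},\sqrt{b_1},e^{r+ih(r)})\to\Crit q$). So the architecture of your argument is the right one, but the sentence ``they differ by $\tfrac12\lambda_1$, hence they coincide at the pinch point'' should be replaced either by a citation (as the paper does at this stage) or by the gluing computation.
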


\begin{proof}
The branching locus of $J_{e'_j}$ is $J_{e'_j} \cap \Crit f$ and is mapped by $f$ homeomorphically onto $W_{p_j} \cap \partial\bar C$. This can be checked explicitly in the local models  (e.g., in Example 3.10 of \cite{CBMS}). Therefore, since $\lambda_{\epsilon}({W_{p_j} \cap C})$ is contained in $\alpha( \mu^{-1}(0) \cap J_{e'_j})$, as $b \in W_{p_j} \cap C$ approaches a point in $W_{p_j} \cap \partial\bar C$, $\lambda_{\epsilon}(b)$ approaches the image by $\alpha$ of a branching point of  $J_{e'_j}$, i.e., a point on $S$.  This proves the corollary. 
\end{proof}

\begin{proof}[Proof of Theorem \ref{lag_spheres}] To extend $\lambda_{\epsilon}$ it remains to show that $\lambda_{\epsilon}$ extends also to points of $\partial\bar C \cap \bar V_{\epsilon}$. Given an edge $e_j$ of $\partial\bar C$, let us consider an open neighborhood $U$ of $e_j \cap \bar V_{\epsilon}$. We can assume that $U \cap (\{ 0 \} \times N_{\R}) \subseteq  U_{e}$ and that $f_U := f|_{f^{-1}(U)}$ is smooth. Just as in Lemma \ref{phi_bend} we have that 
\begin{equation} \label{bend2}
d \tilde \vartheta_{\epsilon}|_{U_{e_j}}(n_{\check e_{j}}) = \inn{\tilde \theta_j}{n_{\check e_j}}.
\end{equation}
We proceed as in Lemma \ref{red_sect}. We let $U \cong D \times (0,1)$ and $\Delta \cap U \cong \{ 0 \} \times (0,1)$ and define $\Lambda^* \subseteq T^*U$ as the $\Z$-linear span of the periods (\ref{periods}). Then $T^*U/ \Lambda^*$ and $f^{-1}(U)- \Crit f_U$ are fibrewise symplectomorphic.  Assume also that the coordinates have been chosen so that  
\[ C \cap U = \{ b_2 = 0, b_1 > 0 \}. \]
In Sections 3.1 and 3.2 of \cite{CBMS} we gave an explicit description of an antisymplectic involution over a neighborhood of an edge. We review it here. Without loss of generality we can assume that in these coordinates the zero section $\sigma_0$ is given by $\sigma_0(u) = (u, \frac{1}{2}[dH])$, where $H$ is the smooth function appearing in equation \eqref{periods} for the periods.
Then the antisymplectic involution that fixes $\sigma_0$ is
\[ \iota: (u, [\eta]) \mapsto (u, [dH- \eta]) \]
for all $u \in U$ and $\eta \in T^*_uU$. The fixed points of $\iota$ which are not monodromy invariant around $\Delta$ are given by the graphs of the following $1$-forms:
\[ \begin{split}
    \ &\half \left(dH + \lambda_1\right), \ \half \left( dH + \lambda_1 + \lambda_2 \right) \\
    \ &\half \left(dH + \lambda_1\ + \lambda_3 \right), \ \half \left( dH + \lambda_1 + \lambda_2  + \lambda_3 \right)
   \end{split} \]  
The first two forms are interchanged by monodromy around $\Delta$, similarly the last two. Therefore the first and second pair define two connected components which map $2$ to $1$ over $U$.  Since the $S^1$-action corresponds to translations by multiples of $\lambda_2$, the first two forms (resp. the last two) have the same image in the quotient by the $S^1$-action. 
Therefore, in these coordinates, we have that $\tilde \theta_j$ is either equal to the first form or to the third one, modulo $\lambda_2$. Let us assume, without loss of generality, that
\[ \tilde \theta_j = \half \left(dH + \lambda_1\right) \mod \lambda_2. \]
Then equation (\ref{bend2}) and Corollary \ref{bend_cor} imply that over $C \cap U$ we have
\[ d \tilde \vartheta_{\epsilon} = \half \left(dH + \lambda_1\right) + h(r) dr \]
for some smooth function $h$. The above expression is defined for points $u=(b_1, r) \in U$, with $b_1 > 0$ (since it is defined on $C \cap U$). When $b_1 = 0$, $\lambda_1$ blows up. This suggests that as $b_1 \rightarrow 0$ the points $(u, [(d \tilde \vartheta_{\epsilon})_u])$ might converge to points in $\Crit f_{U}$. This is precisely what happens:
\begin{lem} Viewing the points $(u, [(d \tilde \vartheta_{\epsilon})_u])$, for $u = (b_1, r) \in C \cap U$, as points in $f^{-1}(U) - \Crit f_{U}$ we have that 
\[ \lim_{b_1\rightarrow 0} (u, [(d \tilde \vartheta_{\epsilon})_u]) \in \Crit f_{U} \]
\end{lem}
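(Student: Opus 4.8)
The plan is to reduce the claim to the structure of the antisymplectic involution $\iota$ over the edge neighbourhood $U$ described in Sections 3.1--3.2 of \cite{CBMS}, together with the known behaviour of its fixed locus $J$ near $\Crit f$. Recall that, with the normalization just made, $d\tilde\vartheta_{\epsilon}=\half(dH+\lambda_1)+h(r)\,dr$ on $C\cap U=\{b_2=0,\ b_1>0\}$ for a smooth function $h$. First I would observe that $\half(dH+\lambda_1)$ is exactly half the period $\lambda_1$ plus the fixed shift $\half dH$, and that on $U-\Delta$ the form $\lambda_1$ is only defined modulo $\lambda_2=2\pi\,db_2$; hence the reduction of $\half(dH+\lambda_1)$ modulo $\Lambda^{*}$ is two-valued over each point, its two values being precisely the graphs of $\half(dH+\lambda_1)$ and $\half(dH+\lambda_1+\lambda_2)$ appearing in the local model. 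These two graphs make up one of the connected components of $J\cap f^{-1}(U)$ that maps $2$-to-$1$ onto $U$, branched over $\Delta\cap U$; and since $J$ is a Lagrangian submanifold of $X(B)$ whose branch locus over $B$ is exactly $J\cap\Crit f$, the branch locus of this component is contained in $\Crit f_U$ and maps homeomorphically onto $\Delta\cap U$.

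Next I would use this branched double cover to take the limit. A double cover branched over the codimension-$2$ locus $\Delta\cap U$ has its two sheets coming together continuously at the branch locus; consequently, for any path in $C\cap U$ tending to a point $(0,r)\in\Delta\cap U$ — in particular along the ray $\{b_2=0,\ b_1\to 0^{+}\}$ — the corresponding point of this component of $J$ converges, and the limit is the branch point over $(0,r)$, which lies in $\Crit f_U$. The role of the precise form $\half(dH+\lambda_1)$ enters here: its $db_1$-component is exactly half of the unboundedly growing $db_1$-component of $\lambda_1$, so modulo $\Lambda^{*}$ the point sits at ``fraction $\tfrac12$'' along the cycle $\gamma$ that pinches over $\Delta$, which is precisely where the two sheets of $J$ live — in contrast to the zero section $\sigma_0$ (``fraction $0$''), which extends smoothly over $\Delta$. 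This is what forces convergence onto $\Crit f_U$ rather than onto a smooth point of the central fibre.

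Finally I would absorb the term $h(r)\,dr$. Translation by $h(r)\,dr$ is a fibre-preserving diffeomorphism $\Psi$ of $f^{-1}(U)-\Crit f_U=T^{*}U/\Lambda^{*}$; since $h(r)\,dr$ is a smooth real multiple of the lattice section $\lambda_3=dr$, the map $\Psi$ acts within the $\lambda_3$-circles, which persist in the central fibre (indeed the generic local fibration carries a $T^{2}$-symmetry, one factor of which rotates the $\lambda_3$-direction and preserves $\Crit f_U$; see \cite{CB-M}), so $\Psi$ extends to a diffeomorphism of $f^{-1}(U)$ preserving $\Crit f_U$. Since $(u,[d\tilde\vartheta_{\epsilon}|_u])=\Psi\bigl(u,[\half(dH+\lambda_1)|_u]\bigr)$, applying $\Psi$ to the limit of the previous paragraph gives $\lim_{b_1\to 0}(u,[d\tilde\vartheta_{\epsilon}|_u])\in\Crit f_U$, as desired.

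The step I expect to be the main obstacle is the second one: justifying rigorously that the limit exists and is a critical value really requires the explicit local model of the generic (focus--focus $\times$ interval) fibration of \cite{RCB1} and \cite{CB-M}, i.e.\ an actual coordinate chart on $X(B)$ around $\Crit f_U$ in which one writes the point $(u,[\half(dH+\lambda_1)|_u])$ and checks the one-sided limit $b_1\downarrow 0$. The branched double cover description is the conceptual shortcut, and one can either invoke the relevant computations in \cite{CBMS} (Example~3.10 and Sections 3.1--3.2) and \cite{RCB1}, or carry out this short chart computation directly; the only delicate point is that the limit is taken along a one-sided ray transverse to $\Delta$, which is harmless precisely because the local model is a branched covering and hence regular enough for the limit to exist along such a path.
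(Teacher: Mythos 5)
Your proof takes a genuinely different route from the paper's, and it is worth comparing the two. The paper proves the lemma by a direct coordinate computation: it sets up the explicit model
\[
Y = \{(z_1,z_2,w)\in\C^2\times\C^* \mid |z_1z_2|<1,\ \log|w|\in(0,1)\},\qquad q(z_1,z_2,w)=(z_1\bar z_2,\log|w|),
\]
identifies $f^{-1}(U)-\Crit f_U$ with $T^*U/\Lambda^*$ glued to a neighbourhood $Z$ of $\Crit q$ via a fibre-preserving symplectomorphism $g$ built from the two section/torus-action parametrizations $\xi_j$, $\zeta_j$, and then computes directly that
\[
g\bigl(u,[(d\tilde\vartheta_\epsilon)_u]\bigr)=\bigl(\sqrt{b_1},\sqrt{b_1},e^{r+ih(r)}\bigr)\ \longrightarrow\ (0,0,e^r)\in\Crit q\qquad(b_1\to 0).
\]
You instead argue structurally through the fixed locus $J$ of the antisymplectic involution $\iota$: the graph of $\tfrac12(dH+\lambda_1)$ is one sheet of the connected component of $J\cap f^{-1}(U)$ that double-covers $U$ with branch locus $J\cap\Crit f$ over $\Delta\cap U$, and a branched double cover forces the sheet to limit onto the branch point; you then absorb the $h(r)\,dr$ term by translation along $\lambda_3$, which corresponds to the $w$-rotation in the model and hence preserves $\Crit q$. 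The conceptual framing via $J$ is illuminating and explains \emph{why} the section closes up over $\Delta$ rather than running off to infinity; the paper does not mention $J$ at all in this particular lemma, even though it is used in the preceding ones.

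The issue you flag yourself is the real one: the branched-cover heuristic, on its own, does not establish that the limit exists or that it lands in $\Crit f_U$. Two facts are silently being used: (i) the relevant component of $J$ is a smooth closed submanifold whose branch locus maps \emph{bijectively} to $\Delta\cap U$ (so that $f^{-1}((0,r))\cap J_e$ is a single point, ruling out a subsequential limit at a non-critical point of the singular fibre), and (ii) the sheet actually converges to that point along a one-sided path transverse to $\Delta$. Both are statements about the local model near $\Crit f_U$ and must be extracted from the explicit chart of \cite{RCB1}/\cite{CBMS} — which is precisely the computation the paper does. So your argument is correct in outline but, as you observe, does not replace the paper's chart computation; it motivates it. If you wanted to make the $J$-based version self-contained, the cleanest route is still to carry out essentially the same computation as the paper (writing the sheet in the $(z_1,z_2,w)$ coordinates and letting $b_1\downarrow 0$), at which point one might as well dispense with $J$ entirely. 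The observation about $h(r)\,dr$ being a translation along $\lambda_3$ that preserves $\Crit f_U$ is correct and is implicitly folded into the paper's computation through the factor $e^{ih(r)}$.
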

\begin{proof}
To prove this we need to describe how $T^*U/ \Lambda^*$ is compactified by gluing in $\Crit f_U$. This was done in \cite{RCB1}. We follow the exposition given in Sections 3.1 and 3.2 of \cite{CBMS}. Let 
\[ Y = \{ (z_1, z_2, w) \in \C^2 \times \C^* \, | \, |z_1z_2| < 1 \ \ \text{and} \ \ \log |w| \in (0,1) \} \]
with symplectic form induced from the standard one on $\C^2 \times \C^*$. Then the map 
\[ 
\begin{split}
q:  Y &\rightarrow U \\
q: (z_1, z_2, w) &\mapsto ( z_1 \bar z_2, \log |w| )
\end{split} 
\]
is a Lagrangian fibration whose general fibres are isomorphic to $T^2 \times \R$. We have that 
\[ \Crit q = \{ z_1 = z_2 = 0 \} \cong S^1 \times (0,1) \]
and $q( \Crit q) = \Delta = \{ 0 \} \times (0,1)$. 
Define the following two sections of $q$:
\[ 
   \begin{split}
     \Sigma_1(b,r) &= (1, \bar b, e^r) \\
     \Sigma_2(b,r) &= (b, 1, e^r)
   \end{split}
\]
where $b= b_1+ib_2 \in D$ and $r \in (0,1)$. We also have a $\C^* \times S^1$-action on $Y$ given by
\[ (\tau, e^{is}) \cdot (z_1, z_2, w) = ( \tau z_1, \bar \tau^{-1} z_2, e^{is} w). \]
We use the sections and the action to define the following maps
\[ 
\begin{split}
   \xi_j : (\C^* \times S^1) \times U & \rightarrow Y \\
            ((\tau, e^{is}), u) &  \mapsto (\tau, e^{is}) \cdot \Sigma_j(u) 
\end{split}
\]
Now consider the following open subsets of $(\C^* \times S^1) \times U$
\[ 
\begin{split}
  V_1 & = \{ ((\tau, e^{is}), (b,r) ) \in  (\C^* \times S^1) \times U \, | \, |b| < |\tau| < 1 \} \\
  V_2 & = \{ ((\tau, e^{is}), (b,r) ) \in  (\C^* \times S^1) \times U \, | \, 1 < |\tau| < |b|^{-1} \}
\end{split}
\]
and define 
\[ Z_j = \xi_j(V_j). \]
These are open subsets of $Y$ and 
\[ Z = Z_1 \cup Z_2 \cup \Crit q \]
 is a neighborhood of $\Crit q$. 

We define a similar structure on $T^*U/\Lambda^*$. Let $L_1$ and $L_2$ be the Lagrangian sections which are defined  as the graphs  respectively of the one-form $dH$ and of the zero $1$-form. Define the following $\C^* \times S^1$-action on $T^*U/\Lambda^*$:
\[ (\tau, e^{is}) \cdot (u, [ \eta]) = (u, [ \eta - \log |\tau| db_1 + \Arg \tau \, db_2 + s dr]). \]
Then the sections and the action are used to define maps
\[ 
\begin{split}
   \zeta_j : (\C^* \times S^1) \times U & \rightarrow T^*U/\Lambda^* \\
            ((\tau, e^{is}), u) &  \mapsto (\tau, e^{is}) \cdot L_j(u).
\end{split}
\]
We then define the open subsets
\[ Z'_j = \zeta_j (V_j). \]
It is not difficult to show that the map $g: Z'_1 \cup Z'_2 \rightarrow Z_1 \cup Z_2$ given by 
\[ g|_{Z'_j}  = \xi_j \circ \zeta_j^{-1} \]
defines a fibre-preserving symplectomorphism. It is proved in \cite{RCB1} that $f^{-1}(U)$ is symplectomorphic to the symplectic manifold obtained by gluing $T^*U / \Lambda^*$ to $Z$ via $g$. Under this identification we have that $\Crit f_U = \Crit q$.

We now want to compute the points  $g(u, [ (d \tilde \vartheta_{\epsilon})_u])$ for all $u= (b_1, r) \in C \cap U$. Notice that
\[ \begin{split}
        (u,  [ (d \tilde \vartheta_{\epsilon})_u]) & = \left( u, \left[ \half ( \lambda_1 + dH ) + h(r) dr \right] \right)  \\
                                                   \ & =\left(u,  \left[ - \half \log|b_1| db_1 + dH + h(r) dr \right] \right)  \\
                                                   \ & = \zeta_1((\sqrt{b_1}, e^{i h(r)}), (b_1, r)). 
   \end{split}
\]
Therefore 
\[ \begin{split}
              g(u, [ (d \tilde \vartheta_{\epsilon})_u]) & = \xi_1((\sqrt{b_1}, e^{i h(r)}), (b_1, r) ) \\
                                                               \   & = (\sqrt{b_1}, e^{i h(r)}) \cdot \Sigma_1(b_1, r)  \\
                                                               \   & = ( \sqrt{b_1}, \sqrt{b_1}, e^{r+i h(r)}) 
   \end{split} 
\]
It is now clear that 
\[
\lim_{b_1 \rightarrow 0} g(u, [ (d \tilde \vartheta_{\epsilon})_u]) \in \Crit q 
\]
and this proves the lemma. 
\end{proof}
This lemma completes the proof of point $(a)$ of Theorem \ref{lag_spheres}. 

To prove part $(b)$, we observe that equation (\ref{bend2}) implies that for all $u \in U_{e_j} \cap C$, $(d \tilde \vartheta_{\epsilon})_u$ is a point on the affine line
\[ \{ m \in M_{\R} \, | \, \inn{m-\tilde \theta_j}{n_{\check e_j}} = 0 \}. \]
Moreover, if $u \in W_{p_j} \cap C$ then $(d \tilde \vartheta_{\epsilon})_u = \tilde \theta_j$ and if $u \in W_{p_{j+1}} \cap C$ then\linebreak $(d \tilde \vartheta_{\epsilon})_u = \tilde \theta_{j+1}$. It follows that, as $u$ moves from $W_{p_j} \cap C$ to $W_{p_{j+1}} \cap C$ inside $U_{e_j}$, $(d \tilde \vartheta_{\epsilon})_u $ moves along a line of slope $n_{e_j}$ from $\tilde \theta_j$ to $\tilde \theta_{j+1}$. Then (\ref{ell_to_psi}) implies that it  winds $\half \ell_{e_j}$ times around the cycle $n_{e_j}$. This proves point $(b)$ of the theorem. 
\end{proof} 

\begin{ex} \label{spheres_quintic} The method for constructing Lagrangian spheres described in this section is quite general and also applies to examples of compact Calabi-Yau's with Lagrangian fibrations. Consider for example the quintic hypersurface in $\PP^4$. In Section 19.3 of \cite{GHJ}, the first author described an integral affine manifold with singularities $B$ whose associated manifold $X(B)$ is homeomorphic to the quintic. Applying \cite{CB-M}, we also have a symplectic form on $X(B)$ and a Lagrangian fibration $f: X(B) \rightarrow B$. Here $B$ is the boundary of a (suitably rescaled) standard $4$-simplex. The discriminant locus intersected with a $2$-face of $B$ is represented by the honeycomb pattern in Figure \ref{quintic}.
\begin{figure}[!ht] 
\begin{center}
\includegraphics{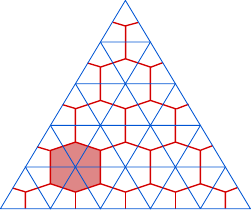}
\caption{Lagrangian spheres in the quintic are constructed over the shaded hexagon.} \label{quintic}
\end{center}
\end{figure}
The (blue) triangles come from a polyhedral subdivision giving $B$ the structure of a simple affine manifold with singularities. The fibration restricted to a neighborhood of the interior of a 2-face has the properties (a)-(c) listed at the end of \S \ref{lag_model}. Here $N_{\R}$ is the affine space spanned by the $2$-face. The $S^1$ action is given by translations in the direction of the $1$-dimensional subspace of $T^*B_0$ which is monodromy invariant around the discriminant locus inside the two face. We can construct Lagrangian spheres in $X(B)$ which map over the shaded hexagon in Figure \ref{quintic} as follows. The polyhedral subdivision (restricted to the $2$-face) defines a $2$-dimensional fan at the vertex contained in the hexagon. We may thus consider a semi-integral support function $\vartheta$ with respect to this fan. The results of this section then show that the graph of the differential of a smoothing of $\vartheta$ extends and lifts to a Lagrangian sphere in $X(B)$. 
\end{ex}

\subsection{Spheres over edges} \label{vc:edges}
There is another type of construction of vanishing cycles. Let $$P = \conv \{ (0,0), (1,0), (0,1), (1,1) \}$$ and subdivide it by adding the diagonal from $(1,0)$ to $(0,1)$. If we let $\nu$ be the unique piecewise affine function such that $\nu(1,1) = 1$ and zero at all other vertices, then $\Gamma$ looks like the
tropical curves shown in Figure~\ref{vc:conifold}. 

\begin{figure}[!ht] 
\begin{center}
\input{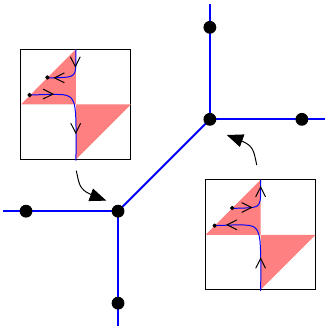_t}
\caption{$\Gamma$ and the curves $\gamma_1$ and $\gamma_3$ inside $\Log^{-1}(p)$ and $\Log^{-1}(p')$.} \label{vc:conifold}
\end{center}
\end{figure}

Here $\check X$ is a small resolution of an ordinary double point (with equation $xy-wz = 0$), while $X$ is a smoothing of this singularity (hence given by equation $xy-wz = t$). The transformation from $\check X$ to $X$ is also known as a conifold transition and it is of independent interest (see \cite{STY} and \cite{CB-M-Con} for more information on this vanishing cycle). For every integer $k$, we construct a curve $\lambda_k: S^1 \rightarrow S$ which can be extended to an embedding $\lambda_k: D^2 \rightarrow M_{\R} \times M_{\R} / M$, where $D^2$ is the unit disc, such that $\lambda_k(D^2) \cap S = \lambda_k(S^1)$. Then, as before, we define 
\[ L_k = \alpha^{-1}(\lambda_k(D^2)). \]
The curve $\lambda_k$ is defined as follows. Let $e$ be the edge joining the two vertices $p$ and $p'$ of $\Gamma$. Let $\theta$ and $\psi$ be two distinct (but close) points on $T^2$ lying on the $1$-cycle $\delta_e$. Let $\gamma_1$ be an oriented curve inside $\Log^{-1}(p)$ going from $(p, \psi)$ to $(p, \theta)$ drawn and oriented as in Figure~\ref{vc:conifold}. In particular $\gamma_1$ is contained in $T_p$. Let $\gamma_2: [0,1] \rightarrow S_e$ be defined as
\[ \gamma_2(s) = ((1-s)p + sp', [kn_es] + \theta). \]
Clearly $\Log(\gamma_2([0,1])) = e$. Define $\gamma_3$ to be an oriented curve inside $\Log^{-1}(p')$ going from $(p', \theta)$ to $(p', \psi)$ drawn and oriented as in Figure~\ref{vc:conifold}. Finally let $\gamma_4: [0,1] \rightarrow S_e$ be defined by
\[ \gamma_4(s) = ((1-s)p' + sp, [-kn_e s] + \psi). \]
\begin{figure}[!ht] 
\begin{center}
\includegraphics{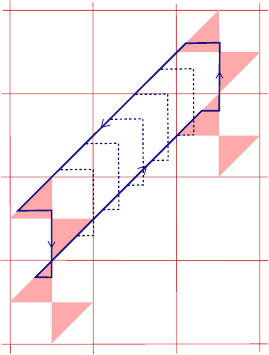}\\
\vspace{1ex}\refstepcounter{figure}Figure \arabic{figure}.\label{conif_cycle}
\end{center}
\end{figure}
We define $\lambda_k: S^1 \rightarrow S$ as the concatenation of the curves $\gamma_1$, $\gamma_2$, $\gamma_3$ and $\gamma_4$. Clearly  $\lambda_k$ can be extended to a map $\lambda_k: D^2 \rightarrow  M_{\R} \times M_{\R} / M$, this follows from the fact that the concatenation of  $\gamma_1, \ldots, \gamma_4$ is trivial in homology. Figure \ref{conif_cycle} depicts $\lambda_2: S^1 \rightarrow S$ projected to the torus and lifted to its universal cover. It also suggests a way to extend $\lambda_k$ to an embedding of $D^2$: the dashed lines are the fibres of $\Log \circ \lambda_k$.  A Lagrangian version of a sphere of this type will be given elsewhere. We will use this sphere in Section \ref{mirroran}.

\section{Compactly supported sections} \label{compact_support}
In this section we will determine the isotopy classes of pairs of sections $\sigma$ and $\sigma'$ of $f: X \rightarrow \R^n$ which coincide outside some compact set. In this case we say that $\sigma$ is compactly supported with respect to $\sigma'$. When $\sigma'$ is the zero section we just say that $\sigma$ is compactly supported. The closure of the set where the two sections differ is called the support of $\sigma$ (relative to $\sigma'$). 

\begin{defi} Suppose that $\sigma$ coincides with $\sigma'$ outside of a compact set $K \subseteq \R^n$ homeomorphic to an $n$-ball. Denote by $K^+$ a copy of $K$ with an orientation induced from a fixed orientation of $\R^n$ and by $K^-$ a copy of $K$ with the opposite orientation. Then we can glue $K^+$ and $K^-$ along their boundary to form an oriented $n$-sphere. Define a map from this sphere to $X$ by defining it to be equal to $\sigma$ on $K^+$ and equal to $\sigma'$ on $K^-$. Denote the image of this map by $\sigma \sigma'$. It defines a class in $H_n(X, \Z)$ which we denote by $[\sigma \sigma']$.
\end{defi}

We will determine when $[\sigma \sigma']$ coincides with the class of one of the spheres $L_{\ell}$ constructed in the previous section.

\subsection{The two-dimensional case} \label{cs:d2} Assume we are as in \S \ref{sec:d2}, with only two singular fibres corresponding to $S =\{ (0, i), (1, i) \}$.  Figure~\ref{compact_sup_d2} depicts two reduced sections (the continuous (blue) line and the dotted (red) line) which can be extended and then lifted to a pair of sections of $f: X \rightarrow \R^2$ which coincide outside a compact set. We prove this in the following. 

\begin{figure}[!ht] 
\begin{center}
\includegraphics{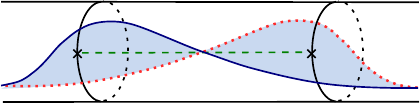}\\
\vspace{1ex}\refstepcounter{figure}Figure \arabic{figure}. \label{compact_sup_d2}
\end{center}
\end{figure}

\begin{prop} \label{cs2:prop} In dimension $2$, let $f: X \rightarrow \R^2$ be as in \S \ref{topological} with only two singular fibres. Then, for every integer $k$, the sections $\sigma_{k+2}$ and $\sigma_k$ defined in \S  \ref{sec:d2} are isotopic to a pair of sections which coincide outside some compact set. 
\end{prop}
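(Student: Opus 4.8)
The plan is to work with the reduced sections $\bar\sigma_k : \R \to \R \times S^1$ of the $\Log$ map and show that $\bar\sigma_{k+2}$ can be isotoped, through sections avoiding $S = \{(0,i),(1,i)\}$, to a section that agrees with $\bar\sigma_k$ outside a compact interval; then lift the isotopy to $X$. The starting observation is that $\bar\sigma_{k+2}$ and $\bar\sigma_k$ already coincide on $\R \setminus [0,2]$ (both are the constant $1$ there) and that the difference between them is concentrated over the two arcs $[0,1]$ and $[1,2]$, where $\bar\sigma_{k+2}$ winds $k+2$ times and $\bar\sigma_k$ winds $k$ times around $S^1$. So the statement is really about modifying the winding over one of these intervals by $\pm 2$ without crossing the two marked points. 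The point is that adding $2$ to the winding number over an interval containing one puncture $(j,i)$ can be realized by dragging a small ``extra loop'' of the section once around that puncture: going around $(0,i)$ (or $(1,i)$) changes the local winding by an even amount precisely because the puncture sits at the antipode $i \in S^1$ relative to the basepoint $1$, so a homotopy that lassos the puncture picks up winding $\pm 2$. Concretely I would interpolate: let $\bar\sigma^t$ be the family that, over $[0,1]$, pushes an arc of $\bar\sigma_{k+2}$ across the fibre on the side of $S^1$ not containing $i$, so that at $t=1$ the local winding has dropped by $2$ and the section is $\bar\sigma_k$ there; one checks $\bar\sigma^t(\R) \cap S = \emptyset$ for all $t$ because the isotopy is supported away from the height $\pi$ (i.e.\ away from $-1 \in S^1$) in the region near the puncture — this is exactly the geometry pictured in Figure~\ref{compact_sup_d2}.

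Having produced such an isotopy $\bar\sigma^t$ of reduced sections with $\bar\sigma^t(\R)\cap S=\emptyset$, I invoke the mechanism described in \S\ref{top_sec}: a reduced section avoiding $S$ extends to a section $\sigma^t_Y$ of $Y \to \R \times M_\R$ (by $\sigma^t_Y(s,q)=(s,\bar\sigma^t(q))$) and then lifts to a section $\sigma^t$ of $f : X \to \R^2$, with the lift unique up to isotopy since the image of $\sigma^t_Y$ avoids $\{0\}\times S$ and $\alpha$ restricted there is a trivial $S^1$-bundle. The family $\sigma^t$ is then an isotopy in $X$ from $\sigma_{k+2}$ to a section that, outside the compact set $K = [0,2] \times (\text{compact interval in the }\R\text{-direction})$, agrees with $\sigma_k$ — because outside $K$ the reduced sections already agreed and the extension/lift were done identically. (As in \S\ref{sec:d2}, the compactness in the base $\R^2$ direction is automatic: both sections equal the zero section outside $[0,2]$ in the first coordinate, and a section of $f$ is determined over the second coordinate with no constraint, so I can take $K$ to be $[0,2]$ times any large interval and arrange $\sigma_{k+2}, \sigma_k$ to literally coincide outside it.)

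The main obstacle is verifying carefully that the naive ``drag a loop around the puncture'' isotopy of reduced sections can be kept disjoint from $S$ throughout, and that it genuinely changes the twisting number by exactly $2$ rather than by some other even number or not at all — i.e.\ making Figure~\ref{compact_sup_d2} into an honest parametrized family. This is a one-dimensional, essentially elementary homotopy-theoretic computation (the relevant invariant is the class in $\pi_1$ of the fibre $S^1$ with the puncture removed, but the puncture is a point of $S^1$, so the complement is an interval and the winding is measured relative to the basepoint; the parity-$2$ comes from the symmetric placement of $i$), but it requires an explicit choice of the interpolating arcs and a check that they never pass through $(j,i)$. Once that is in hand, the extension and lifting steps are routine by the results already established in \S\ref{top_sec}, and the identification of $K$ as a $2$-ball on which the two sections differ completes the proof.
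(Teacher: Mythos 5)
There is a genuine gap, and also a substantive error in the preliminary step you propose. First, the reduced sections $\bar\sigma_{k+2}$ and $\bar\sigma_k$ already coincide on $\R\setminus[0,1]$ --- with two singular fibres we have $m=1$ in the notation of \S\ref{sec:d2}, so both sections equal the constant section $(s,1)$ outside $[0,1]$ and the winding is supported on $[0,1]$ alone --- so no work is needed at the reduced-section level to make them agree outside a compact set. Moreover, the isotopy you describe, which is supposed to ``drop the local winding over $[0,1]$ by $2$'' while staying disjoint from $S$, cannot exist: the twisting number \emph{is} the isotopy invariant of reduced sections in $(\R\times S^1)-S$ (this is exactly the $2$-dimensional version of Lemma~\ref{sec:iso}), so $\bar\sigma_{k+2}$ cannot be isotoped in the complement of $S$ to a section that restricts to $\bar\sigma_k$ on $[0,1]$. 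What the paper's isotopy actually does is put the two sections into a convenient geometric position (as in Figure~\ref{compact_sup_d2}) for the step you dismiss, without changing either twisting number.

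The step you call routine is in fact where the entire content of the proposition lies. After extending the reduced sections one has two sections $\sigma_Y,\sigma_Y'$ of $Y\to\R^2$ agreeing outside a compact $2$-disc $K$. Triviality of $\alpha$ over the image of each section gives a lift of each separately, but that is not enough: for the two lifts to agree outside $K$ the transition function $g:\R^2\setminus K\to S^1$ between them must extend continuously over $K$, which requires $g|_{\partial K}:S^1\to S^1$ to be nullhomotopic. This winding number is exactly $c_1(\alpha)$ evaluated on the $2$-sphere obtained by gluing $\sigma_Y|_K$ and $\sigma_Y'|_K$ along $\partial K$; circle bundles over $S^2$ are classified by $H^2(S^2,\Z)\cong\Z$, so there is a genuine obstruction here, in contrast with the $3$-dimensional Proposition~\ref{com_sup:dim3} where the relevant sphere is $S^3$ and $H^2(S^3,\Z)=0$ makes the lifting automatic. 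The paper's proof computes this obstruction: the complement of $\sigma_Y\cup\sigma_Y'$ in $Y$ has two bounded components, each a $3$-ball containing exactly one point of $\{0\}\times S$, and the induced orientations on the boundary $2$-spheres disagree, so the two contributions to $c_1(\alpha)$ cancel and the pullback bundle on $S^2$ is trivial. Your proposal contains no trace of this cancellation argument, which is the crux of the proof.
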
 

\begin{proof} We consider the case $k=-1$, the other cases are obtained after translation by a section. It can be easily seen that the reduced sections $\bar \sigma_{-1}$ and $\bar \sigma_1$ defined in \S  \ref{sec:d2} are isotopic (in $(\R \times S^1)- S$) to a pair of sections which can be depicted as in Figure~\ref{compact_sup_d2}, the continuous (blue) line representing $\bar \sigma_{-1}$ and the dotted (red) line representing $\bar \sigma_1$.  The two sections coincide outside a compact set containing $\Gamma = \{ 0, 1 \}$. As in \S \ref{topological}, let $Y = \R \times \R \times S^1$, which contains $\R \times S^1$ as the slice $\{ 0 \} \times \R \times S^1$. We must show that the sections can be extended and then lifted to a pair of sections of $f$ which coincide outside a compact set. The two sections can be extended to sections of $Y \rightarrow \R^2$ in such a way that they coincide outside a compact set $K$ homeomorphic to a $2$-disk. This can be done by unwinding both sections as we move away from the slice $\{ 0 \} \times \R \times S^1$ so that they both become flat (coinciding with the zero section). Denote by $\sigma_{-1}$ and $\sigma_1$ the two sections after the extension. We now show that there are lifts of both sections to the circle fibration $\alpha: X \rightarrow Y$ which coincide outside of $K$. 
As explained at the beginning of this section, we can construct a map $\xi_{\sigma_{-1}{\sigma_1}}: S^2 \to Y$, where $S^2$ is formed by gluing two copies of $K$ along the boundary and $\xi_{\sigma_{-1}{\sigma_1}} = \sigma_{-1}$ on one copy of $K$ and $\xi_{\sigma_{-1}{\sigma_1}} = \sigma_{1}$ on the other copy. Pull back the $S^1$-fibration $\alpha: X \rightarrow Y$ to $S^2$ via $\xi_{\sigma_{-1}{\sigma_1}}$. We claim that the pull back fibration is a trivial $S^1$ bundle over $S^2$. Recall that $\alpha$ satisfies the following property.  There exists an orientation on $Y$ such that the first Chern class of $\alpha$, evaluated on the boundary of a small ball centered at any one of the two points of $S$ (and not containing the other one), is equal to $1$. Notice that the complement of $\sigma_{-1} \cup \sigma_1$ inside $Y$ consists of three connected components. One of them is unbounded and the other two are two open balls, each one containing one of the points of $S$.  The intersections of these latter components with $\{ 0 \} \times \R \times S^1$ are the shaded regions in Figure~\ref{compact_sup_d2}. The union of the boundaries of these two regions coincides with the image $\xi_{\sigma_{-1}{\sigma_1}}(S^2)$. One region induces an orientation agreeing with the orientation on $S^2$, but the other one induces the opposite orientation. This implies that the first Chern class of $\alpha$ evaluated on $\xi_{\sigma_{-1}{\sigma_1}}(S^2)$ is zero and therefore the pullback bundle on $S^2$ is trivial. This shows that we can lift $\sigma_{-1}|_{K}$ and $\sigma_{1}|_{K}$ to the fibration $\alpha$ in such a way that the lifts coincide on the boundary of $K$. Then they can be extended outside of $K$ so that they coincide. \end{proof}

Let $L_k$ be the sphere constructed in \S \ref{spheres_d2} with twisting number $k$.

\begin{thm} \label{dim2_diff} In $H_2(X, \Z)$, up to some choice of orientation of $L_{k+1}$, we have
\[ [\sigma_{k+2} \sigma_k  ] = [L_{k+1}] \]
\end{thm}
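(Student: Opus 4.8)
The plan is to make the topological picture of Proposition~\ref{cs2:prop} precise as a statement about homology classes, by comparing the three constructions of $X$ and exploiting the $S^1$-action. Throughout I work with the topological model of $X$ from \S\ref{topological}, so $X = \alpha^{-1}(Y)$ with $Y = \R \times \R \times S^1$ and $S = \{(0,i),(1,i)\}$ the fixed locus of the $S^1$-action. By Proposition~\ref{cs2:prop}, after an isotopy we may assume $\sigma_{k+2}$ and $\sigma_k$ agree outside a compact $2$-ball $K$ and that their reduced sections look as in Figure~\ref{compact_sup_d2}. The class $[\sigma_{k+2}\sigma_k]$ is represented by the map $\xi = \xi_{\sigma_{k+2}\sigma_k}: S^2 \to X$ built by gluing the two lifted copies of $K$ along $\partial K$; the first step is simply to record that, by the Chern-class computation already carried out in the proof of Proposition~\ref{cs2:prop}, the pullback $S^1$-bundle over $S^2$ is trivial, so $\xi$ does lift $\alpha\circ\xi: S^2 \to Y$ and the construction makes sense in $X$, not just in $Y$.

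Next I would identify the image of the underlying map to $Y$. The complement of $\sigma_{k+2}\cup\sigma_k$ inside $Y$ has exactly two bounded components $B_0, B_1$, open $3$-balls, one around each point of $S$ (this is visible in Figure~\ref{compact_sup_d2}: the bounded regions of the complement of the two reduced sections in $\{0\}\times\R\times S^1$ are the shaded regions, and they thicken in the $t$-direction to balls $B_0,B_1$). Thus $\alpha\circ\xi(S^2) = \partial\overline{B_0}\cup\partial\overline{B_1}$, and with the orientations from the gluing, one ball contributes with $+$ and the other with $-$ sign — that's precisely what forced the Chern class to vanish, and it means $[\alpha\circ\xi] = [\partial\overline{B_0}] - [\partial\overline{B_1}]$ in $H_2$ of the relevant space (once one checks both spheres bound). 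The key point is then to see that $\alpha^{-1}(\partial\overline{B_i})$, i.e. the $S^1$-bundle restricted to the boundary sphere around the point $(i,i)\in S$, is exactly a copy of the sphere $L_{k+1}$ (up to isotopy and orientation): indeed $L_{k+1} = \alpha^{-1}(\lambda_{k+1}(\bar C))$ where $\lambda_{k+1}$ is a section of $\Log$ over $\bar C = [0,1]$ sending $\partial\bar C$ into $S$, and $\lambda_{k+1}(\bar C)$ together with a small completion at the two points of $S$ bounds precisely such a small ball. Comparing the winding data — $\sigma_{k+2}$ and $\sigma_k$ differ by $2$ in twisting number, the sphere interpolating between them winds $(k+2+k)/2 = k+1$ times — matches the twisting number of $L_{k+1}$ as recorded in \S\ref{spheres_d2} (this is the same bookkeeping as equations \eqref{transl_d2} and \eqref{trans_sphere}).

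To turn this into an equation in $H_2(X,\Z)$ I would argue as follows. The sphere $L_{k+1}$ separates a neighborhood in $X$ into two pieces, and $\alpha^{-1}(B_0)\cup L_{k+1}$ is, up to isotopy, a closed regular neighborhood of $L_{k+1}$ whose boundary sphere is $\alpha^{-1}(\partial\overline{B_1})$ — equivalently, the two boundary spheres $\alpha^{-1}(\partial\overline{B_0})$ and $\alpha^{-1}(\partial\overline{B_1})$ are isotopic (both are $\pm L_{k+1}$). Hence in $H_2(X,\Z)$, $[\alpha^{-1}(\partial\overline{B_0})] = \pm[\alpha^{-1}(\partial\overline{B_1})] = \pm[L_{k+1}]$, and feeding the oriented gluing description of $\xi$ through $\alpha^{-1}$ gives $[\sigma_{k+2}\sigma_k] = [\alpha^{-1}(\partial\overline{B_0})] - [\alpha^{-1}(\partial\overline{B_1})]$; but the orientations are such that these two terms do not cancel (one ball is read with the $S^2$-orientation, the other reversed), so the total is $[L_{k+1}]$ up to sign, which is the claim. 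The cleanest way to nail the sign and avoid the "do they cancel or add" confusion is to note that $[\sigma_{k+2}\sigma_k]$ can also be computed as $[\sigma_{k+2}] - [\sigma_k]$ in an appropriate relative sense, or directly as the class of the single boundary sphere around the "extra" twist, and then just check on a generator of $H_2$ — e.g. against the divisor class or against the $T^2$-fibre class — that $[L_{k+1}]$ has the right intersection numbers. By \eqref{dim2_inters} and the identification of $L_0$ as a vanishing cycle in \S\ref{spheres_d2}, $L_{k+1}$ is characterized in $H_2(X,\Z)$ by these numbers, so matching them finishes the proof.

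The main obstacle I anticipate is the orientation and sign bookkeeping: one must check carefully that the two boundary spheres $\alpha^{-1}(\partial\overline{B_0})$, $\alpha^{-1}(\partial\overline{B_1})$ enter the class $[\sigma_{k+2}\sigma_k]$ with \emph{opposite} induced orientations (as in the Chern class argument) but that, as unoriented submanifolds, both are isotopic to $L_{k+1}$, so that the two contributions reinforce rather than cancel — this is exactly the subtle point where the trivial-bundle computation of Proposition~\ref{cs2:prop} and the nontrivial homology class coexist. Everything else (the identification of the balls, of $\alpha^{-1}$ of a boundary sphere with $L_{k+1}$, the winding-number count) is routine once the topological pictures are set up, and the $k\neq-1$ cases reduce to $k=-1$ by applying the translation symplectomorphism $T_{\sigma_\ell}$ of \S\ref{ts}, using $T_{\sigma_\ell}(\sigma_k) = \sigma_{k+\ell}$ and $T_{\sigma_\ell}(L_k) = L_{k+\ell}$ from \eqref{transl_d2}.
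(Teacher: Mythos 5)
The central move in your argument---identifying the $S^1$-bundle $\alpha^{-1}(\partial\overline{B_i})$ over a boundary $2$-sphere $\partial\overline{B_i}$ with the sphere $L_{k+1}$---fails on dimensional grounds. Since $\partial\overline{B_i}$ is a $2$-sphere enclosing one point of $S$, the restriction of $\alpha$ to $\alpha^{-1}(\partial\overline{B_i})$ is a circle bundle with Chern number $\pm 1$ over $S^2$, hence $\alpha^{-1}(\partial\overline{B_i})\cong S^3$. By contrast, $L_{k+1}=\alpha^{-1}(\lambda_{k+1}(\bar C))$ is $\alpha^{-1}$ of an \emph{arc} $\lambda_{k+1}(\bar C)\subset Y$ joining the two points of $S$, so it is an $S^2$. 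These cannot be isotopic, and the proposed identity $[\sigma_{k+2}\sigma_k]=[\alpha^{-1}(\partial\overline{B_0})]-[\alpha^{-1}(\partial\overline{B_1})]$ is ill-typed: it equates a class in $H_2(X,\Z)$ with a difference of classes in $H_3(X,\Z)$. The subsequent sign and cancellation discussion does not rescue this because the problem is not one of orientation.

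The paper's argument avoids this mismatch entirely. Working with $k=-1$ (the reduction by translation, which you also use, is fine), it sets $W_j=\alpha^{-1}(Z_j)$ where $Z_1,Z_2$ are the closures of the two bounded components of $Y\setminus(\sigma_{-1}\cup\sigma_1)$, each a $3$-ball, meeting along $Z_1\cap Z_2\cong[0,1]$. Then Mayer--Vietoris for $W=W_1\cup W_2$ gives an isomorphism $H_2(W;\Z)\cong H_1(W_1\cap W_2;\Z)\cong\Z$ with $W_1\cap W_2\cong[0,1]\times S^1$. Both $[\sigma_1\sigma_{-1}]$ and $[L_0]$ are computed in $H_2(W;\Z)$: $L_0\cap(W_1\cap W_2)$ is an $\alpha$-fibre circle, hence visibly a generator; for $[\sigma_1\sigma_{-1}]$ the paper compares with the local Hopf fibration $\alpha':(S^3,S^1)\to(S^2,\ast)$ to conclude its boundary class is also a generator. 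Your picture of the two enclosing balls and your winding-number bookkeeping are correct, but the passage to $H_2(X,\Z)$ has to go through $H_1$ of the separating annulus $W_1\cap W_2$, not through $\alpha^{-1}$ of the boundary $2$-spheres.
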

\begin{proof} We do the case $k=-1$, the other cases follow after translation by a section. Let us consider $\sigma_{-1}$ and $\sigma_{1}$ as sections of $Y \rightarrow \R^2$ (i.e., before they have been lifted to sections of $X$). Then let $Z$ be the closure of the union of the two bounded components of $Y - (\sigma_{-1} \cup \sigma_{1})$. The intersection of $Z$ with $\{ 0 \} \times \R \times S^1$ is the shaded area in Figure \ref{compact_sup_d2}. 
We can write $Z = Z_1 \cup Z_2$, where $Z_1$ and $Z_2$ are the closures of the two bounded components. We can assume that $Z_1 \cap Z_2$ is homeomorphic to the interval $[0,1] \subset \R$ and that $Z_1$ and $Z_2$ are both homeomorphic to a $3$-ball. Now let $W = \alpha^{-1}(Z)$ and $W_j = \alpha^{-1}(Z_j)$. We have that $W_j$ is homeomorphic to a $4$-ball. We can view $[\sigma_{1} \sigma_{-1}]$ as an element of $H_2(W, \Z)$.  Notice that also $[L_0]$ can be viewed as an element of $H_2(W, \Z)$. In fact let $I$ be the segment joining the two points of $S$ depicted by the (green) dashed line in Figure \ref{compact_sup_d2}. Notice that $I \subseteq Z$. Then $L_0 = \alpha^{-1}(I) \subset W$. 

The Mayer-Vietoris sequence for the homology of $W$ with decomposition $W = W_1 \cup W_2$ gives the isomorphism
\[ H_2(W; \Z) \to H_1 (W_1 \cap W_2; \Z). \]
Moreover $W_1 \cap W_2 \cong [0,1] \times S^1$ so that $H_1 (W_1 \cap W_2; \Z) \cong \Z$. To prove the statement of the theorem it is enough to show that $[\sigma_{1} \sigma_{-1}]$ and $[L_0]$ are both mapped to a generator of $H_1(W_1 \cap W_2; \Z)$. This is obvious for $[L_0]$, since $L_0 \cap (W_1 \cap W_2)$ is a fibre of $\alpha$ whose class generates $H_1(W_1 \cap W_2; \Z)$. We now show that this is true also for $[\sigma_{1} \sigma_{-1}]$. 

Let $B_4$ and $B_3$ be the balls of radius $1$ centered at the origin respectively in $\C^2$ and in $\R \times \C$. Let $B_4^*$ and $B_3^*$ be the two balls minus their origin. Then the map 
\begin{equation} \label{local:alpha} 
   \begin{split}
     \alpha': B_4  & \to  B_3 \\
     (x,y) & \mapsto ( |x|^2 - |y|^2, 2 xy)   
   \end{split} 
\end{equation} 
is surjective and $\alpha'_{|B_4^*}: B_4^* \to B_3^*$ is a principal $S^1$ bundle with first Chern class $1 \in \Z = H^2(B_3^*, \Z)$. The fibration $\alpha': B_4 \rightarrow B_3$ is fibrewise isomorphic to $\alpha_{|W_j}: W_j \rightarrow Z_j$.  We have $S^3 = \partial B_4$ and $S^2 = \partial B_3$.  Given a point $\ast \in S^2$, let $S^1 = \alpha'^{-1}(\ast)$. Then we have isomorphisms 
\begin{equation*} \begin{CD}
 H_2 (S^3, S^1 ; \Z) @>\partial>> H_1(S^1; \Z)  \cong \Z \\
@V \alpha'_*VV \\
H_2(S^2, \ast; \Z)
\end{CD}
\end{equation*}
where $\partial$ is the boundary map. The map of pairs $\alpha': (S^3, S^1) \rightarrow (S^2, \ast)$ is clearly homotopy equivalent to the map $\alpha: (\partial W_j, W_1 \cap W_2) \rightarrow (\partial Z_j, Z_1 \cap Z_2)$. So we also have the diagram of isomorphisms:
\begin{equation} \label{hopf} \begin{CD}
 H_2 (\partial W_j, W_1 \cap W_2; \Z) @>\partial>> H_1(W_1 \cap W_2; \Z)  \cong \Z \\
@V \alpha_*VV \\
H_2(\partial Z_j, Z_1 \cap Z_2; \Z).
\end{CD}
\end{equation}
Notice that $[\sigma_{1} \sigma_{-1}]$ can be viewed as an element in $H_2(\partial W; \Z)$ and $\sigma_{1}\sigma_{-1} \cap \partial W_j$ gives an element of $H_2 (\partial W_j, W_1 \cap W_2; \Z)$. Since $\sigma_{1}$ and $\sigma_{-1}$ are sections, $\alpha_{\ast}[\sigma_{1}\sigma_{-1} \cap \partial W_j]$ is a generator of $H_2(\partial Z_j, Z_1 \cap Z_2; \Z)$. This implies that also $\partial [\sigma_{1}\sigma_{-1} \cap \partial W_j] = [\sigma_{1}\sigma_{-1} \cap (W_1 \cap W_2)]$ is a generator of  $H_1(W_1 \cap W_2; \Z)$. This concludes the proof.
\end{proof}

\subsection{The three-dimensional case} \label{cs:d3}
Let $n=3$. We consider the complex tropical fibration $f: X \rightarrow \R \times M_{\R}$ and the construction of sections given in \S \ref{top_sec}. Let $C$ be a bounded connected component of $M_{\R} - \Gamma$. Recall that $C$ corresponds to a toric divisor $D_C$ in $\check X$. Moreover, every edge $e$ in $\partial\bar C$ corresponds to a toric boundary divisor of $D_C$ which we denote $\PP^1_e$.  
\begin{defi} \label{defi:Kc} Denote by $\phi_{K_C}: P \rightarrow \R$ a support function for the line bundle $\mathcal O_{\check X}(D_C)$ on $\check X$. We fix a choice of $\phi_{K_C}$ such that it satisfies $\phi_{K_C}(v_C) = 0$ and $\phi_{K_C}(v) = -1$ for all other vertices $v$ in the subdivision of $P$.  The corresponding kinks $K_{C} = (k_e)_{e \in \mathscr E}$ of $\phi_{K_C}$ satisfy the following. If $e$ is an edge of $\partial\bar C$, then $k_e = -b_e - 2$, where $b_e$ is the self-intersection number of $\PP^1_{e}$ inside $D_C \subset \check X$. If $e$ is an edge which emanates from a vertex of $C$, but is not in $\partial\bar C$, then $k_e=1$. In all other cases $k_e = 0$. 
\end{defi}
The numbers $K_C$ also identify an isotopy class of sections of $f$. 

Now let $R \subset M_{\R}$ be a convex compact set whose interior contains $C$ and which does not intersect other edges of $\Gamma$ except the ones emanating from the vertices of $C$. Let 
\[ \tilde R := [-\epsilon, \epsilon] \times R, \]
for some $\epsilon \in (0, 1)$. Let $\sigma_0$ be the zero section.  

\begin{prop}  \label{com_sup:dim3} An isotopy class $K \in \ker \Phi$ of sections of $f$ has a representative $\sigma$ whose support is contained in $\tilde R$ if and only if $K = a K_C$, for some $a \in \Z$.
\end{prop}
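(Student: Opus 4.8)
The plan is to characterize which isotopy classes $K\in\ker\Phi$ admit a representative section that is the zero section outside $\tilde R$. The basic idea is to separate two conditions: (1) the twisting numbers $k_e$ must vanish on every edge $e$ that is \emph{not} one of the edges of $\partial\bar C$ nor one of the edges emanating from a vertex of $C$ (since such edges lie outside $\tilde R$, where $\sigma$ must equal $\sigma_0$), and (2) the remaining data must be consistent with extending $\bar\sigma$ over all of $M_\R$ — i.e.\ must lie in $\ker\Phi$ — together with the extra constraint imposed by the requirement that outside $\tilde R$ the section unwinds completely. First I would set up the topological picture of \S\ref{top_sec}: a representative $\sigma_K$ with support in $\tilde R$ corresponds to a reduced section $\bar\sigma: M_\R\to M_\R\times(M_\R/M)$ avoiding $S$, which coincides with $\bar\sigma_0$ outside $R$, and whose winding data along each bounded edge $e$ of $\Gamma$ is the integer $k_e$. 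The edges of $\Gamma$ meeting $R$ are exactly the edges of $\partial\bar C$ and the ``half-edges'' emanating from the vertices $p_j$ of $\partial\bar C$; on any edge disjoint from $R$ we must have $k_e=0$. This already forces $k_e=0$ for all $e$ outside the support pattern described in Definition~\ref{defi:Kc}.

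Next I would analyze the constraints on the nonzero $k_e$'s. For each vertex $p_j\in\partial\bar C$, the three edges of $\Gamma$ through $p_j$ are: two edges $e,e'$ of $\partial\bar C$ and one edge $e''_j$ leaving $C$; by the balancing condition \eqref{balance}, the winding of $\bar\sigma$ around $p_j$ must be trivial, which (since $\bar\sigma$ is the zero section on the far part of $e''_j$, outside $R$) relates the $k$-value on $e''_j$ to those on the two boundary edges $e,e'$ at $p_j$. More precisely, since $\bar\sigma$ comes back to the zero section along $e''_j$, there is no freedom: the integer assigned to $e''_j$ is determined by the jump in winding between the two $\partial\bar C$-edges at $p_j$. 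Combined with the global consistency condition \eqref{sec:comp} around $\partial\bar C$, namely $\sum_{e\subseteq\partial\bar C}\epsilon_e k_e n_e=0$, one finds that the space of admissible $(k_e)$ is one-dimensional. I would then compare with the kinks of $a\,\phi_{K_C}$: Definition~\ref{defi:Kc} tells us that for $a=1$ the boundary kinks are $k_e=-b_e-2$, the emanating kinks are $k_e=1$, and all others vanish — and scaling by $a$ scales all of these. So it remains to check that any admissible nonzero choice is a multiple of this one, i.e.\ that the one-dimensional solution space of the balancing plus vanishing constraints is spanned by $K_C$.

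The main technical step, and the likely obstacle, is the second half of the previous paragraph: showing that the constraint ``$\bar\sigma$ returns to the zero section along each $e''_j$'' together with \eqref{sec:comp} pins down the solution to a $\Z$-line. I would argue this by propagating the winding of $\bar\sigma$ around $\partial\bar C$ one vertex at a time. Labeling the boundary edges cyclically $e_1,\dots,e_r$ and using that $n_{e_j}, n_{e_{j+1}}$ form a basis of $M$ at each vertex $p_{j+1}$ (by smoothness of the subdivision, as noted after \eqref{balance}), the requirement that $\bar\sigma|_{e''_{j+1}}$ unwind forces $k_{e_{j+1}}$ to be expressed in terms of $k_{e_j}$ and the self-intersection number $b_{e_{j+1}}$ via the toric relation \eqref{self:int} (reinterpreted for $\Gamma$'s dual combinatorics); this is exactly the recursion \eqref{ell_to_psi}/\eqref{self:int} with the $\tfrac12$'s removed. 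Iterating once around and imposing closure $k_{e_{r+1}}=k_{e_1}$ forces a single free parameter $a\in\Z$, and matching initial data with $\phi_{K_C}$ identifies the parameter. Finally, for the ``if'' direction I would simply exhibit, for $K=aK_C$, the explicit reduced section: take $\bar\sigma_0$ on $M_\R\setminus R$, and inside $R$ build $\bar\sigma$ edge-by-edge over $\partial\bar C$ with the prescribed windings (possible precisely because $aK_C\in\ker\Phi$, so \eqref{sec:comp} holds), unwinding over each $e''_j$ before exiting $R$; then extend over $C$ using the balancing condition and lift to $\sigma$ as in \S\ref{top_sec}, obtaining a section agreeing with $\sigma_0$ outside $\tilde R$.
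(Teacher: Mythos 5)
Your overall strategy matches the paper's: show the twisting numbers vanish on edges away from $C$, derive constraints that pin the remaining data down to a $\Z$-line spanned by $K_C$, and prove the converse by explicit construction. However, the ``only if'' direction as you describe it has two genuine gaps.

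First, the mechanism producing the constraints is misidentified. You attribute them to ``the winding of $\bar\sigma$ around $p_j$ must be trivial, by the balancing condition \eqref{balance}.'' But \eqref{balance} is a property of the primitive edge vectors of $\Gamma$ at a vertex; it says nothing about the section $\bar\sigma$ and gives no constraint on the $k_e$. What actually forces the relations is that $\bar\sigma$ must extend not only over $C$ but over every component of $M_\R - \Gamma$, in particular over each component $C'$ lying on the \emph{opposite} side of a boundary edge $e$ of $C$. Applying the extension criterion \eqref{sec:comp} to $C'$ — which, thanks to the vanishing of $k_e$ away from $C$, involves only the three edges $d^+,e,d^-$ — yields the vector equation \eqref{kc_cond}. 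This is not a per-vertex condition; it involves both endpoints of $e$ simultaneously, and separately one must observe (as the paper does) that the same equation persists when $C'$ is unbounded, by examining $\bar\sigma$ along a segment from $q^+\in d^+$ to $q^-\in d^-$ lying outside $R$. Your sketch does not engage with the unbounded case.

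Second, the recursion you propose on boundary-edge $k$-values is fragile. Expanding \eqref{kc_cond} in coordinates gives $k_{d^+}=k_{d^-}$ and $k_e=-k_{d^+}(b_e+2)$. Expressing $k_{e_{j+1}}$ in terms of $k_{e_j}$ by eliminating $k_{d}$, as you suggest, requires dividing by $b_{e_j}+2$, which fails whenever $\PP^1_{e_j}$ is a $(-2)$-curve in $D_C$ — in which case $k_{e_j}=0$ for every admissible $K$ and carries no information. The paper's route avoids this entirely: the relation $k_{d^+}=k_{d^-}$ for the \emph{outgoing} edges propagates around $\partial\bar C$ without division, giving a single free parameter $a$, after which each boundary $k_e$ is read off as $-a(b_e+2)$.

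Finally, in the ``if'' direction you say ``lift to $\sigma$ as in \S\ref{top_sec}, obtaining a section agreeing with $\sigma_0$ outside $\tilde R$.'' A lift exists, but agreement with the given lift of $\sigma_0$ outside $\tilde R$ is not automatic. The paper's argument is to glue two copies of $\tilde R$ into an $S^3$, pull back the $S^1$-bundle $\alpha$, and use that every $S^1$-bundle on $S^3$ is trivial to obtain compatible lifts on the boundary. You should include this step rather than treat the lift as formal.
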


\begin{proof} Suppose that a section $\sigma$ in the isotopy class $K = (k_e)_{e \in \mathscr E}$ has support contained in $\tilde R$. Let $\bar \sigma: M_{\R} \rightarrow M_{\R} \times M_{\R} / M$ be the reduced section. Then $\bar \sigma$ coincides with $\bar \sigma_0$ outside $R$. If $e$ is an edge which does not emanate from a vertex of $C$, then $k_e = 0$, since $R$ does not intersect $e$.

Now let $e$ be an edge of $\partial\bar C$. Let $p^+$ and $p^-$ be the vertices of $e$ corresponding respectively to the simplex $P^+_e$ and $P^-_e$. Let $e^+$ and $e^-$ be the edges $\partial\bar C$, adjacent to $e$, which emanate respectively from $p^+$ and $p^-$. Let $d^+$ and $d^-$ be the remaining edges emanating respectively from $p^+$ and $p^-$ (see Figure \ref{compact_sup}). 
By choosing appropriate integral affine coordinates, we may assume that 
\begin{equation} \label{ne:ne+} \begin{split}
     n_{e^+} & =  (0,-1), \\
     n_e & =  (1, 0), \\
     n_{e^-}  & =  (-b_e, 1). 
     \end{split}
\end{equation} 
The balancing condition implies
\[ n_{d^+} = n_{e^+}  -  n_e = (-1,-1), \qquad n_{d^-} =n_e - n_{e^-} = (1+b_e, -1). \] 
Let $C'$ be the other component of $M_{\R} - \Gamma$ adjacent to $e$. In the case $C'$ is a bounded component then condition (\ref{sec:comp}) holds and becomes
\begin{equation} \label{kc_cond} 
k_{d^+} (n_{e}  -  n_{e^+}) +  k_e n_e + k_{d^-} (n_e - n_{e^-}) = 0. 
\end{equation}
This imposes the following necessary conditions:
\begin{equation} \label{kc_cond2}
  \begin{split}
    k_{d^+} & =  k_{d^-} \\
   k_e & =  - k_{d^+} (b_e+2).
   \end{split}
\end{equation}
If we prove that these conditions hold also when $C'$ is not bounded, then the first equation of (\ref{kc_cond2}) implies that all numbers attached to edges emanating from a vertex of $C$ but not contained in $\partial\bar C$ coincide with a fixed number $a \in \Z$, which implies that $K = a K_C$.  Thus, assuming $C'$ is not bounded, we interpret $k_{d^+}$ and $k_{d^-}$ as the number of times $\bar \sigma$ winds along $d^+$ and $d^-$. If we choose two points $q^+$ and $q^-$, respectively on  $d^+$ and $d^-$, and sufficiently far from $C$, then $\bar \sigma$ is constant along the segment joining $q^+$ and $q^-$, since it coincides with the zero section. Now consider the convex hull  of $p^+$, $p^-$, $q^+$ and $q^-$ (i.e. the region labeled by $Q_e^-$ in Figure \ref{compact_sup}), then the fact that its boundary must be mapped by $\bar \sigma$ to a homologically trivial loop implies that (\ref{kc_cond}) must hold and hence also \eqref{kc_cond2}.

On the other hand, let us show that there exists a section representing the class $a K_C$ whose support is contained in $\tilde R$. For later use, we will construct a piecewise linear section. First let us construct a map $\tilde \sigma: M_{\R} \rightarrow M_{\R}$ with support inside $R$ such that the reduced section $\bar \sigma: M_{\R} \rightarrow  M_{\R} \times  M_{\R} / M$ can be defined as
\begin{equation} \label{reduced_sigma}
   \bar \sigma(b) = (b, [\tilde \sigma(b)]).
\end{equation}
  Fix some point $p_C$ in the interior of $C$. Let $e$ be an edge of $\partial\bar C$ and let $p^+$, $p^-$, $e^+$, $e^-$, $d^+$ and $d^-$ be as above. Choose points $q^+$ and $q^-$ on $d^+$ and $d^-$ respectively which are also in $R$. Denote by $Q_e^+$ the convex hull of $p_C$, $p^+$ and $p^-$ and by $Q_e^-$ the convex hull  of $p^+$, $p^-$, $q^+$ and $q^-$ (see Figure \ref{compact_sup}).  Let $Q_e = Q_e^+ \cup Q_e^-$. Since $R$ is convex, $Q_e$ is contained in $R$. 

Let $\phi_{K_C}$ be the support function of Definition \ref{defi:Kc}. Denote by $m^+_e$ (resp. $m^-_e$) the linear part of $\phi_{K_C}$ restricted to $P^+_e$ (resp. to $P^-_e$). Assuming that $n_{e^+}$, $n_e$ and $n_{e^-}$ are as in \eqref{ne:ne+}, one computes that $m^+_e = n_{e^+} - n_e$ and $m^-_e= n_e - n_{e^-}$. 
Given a point $b \in Q_e^+$, there are unique positive real numbers $t_1$, $t_2$ and $t_3$, satisfying $\sum t_j = 1$, such that we can write $b = t_1 p_C + t_2 p^+ + t_3 p^-$. Define
\[ \tilde \sigma(b) = a(t_2 m_e^+ + t_3m_e^-). \]
In particular $\tilde \sigma(p_C) = 0$,  $\tilde \sigma(p^+) =  a m_e^+$ and $\tilde \sigma(p^-) =  a m_e^-$.
Let us now define $\tilde \sigma$ on $Q_e^-$. On the segment from $p^+$ to $q^+$ define $\tilde \sigma$ by 
\begin{equation} \label{di+}
    \tilde \sigma ((1-s)p^+ + sq^+) =  a(1-s) m^+_e.
\end{equation}
Similarly, on the segment from $p^-$ to $q^-$ 
\[  \tilde \sigma ((1-s)p^- + sq^-) =  a(1-s)m^-_{e}. \]
Define $\tilde \sigma$ on the segment from $q^+$ to $q^-$ to be zero. By construction $\tilde \sigma$ extends to the interior of $Q_e^-$ (see Figure \ref{compact_sup} for a picture of the image of $Q_e$ under $\tilde \sigma$). We do this for every edge $e \subset \partial\bar C$ and we define $\tilde \sigma$ to be zero on the rest of $M_{\R}$. Define the reduced section $\bar \sigma$ by \eqref{reduced_sigma}. By construction we have $\bar \sigma(M_{\R}) \cap S = \emptyset$. 

We now show that we can extend $\bar \sigma$ to a section $\sigma: \R \times M_{\R} \rightarrow X$ with support inside $\tilde R$.  First extend it to a section $\sigma_Y: \R \times M_{\R} \rightarrow Y$, where $Y = \R \times M_{\R} \times M_{\R}/M$ by defining $\sigma_Y(t,p) = ((t,p), [\rho(t) \tilde{\sigma}(p)])$, where $\rho$ is a suitable bump function with values in the interval [0,1] and vanishing outside $[-\epsilon, \epsilon]$.  In fact we can also choose $\rho$ with values in the smaller interval $[0, 1- \epsilon]$ for small $\epsilon$; this choice will be more convenient later.   Clearly $\sigma_Y$ coincides with the zero section outside $\tilde R$.
We have to show that we can lift $\sigma_Y$ to the $S^1$ fibration $\alpha: X \rightarrow Y$, so that it coincides with the lift of the zero section outside $\tilde R$. Notice that $\tilde R$ is homeomorphic to a $3$-ball. If we consider a $3$-sphere as two copies of $\tilde R$ glued along its boundary, then $\sigma_Y$ on one copy of $\tilde R$ and the zero section on the other copy give a continuous map from $S^3$ to $Y$. The pull-back of $\alpha$ via this map is trivial, since all $S^1$-bundles are trivial on $S^3$.  Therefore the map can be lifted to the $S^1$-bundle. This gives lifts of $\sigma_Y$ and of the zero section which coincide on the boundary of $\tilde R$. This ends the construction of sections with compact support. 
\end{proof}

Observe that the line bundle on $\check X$ corresponding to a Lagrangian section of the type $\sigma_{aK_C}$ via the correspondence of Conjecture~\ref{hms:dim2} is $\mathcal{O}_{\check X}(-aD_C)$. 

\begin{figure}[!ht] 
\begin{center}
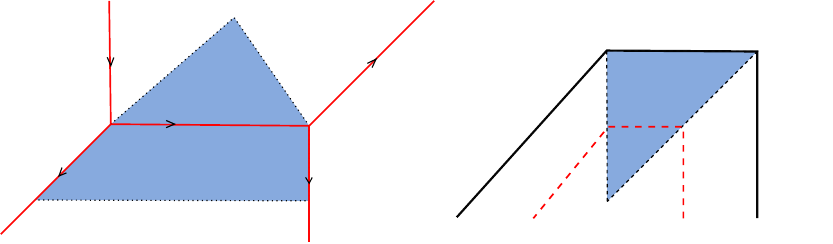
\caption{The picture on the right shows the image of $Q_e$ under $\tilde \sigma$ in the case $b_e=-1$ and $a=-1$ (shaded region). The dark line is $\tilde \sigma(\partial\bar C)$. The dashed (red) line is $\tilde \lambda(\partial\bar C)$.}
\label{compact_sup}
\end{center}
\end{figure}

A compactly supported section $\sigma$ and the zero section, as explained in the beginning of this section,  define a homology class in $H_{3}( X, \Z)$ denoted by $[\sigma\sigma_0]$. Now, given a bounded connected component $C$ of $M_{\R} - \Gamma$, consider the sphere $L_{\ell}$ defined by the numbers $\ell_e = b_e + 2$ for every edge $e$ bounding $C$ and denote it by $L_{-K_C}$. We then have

\begin{thm} \label{diff_sec} Up to some choice of orientation on $L_{-K_C}$ we have
\[ [\sigma_{-K_C}\sigma_0] = [L_{-K_C}]. \]
\end{thm}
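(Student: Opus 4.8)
The plan is to adapt the proof of Theorem~\ref{dim2_diff} to dimension three. First observe that both cycles live over a compact neighbourhood of $\bar C$: by Proposition~\ref{com_sup:dim3} the sphere $\sigma_{-K_C}\sigma_0$ has support inside $\tilde R$, while $L_{-K_C}=\alpha^{-1}(\lambda(\bar C))$ lies over $\{0\}\times R$, so it suffices to prove the identity inside $H_3(W,\Z)$ for a suitable closed neighbourhood $W$ of both cycles in $f^{-1}(\tilde R)$. I would fix convenient representatives: for $\sigma:=\sigma_{-K_C}$ the piecewise linear section built in the proof of Proposition~\ref{com_sup:dim3}, with reduced section $\bar\sigma(b)=(b,[\tilde\sigma(b)])$ and $\tilde\sigma$ supported on the fans $Q_e=Q_e^+\cup Q_e^-$ around the edges of $\partial\bar C$; and for $L_{-K_C}$ the sphere built from the section $\lambda$ of $\Log$ with twisting numbers $\ell_e=b_e+2$ as in \S\ref{vc:d3}, arranged so that the lift $\tilde\lambda(\partial\bar C)$ of $\lambda|_{\partial\bar C}$ runs parallel to $\tilde\sigma(\partial\bar C)$, as in Figure~\ref{compact_sup}. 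This sphere indeed exists: $\ell_e=b_e+2$ has the parity of $b_e$, and since $K_C=(k_e)\in\ker\Phi$ the numbers $\ell_e=-k_e$ satisfy the balancing condition, so conditions (a)--(b) of Definition~\ref{twist:spheres} hold.

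Next, regarding $\sigma$ and $\sigma_0$ as sections of $Y=\R\times M_\R\times(M_\R/M)\to\R\times M_\R$ (before lifting to $X$), they agree outside $\tilde R$, and I let $Z\subset Y$ be the closure of the union of the bounded components of $Y-(\sigma^Y\cup\sigma_0^Y)$. The delicate piece of $Z$ is $Z\cap(\{0\}\times S)$, the portion of the critical surface lying over the edges of $\partial\bar C$, over their vertices, and over the edges emanating from those vertices inside $R$; and the disc $\lambda(\bar C)$, which meets $\{0\}\times S$ exactly along $\lambda(\partial\bar C)$, here plays the role played in Theorem~\ref{dim2_diff} by the arc $I$ joining the two focus--focus points. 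I would then choose an excisive cover $Z=Z_1\cup Z_2$, compatible with the parallel curves $\tilde\sigma(\partial\bar C)$ and $\tilde\lambda(\partial\bar C)$, so that with $W:=\alpha^{-1}(Z)$ and $W_j:=\alpha^{-1}(Z_j)$ the pieces $W_1,W_2$ are acyclic in the relevant degrees, $Z_1\cap Z_2$ is disjoint from $\{0\}\times S$ and retracts onto a circle $\gamma$ cut out by $\lambda(\bar C)$, and $W_1\cap W_2=\alpha^{-1}(Z_1\cap Z_2)$ has $H_2(W_1\cap W_2,\Z)\cong\Z$ generated by the class of $\gamma\times S^1$ ($S^1$ a fibre of $\alpha$). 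The Mayer--Vietoris sequence then yields a boundary isomorphism $H_3(W,\Z)\xrightarrow{\ \sim\ }H_2(W_1\cap W_2,\Z)\cong\Z$, and the theorem reduces to showing that both $[\sigma_{-K_C}\sigma_0]$ and $[L_{-K_C}]$, as classes in $H_3(W,\Z)$, map to $\pm$ a generator.

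For $[L_{-K_C}]$ this is the easy case, exactly as for $[L_0]$ in Theorem~\ref{dim2_diff}: writing $\lambda(\bar C)=D_\gamma\cup A_\gamma$ with $D_\gamma\subset Z_1$ the sub-disc bounded by $\gamma$ and $A_\gamma\subset Z_2$ the complementary annulus (with one boundary on $S$), the decomposition $L_{-K_C}=\alpha^{-1}(D_\gamma)\cup\alpha^{-1}(A_\gamma)$ has its Mayer--Vietoris boundary equal to $[\gamma\times S^1]$, a generator. For $[\sigma_{-K_C}\sigma_0]$ I would repeat the argument of Theorem~\ref{dim2_diff}: the difference sphere lies in $\partial W=\partial W_1\cup\partial W_2$, one splits $[\sigma_{-K_C}\sigma_0]$ into the relative classes $[\sigma_{-K_C}\sigma_0\cap\partial W_j]\in H_3(\partial W_j,W_1\cap W_2;\Z)$, and uses the three-dimensional analogue of the diagram~\eqref{hopf}. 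Here the relevant local model near a smooth point of $\Crit f$ is the fibration $q\colon Y_{\mathrm{loc}}\to U$ of \S\ref{vc:edges}, equivalently the Hopf model of~\eqref{local:alpha} times an interval, for which one has isomorphisms $H_3(\partial W_j,W_1\cap W_2;\Z)\xrightarrow{\partial}H_2(W_1\cap W_2;\Z)$ and $H_3(\partial W_j,W_1\cap W_2;\Z)\xrightarrow{\alpha_*}H_3(\partial Z_j,Z_1\cap Z_2;\Z)$; since $\sigma$ and $\sigma_0$ are sections, $\alpha_*[\sigma_{-K_C}\sigma_0\cap\partial W_j]$ is a generator, hence so is $\partial[\sigma_{-K_C}\sigma_0\cap\partial W_j]$, and the two contributions add to a generator of $H_2(W_1\cap W_2;\Z)$ because they share the common boundary $W_1\cap W_2$ with opposite induced orientations.

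The main obstacle is the construction of the decomposition $Z=Z_1\cup Z_2$ and the accompanying local analysis: unlike in Theorem~\ref{dim2_diff}, where $\Crit f$ consists of isolated points, here $\Crit f$ is a surface built from the cylinders $S_e$ and the two-chains $T_p$, and $\lambda(\partial\bar C)$ runs through these pieces in the controlled way of \S\ref{vc:d3}. Arranging that $Z_1\cap Z_2$ retracts onto a single circle $\gamma$ disjoint from $\Crit f$, that the $W_j$ are acyclic, and that the local models (especially near the trivalent vertices of $\Gamma$, where the fibration is not simply a Hopf fibration times an interval) give the required isomorphisms, is the substantive part of the argument. Once this is in place, the Mayer--Vietoris bookkeeping is a routine extension of the two-dimensional computation.
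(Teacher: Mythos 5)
Your proposal correctly identifies several of the right ingredients: the compactly supported piecewise linear representative for $\sigma_{-K_C}$, the region $Z$ sandwiched between $\sigma_{-K_C}$ and $\sigma_0$, and the observation that the disc $\lambda(\bar C)$ (rather than the critical surface itself) is the higher-dimensional stand-in for the arc $I$ of Theorem~\ref{dim2_diff}. The paper in fact pins $\lambda$ down more tightly, as $\tilde\lambda=\frac12\tilde\sigma|_{\bar C}$, which is what makes Steps~1--2 of the paper's proof work cleanly, but your weaker ``parallel'' set-up is not itself the issue.

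The gap is in the concluding homological step. Instead of your two-piece Mayer--Vietoris cover $Z=Z_1\cup Z_2$, the paper takes a one-parameter \emph{family} of $3$-balls $Z_p$ ($p\in\partial\bar C$), each transverse to $S$ at the single point $\lambda(p)$, with $I=\bigcap_p Z_p$ a segment over $p_C$ disjoint from $S$. It then forms the blow-up $\widehat W=\{(p,q):q\in W_p\}$ (a trivial $B^4$-bundle over $\partial\bar C$), excises along $I$, and reduces the computation of $H_3(W)$ to the long exact sequence of the pair $(\widehat W,\widehat\alpha^{-1}(\widehat I))$. This is not a two-piece decomposition, and the difference matters: in your scheme, one of $Z_1,Z_2$ is forced to contain the whole circle $\lambda(\partial\bar C)\subset S$, so $W_j$ and $\partial W_j$ look like $S^1\times B^4$ and $S^1\times S^3$, not balls and spheres. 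Then the map you invoke, $H_3(\partial W_j,W_1\cap W_2)\xrightarrow{\partial}H_2(W_1\cap W_2)$, is \emph{not} an isomorphism: the long exact sequence of the pair gives
\[
0\longrightarrow H_3(\partial W_j)\cong\Z\longrightarrow H_3(\partial W_j,W_1\cap W_2)\longrightarrow H_2(W_1\cap W_2)\cong\Z\longrightarrow 0,
\]
so $\partial$ is surjective with $\Z$-kernel (and the same phenomenon kills your claimed isomorphism onto $H_3(\partial Z_j,Z_1\cap Z_2)$). The 2D diagram~\eqref{hopf} works precisely because $\partial W_j\cong S^3$ has $H_2=0$; that vanishing does not carry over. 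The paper confronts the corresponding $\Z$ in $H_3(\partial\widehat W)$ head-on: it observes that $\widehat\alpha_*$ vanishes on it (the Hopf map kills $H_3$ of the $S^3$-fibers), so $\widehat\alpha_*$ descends to an isomorphism on the quotient, and only then can one conclude that the section class hits a generator. Your write-up defers exactly this point as ``the substantive part of the argument,'' but the isomorphisms you state along the way are actually false as stated, so the bookkeeping would need to be redone --- most naturally by switching to the paper's $\partial\bar C$-family/blow-up set-up, where the quotient device is available.
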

\begin{proof} We use the map $\tilde \sigma: M_{\R} \rightarrow M_{\R}$ constructed in the proof of Proposition~\ref{com_sup:dim3} with $a=-1$. Define $\tilde \lambda: \bar C \rightarrow M_{\R}$ by 
\begin{equation} \label{lambda:sigma}
    \tilde \lambda = \frac{1}{2} \tilde \sigma|_{\bar C}
\end{equation}
and $\lambda(b) = (b, [ \tilde \lambda(b)])$ for all $b \in \bar C$ (see also Figure \ref{compact_sup} for a picture of $\tilde \lambda(\partial\bar C)$). It can then be checked that $\lambda(\partial\bar C) \subset S$ and
\[ L_{-K_C} = \alpha^{-1}(\lambda(\bar C)). \]

\medskip

{\it Step 1: definition of a set $Z$ containing $\lambda(\bar C)$.} We construct a subset $Z$ containing $\lambda(\bar C)$ which generalizes the set $Z$ constructed in the proof of Theorem \ref{dim2_diff}. Intuitively $Z$ looks like the shaded area depicted in Figure \ref{compact_sup_d2} after it is spun around its center and then thickened in the extra dimension. The center corresponds to the point $p_C$ chosen in the proof of Proposition~\ref{com_sup:dim3}. The singular points (i.e., the crosses in Figure \ref{compact_sup_d2}) span $\lambda(\partial\bar C)$. 

In the following we assume that the function $\rho$ used in the construction of $\sigma_Y$ in Proposition \ref{com_sup:dim3} has values in the interval $[0, 1- \epsilon]$ and that $\rho(0) = 1- \epsilon$. 
Given an edge $e$ of $\partial\bar C$, let $Q_e$ be the neighborhood of $e$ defined in the proof of Proposition \ref{com_sup:dim3} and let
\[ \begin{split}
               Q  & = \bigcup_{e \subset \partial\bar C} Q_e \\
    \tilde Q & = [- \epsilon, \epsilon] \times Q \subset \R \times M_{\R}. 
   \end{split} \]
Now define the set 
\[ Z = \{ (t, b, [r \, \rho(t) \tilde \sigma(b)] ) \, | \, (t,b) \in \tilde Q \ \text{and} \ r \in [0,1] \} \subset Y, \]
where, as usual, $Y= \R \times M_{\R} \times M_{\R}/ M$. Essentially, $Z$ fills the space between the  zero section and the section $\sigma_Y$. In particular the two sections, restricted to $\tilde Q$, form the boundary of $Z$. Notice also that $\lambda( \bar C) \subseteq Z$. 

For any $p \in \partial\bar C$, consider the ray $\rho_p$ in $M_{\R}$ which starts at $p_C$ and passes through $p$  and let $\tilde \rho_p = \rho_p \times [-\epsilon, \epsilon]$. Now let 
\[ Z_p = Z \cap \Log^{-1}(\tilde \rho_p). \]
We have that $Z_p$ is homeomorphic to a closed $3$-ball. Moreover
\[ Z = \bigcup_{p \in \partial\bar C} Z_p. \]

\medskip

{\it Step 2: properties of $Z_p$.} We now prove that 
\begin{equation} \label{z_int_s} 
 Z \cap S = \lambda(\partial\bar C), 
\end{equation}
and in particular that, for all $p \in \partial\bar C$,
\begin{equation} \label{zp_int_s}
   Z_p \cap S = \lambda(p).
\end{equation}
Since $\Log(Z) = \tilde Q$, we only have to study $Z \cap S \cap \Log^{-1}(\tilde Q)$.  Suppose first that $p$ is a point in the interior of an edge $e$ of $\partial\bar C$. Recall that the part of the surface $S$ that is mapped to the interior of $e$ is the cylinder $S_e$ defined in equation (\ref{cylinder:e}), where $\delta_e$ is the cycle defined in (\ref{cylinder_cycle}). 
We can write $p$ as $p =(1-s)p^+ + s p^-$, with $s \in (0,1)$. Then the intersection of $Z$ with the fibre $\Log^{-1}(p) \cong M_{\R} / M$ is
\[ Z \cap \Log^{-1}(p) = \{ [- r(1- \epsilon)((1-s)m_e^+ + s \, m^-_e)] \, | \, r \in [0,1] \} \]
The points in $Z \cap \Log^{-1}(p)$ which lie on $S_e$ correspond to the values of $r \in [0,1]$ such that
\[ \inn{- r(1- \epsilon)((1-s)m_e^+ + s \, m^-_e)}{ n_{\check e}} = \frac{1}{2} \mod \Z. \]
By definition we have that $\inn{m_e^+}{n_{\check e}} = \inn{m^-_e}{n_{\check e}} = -1$. So the above equation becomes
\[ r(1- \epsilon)= \frac{1}{2}, \]
since $r(1-\epsilon) \in [0,1]$. We conclude that for any $p$ in the interior of $e$,
\[ Z \cap S_e \cap \Log^{-1}(p) = \left[ - \frac{1}{2} ((1-s)m_e^+ + s \, m^-_e) \right] = \lambda(p). \]
Suppose now that $p$ is a vertex of $e$, e.g. $p = p^+$. Then, in appropriate affine coordinates, we can assume that 
equations (\ref{ne:ne+}) hold. In these coordinates $m^+_e = (-1,-1)$, therefore
\[ Z \cap \Log^{-1}(p) = \{ [- r(1- \epsilon)(-1,-1)] \, | \, r \in [0,1] \}.\]
Moreover, $S \cap \Log^{-1}(p)$ is  the set $T_p$ described in Remark \ref{algae:rem}. In these coordinates it has the shape depicted in Figure \ref{algae}. Then we see that 
\[ Z \cap S \cap \Log^{-1}(p) =  (Z \cap \Log^{-1}(p)) \cap T_p = [ (1/2, 1/2) ] = \lambda(p). \] 
Finally we have to check that $Z$ does not intersect $S$ over the interior of the edges $d^+$ and $d^-$. Let $q^+ \in d^+$ be the point defined in the proof of Proposition~\ref{com_sup:dim3}. We just need to check what happens over a point $p = (1-s)p^++sq^+$ with $s \in (0,1]$. Using formula \eqref{di+} we see that in this case 
\[ Z \cap \Log^{-1}(p) = \{ [ -r(1-\epsilon)(1-s)m_e^+] \, | \, r \in [0,1] \}. \]
On the other hand, points of this type are also on $S_{d^+}$ if they satisfy
\[ \inn{-r(1-\epsilon)(1-s)m_e^+}{n_{\check{d}^+}} =  \frac{1}{2} \mod \Z. \]
Using the fact that $n_{\check{d}^+} = \pm(n_{\check{e}}- n_{\check{e}^+})$ and $\inn{m_e^+}{n_{\check e}} = \inn{m^+_e}{n_{\check e^+}} = -1$, we can see that the above equation is never satisfied. Thus we have
\[ Z \cap S_{d^+} \cap \Log^{-1}(p) = \emptyset. \]
This proves \eqref{z_int_s} and \eqref{zp_int_s}. 

\medskip

{\it Step 3: conclusion.}  For every $p \in \partial\bar C$ define 
\[ W_p = \alpha^{-1}(Z_p) \]
and let 
\[ \quad W = \alpha^{-1} (Z). \]
Notice that since $Z_p$ is homeomorphic to a $3$-ball and is transversal to $S$ at $\lambda(p)$, it can be viewed as a $3$-ball in the normal bundle to $S$ at $\lambda(p)$. This implies that the fibration $\alpha|_{W_p}: W_p \rightarrow Z_p$ is always fibrewise isomorphic to the map $\alpha'$ given in \eqref{local:alpha}. In particular $W_p$ is homeomorphic to a $4$-ball. 

The classes $[\sigma_{-K_C}\sigma_0]$ and $[L_{-K_C}]$ can be viewed as classes in $H_3(W, \Z)$. We must show that they define the same class. 
Consider the set
\[ I = \bigcap_{p \subset \partial\bar C} Z_p = [-\epsilon, \epsilon] \times \{ p_C \} \times \{0 \} \subset Y. \]

Now define
\[ \widehat W = \{ (p, q) \in \partial\bar C \times X \, | \, q \in W_p \}, \quad \widehat Z = \{ (p, q) \in \partial\bar C \times Y \, | \, q \in Z_p \}. \]
These spaces are sort of (real) blow ups  along the set $I$ of $W$ and $Z$ respectively. Clearly there is an $S^1$-action on $\widehat W$ and a quotient map $\widehat \alpha: \widehat W \rightarrow \widehat Z$  inducing the commutative diagram
\begin{equation*} \begin{CD}
 \widehat W @>\widehat \alpha>> \widehat Z \\
@VVV  @VVV \\
 W @>\alpha>>  Z
\end{CD}
\end{equation*}
where the vertical arrows are the projections. We also have the other projection maps $\widehat W \rightarrow \partial\bar C$ and $\widehat Z \rightarrow \partial\bar C$. These are respectively a $4$-ball and a $3$-ball bundle over $\partial\bar C$. We claim that they are trivial bundles. In fact, the latter can be viewed as the unit $3$-ball bundle inside the normal bundle of $S$ in $Y$, restricted to the curve $\lambda(\partial\bar C)$. Since $S$ and $Y$ are oriented, the normal bundle to $S$ is orientable, therefore it becomes trivial when restricted to $\lambda(\partial\bar C)$. Similarly $\widehat W$ can be viewed as the unit $4$-ball bundle of the normal bundle of $S$ inside $X$, so it is also trivial. We denote by $\partial \widehat W$ and $\partial \widehat Z$ the corresponding $3$-sphere and $2$-sphere bundles respectively. In particular we have 
\begin{equation} \label{homologies}
    H_3(\widehat W, \Z) = H_2( \widehat W, \Z) = 0, \quad H_3(\partial \widehat W, \Z) \cong \Z, \quad H_2(\partial \widehat Z, \Z) \cong \Z.
\end{equation} 
In the last two groups, a generator is given by the fibre. 

Now let $\widehat I \subset \widehat Z$ be the preimage of $I \subset Z$. We have $\widehat I \cong  \partial\bar C \times [-\epsilon, \epsilon] $. Notice that the projections $\widehat Z - \widehat I \longrightarrow Z-I$ and $\widehat W - \widehat \alpha^{-1}(\widehat I) \longrightarrow W - \alpha^{-1}(I)$ are homeomorphisms. Thus, excision implies that 
\[ H_3(W, \alpha^{-1}(I); \Z) \cong H_3(\widehat W, \widehat \alpha^{-1}(\widehat I); \Z ).\]
Notice that $\alpha^{-1}(I) \cong S^1 \times [-\epsilon, \epsilon]$, so that its second and third homology vanish. This gives
\[ H_3(W; \Z) \cong H_3(W, \alpha^{-1}(I); \Z). \] 
Therefore we can view the classes  $[\sigma_{-K_C}\sigma_0]$ and $[L_{-K_C}]$ inside $H_3(\widehat W, \widehat \alpha^{-1}(\widehat I) )$. Notice that $ \widehat \alpha^{-1}(\widehat I) \cong \partial\bar C \times S^1 \times [-\epsilon, \epsilon]$. The long exact sequence in relative homology and \eqref{homologies} gives an isomorphism
\[  H_3(\widehat W, \widehat \alpha^{-1}(\widehat I); \Z ) \stackrel{\partial}{\longrightarrow} H_2(\widehat \alpha^{-1}(\widehat I);  \Z) \cong \Z. \]

The goal is to show that both classes  $[\sigma_{-K_C}\sigma_0]$ and $[L_{-K_C}]$ are mapped to a generator by the above isomorphism. We proceed in a similar way to Theorem \ref{dim2_diff}. First of all notice that by construction $[L_{-K_C}]$ is mapped to a generator by $\partial$. Moreover $[\sigma_{-K_C}\sigma_0]$ can be viewed as a class inside $H_3(\partial \widehat W, \widehat \alpha^{-1}(\widehat I); \Z )$. We have the diagram
\begin{equation*} \resizebox{\textwidth}{!}{$\begin{CD}
0 @>>> H_3(\partial \widehat W, \Z) @>>>  H_3(\partial \widehat W, \widehat \alpha^{-1}(\widehat I); \Z ) @>\partial>> H_2(\widehat \alpha^{-1}(\widehat I); \Z) @>>> 0 \\
@. @.   @VV\widehat \alpha_*V  \\
@. @. H_3(\partial \widehat Z, \widehat I; \Z)
\end{CD}$}
\end{equation*}
which is analogous to diagram \eqref{hopf}. Notice that $H_3(\partial \widehat Z, \widehat I; \Z) \cong H_3(\partial \widehat Z; \Z) \cong \Z$. Since $[\sigma_{-K_C}\sigma_0]$ is constructed by lifting sections, we have that $\widehat \alpha_* [\sigma_{-K_C}\sigma_0]$ is a generator of $H_3(\partial \widehat Z; \Z)$. 
Moreover, since the generator of  $H_3(\partial \widehat W, \Z)$ is the fibre of the sphere bundle
and $\widehat \alpha$ on the fibres is just the Hopf map, we have that $\widehat \alpha_*$ restricted to  $H_3(\partial \widehat W, \Z)$ is zero. Thus $\widehat \alpha_*$ must descend to an isomorphism of the quotient:
\[ \widehat \alpha_*: H_3(\partial \widehat W, \widehat \alpha^{-1}(\widehat I); \Z )/ H_3(\partial \widehat W, \Z)   \rightarrow  H_3(\partial \widehat Z, \widehat I; \Z). \]
Then  $[\sigma_{-K_C}\sigma_0]$ must descend to a generator of 
$$ H_3(\partial \widehat W, \widehat \alpha^{-1}(\widehat I); \Z )/ H_3(\partial \widehat W, \Z),$$ i.e. of  $H_2(\widehat \alpha^{-1}(\widehat I); \Z) $. This concludes the proof.
\end{proof}


Clearly by translating with respect to an arbitrary section $\sigma_{\kappa}$ we can describe other spheres as difference of sections. Applying formula (\ref{trans_sphere}) we have
\begin{cor} \label{comp_trans} We have
$$[\sigma_{-K_C + \kappa} \sigma_{\kappa}] = [L_{-K_C + 2\kappa}],$$ 
where $L_{-K_C + 2 \kappa}$ denotes the sphere whose twisting numbers are given by $b_e + 2 + 2 k_e$ for all edges $e \subset \partial C$. 
\end{cor}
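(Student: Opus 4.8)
The plan is to deduce the corollary from Theorem \ref{diff_sec} by transporting everything along the symplectomorphism $T_{\sigma_\kappa}$ of \S\ref{ts}. First I would assemble the inputs. By Theorem \ref{sections_thm} the section $\sigma_\kappa$ with twisting numbers $\kappa=(k_e)_{e\in\mathscr E}$ may be chosen so that it coincides with the zero section near every negative vertex of $\Delta$; hence by Theorem 1.2 of \cite{CBMS} the fibre-preserving symplectomorphism $T_{\sigma_\kappa}\colon X\to X$ is defined, it satisfies $T_{\sigma_\kappa}\sigma_{K'}=\sigma_{K'+\kappa}$ for every twisting number $K'$, and, by \eqref{trans_sphere}, $T_{\sigma_\kappa}(L_\ell)=L_{\ell+2\kappa}$. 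Being a homeomorphism of $X$, it induces an automorphism $(T_{\sigma_\kappa})_*$ of $H_3(X,\Z)$; since the three models of $X$ are homeomorphic (see \S\ref{torus_fibrations}--\S\ref{affine manifolds}) this automorphism acts on the same group in which Theorem \ref{diff_sec} is stated.

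Next I would check that $T_{\sigma_\kappa}$ carries the compactly supported configuration $\sigma_{-K_C}\sigma_0$ to $\sigma_{-K_C+\kappa}\sigma_\kappa$. Because $T_{\sigma_\kappa}$ preserves each fibre of $f$, if $\sigma_{-K_C}$ and $\sigma_0$ agree over a point $b$ then so do $T_{\sigma_\kappa}\sigma_{-K_C}=\sigma_{-K_C+\kappa}$ and $T_{\sigma_\kappa}\sigma_0=\sigma_\kappa$; thus $\sigma_{-K_C+\kappa}$ is compactly supported relative to $\sigma_\kappa$, with the very same support $K$ (a $3$-ball) as in Theorem \ref{diff_sec}, so the class $[\sigma_{-K_C+\kappa}\sigma_\kappa]$ is defined. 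The map $S^3\to X$ representing $\sigma_{-K_C+\kappa}\sigma_\kappa$ (equal to $\sigma_{-K_C+\kappa}$ on $K^+$ and to $\sigma_\kappa$ on $K^-$) is literally the composition of $T_{\sigma_\kappa}$ with the map representing $\sigma_{-K_C}\sigma_0$. Therefore $[\sigma_{-K_C+\kappa}\sigma_\kappa]=(T_{\sigma_\kappa})_*[\sigma_{-K_C}\sigma_0]$ in $H_3(X,\Z)$.

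Finally I would combine: by Theorem \ref{diff_sec} one has $(T_{\sigma_\kappa})_*[\sigma_{-K_C}\sigma_0]=(T_{\sigma_\kappa})_*[L_{-K_C}]$, and by \eqref{trans_sphere} this equals $[L_{-K_C+2\kappa}]$ (up to the orientation choice already built into Theorem \ref{diff_sec}), the sphere whose twisting numbers are $(b_e+2)+2k_e$ on each edge $e\subset\partial\bar C$ since $-K_C$ has twisting numbers $b_e+2$ there; this is exactly the assertion of the corollary. I do not expect a serious obstacle: the only points requiring care are the bookkeeping of orientations and the claim that the support $K$ is genuinely unchanged by $T_{\sigma_\kappa}$, and both follow at once from the fact that $T_{\sigma_\kappa}$ is fibre-preserving. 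In short, the argument is a direct translate of the already-established Theorem \ref{diff_sec}.
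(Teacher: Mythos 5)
Your proposal is correct and is essentially the argument the paper gives (the paper's own "proof" is just the one-sentence remark preceding the corollary, invoking translation by $\sigma_\kappa$ and formula~\eqref{trans_sphere}); your write-up simply supplies the implicit details — that $T_{\sigma_\kappa}$ is fibre-preserving, hence preserves the support set, so that $(T_{\sigma_\kappa})_*[\sigma_{-K_C}\sigma_0]=[\sigma_{-K_C+\kappa}\sigma_\kappa]$ — and then applies Theorem~\ref{diff_sec} together with \eqref{trans_sphere}.
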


\section{Intersections}

Recall, from formula (\ref{lift_sphere}), that a sphere $L_{\ell}$ is constructed as the union of the $S^1$-fibers over all points of a section $\lambda$ of the $\Log$ map, defined over $\bar C$. Clearly, if $\sigma$ is a section, the intersection points between $L_{\ell}$ and $\sigma$ are in one-to-one correspondence with the intersection points between $\lambda(\bar C)$ and $\bar \sigma$, where $\bar \sigma: N_{\R} \rightarrow X(B)_{\rd}$ is the reduced section. Therefore $\sigma \cap L_{\ell}$ is in one-to-one correspondence with the set $\{ b \in \bar C \, | \, \lambda(b) = \bar \sigma(b) \}$. Since $\lambda (\partial\bar C) \subset S$, this set is all contained in $C$, i.e., there are no intersection points on the boundary of $\bar C$. Up to translation by a section, we can assume that $\sigma$ is the zero section $\sigma_0$. 

We now consider $\lambda$ constructed as in \S \ref{vc:lag}, i.e., as the graph of the differential of the smooth function $\tilde \vartheta_{\epsilon}$ (see formula (\ref{lag_sph_sec})). Recall that $\tilde \vartheta_{\epsilon}$ is a smoothing of a semi-integral support function $\vartheta: | \Sigma_C| \rightarrow \R$ whose kinks are the twisting numbers $\ell$. 

For convenience let us define the following map 
\begin{equation} \label{Dtheta}
   \begin{split}
       D \vartheta: & \ \bar C \longrightarrow M_{\R}  \\
                \             & \        b    \longmapsto  (d \tilde \vartheta_{\epsilon})_b.
   \end{split} 
\end{equation}
We then have the one-to-one correspondence
\[ \sigma_0 \cap L_{\ell} \longleftrightarrow  \bigcup_{m \in M} (D \vartheta)^{-1}(m). \]
We will call a point $b$ in the right-hand set an intersection point. We wish to study transversality of intersections and to count intersection points. It is easy to see that an intersection point $b \in C$ corresponds to a transversal intersection point if and only if the Hessian $(\hes \tilde \vartheta_{\epsilon})_b$ is non-degenerate. 
Now define the curve $\gamma_{\ell}: \partial\bar C \rightarrow M_{\R}$ by setting
\begin{equation}
\label{gammaelleq}
\gamma_{\ell} = D \vartheta |_{\partial\bar C}.
\end{equation}
It follows from the results of \S \ref{lag_sections} and \S \ref{vc:lag} that $\gamma_{\ell}$  is a closed polygonal line joining the points $\tilde \theta_j$ defined in (\ref{ell_to_psi}). Fix an orientation on $N_{\R}$, and the induced orientation on $M_{\R}$. Then we have an anticlockwise cyclic indexing of two-dimensional cones of $\Sigma_C$ (see Figure \ref{vc_fan} and the proof of Proposition~\ref{kinks_to_fun} for our conventions). 

\begin{defi} \label{w_num} Recall that the winding number of the curve $\gamma_{\ell}$ around a point $m \in M_{\R}$, with $m \notin \gamma_{\ell}( \partial\bar C)$, is the degree of the map $\partial\bar C \rightarrow S^1$ given by $b \mapsto \frac{\gamma_{\ell}(b) - m}{|\gamma_{\ell}(b) - m|}$. Since for all $m \in M$, $m \notin \gamma_{\ell}(\partial\bar C)$, the winding number of $\gamma_{\ell}$ around $m$ is well defined for every $m \in M$. Denote it by $w_{\ell}(m)$.
\end{defi}
If $m \in M_{\R}$ is a regular value of $D \vartheta$ and $m \notin \gamma_{\ell}( \partial\bar C)$, then the degree of $D \vartheta$ at $m$ equals the winding number of $\gamma_{\ell}$ around $m$. In particular, if the winding number is not zero, $m$ is in the image of $D\vartheta$.

We have the following:

\begin{thm} \label{nondeg_hess} If the semi-integral support function $\vartheta: |\Sigma_C| \rightarrow \R$, (resp. its opposite $-\vartheta$), is strictly convex and $\epsilon > 0$ is sufficiently small, then for any $b \in C$, either $(d \tilde \vartheta_{\epsilon})_b \in \gamma_{\ell}$ or $(\hes \tilde \vartheta_{\epsilon})_b$ is positive (resp. negative) definite. In particular the image of $D\vartheta$ is the closed convex polygon whose boundary is~$\gamma_{\ell}$. 
\end{thm}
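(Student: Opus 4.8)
The plan is to reduce the statement to a local computation of the Hessian of the mollified function $\tilde\vartheta_\epsilon = \vartheta \ast \mu_\epsilon$ away from the polygonal curve $\gamma_\ell$, using convexity of $\vartheta$ together with the fact, established in Lemma~\ref{outside_epsilon}, that $\tilde\vartheta_\epsilon$ agrees with $\vartheta$ (hence is affine, with zero Hessian) outside the $\epsilon$-neighborhood $V_\epsilon$ of the $1$-skeleton $\Sk_1$. So the only points $b\in C$ where $(\hes\tilde\vartheta_\epsilon)_b$ can fail to vanish lie within distance $\epsilon$ of $\Sk_1$, i.e.\ near the edges $\check e_j$ (and the vertices) of the fan $\Sigma_C$; and for $b$ in the interior of a maximal cone $\nu_j$, $(d\tilde\vartheta_\epsilon)_b = \tilde\theta_j$ lies on $\gamma_\ell$ by construction, so the alternative ``$(d\tilde\vartheta_\epsilon)_b\in\gamma_\ell$'' is already satisfied there. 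First I would therefore fix $\epsilon$ so small that the $\epsilon$-neighborhoods of the various edges $\check e_j$ are disjoint and each meets only the two cones $\nu_j,\nu_{j+1}$ adjacent to $\check e_j$; this localizes the problem to a single kink.

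Next I would carry out the local analysis at one edge $\check e_j$. Pick affine coordinates so that $\check e_j$ lies along a coordinate hyperplane; write $\vartheta$ near $\check e_j$ as $\tilde\theta_j + c_j\cdot\max(0,-\langle u,\,n_{\check e_j}\rangle)$-type expression, or more invariantly decompose $\vartheta = (\text{affine}) + (\text{kink along }\check e_j)$. Convexity of $\vartheta$ means exactly that at each edge the kink is ``turning the right way'': the piecewise-linear function $\vartheta$ restricted to a line transverse to $\check e_j$ is a convex broken line. Convolving a convex function with the nonnegative, even, compactly supported mollifier $\mu_\epsilon$ produces a smooth convex function (convolution preserves convexity); hence $(\hes\tilde\vartheta_\epsilon)_b$ is positive semidefinite everywhere. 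The point is to upgrade semidefiniteness to \emph{definiteness} at the relevant points: for $b$ in $V_\epsilon$ but \emph{not} within $\epsilon$ of a vertex, $\tilde\vartheta_\epsilon$ near $b$ is the convolution of a function with a genuine corner along a single hyperplane $\{\langle\,\cdot\,,n_{\check e_j}\rangle = \text{const}\}$ where the slope genuinely jumps (kink $\ell_{e_j}\neq 0$); the second derivative in the $n_{\check e_j}$-direction is then $\ell_{e_j}$ times a smooth positive bump, so that direction is strictly convex, while in the directions tangent to $\check e_j$ the function is affine — and here one invokes that $(d\tilde\vartheta_\epsilon)_b$ lies on the segment of $\gamma_\ell$ parallel to $n_{e_j}$ (this is exactly the content of Lemma~\ref{phi_bend}/equation~(\ref{bend2}) and of part (b) of Theorem~\ref{lag_spheres}). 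So either $b$ maps into $\gamma_\ell$ or $b$ lies near a vertex $p_j$, where several kinks interact. Near $p_j$ one uses strict convexity of $\vartheta$ as a whole: $\tilde\vartheta_\epsilon$ restricted to a small ball around such a $b$ is the convolution of a strictly convex piecewise-linear function whose domains of linearity are the cones $\nu_{j-1},\nu_j,\nu_{j+1},\dots$ with common apex $v_C$, and the convolution of such a function is strictly convex (the supporting hyperplanes of $\vartheta$ at the relevant lattice directions are genuinely distinct). Either way one gets $(\hes\tilde\vartheta_\epsilon)_b$ positive definite, or $(d\tilde\vartheta_\epsilon)_b\in\gamma_\ell$. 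The case of $-\vartheta$ strictly convex is identical with signs reversed.

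For the final assertion — that the image of $D\vartheta$ is the closed convex polygon bounded by $\gamma_\ell$ — I would argue as follows. Since $\tilde\vartheta_\epsilon$ is (globally) convex, $D\vartheta = d\tilde\vartheta_\epsilon$ is a monotone gradient map, and its image is convex; its boundary is the image of $\partial\bar C$ under $D\vartheta$, which is $\gamma_\ell$ by~(\ref{gammaelleq}). Concretely: $\gamma_\ell$ is a closed polygonal curve whose successive vertices are $\tilde\theta_1,\dots,\tilde\theta_r$ (by~(\ref{ell_to_psi})), and strict convexity of $\vartheta$ forces this curve to be the boundary of a convex polygon $Q$; the gradient image of a convex function whose ``asymptotic'' behavior is governed by the cones $\nu_j$ is precisely the convex hull of the slopes $\tilde\theta_j$, i.e.\ $Q$. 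Invoking the remark after Definition~\ref{w_num} (for $m$ a regular value of $D\vartheta$ not on $\gamma_\ell$, $\deg_m D\vartheta = w_\ell(m)$, which is $1$ inside $Q$ and $0$ outside) pins down the image as $Q$, and the Hessian being definite on $D\vartheta^{-1}(\mathrm{int}\,Q\setminus\gamma_\ell)$ guarantees all these intersections are transverse.

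I expect the main obstacle to be the analysis near a vertex $p_j$ of $\partial\bar C$, where more than one kink is present simultaneously and one must rule out degenerate directions of the Hessian: away from vertices the ``one corner, one strictly-convex direction, rest affine'' picture is clean, but at a vertex the convolution mixes contributions from three (or more) linear pieces and one has to check carefully that strict convexity of $\vartheta$ — not merely convexity along each individual edge — survives mollification as \emph{strict} positive-definiteness of the Hessian, uniformly once $\epsilon$ is small. Making the ``$\epsilon$ sufficiently small'' quantifier precise (so that the disjointness of edge-neighborhoods and the non-interference of distant kinks both hold) is the bookkeeping cost of this step.
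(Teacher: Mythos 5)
Your strategy matches the paper's: localize to where $\tilde\vartheta_\epsilon = \vartheta * \mu_\epsilon$ differs from $\vartheta$, argue that near a single kink the gradient lies on $\gamma_\ell$, and argue that where several kinks are in range the Hessian is positive definite, closing with a degree/winding argument to identify the image with the polygon bounded by $\gamma_\ell$.

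Two slips. First, you repeatedly place the hard step ``near a vertex $p_j$ of $\partial\bar C$.'' But $p_j$ is the barycenter of the simplex $P_j$, sitting in the \emph{interior} of the cone $\nu_j$ of $\Sigma_C$, where $\vartheta$ is linear with no kink at all; there $\hes\tilde\vartheta_\epsilon$ vanishes and the gradient sits at the vertex $\tilde\theta_j$ of $\gamma_\ell$. The place where several kinks genuinely interact is near the apex $v_C$ of the fan $\Sigma_C$ --- in the paper's notation the set $C - \bar U_C$ --- not the vertices of $\partial\bar C$. Second, $n_{\check e_j}$ is the primitive integral \emph{tangent} vector to $\check e_j$ (\S\ref{trop_hyp}), so along $\check e_j$ the mollified function is affine, not strictly convex, in the $n_{\check e_j}$-direction (this is exactly Lemma \ref{phi_bend} and formula \eqref{bend2}); the strictly convex direction is the one transverse to $\check e_j$.

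You correctly flag, as the main remaining obstacle, that strict convexity of the piecewise-linear $\vartheta$ must be shown to give strict positive-definiteness of the mollified Hessian near the apex, and that this is where strict (not just edgewise) convexity must enter. The paper closes this gap with a one-dimensional lemma (Lemma \ref{conv_convx1}): if $h$ is piecewise affine and convex on $\R$ and $\eta$ is an even nonnegative mollifier supported in $(-\delta,\delta)$, then $(h*\eta)''(t)\geq 0$, with strict inequality precisely when $(t-\delta,t+\delta)$ contains a break point of $h$. To get positive definiteness at $b\in C - \bar U_C$, for each direction $v$ one writes the $v$-component of $\hes\tilde\vartheta_\epsilon$ at $b$ as an integral over a transversal parameter of such one-dimensional second derivatives (each nonnegative by the lemma), and then uses the geometric observation that the $\epsilon$-ball around $b$ meets at least two non-parallel rays of $\Sigma_C$ emanating from $v_C$: one of them is transverse to $v$, so some slice parallel to $v$ hits a break point within range, and that slice contributes strictly positively. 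That slicing argument is what you would need to supply in place of the assertion that ``convolution of a strictly convex PL function is strictly convex.''
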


Before proving the theorem let us study a one-dimensional version of the problem. Let $h: \R \rightarrow \R$ be a continuous function which is piecewise affine, i.e., it is locally affine except at a finite number of points in $\R$. We call a point $t \in \R$ a non-smooth point if $h$ is not locally affine at $t$. At a non-smooth point $t$, let $m^+_t$ and $m^-_t$ be the slopes of $h$ respectively on the right and on the left of $t$. The function $h$ is convex if at every non-smooth point $t$ we have 
\[ m^+_t - m^-_t > 0. \]

Now let $\eta: \R \rightarrow \R$ be an even $C^{\infty}$ function, whose support is $[-\delta, \delta]$ and such that $\eta(t) > 0$ for all $t \in (-\delta, \delta)$.  Define
\[ \tilde h = h \ast \eta. \]
\begin{lem} \label{conv_convx1} In the above situation, we have $\tilde h''(t) \geq 0$ and equality holds if and only if the interval $(t- \delta, t+ \delta)$ does not contain non-smooth points. 
\end{lem}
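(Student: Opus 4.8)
The plan is to compute $\tilde h''$ directly as a convolution. Since $h$ is continuous and piecewise affine, its distributional second derivative $h''$ is a finite sum of positive point masses: $h'' = \sum_i (m^+_{t_i} - m^-_{t_i})\,\delta_{t_i}$ over the non-smooth points $t_i$, with every coefficient strictly positive by the convexity hypothesis. Convolution commutes with differentiation, so $\tilde h'' = (h \ast \eta)'' = h'' \ast \eta = \sum_i (m^+_{t_i} - m^-_{t_i})\,\eta(\,\cdot\, - t_i)$. This is the key identity, and it makes both assertions transparent.

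Concretely, I would avoid invoking distributions if a more elementary argument is wanted: pick a point $t$ and split $\R$ into the maximal subintervals on which $h$ is affine that meet $(t-\delta, t+\delta)$. On each such interval $h$ agrees with an affine function, so one can differentiate under the integral sign in $\tilde h(t) = \int_{-\delta}^{\delta} h(t-s)\eta(s)\,ds$ twice, handling the finitely many non-smooth points by the usual integration-by-parts bookkeeping; the boundary terms telescope and leave exactly $\tilde h''(t) = \sum_{t_i \in (t-\delta,\, t+\delta)} (m^+_{t_i} - m^-_{t_i})\,\eta(t - t_i)$. Either route gives the same formula.

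From this formula the lemma is immediate. Each summand is nonnegative, since $m^+_{t_i} - m^-_{t_i} > 0$ by convexity and $\eta \geq 0$ everywhere; hence $\tilde h''(t) \geq 0$. For the equality case: if $(t-\delta, t+\delta)$ contains no non-smooth point the sum is empty and $\tilde h''(t) = 0$; conversely, if some non-smooth point $t_i$ lies in $(t-\delta, t+\delta)$, then $t - t_i \in (-\delta, \delta)$, so $\eta(t-t_i) > 0$ by hypothesis, and since $m^+_{t_i} - m^-_{t_i} > 0$ that single term is strictly positive, forcing $\tilde h''(t) > 0$.

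I do not anticipate a genuine obstacle here; the only point requiring a little care is the differentiation-under-the-integral step at the non-smooth points of $h$ (equivalently, justifying $h'' \ast \eta$ as the classical second derivative of the smooth function $\tilde h$). This is handled by noting that $h$ is Lipschitz on compact sets and piecewise $C^\infty$ (indeed piecewise affine), so $h' \ast \eta$ is $C^\infty$ with derivative $h' \ast \eta' = h'' \ast \eta$ in the distributional sense, which here is an honest smooth function because $\eta$ is smooth and the point masses get smeared into translates of $\eta$. One should also remark that the sum defining $\tilde h''(t)$ is locally finite, so no convergence issue arises.
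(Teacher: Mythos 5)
Your proposal is correct and arrives at the same key identity as the paper, namely $\tilde h''(t) = \sum_{t_i}(m^+_{t_i}-m^-_{t_i})\,\eta(t-t_i)$, which the paper derives by the explicit integration-by-parts bookkeeping you sketch in your second paragraph (the paper writes the jumps as $q_{j+1}-q_j$ and uses the evenness of $\eta$ to tidy signs, whereas your distributional formulation gets the formula directly without needing evenness). The distributional phrasing, $\tilde h'' = h''\ast\eta$ with $h''$ a positive combination of Dirac masses, is a cleaner way to package the same computation, but it is not a different argument.
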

\begin{proof}
By the definition of the convolution product we have
\[ \tilde h''(t) = \int_{\R} h(s) \eta''(t-s)ds. \]
Assume, without loss of generality, that $t=0$. Then
\[ \tilde h''(0) = \int_{\R} h(s) \eta''(-s)ds = \int_{-\delta}^{\delta} h(s) \eta''(-s)ds. \]
Now suppose that $(-\delta, \delta)$ contains $r$ non-smooth points $\{t_1, \ldots, t_r \}$. Define $t_0= -\delta$ and $t_{r+1} = \delta$. Suppose, moreover, that on the interval $[t_{j-1}, t_j]$, $h$ coincides with the affine function $m_j(t) = q_jt+c_j$. By convexity
\begin{equation} \label{convx1}
 q_{j+1} - q_j >0. 
 \end{equation}
We have
\begin{equation} \label{der_conv}
    \tilde h''(0) = \sum_{j=1}^{r+1} \int_{t_{j-1}}^{t_j} m_j(s) \eta''(-s)ds. 
\end{equation}
 Integration by parts gives
\begin{align*} 
\int_{t_{j-1}}^{t_j} m_j(s) \eta''(-s)ds &= m_j(t_{j-1})\eta'(-t_{j-1}) - m_j(t_j) \eta'(-t_{j})\\
&\quad -q_j( \eta(-t_{j}) - \eta(-t_{j-1}) ). 
\end{align*}
Using the fact that $\eta$ is even we obtain
	\begin{align*} \int_{t_{j-1}}^{t_j} m_j(s) \eta''(-s)ds &=  m_j(t_j) \eta'(t_{j})- m_j(t_{j-1})\eta'(t_{j-1})\\
		&\quad	-q_j( \eta(t_j) - \eta(t_{j-1}) ). 
	\end{align*}
Continuity of $h$ implies that 
\[ m_{j}(t_j) \eta'(t_{j}) = m_{j+1}(t_j) \eta'(t_{j}). \]
Moreover we have 
\[ \eta(t_0)=\eta'(t_0)= \eta(t_{r+1})= \eta'(t_{r+1})=0. \]
Using these facts, equation (\ref{der_conv}) becomes
\[
    \tilde h''(0) = \sum_{j=1}^{r} (q_{j+1}-q_{j}) \eta(t_j) > 0.
\]
The last inequality follows from (\ref{convx1}) and the fact that $t_j \in (- \delta, \delta)$, where $\eta$ is strictly positive. Clearly, if $(-\delta, \delta)$ does not contain non-smooth points, integration by parts shows that 
$\tilde h''(0) =0$
\end{proof}

\begin{proof}[Proof of Theorem \ref{nondeg_hess}] Let $\epsilon$ satisfy conditions (i)-(iii) listed after Corollary~\ref{bend_cor}. Let $e_j$ be the edge of $\partial\bar C$ whose vertices are $p_j$ and $p_{j+1}$. As we have already discussed (see the end of the proof of Theorem \ref{lag_spheres}), as $b$ moves inside $U_{e_j}$ from a point in $W_{p_j}$ to a point in $W_{p_{j+1}}$, then $(d \tilde \vartheta_{\epsilon})_b$ moves inside the line passing from $\tilde \theta_j$ and $\tilde \theta_{j+1}$. Using convexity of $\vartheta$ and Lemma~\ref{conv_convx1} one can refine the argument to show that for all $b \in \bar U_{e_j}$, $(d \tilde \vartheta_{\epsilon})_b$ belongs to the segment from $\tilde \theta_j$ to $\tilde \theta_{j+1}$.   Now let 
\[ U_{C} = \bigcup_{e \subset \partial\bar C} U_e. \]
The above argument shows that for any $b \in \bar U_C$, $(d \tilde \vartheta_\epsilon)_b \in \gamma_{\ell}$. 
We now prove that if $b \in C - \bar U_C$ then $(\hes \tilde \vartheta_{\epsilon})_b$ is positive definite. Thus we need to show that for any $v \in N_{\R}$, 
\begin{equation} \label{posdef}
 \inn{(\hes \tilde \vartheta_{\epsilon})_b v}{v}  > 0. 
\end{equation}
We can choose linear coordinates $(x_1,x_2)$ on $N_{\R}$ so that $v= \frac{\partial}{\partial {x_1}}$. In this case
\[ \inn{(\hes \tilde \vartheta_{\epsilon})_b v}{v} = \int_{\R} \left( \int_{\R} \vartheta(x_1,x_2) \frac{\partial^2 \mu_{\epsilon}}{\partial x_{1}^2} (b_1-x_1, b_2-x_2) dx_1\right) dx_2 \]
We claim that
\begin{equation} \label{posdef_int}
 \int_{\R} \vartheta(x_1,x_2) \frac{\partial^2 \mu_{\epsilon}}{\partial x_{1}^2} (b_1-x_1, b_2-x_2) \, dx_1 \geq 0
\end{equation}
and that if $b=(b_1, b_2) \in C - \bar U_C$, then the inequality is strict for some values of $x_2$. This would imply (\ref{posdef}). For fixed $x_2$, define the functions
\[ h(s) = \vartheta(s,x_2), \ \ \ \eta(s) = \mu_{\epsilon}(s, b_2-x_2). \] 
Notice that $h$ is a continuous, convex, piecewise affine function on $\R$. On the other hand, $\eta$ is identically zero if  $|x_2-b_2| \geq \epsilon$, but when $|x_2-b_2| < \epsilon$ it is smooth, even and its support is an interval $(-\delta_{x_2}, \delta_{x_2})$ with $\delta_{x_2} > 0$.  Moreover $\eta(s) > 0$ for all $s \in (-\delta_{x_2}, \delta_{x_2})$. 

The integral on the left hand side of (\ref{posdef_int}) coincides with $(h \ast \eta)''(b_1)$ and therefore the inequality follows from Lemma \ref{conv_convx1}.  This lemma also implies that the integral in (\ref{posdef_int}) is strictly positive for $x_2 \in (b_2-\epsilon, b_2+\epsilon)$,  if in the interval $(b_1-\delta_{x_2}, b_1+\delta_{x_2})$ there exists a non-smooth point of $h$.
We prove that this is true for some $x_2 \in (b_2-\epsilon, b_2+\epsilon)$. The condition  $b \in C - \bar U_C$ implies that the $\epsilon$-ball around $b$ intersects at least two non parallel edges of $\Sk_1$ emanating from the vertex $v_C$ of the subdivision. 
In particular, we can find a point $(\bar x_1, \bar x_2)$ in the $\epsilon$-ball around $b$ which lies on an edge of $\Sk_1$ not parallel to the $x_1$-axis. This implies that the line in $N_\R$ given by $\{ (s,\bar x_2) \, | \, s \in \R \}$ intersects this edge transversally in the point $(\bar x_1, \bar x_2)$. Thus $\bar x_1$ is a non-smooth point for $h$ defined by setting $x_2= \bar x_2$; moreover $\bar x_2 \in (b_2-\epsilon, b_2+\epsilon)$ and $\bar x_1 \in (b_1-\delta_{\bar x_2}, b_1 + \delta_{\bar x_2})$ since $(\bar x_1, \bar x_2)$ lies in the $\epsilon$-ball around $b$. 

The last statement follows easily. In fact, by the previous argument, if $m \notin \gamma_{\ell}( \partial\bar C)$ and $m = D \vartheta(b)$ for some $b \in C$, then $m$ is a regular value of $D\vartheta$ and its degree must be positive. Moreover, strict convexity of $\vartheta$ implies that $\gamma_{\ell}(\partial\bar C)$ is an embedded simple curve which encloses a convex bounded polygon. So $w_{\ell}(m)$ is $1$ if $m$ is inside this polygon and zero if $m$ is outside. This implies the last claim of the theorem.
\end{proof}


We have the following 

\begin{cor} \label{convex:intersection}
If  $\vartheta: |\Sigma_C| \rightarrow \R$, or its opposite $-\vartheta$, is strictly convex and $\epsilon > 0$ is sufficiently small, the corresponding Lagrangian sphere $L_{\ell}$ intersects the zero section transversally and 
\[ \#( \sigma_0 \cap L_{\ell})= \#( D \vartheta(C) \cap M) \]
\end{cor}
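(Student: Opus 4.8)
The plan is to read off the statement from Theorem~\ref{nondeg_hess} together with the degree/winding-number dictionary recorded just after Definition~\ref{w_num}. Recall that $\sigma_0\cap L_\ell$ is in bijection with $\bigcup_{m\in M}(D\vartheta)^{-1}(m)$, and that an intersection point $b\in C$ is transverse precisely when $(\hes\tilde\vartheta_\epsilon)_b$ is non-degenerate. The first point I would isolate is that $\gamma_\ell(\partial\bar C)$ contains no point of $M$: by (\ref{bend2}) each edge of $\gamma_\ell$ lies on the affine line $\{m\in M_\R : \inn{m}{n_{\check e_j}}=\inn{\tilde\theta_j}{n_{\check e_j}}\}$, and $\inn{\tilde\theta_j}{n_{\check e_j}}\equiv\tfrac12\pmod{\Z}$ by the semi-integrality (\ref{semi-int}) of $\vartheta$, whereas $\inn{m}{n_{\check e_j}}\in\Z$ for all $m\in M$ (this is the fact already invoked in Definition~\ref{w_num}). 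Consequently, if $b\in C$ is an intersection point then $D\vartheta(b)=m\in M$ is not on $\gamma_\ell$, so Theorem~\ref{nondeg_hess} gives that $(\hes\tilde\vartheta_\epsilon)_b$ is positive (or, in the $-\vartheta$ case, negative) definite, hence non-degenerate; and since $D\vartheta$ maps $\partial\bar C$ into $\gamma_\ell$ there are no intersection points on $\partial\bar C$. This proves transversality.

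For the count I would show that each lattice point of the open polygon $\Pi$ bounded by $\gamma_\ell$ is hit exactly once and no other lattice point is hit. By the last assertion of Theorem~\ref{nondeg_hess}, $D\vartheta(\bar C)=\bar\Pi$; since $D\vartheta(\partial\bar C)=\gamma_\ell=\partial\bar\Pi$ carries no lattice point, every $m\in M$ with $(D\vartheta)^{-1}(m)\neq\emptyset$ lies in $\operatorname{int}\Pi$, and conversely for $m\in M\cap\operatorname{int}\Pi$ the winding number $w_\ell(m)$ equals $1$ (by the winding-number computation in the proof of Theorem~\ref{nondeg_hess}), so $m$ is in the image of $D\vartheta$ by the remark following Definition~\ref{w_num}. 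For uniqueness of the preimage: $\tilde\vartheta_\epsilon=\vartheta\ast\mu_\epsilon$ is convex on all of $N_\R$ (convolution of a convex function with a nonnegative mollifier), so if $D\vartheta(b_1)=D\vartheta(b_2)$ with $b_1\neq b_2$ then $\tilde\vartheta_\epsilon$ is affine along $[b_1,b_2]$, forcing $(\hes\tilde\vartheta_\epsilon)_{b_1}=0$, which contradicts the definiteness supplied by Theorem~\ref{nondeg_hess} when $m=D\vartheta(b_1)\in M$ (in the $-\vartheta$ case replace ``convex'' by ``concave''). Hence $\#(D\vartheta)^{-1}(m)=1$ for every $m\in M\cap\operatorname{int}\Pi$, so $\#(\sigma_0\cap L_\ell)=\#(M\cap\operatorname{int}\Pi)$. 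Finally $\operatorname{int}\Pi\subseteq D\vartheta(C)\subseteq\bar\Pi$ (a point of $\operatorname{int}\Pi$ cannot be the image of a point of $\partial\bar C$), so $M\cap D\vartheta(C)=M\cap\operatorname{int}\Pi$, which is the desired identity.

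Since all of the analytic content — positive semi-definiteness of $\hes\tilde\vartheta_\epsilon$, its definiteness away from $\Sk_1$, and the identification of $D\vartheta(\bar C)$ with the polygon — is already packaged in Theorem~\ref{nondeg_hess} and Lemma~\ref{conv_convx1}, there is no serious obstacle; the only care needed is bookkeeping. Concretely, one must check that the ``$\gamma_\ell$ avoids $M$'' step uses only the mod-$\Z$ data of (\ref{semi-int}), and one must treat the strictly convex and strictly concave cases uniformly, for instance by passing to $|w_\ell(m)|$ and using that the gradient of a strictly convex, resp.\ concave, function is injective on its definiteness locus. Everything else is a direct corollary of the theorem.
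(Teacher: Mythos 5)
Your proof is correct. The structural backbone is the same as the paper's: you invoke Theorem~\ref{nondeg_hess}, you observe (making explicit what the paper leaves implicit via Definition~\ref{w_num}) that $\gamma_\ell$ never meets a lattice point because $\inn{\tilde\theta_j}{n_{\check e_j}}\equiv\frac12\pmod\Z$, and you conclude definiteness, hence transversality, at every intersection point. Where you diverge is in the uniqueness-of-preimage step: the paper gets it in one line from degree theory --- each preimage contributes $+1$ by definiteness, the degree at $m$ equals the winding number, and the winding number is $1$ --- whereas you argue directly from convexity, noting that $\tilde\vartheta_\epsilon = \vartheta\ast\mu_\epsilon$ is globally convex (concave in the $-\vartheta$ case), so two preimages $b_1\neq b_2$ of the same $m$ would force $\tilde\vartheta_\epsilon$ to be affine along $[b_1,b_2]$ and hence $(\hes\tilde\vartheta_\epsilon)_{b_1}$ to annihilate $b_2-b_1$, contradicting definiteness. (A minor imprecision: this degenerates the Hessian rather than forcing it to vanish, but that is all you need.) Both routes are valid; the paper's degree count is shorter and reuses the winding-number machinery already in place, while your convexity argument is more elementary and self-contained but requires the extra observation of global convexity of the mollified function. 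You also introduce the polygon $\Pi$ and prove $M\cap D\vartheta(C)=M\cap\operatorname{int}\Pi$, which is additional bookkeeping the paper's statement does not require since it counts $D\vartheta(C)\cap M$ directly, but it is harmless and correct.
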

\begin{proof} The previous theorem implies that all points of $m \in D\vartheta(C) \cap M$ are regular values of $D\vartheta$ and that every point in the preimage of $m$ contributes $+1$ to the degree. Since $m$ has degree $1$, it has only one preimage. 
\end{proof}

More generally, suppose that we have an arbitrary section $\sigma_{\kappa}$. Then the number of intersection points between $\sigma_{\kappa}$ and $L_{\ell}$ is the same as between the zero section and the translate of $L_{\ell}$ by $\sigma_{- \kappa}$, i.e., using formula (\ref{trans_sphere}):
\[ \# ( \sigma_{\kappa} \cap L_{\ell}) = \# ( \sigma_0 \cap L_{\ell-2 \kappa}) \]

\begin{ex}  \label{inters:p2} In the case of Example~\ref{triangle}, the spheres $L_{n}$ are labeled  by odd numbers $n$. In Figure~\ref{triangle_intersection} we picture some of the spheres and their intersection points with the zero section. Clearly in this case we 
have the formula
\begin{equation} \label{cap_p2}
  \# ( \sigma_0 \cap L_{2k+1}) = \left | \frac{k(k+1)}{2} \right | 
\end{equation}
Notice that $L_1$ and $L_{-1}$ are the only spheres which do not intersect the zero section. 

\begin{figure}[!ht] 
\begin{center}
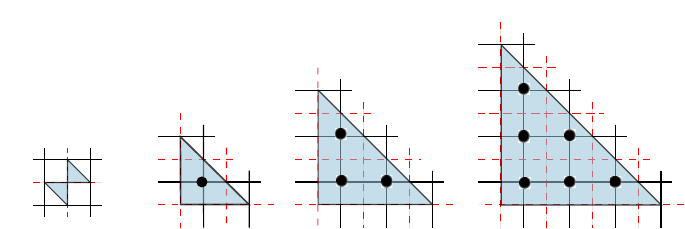\\
\vspace{1ex}\refstepcounter{figure}Figure \arabic{figure}.\label{triangle_intersection}
\end{center}
\end{figure}
\end{ex}

\begin{ex}
\label{a2d:mirrorredux}
Let us look at the case of Example~\ref{a2d:mirror}.   Assume that a subdivision of $P$ has been chosen so that every compact toric divisor in $\check X$ is a one point blow up of a Hirzebruch surface (e.g., as in Figure \ref{a2d:trop} for the case $d=3$). Then we can orient and number the bounding edges of each component $C_j$ as in Example~\ref{a2d:lineb}. Let us then construct the spheres of type $L_{\ell}$ over each component. Notice that when $j$ is odd, then $\ell$ must be of type $( \text{odd}, \text{odd},\text{odd}, \text{even}, \text{even})$ and when $j$ is even then it must be of type $( \text{even}, \text{odd}, \text{odd},\text{odd}, \text{even})$. Let us assume $j$ is odd, as the even case is similar. There are two spheres with minimal twisting numbers, these are given by numbers $(-1,1,-1,0,0)$ or $(1,-1,1,0,0)$. Let us denote them respectively by $L^+$ and $L^-$. 
These spheres are vanishing cycles with respect to the degeneration to the $A_{2d-1}$ singularity. Notice that $L^+$ and $L^-$ do not intersect the zero section. Another interesting sphere over $C_j$ is given by numbers $(-j+2, 1, 1, j+1, 2)$. In fact these correspond to the spheres $L_{-K_{C_{j}}}$ described in Theorem~\ref{diff_sec} as a difference of sections.
\end{ex}

Let us now discuss the general case where neither $\vartheta$ nor $- \vartheta$ are convex. In this case the polyhedral curve $\gamma_{\ell}$ can have a more complicated behavior. Let us define the following numbers
\begin{equation} \label{winding}
            \begin{split}
             h^{\text{odd}}(L_{\ell}) & = - \sum_{m \in M, \, w_{\ell}(m) < 0} w_{\ell}(m), \\
             h^{\text{even}}(L_{\ell}) & = \sum_{m \in M, \, w_{\ell}(m) > 0 } w_{\ell}(m).
            \end{split}
\end{equation}
\begin{con} We conjecture that for $\epsilon > 0$ sufficiently small, $m \in\linebreak D \vartheta(C) \cap M$ if and only if $w_{\ell}(m) \neq 0$. Moreover, given $b \in (D \vartheta)^{-1}(m)$, then $(\hes \tilde \vartheta_{\epsilon})_b$ is non-degenerate and the following holds
\begin{itemize}
 \item[i)] $w_{\ell}(m) < 0$ implies that $(\hes \tilde \vartheta_{\epsilon})_b$ has signature $(1,1)$;
 \item[ii)] $w_{\ell}(m) > 0$, implies that $(\hes \tilde \vartheta_{\epsilon})_b$ has signature $(2, 0)$ or $(0,2)$. 
\end{itemize}
In particular we have that $L_{\ell}$ intersects $\sigma_0$ transversally and
\[ \#( \sigma_0 \cap L_{\ell}) = h^{\text{odd}}(L_{\ell}) + h^{\text{even}}(L_{\ell}). \]
\end{con}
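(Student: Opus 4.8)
The plan is to study the smooth map $D\vartheta:\bar C\to M_{\R}$ of \eqref{Dtheta}, whose Jacobian at $b$ is exactly $(\hes\tilde\vartheta_{\epsilon})_b$, and to locate and sign-count its fibres over lattice points. First note that the part of the proof of Theorem~\ref{nondeg_hess} which locates $(d\tilde\vartheta_{\epsilon})_b$ on $\gamma_{\ell}$ for $b\in\bar U_C$ does not use convexity: Lemma~\ref{phi_bend} gives $(d\tilde\vartheta_{\epsilon})_b(n_{\check e})=\langle m^{+}_e,n_{\check e}\rangle$ for either sign of $\ell_e$, and the one-dimensional convolution estimate (Lemma~\ref{conv_convx1}, applied to $\pm h$) shows the remaining coordinate traverses the segment $[\tilde\theta_j,\tilde\theta_{j+1}]$ monotonically. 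Since no $m\in M$ lies on $\gamma_{\ell}(\partial\bar C)$, every fibre $(D\vartheta)^{-1}(m)$ with $m\in M$ is contained in $C-\bar U_C$, and the boundary value $D\vartheta|_{\partial\bar C}=\gamma_{\ell}$ then gives, by the degree--winding-number identity already recorded before Definition~\ref{w_num}, the relation $\deg(D\vartheta,C,m)=w_{\ell}(m)$ for all $m\notin\gamma_{\ell}(\partial\bar C)$. Hence, as soon as every $m\in M$ is a regular value of $D\vartheta$, we obtain $\sum_{b\in(D\vartheta)^{-1}(m)}\sgn\det(\hes\tilde\vartheta_{\epsilon})_b=w_{\ell}(m)$; in particular $w_{\ell}(m)\neq 0$ puts $m$ in the image, and the intersection with $\sigma_0$ is transverse at the corresponding points. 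What is left is to prove the regular-value claim, the converse (image $\cap\,M\subseteq\{w_{\ell}\neq 0\}$), and the signature statements (i)--(ii).

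The next step is to reduce all of these to a single fixed function by rescaling at the vertex $v_C$. Since $\vartheta$ is piecewise linear and homogeneous of degree $1$ on all of $N_{\R}=|\Sigma_C|$, the identities $\mu_{\epsilon}(x)=\epsilon^{-2}\mu_1(x/\epsilon)$ and $\vartheta(\epsilon x)=\epsilon\,\vartheta(x)$ give $\tilde\vartheta_{\epsilon}(x)=\epsilon\,\Theta(x/\epsilon)$ with $\Theta:=\vartheta\ast\mu_1$, and therefore
\begin{equation*}
(d\tilde\vartheta_{\epsilon})_x=(d\Theta)_{x/\epsilon},\qquad (\hes\tilde\vartheta_{\epsilon})_x=\epsilon^{-1}(\hes\Theta)_{x/\epsilon}\qquad\text{on all of }N_{\R}.
\end{equation*}
Thus $(D\vartheta)^{-1}(m)=\epsilon\cdot(d\Theta)^{-1}(m)$ and $(\hes\tilde\vartheta_{\epsilon})_b$ is non-degenerate iff $(\hes\Theta)_{b/\epsilon}$ is. Away from the origin, in the interior of the $j$-th cone $\Theta$ agrees with the affine function $\tilde\theta_j$ (Lemma~\ref{conv_aff}), so $d\Theta$ is asymptotic to $\tilde\theta_j$ there; by the semi-integrality condition \eqref{semi-int} each $\tilde\theta_j$ pairs with a basis of $N$ to half-integers, hence $\tilde\theta_j\notin M$ and no $m\in M$ is an asymptotic value of $d\Theta$. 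Consequently $\bigcup_{m\in M}(d\Theta)^{-1}(m)$ is a bounded, hence (granting non-degeneracy) finite, subset of $N_{\R}$, independent of $\epsilon$, and the fibres of $D\vartheta$ over lattice points crowd toward $v_C$ as $\epsilon\to 0$. The conjecture is then equivalent to two assertions about the fixed function $\Theta$: (a) $\hes\Theta$ is non-degenerate at every point of that finite set; (b) at such a point, $\hes\Theta$ has signature $(1,1)$ when the local degree of $d\Theta$ is $-1$, and signature $(2,0)$ or $(0,2)$ when it is $+1$, all preimages of a given $m$ having the same local degree. Given (a) and (b), each $m\in M$ is a regular value, the signed fibre count has constant sign $\operatorname{sign}w_{\ell}(m)$, so $\#(D\vartheta)^{-1}(m)=|w_{\ell}(m)|$; summing over $M$ and using \eqref{winding} yields $\#(\sigma_0\cap L_{\ell})=h^{\mathrm{odd}}(L_{\ell})+h^{\mathrm{even}}(L_{\ell})$, and $m$ lies in the image iff $w_{\ell}(m)\neq 0$.

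The main obstacle is precisely (a) and (b): understanding the critical-point structure of a prescribed smoothing $\Theta$ of a \emph{non-convex} homogeneous piecewise linear function of two variables. The mechanism one hopes for is that $\tilde\vartheta_{\epsilon}$ retains enough of the piecewise linearity of $\vartheta$ that its critical points lie in well-separated ``pockets'' attached to sign changes of the half-kinks $\ell_e/2$ along $\Sk_1$, each pocket being strictly convex or strictly saddle-like; this would be made quantitative by choosing the mollifier $\mu_1$ appropriately and running the one-dimensional estimate behind Lemma~\ref{conv_convx1} along lines transverse to the edges of $\Sk_1$, now bookkeeping the signs of the kinks rather than assuming them positive. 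A complementary route is to bound $\#(D\vartheta)^{-1}(m)\le|w_{\ell}(m)|$ independently (e.g.\ by a Morse-theoretic count of $\tilde\vartheta_{\epsilon}$ on $\bar C$, or by matching with the purely toric computation expressing $h^{\mathrm{even}}(\mathscr L_{\psi})$ and $h^{\mathrm{odd}}(\mathscr L_{\psi})$ as winding-number sums for $\gamma_{\ell}$): if this upper bound meets the degree lower bound there is no cancellation and (b) follows. Converting either heuristic into a rigorous argument is exactly the content of the conjecture, and is why it is stated here as such rather than proved.
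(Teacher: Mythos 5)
The statement you were asked to prove is explicitly labelled a \emph{conjecture} in the paper, and the authors do not prove it; immediately after the statement they write ``If the above conjecture is true, then it should be easy to prove\ldots'' and later admit it may not be true. So there is no proof in the paper against which your attempt can be compared, and the honest concluding paragraph of your proposal — identifying (a) and (b) as precisely what remains conjectural — is the right stance. Your reductions are sound: the observation that the $\bar U_C$ part of Theorem~\ref{nondeg_hess} only needs the one-kink form of Lemma~\ref{conv_convx1} applied to $\pm h$ (so convexity is not used there) is correct, as is the degree identity $\sum_b \sgn\det(\hes\tilde\vartheta_\epsilon)_b = w_\ell(m)$ once regularity is granted, and the half-integrality argument ruling out $\tilde\theta_j\in M$.

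One small inaccuracy: the rescaling identity $\mu_\epsilon(x)=\epsilon^{-2}\mu_1(x/\epsilon)$ does \emph{not} hold for the specific mollifier \eqref{mollifier}, because the exponent $1/(\|x\|^2-\epsilon^2)$ is not scale-invariant (one gets $\exp(\epsilon^2/(\|x\|^2-\epsilon^2))$ from $\mu_1(x/\epsilon)$, which differs by a factor $\epsilon^2$ in the exponent). The rescaling trick is still a legitimate simplification if one swaps \eqref{mollifier} for a genuinely scale-invariant mollifier $\mu_\epsilon(x)=\epsilon^{-n}\mu_1(x/\epsilon)$, and none of the earlier lemmas care which specific mollifier is used, so this is cosmetic; but as written the reduction to a fixed $\Theta$ relies on an identity that fails for the paper's $\mu_\epsilon$. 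Apart from that, the proposal is a fair roadmap whose unresolved core — the signature analysis of $\hes\tilde\vartheta_\epsilon$ for a non-convex $\vartheta$, i.e.\ your (a) and (b) — is exactly what the authors themselves leave open, so you should not expect to close it with the paper's tools.
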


\begin{rem} \label{floer}
If the above conjecture is true, then it should not be difficult to prove the following isomorphism of $\C$-vector spaces   
\begin{equation} \label{hf:sec_sph}
   \begin{split}
      HF_1( \sigma_0, L_{\ell}) & \cong \C^{h^{\text{odd}}(L_{\ell})}, \\
      HF_{\text{even}}( \sigma_0, L_{\ell}) & \cong \C^{h^{\text{even}}(L_{\ell})}.
   \end{split}
\end{equation}
Let us sketch how one could prove this. Every intersection point $b \in \sigma_0 \cap L_{\ell}$ corresponds to an intersection point $\bar b$ between the section $\lambda$ (i.e. the graph of $d \tilde \vartheta_{\epsilon}$) and the zero section $\bar \sigma_0$ in the reduced space.  Moreover we have
\begin{lem} If $X$ admits an $\omega$-compatible almost complex structure $J$ and an $S^1$ invariant $(3,0)$-form $\Omega$ with $\Omega \wedge \bar \Omega > 0$, then there are gradings $\bar \theta$, $\theta$, $\bar \psi$ and $\psi$ of $\bar \sigma_0$, $\sigma_0$, $\lambda$ and $L_{\ell}$ respectively such that for every $b \in \sigma_0 \cap L_{\ell}$ the Maslov degree computed with respect to these gradings satisfy
\[ \deg_{\sigma_0, L_{\ell}}(b) = \deg_{\bar \sigma_0, \lambda}(\bar b).\]
\end{lem}
\begin{proof} Here we compute the Maslov degree of a point with the method suggested in \cite{TY}. 
 Let $\eta$ be the vector field generating the $S^1$ action, normalized so that $|\eta| =1$ with respect to the metric given by $\omega$ and $J$.  We can define a reduced $(2,0)$ form $\Omega_{\rd}$ on $X_{\rd} = \mu^{-1}(0)/S^1$ as follows. If $\bar v_1$ and $\bar v_2$ are two tangent vectors on $X_{\rd}$ and $v_1, v_2$ are a choice of lifts to $\mu^{-1}(0)$ let 
\[ \Omega_{\rd}(\bar v_1, \bar v_2) = \Omega(\eta, v_1, v_2). \]
We have that $\Omega_{\rd}$ induces an almost complex structure which is compatible with the reduced symplectic form on $X_{\rd}$ (see \cite{Goldstein1} or \cite{Gross_spLagEx}).  We also have that if $H$ denotes the complex vector subspace generated by $\eta$ and $J \eta$, then its orthogonal complement $H^{\perp}$ is contained in the tangent space of $\mu^{-1}(0)$ and is mapped to the tangent space of $X_{\rd}$ via a complex isometry.

Now let $\bar \theta$ and $\bar \psi$ be gradings of $\bar \sigma_0$ and $\lambda$ defined using $\Omega_{\rd}$, i.e. real functions such that 
\[ \Omega_{\rd}|_{\bar \sigma_0} = r e^{i\pi \bar \theta} \vl_{\bar \sigma_0} \quad \text{and} \quad \Omega_{\rd}|_{\lambda} = q e^{i\pi \bar \psi} \vl_{\lambda} \]
for positive real functions $r$ and $q$ on $\bar \sigma_0$ and $\lambda$ respectively. Now we can choose an orthonormal basis $\{ \bar e_1, \bar e_2 \}$ of $T_{\bar b} \bar \sigma_0$ and $\{ \bar f_1, \bar f_2 \}$ of $T_{\bar b} \lambda$ such that 
\[ \bar f_j = e^{i\pi \alpha_j} \bar e_j \]
for numbers $\alpha_j \in (0,1)$. Then we have
\[ \deg_{\bar \sigma_0, \lambda}(b) =  \alpha_1+ \alpha_2 + \bar \theta(b) - \bar \psi(b) . \]
Let us define the grading on $L_{\ell}$. Let $\{f_1, f_2 \}$ be a lift of $\{ \bar f_1, \bar f_2 \}$ to $T_b \mu^{-1}(0)$ which is orthogonal to $\eta$. Then $\{\eta, f_1, f_2 \}$ is an orthonormal basis of $T_bL_{\ell}$. We have
\[ \Omega(\eta, f_1, f_2) = \Omega_{\rd}(\bar f_1, \bar f_2) = q e^{i \pi \bar \psi}.\]
Thus we can define a grading of $L_{\ell}$ to be $\psi = \bar \psi$. 

Let us now define the grading on $\sigma_0$. Define $S$ to be the tangent bundle to $\sigma_0 \cap \mu^{-1}(0)$. It can be shown that since $\sigma_0$ is Lagrangian and transversal to $\eta$ we have
\[ TX = H \oplus S \oplus JS. \]
Now let $e_0$ be a unitary vector field along $\sigma_0 \cap \mu^{-1}(0)$, which is tangent to $\sigma_0$ and orthogonal to $\sigma_0 \cap \mu^{-1}(0)$. We can decompose $e_0 = e_0'+e_0''$ where $e_0' \in H$ and $e_0'' \in S \oplus JS$. 
In particular, since $e_0$ is transversal to $\mu^{-1}(0)$, there is a function $\alpha_0: \mu^{-1}(0) \cap \sigma_0 \rightarrow (0,1)$ such that $e_0' = \ell e^{-i \pi \alpha_0} \eta$ for some positive real function $\ell$. For any frame $\{e_1, e_2 \}$ tangent to $\sigma_0 \cap \mu^{-1}(0)$ and lifting $\{ \bar e_1, \bar e_2 \}$ we have
\[ \begin{split}
           \Omega(e_0 , e_1, e_2) & = \Omega(e_0', e_1, e_2) =  \ell e^{-i \pi \alpha_0}\Omega(\eta, e_1, e_2)  \\
            & = \ell e^{-i \pi \alpha_0}\Omega_{\rd}( \bar e_1, \bar e_2) = \ell r e^{i \pi (\bar \theta- \alpha_0)}.
  \end{split} 
\]
Thus we can define a grading of $\sigma_0$ to be $\theta = \bar \theta - \alpha_0$. 

Let us now compute the Maslov degree at an intersection point $b$. We claim that up to a hamiltonian deformation of $\sigma_0$, we can assume that at each intersection point $b$, the tangent space to $\sigma_0 \cap \mu^{-1}(0)$ is orthogonal to $\eta$, i.e. $S_b  \subset H^{\perp}$. Indeed, given any smooth function $h$ on $X$, the hamiltonian vector field of $h \cdot\mu$ restricted to $\mu^{-1}(0)$ is $h \eta$. Thus we can locally choose $h$ near an intersection so that the hamiltonian flow of $h \mu$ will twist the tangent of space of $\sigma_0 \cap \mu^{-1}(0)$ so that it becomes orthogonal to $\eta$. 

In particular, if $e_1, e_2 \in S_b$ are the lifts of $\bar e_1, \bar e_2$ and $f_1, f_2$ are lifts of $\bar f_1, \bar f_2$  so that $\{\eta, f_1, f_2 \}$ is an orthonormal basis of $T_bL_{\ell}$, we have that $f_j = e^{i\pi \alpha_j} e_j$, since all these vectors are in $H^{\perp}$. Moreover we must have $e_0(b) \in H$, i.e. $e_0(b) = e^{-i \pi \alpha_0} \eta$. Therefore 
\[ \begin{split}
    \deg_{\sigma_0, L_{\ell}}(b) & = \alpha_0 + \alpha_1 + \alpha_2 + \theta(b) - \psi(b)  \\ 
                                 & = \alpha_1 + \alpha_2 + \bar \theta(b) - \bar \psi(b) =  \deg_{\bar \sigma_0, \lambda}(b).
  \end{split} \]
\end{proof}
In order to use this lemma in our situation one should prove the existence of the $S^1$-invariant form $\Omega$.  Now, recall that in the case of the cotangent bundle of a smooth $n$-dimensional manifold, the Maslov degree at a transversal intersection point $b$ between the zero section and the graph of an exact one form $d \phi$ is $n-\operatorname{Ind}(\phi, b)$, where $\operatorname{Ind}(\phi, b)$ denotes the Morse index of $\phi$ at $b$. Since the reduced space is a cotangent bundle (modulo a lattice), the above formula holds in the reduced space. Hence, the above lemma would give 
\[ \deg_{\sigma_0, L_{\ell}}(b) = 2 - \operatorname{Ind}(\tilde \vartheta_{\epsilon}, \bar b). \]

Now, observe that $J$-holomorphic strips connecting two intersection\linebreak points $b$ and $b'$ with boundary on $\sigma_0$ and $L_{\ell}$ may exist only if $D \vartheta(b) = D \vartheta(b')$. This is true for topological reasons. Indeed, a strip is a topological disc with half of the boundary on $\sigma_0$ and the other half on $L_{\ell}$.  Assume $D \vartheta(b) = m \in M$. Then the first half of the boundary gives a path $(\beta(t), m)$, with $\beta(0) = b$ and $\beta(1) = b'$ and the second half gives a path from $(b, D \vartheta(b))$ to $(b', D\vartheta(b'))$ in $N_{\R} \times M_{\R}$ along the graph of $D \vartheta$. If the disc exists with this boundary in the quotient $N_{\R} \times M_{\R}/ M$, then the latter path must close up, i.e., we must have  $D \vartheta(b)=D\vartheta(b')$. 

We then have that in computing the Floer differential, cancellations may happen only between points in $(D \vartheta)^{-1}(m)$.
Point (i) of the conjecture implies that if $w_{\ell}(m) < 0$, then all preimages of $m$ have Maslov degree $1$, therefore there can be no cancellations, i.e., every point in the preimage of $m$ contributes $1$ to the dimension of $HF_{1}( \sigma_0, L_{\ell})$. In the case $w_{\ell}(m) > 0$, then point (ii) implies that every point in the preimage of $m$ contributes $1$ to the degree of $m$, i.e., to  $w_{\ell}(m)$. We will see in the next section that if $w_{\ell}(m) > 0$, then necessarily $w_{\ell}(m) = 1$. In particular every point $m$ with $w_{\ell}(m) > 0$ contributes one either to the dimension of $HF_0(\sigma_0, L_{\ell})$ or to the dimension of $HF_2(\sigma_0, L_{\ell})$. In particular, Theorem \ref{nondeg_hess} and Corollary \ref{convex:intersection} imply that the conjecture is true in the case $\vartheta$ is strictly convex, so we have that 
\begin{equation} \label{strconv:hf}
   \vartheta \ \ \text{strictly convex} \ \ \ \Longrightarrow \ \ \ 
                                \begin{cases} 
                                             HF_1( \sigma_0, L_{\ell}) = HF_0( \sigma_0, L_{\ell}) = 0 \\ 
                                             HF_2( \sigma_0, L_{\ell}) \cong \C^ {\#( D\vartheta(C) \cap M)}
                                \end{cases}  
\end{equation}
and similarily
\begin{equation} \label{-strconv:hf}
   - \vartheta \ \ \text{strictly convex} \ \ \ \Longrightarrow \ \ \ 
                                \begin{cases} 
                                             HF_1( \sigma_0, L_{\ell}) = HF_2( \sigma_0, L_{\ell}) = 0 \\ 
                                             HF_0( \sigma_0, L_{\ell}) \cong \C^ {\#( D\vartheta(C) \cap M)},
                                \end{cases}  
\end{equation}
where the above isomorphisms are as vector spaces over $\C$. Even if the above conjecture is not true, we still believe that equalities \eqref{hf:sec_sph} are true, but it may be harder to prove them. We believe that these can be proved with methods similar to those of Abouzaid \cite{abo_coord_ring}, \cite{Ab_HMS}. 
\end{rem}

\section{Homological mirror symmetry correspondence} \label{hms}

If $\sigma$ is a compactly supported section and $L$ is one of the spheres constructed above such that 
\[ [\sigma\sigma_0] = [L], \]
then we can guess the sheaf $\mathscr{E}_{L}$ on $\check X$ associated to $L$ as follows.  Letting $\mathscr{L}_{\sigma}$ be the line bundle on $\check X$ associated to $\sigma$, then we should have a short exact sequence
\[ 0  \rightarrow \mathcal{O}_{\check X} \rightarrow \mathscr{L}_{\sigma} \rightarrow \mathscr{E}_{L} \rightarrow 0. \]
In particular, Conjecture \ref{hms:dim2} and Proposition \ref{com_sup:dim3} (respectively \ref{cs2:prop}) imply that $\sigma_{-K_C}$  (resp.\ $\sigma_2$) is a compactly supported section and its mirror line bundle is $\mathcal{O}_{\check X}(D_C)$ where $D_C$ is the divisor corresponding to $C$ (respectively $D_C = \PP^1 \subset \check X$). We have a short exact sequence
\[ 0  \rightarrow \mathcal{O}_{\check X} \rightarrow \mathcal{O}_{\check X}(D_C) \rightarrow \mathcal{O}_{\check X}(D_C)|_{D_C} \rightarrow 0, \]
where the second arrow is given by tensoring with a section vanishing on $D_C$. Using Theorem \ref{diff_sec} (resp. Theorem \ref{dim2_diff}) we conclude that we should have
\begin{equation} \label{hms:vanishing1}
  \mathscr{E}_{L_{- K_C}} = \mathcal{O}_{\check X}(D_C)|_{D_C},  \ \ \ (\text{resp.} \ \ \mathscr{E}_{L_{1}} = \mathcal{O}_{\check X}(\PP^1)|_{\PP^1}).
\end{equation}
More generally, from formula (\ref{trans_sphere}) (resp. (\ref{transl_d2})) and the fact that translation by a section should be mirror to tensoring with the corresponding line bundle, we should have
\begin{equation} \label{hms:vanishing2} 
\mathscr{E}_{L_{-K_C+2\kappa}} = \mathscr L_{-\kappa} \otimes \mathcal{O}_{\check X}(D_C)|_{D_C} \ \ \ (\text{resp.} \ \ \mathscr{E}_{L_{1+k}} = \mathscr L_{-k} \otimes \mathcal{O}_{\check X}(\PP^1)|_{\PP^1}).
\end{equation}
See also Corollary \ref{comp_trans}.

\medskip

In the three-dimensional case, let us formulate this as a conjecture 
\begin{con} \label{hms:spheres} The Lagrangian sections of $f: X \rightarrow \R^n$ and the La\-gran\-gian spheres of type $L_{\ell}$ defined on each bounded connected component $C$ of $N_{\R} - \Delta$ generate the derived partially wrapped Fukaya category of $X$ and the correspondence which maps a section to a line bundle over $\check X$ as in Conjecture \ref{hms:dim2} and a sphere $L_{\ell}$ to the sheaf $\mathscr E_{L_{\ell}}$ supported on the divisor $D_C \subset \check X$ given by 
\begin{equation} \label{hms:vanishing3} 
 \mathscr E_{L_{\ell}} = \mathscr L_{\frac{K_C - \ell}{2}}|_{D_C}
\end{equation}
induces an embedding of the derived partially wrapped Fukaya category in $\DC(\check X)$.
\end{con}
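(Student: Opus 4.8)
The plan is to promote the object-level correspondence of Conjecture~\ref{hms:dim2} and formula~\eqref{hms:vanishing3} to an $A_\infty$-functor into (twisted complexes over) $\DC(\check X)$ and to prove it is cohomologically full and faithful, leveraging the $S^1$-equivariant structure of $f:X\to\R^3$ and the partial Floer computations already established in \S\ref{lag_sections}, \S\ref{vc:lag} and the intersection section. First I would pin down the source category $\DF(X)$: objects are the graded, \emph{Spin} Lagrangian sections $\sigma_K$ of Theorem~\ref{sections_thm} and the Lagrangian spheres $L_\ell$ of Theorem~\ref{lag_spheres}; morphisms between two sections are computed by a wrapped Floer complex, as in Chan--Ueda~\cite{chan:ueda}, while all morphisms involving a (compact) sphere use ordinary Floer cohomology. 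On the algebraic side the target is the full triangulated subcategory $\mathcal T\subseteq\DC(\check X)$ split-generated by the line bundles $\mathcal L_\phi$ and the sheaves $\mathscr E_{L_\ell}=\mathcal L_{(K_C-\ell)/2}|_{D_C}$, and the claim is that the resulting functor $\Psi\colon\DF(X)\to\mathrm{Tw}\,\mathcal T$ is an equivalence onto a full subcategory.

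The second step is to compute all morphism spaces at the level of cohomology. The key simplification is that, modulo the Hamiltonian $S^1$-action, the relevant Floer theory lives in the reduced space $X(B)_{\rd}=\mu^{-1}(0)/S^1$, which over the smooth locus is the twisted cotangent bundle $T^*(N_\R-\Delta)/\Lambda^*$; holomorphic strips and polygons with boundary on sections and on the sections $\lambda_\epsilon$ cutting out the spheres project to holomorphic polygons there, which are governed by the Morse theory of the generating functions $\tilde\phi_\epsilon$ and $\tilde\vartheta_\epsilon$ via Fukaya--Oh. For two sections this reproduces the toric identity $\Ext^i(\mathcal L_\phi,\mathcal L_{\phi'})=H^i(\check X,\mathcal L_{\phi'-\phi})$ exactly as in \cite{chan:ueda}. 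For a section against a sphere the intersection count and Maslov indices are precisely the winding-number data of Corollary~\ref{convex:intersection} and Theorem~\ref{coho_winding}, which were already shown there to match $\dim H^\bullet(D_C,\mathcal L_\psi)$; what remains is the vanishing of the Floer differential, which I would extract from the index/topology argument of Remark~\ref{floer} together with the non-degeneracy of the Hessian --- establishing the latter in the non-convex case, presumably by refining the convolution estimate of Lemma~\ref{conv_convx1}, is one of the analytic points still to be completed. For two spheres over the same component one uses the resolution $0\to\mathcal L_{\sigma'}\to\mathcal L_\sigma\to\mathscr E_{L_\ell}\to0$ supplied by Theorem~\ref{diff_sec} and Proposition~\ref{com_sup:dim3}, together with the description of $L_\ell$ as a Lagrangian connect sum $\sigma_{K}\,\#\,\sigma_{K'}$, to reduce $HF^\bullet(L_\ell,L_{\ell'})$ to morphism spaces between sections already computed; for spheres over different components both sides should vanish because the supporting divisors meet only along lower strata.

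The third step is to upgrade these isomorphisms to a genuine $A_\infty$-functor and to prove it is a quasi-isomorphism onto its image. Following Abouzaid~\cite{Ab_HMS,abo_coord_ring} and Chan--Ueda~\cite{chan:ueda}, I would realise $\DF(X)$ on a model whose morphism complexes are \v{C}ech/Morse complexes matched with a \v{C}ech model for morphisms on $\check X$, so that the higher products $m_k$ correspond to Yoneda products; the structure constants are counts of holomorphic polygons which, via the $S^1$-reduction above, localise to tropical/combinatorial data on $N_\R$. Full faithfulness then follows from the second step, and the image is identified with $\mathcal T$ by construction together with the exact triangles relating $\sigma_K$, $\sigma_{K'}$ and $L_\ell$ (compatible with formula~\eqref{trans_sphere} and the expectation that translation by a section is mirror to tensoring by a line bundle). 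Throughout one must track gradings and orientation signs carefully, using the antisymplectic involution of \cite{CBMS} to fix \emph{Spin} structures, and verify that the fattened discriminant $\Delta_\blacklozenge$ at negative vertices contributes no extra polygons --- which is plausible since, as in \S\ref{lag_sections}, all the Lagrangians here agree with the zero section on a neighbourhood of those vertices.

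The hardest part will be this analytic control of holomorphic curves near the critical locus $S$ and the corresponding chain-level refinement: away from $S$ the reduced space is a cotangent bundle and the curve counts are classical, but near $S$ --- above all near negative vertices, where $f$ is only piecewise smooth (Theorem~\ref{lagfib_thm}, \cite{CB-M}) --- one has to build local Floer models compatible with the symplectic compactification of \cite{CB-M} and \cite{CBMS}, or else develop an $S^1$-equivariant Fukaya formalism that makes the reduction rigorous. A second genuine gap is that the non-convex case currently rests on the unproven Floer conjecture stated just before Remark~\ref{floer}; an unconditional proof of the embedding would require settling that conjecture, which is where I expect most of the remaining work to concentrate.
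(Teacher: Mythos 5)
This statement is labeled a \emph{conjecture} in the paper, and the paper contains no proof of it; it offers only partial evidence (Theorems~\ref{diff_sec}, \ref{coho_winding}, Corollary~\ref{convex:intersection}, and the two-dimensional results of Chan and Chan--Ueda cited in the text). Your write-up is therefore a research programme rather than a proof, and you say so yourself, correctly identifying the two serious open points: rigorous control of holomorphic curves near $S$ and the fattened discriminant at negative vertices, and the vanishing of the Floer differential in the non-convex case (which the paper itself leaves as a conjecture just before Remark~\ref{floer}). Your overall strategy --- reduce modulo the Hamiltonian $S^1$-action, interpret sections and spheres via the generating functions $\tilde\phi_\epsilon$ and $\tilde\vartheta_\epsilon$ through Fukaya--Oh, match the resulting combinatorial/winding data with the cohomology of line bundles on $D_C$, and lift to an $A_\infty$-functor in the style of Abouzaid and Chan--Ueda --- is consistent with the evidence the paper actually provides and with the cited two-dimensional proofs.

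Two points, however, are not accurate as stated. First, Theorem~\ref{diff_sec} and Proposition~\ref{com_sup:dim3} are purely homological: they identify $[\sigma\sigma']$ with $[L_\ell]$ in $H_3(X,\Z)$. They do not \emph{supply} the short exact sequence $0\to\mathscr L_{\sigma'}\to\mathscr L_\sigma\to\mathscr E_{L_\ell}\to0$ nor a Lagrangian connect-sum description $\sigma=\sigma'\#L_\ell$; the paper states these only as motivation (``it seems reasonable to expect''), and any actual proof would have to establish the connect-sum statement geometrically before invoking it. Second, your claim that for spheres over \emph{different} components ``both sides should vanish because the supporting divisors meet only along lower strata'' is false. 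Adjacent compact toric divisors $D_C$ and $D_{C'}$ in a threefold intersect along a $\PP^1$, and $\Ext^*$ between sheaves on such divisors is computed in Lemma~\ref{inters_curv} and is generically nonzero; it vanishes only under the numerical condition $k+m=-1$. Correspondingly, spheres over adjacent components need not be disjoint in $X$: the disjointness used in Proposition~\ref{a2d:sequence} relies on the specific choice of twisting numbers making the sections $\lambda$ agree on the shared edge. So the case of spheres over distinct adjacent components is a genuine computation (matching the combinatorics of Lemma~\ref{inters_curv} with intersection counts on the shared edge), not a vanishing-for-free.
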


In dimension $2$, in \cite{chan:an} Chan proves this conjecture for the part concerning the spheres and in \cite{chan:ueda} Chan and Ueda prove this conjecture for the sections. They do not seem to discuss whether the two correspondences, i.e., the one concerning the spheres and the one concerning the sections, are compatible with each other. We now look at examples in more detail and discuss some first simple evidence supporting the above conjecture. 

\begin{ex} When $\check X$ is the total space of $ \mathcal{O}_{\PP^1}(-2)$, formulas (\ref{hms:vanishing1}) and (\ref{hms:vanishing2}) give 
\[ \mathscr E_{L_1} = \mathcal{O}_{\PP^1}(-2) \ \ \ \text{and} \ \ \ \mathscr E_{L_{k}} = \mathscr L_{1-k} \otimes \mathcal{O}_{\PP^1}(-2) = \mathcal{O}_{\PP^1}(-k-1).\] 
Notice that $\sigma_0$ intersects $L_1$ at only one point, so we must have:
\[ HF^*(\sigma_0, L_1) \otimes_{\R} \C  \cong \C. \]
This corresponds well with the  fact that 
\[ \Hom^j_{\DC(\check X)}(\mathcal{O}_{\check X}, \mathcal{O}_{\PP^1}(-2)) \cong H^j(\PP^1, \mathcal{O}_{\PP^1}(-2)) 
                                   = \begin{cases}
                                     \C \quad \text{for} \ j=1, \\
                                     0 \quad \text{otherwise}.
                                    \end{cases}                                      \]
\end{ex}
More generally,  $\# (\sigma_0 \cap L_k) = |k|$ and all intersections have the same Maslov degree, hence
\[ HF^*(\sigma_0, L_k) \otimes_{\R} \C  \cong \C^{|k|}. \]
This corresponds to the fact that 
\[ \Hom^*_{\DC(\check X)}(\mathcal{O}_{\check X}, \mathcal{O}_{\PP^1}(-k-1)) \cong H^*(\check X, \mathcal{O}_{\PP^1}(-k-1)) \cong \C^{|k|} \]
Notice that as line bundles on $\PP^1$, $\mathscr E_{L_{-k}} = \mathscr E_{L_{k}}^{\vee} \otimes \omega_{\PP^1}$, so  $\mathscr E_{L_{-k}}$ and  $\mathscr E_{L_{k}}$ are related by Serre duality on $\PP^1$.  This matches the fact that $\#(\sigma_0 \cap L_{-k}) = \#(\sigma_0 \cap L_{k})$

\begin{ex} In dimension 3, when $\check X$ is an open set in the total space of $ \mathcal{O}_{\PP^2}(-3)$ as in Examples \ref{triangle} and \ref{localP2exampleredux}, 
we have
\[ [\sigma_3\sigma_0] = L_3. \]
Then, since $K_C = -3$, formula (\ref{hms:vanishing3}) becomes
\[ \mathscr E_{L_{2k+1}} =  \mathscr L_{-k-2}|_{\PP^2} = \mathcal{O}_{\PP^2}(-k-2). \]
Formula (\ref{cap_p2}) and the observations \eqref{strconv:hf} and \eqref{-strconv:hf} imply
\[ HF^*(\sigma_0, L_{2k+1}) \otimes_{\R} \C  \cong \C^{|\frac{k(k+1)}{2}|}. \]
This matches the fact that
\[ \Hom^*_{\DC(\check X)}(\mathcal{O}_{\check X}, \mathcal{O}_{\PP^2}(-k-2)) \cong H^*(\PP^2, \mathcal{O}_{\PP^2}(-k-2)) \cong \C^{\left| \frac{k(k+1)}{2} \right | }. \]
Notice also in this case, that $\mathscr E_{L_{-(2k+1)}} = \mathcal{O}_{\PP^2}(k-1)$  and $\mathscr E_{L_{2k+1}} = \mathcal{O}_{\PP^2}(-k-2)$ are related by Serre duality on $\PP^2$. This matches the fact that $\#(\sigma_0 \cap L_{-(2k+1)}) = \#(\sigma_0 \cap L_{2k+1})$.
\end{ex}

\subsection{Lagrangian spheres and cohomology of line bundles}
In this section we gather some more evidence supporting Conjecture \ref{hms:spheres}. In particular we will prove the following: 

\begin{thm} \label{coho_winding} For all Lagrangian spheres $L_{\ell}$ in $X$ over $C$, we have the following
\begin{equation} 
 \begin{split}
 \Hom^1_{\DC(\check X)}(\mathcal{O}_{\check X},  \mathscr L_{\frac{K_C - \ell}{2}}|_{D_C}) & \cong \C^{h^{\text{odd}}(L_{\ell})} \\
 \Hom^{\text{even}}_{\DC(\check X)}(\mathcal{O}_{\check X},  \mathscr L_{\frac{K_C - \ell}{2}}|_{D_C}) & \cong \C^{h^{\text{even}}(L_{\ell})} 
 \end{split}
\end{equation}
where $h^{\text{odd}}(L_{\ell})$ and $h^{\text{even}}(L_{\ell})$ are defined in (\ref{winding}).
\end{thm}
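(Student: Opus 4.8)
The plan is to reduce the statement to a purely combinatorial computation on the fan $\Sigma_C$ and then match two counts: the winding numbers of the polygonal curve $\gamma_\ell$ against the lattice $M$, and the cohomology dimensions of $\mathscr L_\psi = \mathscr L_{(K_C-\ell)/2}|_{D_C}$ on the toric surface $D_C$. The key observation is that $D_C$ is a complete smooth toric surface with fan $\Sigma_C$, and the half-integral support function $\psi = \frac{1}{2}\psi_{K_C} - \vartheta$ determines a (not-necessarily-convex) piecewise linear function on $|\Sigma_C| = N_\R$. Classical toric geometry expresses $\dim H^q(D_C, \mathscr L_\psi)$ in terms of the piecewise linear structure of $\psi$; the first step is to make this description precise. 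For a complete toric surface, $\chi(\mathscr L_\psi)$ and the individual $H^q$ can be computed via the combinatorics of the upper boundary of the graph of $\psi$, equivalently via the ``broken line'' or the Newton-polygon-with-folds associated to $\psi$. I would invoke the standard fact (e.g.\ via \v{C}ech cohomology on the standard affine cover, or via the long exact sequences coming from the toric boundary strata) that $H^0$ counts lattice points over which $\psi$ lies below its ``convexification'' appropriately, and $H^2$ counts lattice points related by duality, with $H^1$ detecting the failure of convexity.

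The second and central step is to identify the curve $\gamma_\ell = D\vartheta|_{\partial\bar C}$ with the boundary of the graph data of $\psi$. From the construction in \S\ref{vc:lag}, $\gamma_\ell$ is the closed polygonal curve with vertices $\tilde\theta_j$, where $\tilde\theta_{j+1} = \frac{1}{2}\epsilon_{e_j}\ell_{e_j} n_{e_j} + \tilde\theta_j$; these $\tilde\theta_j$ are precisely the linear parts of $\vartheta$ on the maximal cones $\nu_j$ of $\Sigma_C$, shifted into $M_\R$. The point is that the linear parts of $\psi = \frac{1}{2}\psi_{K_C} - \vartheta$ on the cones $\nu_j$ form a second polygonal curve, and — because $\psi_{K_C}$ is the support function of the canonical-type twist entering $K_C$ (recall $K_C$ is defined in Definition \ref{defi:Kc} with $k_e = -b_e-2$, matching the adjunction/canonical class of $D_C$) — the winding number $w_\ell(m)$ of $\gamma_\ell$ around a lattice point $m$ is exactly the local contribution of $m$ to $\chi$ of $\mathscr L_\psi$ together with a sign that distinguishes even from odd cohomology. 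Concretely, I would show: $w_\ell(m) = 1$ iff $m$ contributes to $H^{\text{even}}$, $w_\ell(m) < 0$ iff $m$ contributes $-w_\ell(m)$ to $H^1$, and $w_\ell(m)=0$ iff $m$ is not in the cohomological support. This is the heart of the matter and where the factor $\frac{1}{2}$ and the half-integrality of $\vartheta$ are used: the half-lattice shifts ensure $\gamma_\ell$ never passes through $M$, and the relation $\psi = \frac{1}{2}\psi_{K_C}-\vartheta$ is what converts ``winding number of the derivative curve'' into ``Euler characteristic contribution''.

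With these two identifications, the proof concludes by summing: $\dim H^{\text{even}}(D_C,\mathscr L_\psi) = \#\{m : \text{even contribution}\} = \sum_{w_\ell(m)>0} w_\ell(m) = h^{\text{even}}(L_\ell)$ using that $w_\ell(m) > 0$ forces $w_\ell(m) = 1$ (which I would verify from the trivalence of vertices of $\Gamma$ / simplicity of the subdivision, so that $\gamma_\ell$ has a controlled self-intersection pattern), and $\dim H^1(D_C,\mathscr L_\psi) = -\sum_{w_\ell(m)<0} w_\ell(m) = h^{\text{odd}}(L_\ell)$. Finally I would cite the fact that $\Hom^j_{\DC(\check X)}(\O_{\check X}, \mathscr L_\psi|_{D_C}) \cong H^j(D_C, \mathscr L_\psi)$, which follows because $D_C$ is a compact divisor in the affine-over-a-point variety $\check X$ (so higher direct images vanish and the pushforward computes the Ext groups directly), combined with $H^3 = 0$ on a surface, giving the claimed isomorphisms.

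The main obstacle I anticipate is the second step: setting up a clean, sign-correct dictionary between the winding numbers of $\gamma_\ell$ and the toric cohomology of $\mathscr L_\psi$ on a possibly non-smooth-looking but actually smooth surface $D_C$ whose fan $\Sigma_C$ can be complicated. The combinatorics of $H^1$ of a line bundle on a toric surface with a non-convex support function is genuinely subtle — one must track not just which lattice points appear but with what multiplicity — and matching ``$-\sum w_\ell(m)$ over points of negative winding'' to ``$\dim H^1$'' requires either a careful Mayer--Vietoris / \v{C}ech argument over the cones of $\Sigma_C$ or an appeal to a degeneration/deformation invariance of $\chi$ together with the vanishing results \eqref{strconv:hf}, \eqref{-strconv:hf} in the convex cases as base points of an induction on ``how non-convex'' $\vartheta$ is. I would likely organize the argument as an induction on the number of ``reflex'' edges of $\gamma_\ell$, using a wall-crossing / mutation move that changes $\ell$ across a single edge and tracking how both sides of the claimed equality change — this localizes the verification to a single elementary modification, which should be checkable by hand.
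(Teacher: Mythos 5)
Your high-level plan is sound and matches the paper's starting point: compute the $\Hom$ groups as $H^j(D_C,\mathscr L_\psi)$ with $\psi=\frac{1}{2}\psi_{K_C}-\vartheta$, invoke the standard toric description of $H^j$ via the sets $Z(m)=\{n\mid\langle m,n\rangle+\psi(n)\geq 0\}$, and then establish a dictionary between the winding numbers $w_\ell(m)$ of $\gamma_\ell$ and the local cohomology contributions at each $m\in M$. The observation that half-integrality of $\vartheta$ ensures $\gamma_\ell$ misses $M$ is also the right one.

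The genuine gap is in the second step, which you yourself flag as ``the heart of the matter.'' The paper does not use an induction on reflex edges or a wall-crossing argument. Instead it proceeds directly: Lemmas \ref{inq_psi:theta} and \ref{ZW} use the half-integrality of $\vartheta$ at the rays $n_{\check e}$ to replace $Z(m)$ by $W(m)=\{n\mid\langle m,n\rangle-\vartheta(n)\geq 0\}$ without changing any Betti numbers; then the winding number of $\gamma_\ell$ around $m$ is read off a generic ray $\rho_{\bar m}$ via the auxiliary function $T(q)=(\vartheta(q)-\langle m,q\rangle)/\langle\bar m,q\rangle$ on the semicircles $S^\pm$, whose local maxima and minima with $T>0$ are shown (Lemma \ref{min:edge}) to correspond bijectively and with correct signs to the intersections $\rho_{\bar m}\cap\gamma_\ell$; finally an explicit case analysis of the ``white/black arc'' structure of $S^\pm$ relative to $W(m)$ yields $w_\ell(m)=1$ for $m\in Q_0\cup Q_2$ and $w_\ell(m)=1-b_0(N_\R-W(m))$ for $m\in Q_1$, which is exactly the identity you need. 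Your proposed induction would have to track how \emph{both} the polygon $\gamma_\ell$ and the sheaf cohomology change under a single modification of $\ell$, and verifying that these two changes match is essentially the same combinatorial problem in disguise; you have not shown how the base case or the inductive step would actually be controlled. In addition, your assertion that $w_\ell(m)>0\Rightarrow w_\ell(m)=1$ ``follows from trivalence of vertices of $\Gamma$ / simplicity of the subdivision'' is not justified as stated: $\gamma_\ell$ can be a quite wild closed polygon, and the paper obtains $w_\ell(m)\in\{1\}\cup\Z_{\leq 0}$ only as a by-product of the $T$-function/arc analysis (positive winding forces $m\in Q_0\cup Q_2$), not from any a priori constraint on the self-intersections of $\gamma_\ell$. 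So the reduction to toric combinatorics is fine, but the central identity is asserted rather than proved, and the sketched alternative route is not obviously viable without doing the same work the paper does by hand.
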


\begin{rem}
In the case the semi-integral support function $\vartheta$ with kinks $\ell$ (or its opposite  $-\vartheta$) is strictly convex, the above theorem, together with observations \eqref{strconv:hf} and \eqref{-strconv:hf}, implies
\begin{equation} 
 \begin{split}
 \Hom^1_{\DC(\check X)}(\mathcal{O}_{\check X},  \mathscr L_{\frac{K_C - \ell}{2}}|_{D_C}) & \cong HF^1(\sigma_0, L_{\ell}) = 0, \\
 \Hom^{\text{even}}_{\DC(\check X)}(\mathcal{O}_{\check X},  \mathscr L_{\frac{K_C - \ell}{2}}|_{D_C}) & \cong HF^{\text{even}}(\sigma_0, L_{\ell}).
 \end{split}
\end{equation}
\qed
\end{rem}

We have
\[ \Hom^*_{\DC(\check X)}(\mathcal{O}_{\check X},  \mathscr L_{\frac{K_C - \ell}{2}}|_{D_C}) = H^*(D_C,  \mathscr L_{\frac{K_C - \ell}{2}}|_{D_C}). \]
Recall that $D_C$ is a toric surface given by the fan $\Sigma_C$ whose cones are the tangent wedges to the simplices containing $v_C \in C$. Let us denote by $\Sigma_C(1)$ the one-dimensional cones of $\Sigma_C$. If $\check e$ is an edge emanating from $v_C$, its tangent wedge is a one-dimensional cone of $\Sigma_C$, so by slight abuse of notation we write $\check e \in \Sigma_C(1)$. We also assume that for every $\check e \in \Sigma_C(1)$ the tangent vector $n_{\check e}$ points outward from $v_C$, so that $n_{\check e}$ is a primitive integral generator of the one-dimensional cone $\check e$. 

We now describe a support function on $|\Sigma_C|$ corresponding to the line bundle $\mathscr L_{\frac{K_C - \ell}{2}}|_{D_C}$. Let $\psi_{K_C}$ be a support function for the canonical bundle on $D_C$. In particular we require that 
\begin{equation} \label{psi:def}
    \psi_{K_C}(n_{\check e}) = -1
\end{equation} 
for all $\check e \in \Sigma_C(1)$. It can be easily verified that a support function for the line bundle $\mathscr L_{\frac{K_C - \ell}{2}}|_{D_C}$ on $D_C$ is given by 
\begin{equation} \label{psi:teta}
     \psi = \frac{1}{2} \psi_{K_C} - \vartheta,
\end{equation}
where $\vartheta$ is the semi-integral support function whose kinks are $\ell/2$. 

Cohomology of line bundles on a toric variety can be computed in terms of support functions as explained in Section 3.5 of Fulton's book \cite{fulton:toric} (or in Chapter 9 of  the book by Cox, Little and Schenk \cite{cox:little:schenck}). Recall that our convention for support functions is opposite to the one used by Fulton (op. cit.) or by Cox, Little and Schenk (op. cit.). In our case $|\Sigma_C| = N_{\R} \cong \R^2$, so we proceed as follows. For every $m \in M$ let 
\[ Z(m) = \{ n \in N_{\R} \, | \, \inn{m}{n}+ \psi(n) \geq 0 \}. \]
For every $i$, consider the relative cohomology groups:
\[ H^i(\psi, m) := H^i(N_{\R}, N_{\R} - Z(m); \C). \]
Then the cohomology of the line bundle $\mathscr L_{\psi}$ over $D_C$, given by the support function $\psi$, is computed by
\[ H^i(D_C, \mathscr L_{\psi}) = \bigoplus_{m \in M} H^i(\psi, m). \]
Now let 
\begin{equation*}
        \begin{split}
        Q_0 & = \{ m \in M \, | \, Z(m) = N_{\R} \}, \\
        Q_1 & = \{ m \in M \, | \, \{ 0 \} \subsetneq Z(m)  \subsetneq N_{\R}\}, \\
        Q_2 & = \{ m \in M \, | \, Z(m) = \{ 0 \} \}.
        \end{split}
\end{equation*}
Furthermore, for every $m \in Q_1$, let 
\[ h_{\psi}(m) = b_0(N_{\R} - Z(m)) - 1, \]
where $b_0$ denotes the $0$-th Betti number. Then one can show that
\begin{equation} \label{coho_L}
        \begin{split}
        H^0 (D_C, \mathscr L_{\psi}) & \cong \C^{\# Q_0}, \\
        H^1 (D_C, \mathscr L_{\psi}) & \cong \bigoplus_{m \in Q_1} \C^{h_{\psi}(m)}, \\
        H^2 (D_C, \mathscr L_{\psi}) &  \cong \C^{\# Q_2}.
        \end{split}
\end{equation}
Letting $\psi$ be given by (\ref{psi:teta}), we have the following:
\begin{lem} \label{inq_psi:theta}
For every $\check e \in \Sigma_C(1)$ and every $m \in M$, we have 
	\begin{align*} \inn{m}{n_{\check e}} + \psi(n_{\check e}) \geq 0 \ \Longleftrightarrow \ \inn{m}{n_{\check e}} - \vartheta(n_{\check e}) >  0, \\
 \inn{m}{n_{\check e}} + \psi(n_{\check e}) < 0 \ \Longleftrightarrow \ \inn{m}{n_{\check e}} - \vartheta(n_{\check e}) <  0. 
\end{align*}
\end{lem}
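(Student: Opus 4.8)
The plan is to reduce both equivalences to a single elementary observation: for every $\check e \in \Sigma_C(1)$ and every $m \in M$, the quantity $\inn{m}{n_{\check e}} - \vartheta(n_{\check e})$ is an odd multiple of $\half$, hence in particular never an integer and never zero.

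First I would substitute the defining relation $\psi = \half \psi_{K_C} - \vartheta$ from \eqref{psi:teta} together with the normalization $\psi_{K_C}(n_{\check e}) = -1$ from \eqref{psi:def}. This gives $\psi(n_{\check e}) = -\half - \vartheta(n_{\check e})$, and therefore
\[ \inn{m}{n_{\check e}} + \psi(n_{\check e}) = \bigl(\inn{m}{n_{\check e}} - \vartheta(n_{\check e})\bigr) - \half . \]
Next I would record the integrality statement: since $m \in M$ and $n_{\check e} \in N$ we have $\inn{m}{n_{\check e}} \in \Z$, while the semi-integrality condition \eqref{semi-int} — using the standing assumption made just before the lemma that $n_{\check e}$ points outward from $v_C$, so that $\epsilon_{\check e} = 1$ — gives $\vartheta(n_{\check e}) \in \half + \Z$. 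Hence $\inn{m}{n_{\check e}} - \vartheta(n_{\check e}) \in \half + \Z$ is an odd multiple of $\half$; in particular the displayed right-hand side above is an integer.

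Finally I would combine these two facts. For the first equivalence, $\inn{m}{n_{\check e}} + \psi(n_{\check e}) \geq 0$ is equivalent to $\inn{m}{n_{\check e}} - \vartheta(n_{\check e}) \geq \half$; but an odd multiple of $\half$ is $\geq \half$ if and only if it is $> 0$. For the second equivalence, $\inn{m}{n_{\check e}} + \psi(n_{\check e}) < 0$ is equivalent to $\inn{m}{n_{\check e}} - \vartheta(n_{\check e}) < \half$, which for an odd multiple of $\half$ is equivalent to $\leq -\half$, i.e. to $< 0$. (The two equivalences are logically complementary, so in fact establishing one suffices.)

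There is essentially no serious obstacle here: the only point requiring care is the bookkeeping of the ``$\mod \Z$'' statement in \eqref{semi-int} and the sign convention $\epsilon_{\check e} = 1$, which is precisely why one assumes $n_{\check e}$ points outward from $v_C$. If instead $\epsilon_{\check e} = -1$, one replaces $\vartheta(n_{\check e})$ throughout by $-\vartheta(-n_{\check e})$, which is again an odd multiple of $\half$, and the same argument applies verbatim; since the final cohomology computation only uses $\#\{\check e : \inn{m}{n_{\check e}} + \psi(n_{\check e}) < 0\}$-type data, the choice of sign for $n_{\check e}$ is immaterial.
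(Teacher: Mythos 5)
Your proof is correct and follows essentially the same route as the paper's: substitute $\psi(n_{\check e}) = -\tfrac{1}{2} - \vartheta(n_{\check e})$ using \eqref{psi:def} and \eqref{psi:teta}, observe that $\inn{m}{n_{\check e}} - \vartheta(n_{\check e}) \in \tfrac{1}{2} + \Z$, and conclude by the arithmetic of half-odd-integers. The paper states the second equivalence by noting that $\inn{m}{n_{\check e}} - \vartheta(n_{\check e}) = 0$ is impossible and invoking the dichotomy, whereas you argue it directly; this is a cosmetic difference only, and your closing remark about the sign convention is superfluous since the paper fixes $n_{\check e}$ outward-pointing just before the lemma.
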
 

\begin{proof} Both equivalences follow from (\ref{psi:def}) and (\ref{psi:teta}). In fact, 
\[ \inn{m}{n_{\check e}} + \psi(n_{\check e}) \geq 0 \ \Leftrightarrow \ \inn{m}{n_{\check e}} - \vartheta(n_{\check e}) \geq \frac{1}{2}. \]
On the other hand recall that $\vartheta(n_{\check e}) = \frac{1}{2} \mod \Z$ for every $\check e \in \Sigma_C(1)$. Therefore 
\[ \inn{m}{n_{\check e}} - \vartheta(n_{\check e}) >  0 \ \Rightarrow \ \inn{m}{n_{\check e}} - \vartheta(n_{\check e}) \geq  \frac{1}{2}. \]
Hence applying (\ref{psi:def}) and (\ref{psi:teta}) gives the first equivalence. The second equivalence follows immediately since we cannot have $\inn{m}{n_{\check e}} - \vartheta(n_{\check e}) =0$. \end{proof}

Let 
\[ W(m)  = \{ n \in N_{\R} \, | \, \inn{m}{n} - \vartheta(n) \geq 0 \}. \]

\begin{lem} \label{ZW} For every $m \in M$ we have 
	\begin{align*} Z(m) = N_{\R} \ \Longleftrightarrow \ W(m) = N_{\R},  \\
 Z(m) = \{ 0 \} \ \Longleftrightarrow \ W(m) = \{ 0 \}.
	\end{align*}
Moreover, for all $m \in Q_1$, 
\[ b_0(N_{\R} - Z(m)) =   b_0(N_{\R} - W(m)). \]
\end{lem}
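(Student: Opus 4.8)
The plan is to compare the two closed convex regions $Z(m)$ and $W(m)$ directly, using Lemma \ref{inq_psi:theta} to control the relevant half-planes. Both $Z(m)$ and $W(m)$ are intersections of half-planes indexed by the rays $\check e \in \Sigma_C(1)$: explicitly,
\[ Z(m) = \bigcap_{\check e \in \Sigma_C(1)} \{ n \in N_{\R} \mid \inn{m}{n} + \psi(n) \geq 0 \text{ on the cone } \check e \}, \]
and similarly for $W(m)$ with $\psi$ replaced by $-\vartheta$. Since $\psi$ and $\vartheta$ are linear on each maximal cone, the set $Z(m)$ is a union of ``wedges'' cut out within each maximal cone of $\Sigma_C$; the essential point is that membership of the generator $n_{\check e}$ of each one-dimensional cone in $Z(m)$ (resp. in $W(m)$) is governed precisely by the sign conditions $\inn{m}{n_{\check e}} + \psi(n_{\check e}) \geq 0$ (resp. $\inn{m}{n_{\check e}} - \vartheta(n_{\check e}) > 0$), which Lemma \ref{inq_psi:theta} tells us are equivalent.

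First I would establish the two equivalences about the extreme cases. For the statement $Z(m) = N_{\R} \Leftrightarrow W(m) = N_{\R}$: the region $Z(m)$ equals all of $N_{\R}$ if and only if the linear functional $n \mapsto \inn{m}{n} + \psi(n)$ is $\geq 0$ on every maximal cone, which (since each such functional is linear on a cone and $\Sigma_C$ is complete) happens if and only if it is $\geq 0$ at each ray generator $n_{\check e}$. By Lemma \ref{inq_psi:theta} this is equivalent to $\inn{m}{n_{\check e}} - \vartheta(n_{\check e}) > 0$ at every $\check e \in \Sigma_C(1)$, which by the same reasoning (the functional $n \mapsto \inn{m}{n} - \vartheta(n)$ is linear on each maximal cone and thus nonneg on the whole cone iff nonneg, here even positive, at the two ray generators) is equivalent to $W(m) = N_{\R}$. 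The implication $Z(m) = \{0\} \Leftrightarrow W(m) = \{0\}$ is the ``dual'' statement: $Z(m) = \{0\}$ means that on each maximal cone $\nu_j$ the functional $\inn{m}{\cdot} + \psi(\cdot)$ is strictly negative away from the origin, equivalently it is $< 0$ at both generators of $\nu_j$; again Lemma \ref{inq_psi:theta} (second equivalence) translates this into the corresponding statement for $\inn{m}{\cdot} - \vartheta(\cdot)$, hence into $W(m) = \{0\}$. One should be mildly careful that ``$<0$ at both generators of a $2$-dimensional cone'' does imply ``$<0$ on the whole punctured cone'' — this is immediate because any point of the cone is a nonnegative combination of the two generators, not both zero.

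The remaining and most delicate point is the equality of Betti numbers $b_0(N_{\R} - Z(m)) = b_0(N_{\R} - W(m))$ for $m \in Q_1$. My plan here is to show that $N_{\R} - Z(m)$ and $N_{\R} - W(m)$ have, up to a deformation retraction, the same combinatorial description: a maximal cone $\nu_j$ of $\Sigma_C$ is ``active'' (contains points of the complement near the corresponding ray) exactly according to the signs of $\inn{m}{n_{\check e}} + \psi(n_{\check e})$ at the rays bounding $\nu_j$, and by Lemma \ref{inq_psi:theta} these signs agree with those of $\inn{m}{n_{\check e}} - \vartheta(n_{\check e})$. More precisely, walking around the fan $\Sigma_C$ cyclically, the connected components of $N_{\R} - Z(m)$ correspond to maximal runs of cones/rays on which the relevant linear functional is negative (``excluded'' directions), and this combinatorial data depends only on the sign pattern $\big(\sgn(\inn{m}{n_{\check e}} + \psi(n_{\check e}))\big)_{\check e}$, which coincides with $\big(\sgn(\inn{m}{n_{\check e}} - \vartheta(n_{\check e}))\big)_{\check e}$ by the lemma. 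Hence $b_0$ agrees. I expect the main obstacle to be making this ``same sign pattern $\Rightarrow$ same number of components of the complement'' rigorous: one must argue that within a single maximal cone the shape of $Z(m)$ (a wedge, a half-cone, the whole cone, or just the origin) is determined by the sign data at its two bounding rays together with the slope of $\psi|_{\nu_j}$, and that gluing these local pictures around the complete fan yields a complement whose component count is a function of the cyclic sign sequence alone. A clean way to do this is probably to observe that $N_{\R} - Z(m)$ deformation retracts onto its intersection with a large circle, i.e. onto the union of arcs of directions $n$ with $\inn{m}{n} + \psi(n) < 0$, and that on each maximal cone of directions this condition is controlled by the endpoint signs; then invoke Lemma \ref{inq_psi:theta} once more to conclude the two circles-of-directions pictures are identical.
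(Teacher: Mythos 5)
Your proposal is correct and takes essentially the same route as the paper. The only (cosmetic) difference is in the packaging of the $b_0$ argument: the paper introduces $Z^+(m)$, the union of cones of $\Sigma_C$ entirely contained in $Z(m)$, shows $N_{\R}-Z(m)$ deformation retracts onto $N_{\R}-Z^+(m)$ (and likewise for $W$), notes that whether a $2$-cone lies in $Z^+(m)$ is decided by the signs at its two bounding rays, and then concludes from Lemma~\ref{inq_psi:theta} that $Z^+(m)=W^+(m)$ as sets — giving a genuine homotopy equivalence of the two complements rather than just a match of $b_0$'s. Your version, retracting to the circle and comparing cyclic sign sequences, encodes the same information and is equally valid.
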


\begin{proof} We have that $Z(m) = N_{\R}$ if and only if $\inn{m}{n_{\check e}} + \psi(n_{\check e}) \geq 0$ for all $\check e \in \Sigma_C(1)$. Similarly $Z(m) = \{ 0 \}$ if and only if $\inn{m}{n_{\check e}} + \psi(n_{\check e}) <  0$ for all $\check e \in \Sigma_C(1)$. Therefore the first two equivalences follow from Lemma \ref{inq_psi:theta}. 

Label a cone of $\Sigma_C$, of any dimension, with a ``$+$'' if it is entirely contained in $Z(m)$ and denote by $Z^+(m)$ the union of the cones of $\Sigma_C$ which are labeled with a ``$+$''. Observe that a two-dimensional cone is labeled with a ``$+$'' if and only if both its boundary edges are labeled with a ``$+$''. 
It can be easily proved that $N_{\R} - Z(m)$ is a deformation retract of $N_{\R} - Z^+(m)$.  Therefore 
\[ b_0(N_{\R} - Z(m)) =   b_0(N_{\R} - Z^+(m)). \]
We can also consider a different labeling. Label a cone with a ``$+$'' if it is entirely contained in $W(m)$ and denote by $W^+(m)$ the union of the cones of $\Sigma_C$ which are labeled with a ``$+$''. Again, one can show that  $N_{\R} - W(m)$ is a deformation retract of $N_{\R} - W^+(m)$, so that 
\[ b_0(N_{\R} - W(m)) =   b_0(N_{\R} - W^+(m)). \]
On the other hand Lemma \ref{inq_psi:theta} implies that $Z^+(m) = W^+(m)$. This completes the proof of the Lemma. 
\end{proof}
This lemma implies that we can compute the cohomology of $\mathscr L_{\psi}$ by replacing the subsets $Z(m)$ with the subsets $W(m)$.

Given an oriented curve $\gamma$ and a point $m \notin \gamma$, the winding number of $\gamma$ around $m$ (see Definition \ref{w_num}) coincides with the intersection number between $\gamma$ and a generic straight ray $\rho$ emanating from $m$, oriented outward from $m$. We write this as 
\begin{equation} \label{rho:gamma}
   w_{\gamma}(m) = \rho \cdot \gamma.
\end{equation}
If $\gamma$ is smooth (or piecewise smooth) the generic ray $\rho$ intersects $\gamma$ transversely, so $\rho \cdot \gamma$ is defined by counting each intersection point with a sign depending on orientations. 

Let us now study the case $\gamma = \gamma_{\ell}$, where $\gamma_{\ell}$ is
defined in \eqref{gammaelleq}. With some abuse of notation we will denote by $\gamma_{\ell}$ its image in $M_{\R}$. We will say that an index $j \in {1, \ldots, r}$ corresponds to a {\it non-degenerate edge} of $\gamma_{\ell}$ if $\tilde \theta_{j+1} - \tilde \theta_{j} \neq 0$.  Choose some $\bar m \in M_{\R}$, $\bar m \neq 0$ and consider the ray 
\[ \rho_{\bar m}(t) = m + t \bar m, \quad t > 0. \]
If $\bar m$ is generic we can assume that every intersection point between $\rho_{\bar m}$ and $\gamma_{\ell}$ is in the relative interior of some non-degenerate edge of $\gamma_{\ell}$ . Moreover, we can also assume that 
\[ \inn{\bar m}{n_{\check e_j}} \neq 0, \quad \forall j \in \{1, \ldots, r \}. \]

\begin{defi} For every $p \in \rho_{\bar m} \cap \gamma_{\ell}$ and $j \in \{1, \ldots, r \}$, let $\delta(p,j)$ be defined as follows. If $p$ does not belong to the segment from $\tilde \theta_j$ to $\tilde \theta_{j+1}$, then $\delta(p,j)=0$.  If $p$ belongs to the segment from $\tilde \theta_j$ to $\tilde \theta_{j+1}$ then $\delta(p,j) =1$  (resp. $\delta(p,j) =-1$) if $\{ \bar m, \tilde \theta_{j+1} - \tilde \theta_j \}$ is a positively (resp. negatively) oriented basis of $M_{\R}$. Define
\[ \delta(p) = \sum_{j=1}^{r} \delta(p,j). \]
\end{defi}

Then formula (\ref{rho:gamma}) for $w_{\gamma}$ becomes
\[ w_{\gamma}(m) = \sum_{p \in \rho_{\bar m} \cap \gamma_{\ell}} \delta(p). \]

\begin{lem} \label{sign:inters} If $p \in \rho_{\bar m} \cap \gamma_{\ell}$ belongs to the edge from $\tilde \theta_j$ to $\tilde \theta_{j+1}$, then, for any $n$ in the interior of the cone $\nu_{j}$,
\[ \delta(p,j) = - \sgn( \inn{\tilde \theta_{j+1} - \tilde \theta_{j}}{ n} \inn{\bar m }{ n_{\check e_j}}). \]
\end{lem}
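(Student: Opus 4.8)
The plan is to compute $\delta(p,j)$ directly from its definition in terms of orientation of the basis $\{\bar m, \tilde\theta_{j+1}-\tilde\theta_j\}$, and to translate the orientation condition into the sign of a $2\times 2$ determinant, which I then express via the dual basis vectors $n_{\check e_{j-1}}, n_{\check e_j}$ spanning $\nu_j$. The key point is that $p$ lying on the $j$-th edge forces $\tilde\theta_{j+1}-\tilde\theta_j \neq 0$; by construction this vector is a multiple of $n_{e_j}$, and by \eqref{edge_dual} the vector $n_{e_j}$ is characterized inside $\nu_j$ by being annihilated by $n_{\check e_j}$. So I would pair everything with a vector $n$ in the interior of $\nu_j$ and turn the orientation statement into a statement about signs of pairings.

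First I would fix the convention: $\delta(p,j)=1$ iff $\{\bar m, \tilde\theta_{j+1}-\tilde\theta_j\}$ is a positively oriented basis of $M_{\R}$, i.e. iff $\det(\bar m \mid \tilde\theta_{j+1}-\tilde\theta_j) > 0$ with respect to the chosen orientation on $M_{\R}$ (induced from $N_{\R}$). Since $N_{\R}$ is 2-dimensional, the orientation gives a volume form, and for $u, w \in M_{\R}$ and $n, n' \in N_{\R}$ a dual positively oriented basis, $\det(u \mid w)$ has the same sign as $\inn{u}{n'}\inn{w}{n} - \inn{u}{n}\inn{w}{n'}$, suitably normalized. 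The cleaner route: take $n$ in the interior of $\nu_j$, so $n$ pairs non-trivially with $\bar m$ (generic choice) and also $\tilde\theta_{j+1}-\tilde\theta_j$ is parallel to $n_{e_j}$ which pairs to zero with $n_{\check e_j}$ but non-trivially with $n$. I would then write the sign of the determinant $\det(\bar m \mid \tilde\theta_{j+1}-\tilde\theta_j)$ as a product of $\inn{\bar m}{n_{\check e_j}}$ (which measures the component of $\bar m$ transverse to the edge direction) and $\inn{\tilde\theta_{j+1}-\tilde\theta_j}{n}$ (which measures the length/sign of the edge vector against $n$), up to a universal sign coming from the relative orientation of $\{n_{\check e_j}, n_{e_j}\}$ versus the fixed orientation.

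The main computation is to pin down that universal sign and show it is $-1$ regardless of $j$ and of the choice of $n$ in the interior of $\nu_j$. For this I would use that $\{n_{\check e_{j-1}}, n_{\check e_j}\}$ is a basis of $N$ (from the end of \S\ref{trop_hyp}, since the subdivision is smooth), that $n_{e_j}$ is dual-orthogonal to $n_{\check e_j}$ by \eqref{edge_dual}, and the convention fixed in the construction (see the remark after the labeling a)--c) in \S\ref{trop_hyp}, and equation \eqref{ell_to_psi}) that $n_{e_j}$ points from $p^+_{e_j}$ toward $p^-_{e_j}$, i.e. that $\tilde\theta_{j+1}-\tilde\theta_j = \tfrac12 \epsilon_{e_j}\ell_{e_j} n_{e_j}$ with $\epsilon_{e_j}$ as in \eqref{ell_to_psi} tracking which of $\nu_j, \nu_{j+1}$ is the wedge of $P^+$. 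Combining the anti-clockwise cyclic ordering of the cones $\nu_j$ (so that $\{n_{\check e_{j-1}}, n_{\check e_j}\}$ is, say, negatively oriented in $N_{\R}$, hence the dual basis has a fixed orientation in $M_{\R}$) with these sign bookkeeping facts yields the claimed identity $\delta(p,j) = -\sgn(\inn{\tilde\theta_{j+1}-\tilde\theta_j}{n}\inn{\bar m}{n_{\check e_j}})$, and independence of $n$ follows because any two interior vectors of $\nu_j$ give the same signs for both pairings.

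The hard part will be the sign bookkeeping: there are several competing sign conventions in play (the orientation of $\partial\bar C$ via $\epsilon_{e_j}$, the sign $\epsilon_{\check e}$ making $n_{\check e}$ outward-pointing in Definition \ref{semi:integral}, the anti-clockwise ordering of the $\nu_j$, and the orientation on $M_{\R}$ induced from $N_{\R}$), and one must check they assemble to a global $-1$ rather than $+1$ and that the answer does not secretly depend on $j$. I would handle this by reducing to the model case of Example \ref{triangle} / Figure \ref{vc_fan} with explicit coordinates $n_{\check e_j} = (1,0)$, $n_{\check e_{j-1}}=(0,1)$ (or similar), checking the identity by a direct $2\times 2$ determinant computation there, and then arguing invariance under the $\mathrm{SL}_2(\Z)$-changes of basis relating different $j$ — such changes preserve orientation and hence all the signs. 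The routine parts (deformation-retract-free direct determinant check in coordinates, verifying genericity of $\bar m$ ensures transversality at $p$) I would state without full detail.
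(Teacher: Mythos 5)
Your proposal has the right core idea — reduce to the sign of the $2\times 2$ determinant of $\{\bar m,\tilde\theta_{j+1}-\tilde\theta_j\}$ and exploit $\inn{\tilde\theta_{j+1}-\tilde\theta_j}{n_{\check e_j}}=0$ — so it is essentially the same approach as the paper. What you are missing, though, is the one observation that makes the sign immediate and renders your model-case-plus-$\mathrm{SL}_2(\Z)$-transport step unnecessary: for \emph{any} $n$ in the interior of $\nu_j$, the pair $\{n,n_{\check e_j}\}$ is already a positively oriented basis of $N_{\R}$ (this is exactly the anticlockwise ordering of the cones). Expanding the determinant of $\{\bar m,\tilde\theta_{j+1}-\tilde\theta_j\}$ against this dual pair gives $\inn{\bar m}{n}\inn{\tilde\theta_{j+1}-\tilde\theta_j}{n_{\check e_j}}-\inn{\bar m}{n_{\check e_j}}\inn{\tilde\theta_{j+1}-\tilde\theta_j}{n}$, and the first summand vanishes, so the sign is literally $-\sgn\bigl(\inn{\tilde\theta_{j+1}-\tilde\theta_j}{n}\inn{\bar m}{n_{\check e_j}}\bigr)$ with no residual bookkeeping. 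Note also that your phrase ``the relative orientation of $\{n_{\check e_j},n_{e_j}\}$'' doesn't quite make sense as stated, since $n_{\check e_j}\in N_{\R}$ while $n_{e_j}\in M_{\R}$; the orientation that actually matters is that of $\{n,n_{\check e_j}\}$ inside $N_{\R}$. Likewise, the conventions about $\epsilon_{e_j}$ and the direction in which $n_{e_j}$ points from $p^+_{e_j}$ to $p^-_{e_j}$ are not needed, because the formula is expressed directly in terms of $\inn{\tilde\theta_{j+1}-\tilde\theta_j}{n}$ rather than $\ell_{e_j}n_{e_j}$, which silently absorbs them.
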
 

\begin{proof}
Recall that $\check e_j$ is the common intersection between the cones $\nu_{j}$ and $\nu_{j+1}$. For any $n$ in the interior of $\nu_{j}$, we have that $\{ n, n_{\check e_{j}} \}$ is a positively oriented basis of $N_{\R}$, therefore $\{ \bar m , \tilde \theta_{j+1} - \tilde \theta_{j} \}$ is a positively oriented basis of $M_{\R}$ if 
\[ \inn{\tilde \theta_{j+1} - \tilde \theta_{j}}{ n_{\check e_{j}}} \inn{\bar m }{ n} - \inn{\tilde \theta_{j+1} - \tilde \theta_{j}}{ n} \inn{\bar m }{ n_{\check e_j}} > 0, \]
otherwise it is negatively oriented. On the other hand, by definition of the support function, we have
\[ \inn{\tilde \theta_{j+1} - \tilde \theta_{j}}{ n_{\check e_{j}}}  = 0. \]
Therefore the claim follows.
\end{proof}

Now let $S \subset N_{\R}$ be a circle centered at the origin and define semicircles 
\[ S^+ = S \cap \{ n \in N_{\R} \, | \, \inn{\bar m}{n} > 0 \}, \quad S^-= S \cap \{ n \in N_{\R} \, | \,  \inn{\bar m}{n} < 0\}. \]
Consider the function $T: S^- \cup S^+ \rightarrow \R$ given by 
\[ T(q) = \frac{\vartheta(q) - \inn{m}{q}}{\inn{\bar m}{q}}. \]
We now prove that the intersections between $\rho_{\bar m}$ and $\gamma_{\ell}$ are in correspondence with local maxima and minima of $T(q)$. 

We have the following:

\begin{lem} \label{min:edge}
For a generic choice of $\bar m$ we have that a point $q \in S^+ \cup S^-$ is a local maximum or minimum of $T$ if and only if $q \in \check e_j$ for some $j \in \{1, \ldots, r \}$ corresponding to some non-degenerate edge of $\gamma_{\ell}$ and $m + T(q) \bar m$ is on the edge joining $\tilde \theta_j$ and $\tilde \theta_{j+1}$.
\end{lem}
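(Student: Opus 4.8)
\textbf{Proof proposal for Lemma \ref{min:edge}.}

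The plan is to analyze the function $T(q) = \frac{\vartheta(q) - \inn{m}{q}}{\inn{\bar m}{q}}$ on each open semicircle by relating it to the geometry of the ray $\rho_{\bar m}$. First I would observe that for a point $q \in S^+$ the value $T(q)$ is characterized by the property that $m + T(q)\bar m$ lies on the line $\{ n' \in M_{\R} : \inn{n'}{q} = \vartheta(q) \}$, i.e. on the supporting hyperplane to the ``graph'' of $\vartheta$ in the direction $q$. Concretely, writing $\nu_j$ for the maximal cone containing $q$ and $\tilde\theta_j$ for the linear function $\vartheta|_{\nu_j}$, the equation $\inn{m + t\bar m}{q} = \vartheta(q) = \inn{\tilde\theta_j}{q}$ has the unique solution $t = T(q)$, so that $m + T(q)\bar m$ is the unique point of the affine line $\rho_{\bar m}$ on the line $L_j(q) = \{n' : \inn{n'}{q} = \inn{\tilde\theta_j}{q}\}$ through $\tilde\theta_j$ with normal $q$. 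Thus $T$ is smooth (in fact a ratio of linear functions of $q$) on the relative interior of each arc $S^+ \cap \nu_j$, and piecewise smooth on all of $S^+$, with possible non-smooth points exactly where $q$ crosses a ray $\check e_j$ (since that is where the relevant linear piece $\tilde\theta_j$ of $\vartheta$ changes).

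Next I would compute the derivative of $T$ on the interior of an arc $S^+ \cap \nu_j$ and show it does not vanish there for generic $\bar m$. Parametrize $S$ by an angle; on $\nu_j$ we have $T(q) = \frac{\inn{\tilde\theta_j}{q} - \inn{m}{q}}{\inn{\bar m}{q}} = \frac{\inn{\tilde\theta_j - m}{q}}{\inn{\bar m}{q}}$, a quotient of two linear forms in $q$, whose only critical points on the circle $S$ occur where the two linear forms $\tilde\theta_j - m$ and $\bar m$ are proportional — a codimension-one condition on $\bar m$ that we exclude by genericity (this uses the genericity hypotheses on $\bar m$ already imposed before the lemma, namely $\inn{\bar m}{n_{\check e_j}} \neq 0$ and that intersections lie in relative interiors of non-degenerate edges). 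Hence no interior point of an arc is a local extremum, so local extrema of $T$ on $S^+ \cup S^-$ can only occur at the junction points $q \in \check e_j$. Then, at such a junction, $q \in \check e_j$, the one-sided limits of $T'$ come from the two pieces $\tilde\theta_j$ and $\tilde\theta_{j+1}$; $q$ is a genuine local max or min precisely when the two one-sided derivatives have opposite signs, and I would show by a direct sign computation (using $\inn{\tilde\theta_{j+1} - \tilde\theta_j}{n_{\check e_j}} = 0$, exactly as in Lemma \ref{sign:inters}) that this sign change is equivalent to $\rho_{\bar m}$ actually crossing the segment $[\tilde\theta_j, \tilde\theta_{j+1}]$, i.e. to the point $m + T(q)\bar m$ lying on that segment and to $j$ being a non-degenerate edge (if $\tilde\theta_{j+1} = \tilde\theta_j$ the two pieces agree near $\check e_j$ so $T$ is smooth there and $q$ is not an extremum).

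I expect the main obstacle to be the bookkeeping at the junction points: one must carefully match the geometric picture (the point $m + T(q)\bar m$ travels along $\rho_{\bar m}$ and, as $q$ rotates, sweeps along the successive lines $L_j(q)$) with the analytic behaviour of the one-sided derivatives of $T$, and verify that an extremum appears exactly when the swept point is on the bounded segment $[\tilde\theta_j,\tilde\theta_{j+1}]$ rather than on its complement in the line. The identity $\inn{\tilde\theta_{j+1} - \tilde\theta_j}{n_{\check e_j}} = 0$ from the definition of a support function is what makes this clean: it forces $m + T(q)\bar m$ to be the \emph{same} point whether computed from the $\nu_j$ side or the $\nu_{j+1}$ side when $q = n_{\check e_j}$, so the only thing that changes across the junction is the direction in which this point moves, and that direction flips sign across the segment endpoints. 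Once this local analysis is in place, the equivalence in the statement follows, and combined with Lemma \ref{sign:inters} it will give, in the next step of the paper, a count of $\#(\rho_{\bar m} \cap \gamma_{\ell})$ and hence of $w_\ell(m)$ in terms of extrema of $T$, which is presumably how Theorem \ref{coho_winding} gets proved.
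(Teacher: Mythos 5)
Your proposal is correct, and the two arguments share the same first step but part ways in the second.

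For ruling out extrema in the interior of an arc, the paper and your proposal use the same computation: on $S^\pm\cap\nu_j$, the function $T$ is a ratio of two linear forms, namely $T(q)=\inn{\tilde\theta_j-m}{q}/\inn{\bar m}{q}$, hence (by $0$-homogeneity) a non-constant M\"obius function of a tangent-line parameter for generic $\bar m$, and such a function has no local extrema. You frame this as ``$T$ is strictly monotone on each arc,'' the paper frames it as ``a critical point forces $\vartheta$ to be non-linear at $q$''; these are the same observation. Note that the genericity needed here, $\tilde\theta_j-m$ not proportional to $\bar m$, is \emph{not} one of the two conditions stated explicitly before the lemma; it is an additional codimension-one exclusion, which the paper also makes (``From the genericity assumption of $\bar m$ we can also assume that this function is not constant''). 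Your parenthetical attributing it to the already-stated conditions is slightly misleading but harmless, since both conditions together are still generic.

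Where you genuinely diverge is in the second step. The paper works directly from the definition of a local minimum: it writes down $T(q)\le T(q')$ and $T(q)\le T(q'')$ for nearby points in the two adjacent cones, clears denominators, applies the defining equation of $T$, and after some algebra extracts the parameter $s$ with $m+T(q)\bar m=s(\tilde\theta_{j+1}-\tilde\theta_j)+\tilde\theta_j$ and shows $s\in[0,1]$. The converse is left to the reader. Your argument instead examines the sign of the one-sided derivatives of $T$ at the junction $q\in\check e_j$: each one-sided sign is (up to a fixed sign) $\operatorname{sgn}\det(\tilde\theta_j-m,\bar m)$, resp. $\operatorname{sgn}\det(\tilde\theta_{j+1}-m,\bar m)$, and these disagree precisely when $\tilde\theta_j$ and $\tilde\theta_{j+1}$ lie strictly on opposite sides of the line through $m$ with direction $\bar m$, i.e.\ precisely when $m+T(q)\bar m$ lies in the relative interior of the segment. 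Combined with the identity $\inn{\tilde\theta_{j+1}-\tilde\theta_j}{n_{\check e_j}}=0$, which you correctly use to place $m+T(q)\bar m$ on the line through $\tilde\theta_j$ and $\tilde\theta_{j+1}$, this gives the equivalence in one stroke, proving both directions of the ``if and only if'' simultaneously (whereas the paper proves only one direction and delegates the converse). Your version also makes more visible why degenerate edges ($\tilde\theta_{j+1}=\tilde\theta_j$) cannot contribute: the two one-sided derivatives then coincide. In short, the paper's step two is an inequality computation, yours is a sign-of-derivative computation; yours is cleaner and more symmetric, while the paper's is more hands-on with the convexity structure that reappears elsewhere in that section.
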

\begin{proof}
Let us first show that if $q$ is a local maximum or minimum of $T$, then $\vartheta$  is not linear in a neighborhood of $q$. By definition $T(q)$ satisfies 
\begin{equation} \label{T:def}
   \inn{m + T(q) \bar m}{q} - \vartheta(q) = 0.
\end{equation}
We assume that $q \in S^-$, the case $q \in S^+$ is analogous. In particular we have
\[ \inn{ \bar m}{q} < 0. \]
If $q$ is a local minimum of $T$ on $S^-$ then $q$ is also a local minimum of $T$ restricted to the line which is tangent to $S^-$ at $q$.  Write this line as $t \mapsto q+tq^{\perp}$, where $q^{\perp}$ is a tangent vector to $S^-$ at $q$ and $t \in \R$. 
If $\vartheta$ is linear in a neighborhood of $q$ then it is easy to see that for small values of $|t|$ we must have
\[ T(q+tq^{\perp}) = \frac{a + bt}{c+dt}. \]
for some real numbers $a,b,c,d$. From the genericity assumption of $\bar m$ we can also assume that this function is not constant. Then this function in $t$ does not have a local minimum in $t=0$. So $q$ cannot be a local minimum (or maximum) of $T$ on $S^-$. Hence we must have $q \in \check e_j$ for some $j$ such that $\tilde \theta_{j+1} - \tilde \theta_j \neq 0$.


We now prove that $m + T(q) \bar m$ is on the segment joining $\tilde \theta_j$ and $\tilde \theta_{j+1}$. 
We assume that $q$ is a local minimum, the local maximum case being similar. We have that for all $q'  \in \nu_{j+1} \cap S^-$
\[ T(q) \leq T(q') \]
and for all $q'' \in \nu_{j} \cap S^-$
\[ T(q) \leq T(q'') \]
hence
\begin{equation} \label{minq}
T(q) \inn{\bar m}{q'}  \geq T(q') \inn{\bar m}{q'}, \quad T(q) \inn{\bar m}{q''}  \geq T(q'') \inn{\bar m}{q''}.
\end{equation}
If we add $\inn{m}{q'}$ to both sides of the first inequality, we apply (\ref{T:def}) to $q'$  and we observe that $\vartheta(q') = \inn{\tilde \theta_{j+1}}{q'}$  we get
\begin{equation} \label{minimum}
       \inn{m + T(q) \bar m}{q'} \geq  \inn{\tilde \theta_{j+1}}{q'},
\end{equation}
Similarly from the second inequality of \eqref{minq} we get
\begin{equation} \label{minimum2}
       \inn{m + T(q) \bar m}{q''} \geq  \inn{\tilde \theta_{j}}{q''}.
\end{equation}
Equation (\ref{T:def}) implies that
\[ \inn{m + T(q) \bar m}{n_{\check e_j}} =  \inn{\tilde \theta_{j+1}}{n_{\check e_j}} = \inn{\tilde \theta_{j}}{n_{\check e_j}}. \]
This implies that $m + T(q) \bar m = s (\tilde \theta_{j+1}- \tilde \theta_j) + \tilde \theta_j$ for some $s \in \R$. Let us show that  $s \in [0,1]$.  Using $m + T(q) \bar m = s (\tilde \theta_{j+1}- \tilde \theta_j) + \tilde \theta_j$ in inequalities (\ref{minimum}) and \eqref{minimum2} we obtain
\begin{equation} \label{segment_ineq}
    (s-1) \inn{\tilde \theta_{j+1}- \tilde \theta_j}{q'} \geq 0 \quad \text{and} \quad s \inn{\tilde \theta_{j+1}- \tilde \theta_j}{q''} \geq 0. 
\end{equation}
Since $\inn{\tilde \theta_{j+1}- \tilde \theta_j}{n_{\check e_j}} = 0$ only the following two things can happen:
\begin{equation} \label{min3}
\inn{\tilde \theta_{j+1}- \tilde \theta_j}{q'} < 0 \quad \text{and} \quad \inn{\tilde \theta_{j+1}- \tilde \theta_j}{q''} > 0 
\end{equation}
or 
 \[\inn{\tilde \theta_{j+1}- \tilde \theta_j}{q'} > 0 \quad \text{and} \quad \inn{\tilde \theta_{j+1}- \tilde \theta_j}{q''} < 0. \]
The latter inequalities and \eqref{segment_ineq} cannot hold together, while the former inequalities and  \eqref{segment_ineq} imply that $s \in [0,1]$. We conclude that if  $q$ is a local minimum of $T(q)$, i.e. \eqref{segment_ineq} holds, then the former inequalities must be satisfied and $s \in [0,1]$. 
The converse is also true and we leave it to the reader.
\end{proof}

This lemma implies that if $p \in \rho_{\bar m} \cap \gamma_{\ell}$, then 
\[ p = m + T(q) \bar m \]
for some $q \in S^{+} \cup S^{-}$, with $T(q) > 0$ and $q$ is a local maximum or minimum.  Notice that this may happen for more than one value of $q$. In this case $p$ belongs to more than one edge, i.e., it is a multiple intersection. We have the following
\begin{lem} If $p \in \rho_{\bar m} \cap \gamma_{\ell}$ and $p = m + T(q) \bar m$ for some $q \in \check e_j$, then 
\[ \delta (p, j) = \begin{cases}
	1 &\ \text{if $q$ is a local minimum of $T$}, \\
      -1 &\ \text{if $q$ is a local maximum of $T$}.
       \end{cases}. \]
\end{lem}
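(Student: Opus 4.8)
The plan is to reduce the statement to a local computation on the edge $\check e_j$ of the fan $\Sigma_C$, combining the description of $\delta(p,j)$ given in Lemma \ref{sign:inters} with the characterization of local extrema of $T$ obtained in Lemma \ref{min:edge}. First I would fix $p\in\rho_{\bar m}\cap\gamma_\ell$ lying on the edge from $\tilde\theta_j$ to $\tilde\theta_{j+1}$, and write $p=m+T(q)\bar m$ with $q\in\check e_j\cap S^{\pm}$ a local extremum, as guaranteed by Lemma \ref{min:edge}. By Lemma \ref{sign:inters},
\[
\delta(p,j)=-\sgn\bigl(\inn{\tilde\theta_{j+1}-\tilde\theta_j}{n}\,\inn{\bar m}{n_{\check e_j}}\bigr)
\]
for any $n$ in the interior of $\nu_j$; the task is to identify the sign of this product in terms of whether $q$ is a local minimum or maximum of $T$.

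The key step is to relate the sign of $\inn{\tilde\theta_{j+1}-\tilde\theta_j}{q}$ to the min/max dichotomy. Looking back at the proof of Lemma \ref{min:edge}: in the case $q\in S^-$ (so $\inn{\bar m}{q}<0$) and $q$ a local minimum, one extracts from \eqref{segment_ineq} and the remark that $\inn{\tilde\theta_{j+1}-\tilde\theta_j}{n_{\check e_j}}=0$ that necessarily $\inn{\tilde\theta_{j+1}-\tilde\theta_j}{q'}<0$ and $\inn{\tilde\theta_{j+1}-\tilde\theta_j}{q''}>0$, where $q'\in\nu_{j+1}\cap S^-$ and $q''\in\nu_j\cap S^-$. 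So on the side $\nu_j$ near $q$ the pairing $\inn{\tilde\theta_{j+1}-\tilde\theta_j}{\cdot}$ is positive. Since $n$ can be taken in the interior of $\nu_j$ close to $q$, we get $\inn{\tilde\theta_{j+1}-\tilde\theta_j}{n}>0$. Meanwhile $\inn{\bar m}{n_{\check e_j}}$ has a fixed sign determined by the geometry near $q$; I would pin it down by noting that $q$ lies on the ray through $\check e_j$, that $n$ lies in $\nu_j$, and that the orientation conventions fixing the cyclic ordering of the $\nu_j$ force $\inn{\bar m}{n_{\check e_j}}$ to have the opposite sign to $\inn{\bar m}{q}$ being negative in a way compatible with $q''\in\nu_j$, yielding $\inn{\bar m}{n_{\check e_j}}>0$. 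Hence $\delta(p,j)=-\sgn(+\cdot+)=-1$ — but wait, this must be reconciled with the claimed value $+1$ for a local minimum, so I would need to track the orientation conventions carefully; the most likely resolution is that $S^-$ versus $S^+$ swaps the sign, and the two cases $q\in S^+$, $q\in S^-$ together with the ``local minimum'' label combine to give a consistent answer $+1$. I would do both the $S^+$ and $S^-$ subcases explicitly, checking that in each the product of signs comes out the same once the orientation of $\{n,n_{\check e_j}\}$ as a positively oriented basis of $N_{\R}$ and the induced orientation on $M_{\R}$ are consistently used.

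For the local maximum case the argument is symmetric: one gets the reversed inequalities $\inn{\tilde\theta_{j+1}-\tilde\theta_j}{q'}>0$, $\inn{\tilde\theta_{j+1}-\tilde\theta_j}{q''}<0$ from the maximum version of \eqref{segment_ineq}, flipping $\inn{\tilde\theta_{j+1}-\tilde\theta_j}{n}$ to negative while $\inn{\bar m}{n_{\check e_j}}$ is unchanged, so $\delta(p,j)=-\sgn(-\cdot+)=+1$ — again with the same caveat that the bookkeeping must land on $-1$, which it will once all conventions are fixed uniformly. So concretely the proof is: (1) invoke Lemma \ref{min:edge} to write $q$ as an extremum on $\check e_j$; (2) invoke Lemma \ref{sign:inters} for the formula for $\delta(p,j)$; (3) read off $\sgn\inn{\tilde\theta_{j+1}-\tilde\theta_j}{n}$ from the inequalities established inside the proof of Lemma \ref{min:edge} for the two cases min/max; (4) read off $\sgn\inn{\bar m}{n_{\check e_j}}$ from the position of $q$ relative to $\nu_j$ and the orientation convention; (5) multiply and take $-\sgn$.

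The main obstacle will be step (4) together with keeping all sign conventions straight: the statement has a definite $\pm1$, so I must be careful that the orientation of $N_{\R}$ chosen at the start of the section, the induced orientation on $M_{\R}$, the ``positively oriented basis'' convention in Lemma \ref{sign:inters}, and the labelling in terms of $S^+$ versus $S^-$ all cohere; a single flipped convention would reverse both rows of the claimed table. I expect the cleanest route is to fix once and for all a generic $q$ near $\check e_j$ inside $\nu_j$, compute $\inn{\bar m}{q''}$ and $\inn{\tilde\theta_{j+1}-\tilde\theta_j}{q''}$ for that same $q''$, and then apply Lemma \ref{sign:inters} with $n=q''$ directly, so that the product $\inn{\tilde\theta_{j+1}-\tilde\theta_j}{n}\inn{\bar m}{n_{\check e_j}}$ is evaluated against quantities whose signs are already fixed by the min/max analysis of Lemma \ref{min:edge}, avoiding any independent orientation argument for $n_{\check e_j}$.
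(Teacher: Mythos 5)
Your overall strategy is exactly the paper's: invoke Lemma \ref{min:edge} to locate the intersection on $\check e_j$, invoke Lemma \ref{sign:inters} with $n=q''$, and read off the two signs from the inequalities established in the proof of Lemma \ref{min:edge}. You correctly identify that for a local minimum with $q\in S^-$ one has $\inn{\tilde\theta_{j+1}-\tilde\theta_j}{q''}>0$ for $q''\in\nu_j\cap S^-$.

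The gap is in step (4), where you assert that $\inn{\bar m}{n_{\check e_j}}$ should have the \emph{opposite} sign to $\inn{\bar m}{q}$, yielding $\inn{\bar m}{n_{\check e_j}}>0$ when $q\in S^-$. This is false, and it is what forces you into the inconsistent computation $\delta(p,j)=-\sgn(+\cdot+)=-1$ that you then try to patch with a vague appeal to ``orientation conventions''. There is no convention subtlety here: $q$ lies on the ray $\check e_j$ emanating from the origin and $n_{\check e_j}$ is the primitive integral generator of that same ray, so $n_{\check e_j}$ is a \emph{positive} scalar multiple of $q$. Hence $\inn{\bar m}{n_{\check e_j}}$ has the \emph{same} sign as $\inn{\bar m}{q}$, which is negative for $q\in S^-$. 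With the corrected sign, Lemma \ref{sign:inters} applied with $n=q''$ gives $\delta(p,j)=-\sgn(+\cdot-)=+1$, matching the statement, and the other three subcases ($q\in S^-$ max, $q\in S^+$ min/max) follow by the same reasoning with the appropriate inequalities from \eqref{min3}. So the plan is sound and essentially identical to the paper's; you need only replace the erroneous sign claim with the elementary observation that $n_{\check e_j}$ is a positive multiple of $q$, which removes the apparent paradox without any further appeal to convention-chasing.
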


\begin{proof} Let us assume first that $q \in S^-$ and that $q$ is a local minimum. In particular 
\begin{equation} \label{s-}
 \inn{ \bar m}{n_{\check e_j}} < 0 
\end{equation}
since $n_{\check e_j}$ is a positive multiple of $q$. Moreover, since $q$ is a local minimum, 
we saw in the proof of Lemma \ref{min:edge} that  for any $q'' \in S^- \cap \nu_{j}$ the second inequality in \eqref{min3} holds. Hence Lemma \ref{sign:inters}, applied with $n = q''$ gives $\delta(p,j) =1$. 

Similarly we can reason in the cases where $q \in S^-$, but it is a maximum;  $q'' \in S^+$ and it is a minimum; $q'' \in S^+$ and it is a maximum.
\end{proof}

\begin{cor} \label{win_min_max}
Let $\operatorname{Max}_{T \geq 0}$ be the set of local maxima $q$ of $T$ such that $T(q) > 0$ and let $\operatorname{Min}_{T \geq 0}$ be the set of local minima $q$ of $T$ such that $T(q) > 0$, then 
\begin{equation} \label{min_max_eq}
    w_{\gamma_{\ell}}(m) =\# \operatorname{Min}_{T \geq 0} \ - \ \# \operatorname{Max}_{T \geq 0} .
\end{equation}
\end{cor}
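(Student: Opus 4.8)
The plan is to read off the corollary directly from Lemma~\ref{min:edge} and the lemma immediately preceding it, by packaging them into a single bijection and then summing signs. Recall that by~(\ref{rho:gamma}) and the definition of $\delta$ we have
$w_{\gamma_{\ell}}(m)=\sum_{p\in\rho_{\bar m}\cap\gamma_{\ell}}\delta(p)=\sum_{p\in\rho_{\bar m}\cap\gamma_{\ell}}\sum_{j=1}^{r}\delta(p,j)$,
so it suffices to understand the set of pairs $\mathcal{P}=\{(p,j)\,:\,p\in\rho_{\bar m}\cap\gamma_{\ell},\ \delta(p,j)\neq 0\}$, on which $\delta$ takes the values $\pm 1$. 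Keeping track of pairs $(p,j)$ rather than of points $p$ is what makes the argument handle multiple intersection points automatically.

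First I would build a map $\operatorname{Min}_{T\geq 0}\cup\operatorname{Max}_{T\geq 0}\to\mathcal{P}$. Given a local extremum $q$ of $T$ with $T(q)>0$, Lemma~\ref{min:edge} supplies a unique index $j=j(q)$, corresponding to a non-degenerate edge, with $q\in\check e_{j}$ and with $p:=m+T(q)\bar m$ lying on the segment from $\tilde\theta_{j}$ to $\tilde\theta_{j+1}$; since $T(q)>0$ this point lies on $\rho_{\bar m}$, so $(p,j)\in\mathcal{P}$. The assignment $q\mapsto j(q)$ is injective because distinct one-dimensional cones $\check e_{j}$ meet the circle $S$ in distinct points, so $q\mapsto(p,j(q))$ is injective. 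For the inverse, given $(p,j)\in\mathcal{P}$ write $p=m+t_{0}\bar m$ with $t_{0}>0$; because $p$ lies on the segment $[\tilde\theta_{j},\tilde\theta_{j+1}]$ and $\langle\tilde\theta_{j+1}-\tilde\theta_{j},n_{\check e_{j}}\rangle=0$ (the support‑function identity already used in the proof of Lemma~\ref{sign:inters}), one gets $\langle p,n_{\check e_{j}}\rangle=\vartheta(n_{\check e_{j}})$, hence $t_{0}=T(n_{\check e_{j}})$; by degree‑one homogeneity of $\vartheta$ on cones and of $T$, the point $q_{0}(j):=\check e_{j}\cap S$ (which lies in $S^{+}\cup S^{-}$ since $\langle\bar m,n_{\check e_{j}}\rangle\neq 0$ by the genericity of $\bar m$) satisfies $T(q_{0}(j))=t_{0}>0$ and $m+T(q_{0}(j))\bar m=p$, and by the ``if'' direction of Lemma~\ref{min:edge} it is a local extremum of $T$. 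Thus $(p,j)\mapsto q_{0}(j)$ and $q\mapsto(p,j(q))$ are mutually inverse bijections between $\mathcal{P}$ and $\operatorname{Min}_{T\geq 0}\cup\operatorname{Max}_{T\geq 0}$.

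Finally I would invoke the last lemma of the section, which says that for $(p,j)\in\mathcal P$ with corresponding extremum $q$ one has $\delta(p,j)=+1$ if $q$ is a local minimum of $T$ and $\delta(p,j)=-1$ if $q$ is a local maximum. Under the bijection above this identifies the $\delta=+1$ part of $\mathcal{P}$ with $\operatorname{Min}_{T\geq 0}$ and the $\delta=-1$ part with $\operatorname{Max}_{T\geq 0}$, whence
$w_{\gamma_{\ell}}(m)=\sum_{(p,j)\in\mathcal{P}}\delta(p,j)=\#\operatorname{Min}_{T\geq 0}-\#\operatorname{Max}_{T\geq 0}$,
which is exactly~(\ref{min_max_eq}).

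The argument is not conceptually difficult; the main obstacle is purely bookkeeping. One must check that the genericity conditions on $\bar m$ imposed just before and inside Lemma~\ref{min:edge}—every intersection point in the relative interior of a non-degenerate edge, $\langle\bar m,n_{\check e_{j}}\rangle\neq 0$ for all $j$, and $m\notin\gamma_{\ell}$—are precisely what is needed to make both assignments well defined and mutually inverse, and that multiple intersection points (a $p$ lying on several edges) are correctly accounted for because they contribute several pairs $(p,j)$ and hence several distinct extrema $q_{0}(j)$. The only genuine computation is the identity $t_{0}=T(n_{\check e_{j}})$ showing that $q_{0}(j)$ pulls back to $p$, and it rests entirely on $\langle\tilde\theta_{j+1}-\tilde\theta_{j},n_{\check e_{j}}\rangle=0$.
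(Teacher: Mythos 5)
Your proposal is correct and follows exactly the route the paper intends: the corollary is stated without proof precisely because it is the immediate combination of Lemma~\ref{min:edge} (the bijection between pairs $(p,j)$ with $\delta(p,j)\neq 0$ and local extrema of $T$ with $T>0$) and the preceding unnamed lemma (the sign dictionary $\delta(p,j)=\pm 1$ for minima/maxima), summed via~(\ref{rho:gamma}). Your write-up simply makes the bijection and the scale-invariance computation $t_0=T(n_{\check e_j})$ explicit, which is the right level of care but not a different argument.
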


\begin{proof}[Proof of Theorem \ref{coho_winding}] Let us first prove that
\begin{equation} \label{winding_one}
        m \in  Q_0 \Longrightarrow w_{\gamma_{\ell}}(m) = 1.
\end{equation}
It follows from Lemma \ref{ZW} that $m \in Q_0$ if and only if 
\[ \inn{m}{n} - \vartheta(n) > 0, \quad \forall  n \in N_{\R}- \{ 0 \}.\]
Therefore if $m \in Q_0$, then $T|_{S^-} > 0$ and $T|_{S^+} < 0$. We then have that all points of $\operatorname{Max}_{T \geq 0}$ and $\operatorname{Min}_{T \geq 0}$ must be in $S^-$. Now let $q_{-\infty}$ and $q_{+\infty}$ be the two boundary points of $S^{-}$ (and of $S^+$). We have that 
\[ \lim_{q \rightarrow q_{+ \infty}} T(q) = \lim_{q \rightarrow q_{- \infty}} T(q) = + \infty \]
where the limits are taken for $q \in S^-$.  Then (see Figure \ref{graph_T}), it can be easily seen that 
\[ \# \operatorname{Min}_{T \geq 0} = \# \operatorname{Max}_{T \geq 0} + 1. \]

\begin{figure}[!ht] 
\begin{center}
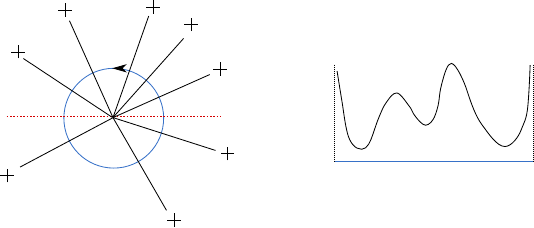
\caption{The fan and the graph of $T$ in the case $m \in Q_0$.} \label{graph_T}
\end{center}
\end{figure}
Now (\ref{winding_one}) follows from Corollary \ref{win_min_max}. 
Similarly one can show 
\begin{equation} \label{winding_one2}
        m \in  Q_2 \Longrightarrow w_{\gamma_{\ell}}(m) = 1.
\end{equation}
In fact in this case $m \in Q_2$ if and only if 
\[ \inn{m}{n} - \vartheta(n) < 0, \quad \forall  n \in N_{\R}- \{ 0 \}. \]
Then the same proof holds replacing $S^-$ with $S^+$.

We now prove that 
\begin{equation} \label{winding_neg}
        m \in  Q_1 \Longrightarrow w_{\gamma_{\ell}}(m) = 1- b_0(N_{\R} - W(m)).
\end{equation}
If $m \in Q_1$, the set $W(m)$ is a union of a finite number of closed angular sectors (which may also be non-convex). We only consider maximal sectors, i.e., those which are not contained in strictly larger sectors which are also contained in $W(m)$. We call these sectors ``white sectors''.
The complement $N_{\R} - W(m)$ is also a union of open angular sectors, i.e., its connected components. We call these sectors ``black sectors''. Clearly $b_0(N_{\R} - W(m))$ is either the number of white sectors or of black sectors. We call the intersection of a white sector (resp. black sector) with $S^+$ or $S^-$ a white arc (resp. black arc). It can be easily seen that $T$ is non-negative on white arcs contained in $S^-$ or on black arcs contained in $S^+$. Vice versa $T$ is negative on black arcs contained in $S^-$ or on white arcs contained in $S^+$. It is clear that an arc in $S^-$, either black or white, can be of three types: i) it coincides with $S^-$; ii) its boundary consists of a point in $S^-$ and one of the points $q_{\pm \infty}$; iii) both its boundary points are in $S^-$. Similarly we classify arcs in $S^+$.  The number of local minima of $T$ inside a white arc in $S^-$ of type (i) is one plus the number of local maxima inside the same arc. Therefore, from formula (\ref{min_max_eq}), a white arc in $S^-$ of type (i) contributes $1$ to the computation of $w_{\gamma_{\ell}}(m)$. Similarly we can say the same of a black arc in $S^+$ of type (i). 
On the other hand, the number of local minima of $T$ inside a white arc in $S^-$ of type (ii) is the same as the number of local maxima inside the same arc (see Figure \ref{graph_T_2}). 
So that a white arc in $S^-$ of type (ii) does not contribute at all to the computation of $w_{\gamma_{\ell}}(m)$. Similarly we can say the same of a black arc in $S^+$ of type (ii). 
Finally, the number of local minima of $T$ inside a white arc in $S^-$ of type (iii) is one less than the number of local maxima inside the same arc (see Figure \ref{graph_T_2}). Similarly we can say of a black arc in $S^+$ of type (iii). Therefore a white arc in $S^-$, or black arc in $S^+$, of type (iii) contributes $-1$ to the computation of $w_{\gamma_{\ell}}(m)$. 

\begin{figure}[!ht] 
\begin{center}
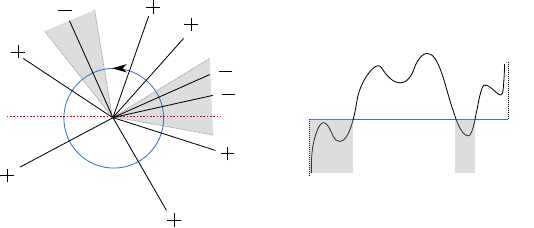
\caption{The fan and the graph of $T$ when $m \in Q_1$. Black sectors are shaded. In this case $S^-$ has one white arc of type (iii) and one of type (ii). } \label{graph_T_2}
\end{center}
\end{figure}
The final step of the proof is just a simple combinatorial problem. Let $w^+$ and $w^-$ be the number of white arcs of type (iii) respectively in $S^+$ and in $S^-$. Similarly let $b^+$ and $b^-$ be the number of black arcs of type (iii) respectively in $S^+$ and $S^-$. If $S^-$ consists of one white arc of type (i), then all black arcs in $S^+$ are of type (iii) and, from the previous considerations, we have
\[ w_{\gamma_{\ell}}(m) = 1-b^+. \]
It can be seen that $b^+ = w^++1$ and that
\[ b_0(N_{\R} - W(m)) = 1 + w^+ = b^+, \]
which implies (\ref{winding_neg}). Similarly one deals with the case when $S^+$ consists of one black arc. 

Now suppose all white arcs of $S^-$ are of type (iii) and $w^+ \geq 1$. Then we have 
\begin{align*} w_{\gamma_{\ell} }(m)&= - w^- - b^+, \\
 b^{0}(N_{\R} - W(m)) &= b^- + b^+ +2.
\end{align*}
Moreover $w^- = b^- + 1$. Therefore we obtain (\ref{winding_neg}). The case where all black arcs of $S^+$ are of type (iii) and $b^- \geq 1$ is similar. 

We do the case when $S^-$ has one white arc of type (ii) and one black arc of type (ii). In this case the same is true for $S^+$. So we have 
\begin{align*} w_{\gamma_{\ell} }&= - w^- - b^+, \\
 b^{0}(N_{\R} - W(m)) &= b^- + b^+ +1.
\end{align*}
Moreover $b^- = w^-$, which gives (\ref{winding_neg}). The last cases to consider is when $S^-$ does not contain white arcs or $S^+$ does not contain black arcs. We leave these to the reader. 

In particular this shows that if $m \in Q_1$, then either $w_{\gamma_{\ell}}(m)$ is zero or it is negative and equal to $- h_{\psi}(m)$. Therefore $w_{\gamma_{\ell}}(m)$ is positive if and only if $m \in Q_0$ or $m \in Q_2$, in which case $w_{\gamma_{\ell}}(m) = 1$. The conclusion of the theorem now follows from (\ref{coho_L}). 
\end{proof}

\begin{ex}
Let $\check X$ be the total space of the canonical bundle of the one-point blowup of $\PP^2$. It is obtained from the polytope $$P = \conv \{ (0,1), (-1,1), (-1,0), (1,-1) \}$$ with the subdivision whose interior edges connect each vertex of $P$ with the unique interior integral point $(0,0)$. Then $M_{\R} - \Gamma$ has just one bounded component $C$ corresponding to $v_C = (0,0)$. Clearly $D_C$ is the one point blowup of $\PP^2$.  The fan $\Sigma_C$ has one-dimensional cones generated respectively by $\check n_1 = (0,1)$, $\check n_2=(-1,1)$, $\check n_3= (-1,0)$, $\check n_4= (1,-1)$. Then consider the twisting numbers 
\[ \ell = (-14, 5,-14,-9). \]
The curve $\gamma_{\ell}$ is pictured in Figure \ref{winding_example}. There are $10$ integral points with winding number $1$ and $3$ points with winding number $-1$. We have 
\[ K_C = (-2,-1, -2, -3), \]
and
\[ \frac{K_C - \ell}{2} = (6, -3, 6, 3). \]

\begin{figure}[!ht] 
\begin{center}
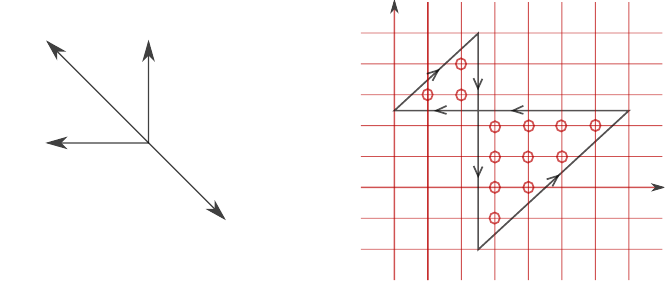\\
\vspace{1ex}\refstepcounter{figure}Figure \arabic{figure}. \label{winding_example}
\end{center}
\end{figure}
According to Conjecture \ref{hms:spheres}, these four numbers are the intersection numbers of the sheaf $\mathscr E_{L_{\ell}}$ corresponding to $L_{\ell}$ with
the curves on the toric boundary of $D_C$. If we denote by $H$ the hyperplane section in $D_C$ and by $E$ the exceptional curve of the blowup, one can see that 
\[ \mathscr E_{L_{\ell}} = \mathcal{O}_{D_C}(3H+3E). \]
We have 
\[ H^{0}(D_C, \mathscr E_{L_{\ell}}) = \C^{10}, \quad H^{1}( D_C, \mathscr E_{L_{\ell}}) = \C^3, \quad H^{2}( D_C, \mathscr E_{L_{\ell}}) = 0.\]
\end{ex}

\section{Spherical objects and $A_m$-configurations} \label{spherical:an}
In this section we will use the results of the previous sections to find sheaves which, according to our Conjecture \ref{hms:spheres}, are mirror to the $A_{2d-1}$-configuration of vanishing cycles in a smoothing of a $A_{2d-1}$-singularity. This refines a conjecture of Seidel and Thomas \cite{ST}.

\subsection{Definitions and examples} When $L$ is an embedded Lagrangian sphere of dimension $n \geq 2$, the Floer homology $HF^*(L,L) \otimes_{\R} \C$ is isomorphic to the standard cohomology of the sphere with $\C$ coefficients, i.e., it is $\C$ in degrees $0$ and $n$ and $0$ elsewhere. So morphisms of the mirror object $\mathcal E_L$  should satisfy the same property. This justifies the following definition of Seidel and Thomas \cite{ST}:

\begin{defi}
Given a Calabi-Yau manifold $X$, an object $\mathcal E \in \DC(X)$ is \textit{spherical} if $\Hom^r_{\DC(X)}(\mathcal E, \mathcal E)$ is $\C$ when $r=0$ or $n$, and zero in all other degrees. 
\end{defi}

Now suppose we have a chain of embedded Lagrangian spheres $L_1, \ldots, L_n$ such that $L_i$ intersects $L_{j}$ transversely in one point if $|i-j| = 1$ and $L_i \cap L_j = \emptyset$ if $|i-j| > 1$. Then we have that $HF^*(L_i,L_j) \otimes_{\R} \C$
 is $\C$ when $|i-j| = 1$ and zero when $|i-j| > 1$. Such a configuration of spheres is called an $A_m$-configuration. This justifies the following:

\begin{defi}
Given a Calabi-Yau manifold $X$ and $m \geq 1$, an $A_m$ configuration in $\DC(X)$ is a collection of spherical objects $\mathcal E_1, \dots, \mathcal E_m$ such that
 \[ \dim_{\C} \Hom^*_{\DC(X)}(\mathcal E_i, \mathcal E_j) = \begin{cases} 
     1 \quad |i-j| = 1, \\
     0 \quad |i-j| > 1.
    \end{cases}
\] 
\end{defi}


Let $X$ be an $n$-dimensional Calabi-Yau manifold. Assuming that $X$ is compact, a simple class of spherical objects in $\DC(X)$ is given by line bundles.  More generally consider a submanifold $Y \subset X$ and sheaves supported on $Y$. We denote by $\iota: Y \hookrightarrow X$ the inclusion. If $\mathcal L$ is a sheaf on $Y$, by slight abuse of notation we will denote by $\mathcal L$ also the sheaf on $X$ given by $\iota_{\ast} \mathcal L$. We have the following (see Lemma 3.7 of \cite{ST} for a proof): 
\begin{lem} \label{y:spher}
Let $X$ be an $n$-dimensional quasi-projective smooth variety and $Y \subseteq X$ a connected compact submanifold of codimension $r$, whose normal bundle $\nu$ satisfies $H^{i}(Y, \bigwedge^j \nu) = 0$ when $0< i+j < n$. Then $\mathcal O_Y \in  \DC(X)$ is spherical. 
\end{lem}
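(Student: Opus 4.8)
\textbf{Proof proposal for Lemma \ref{y:spher}.}

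The plan is to reduce the computation of $\Hom^*_{\DC(X)}(\O_Y, \O_Y)$ to a cohomology computation on $Y$ via the local structure of $Y$ inside $X$, and then feed in the vanishing hypothesis $H^i(Y, \bigwedge^j \nu) = 0$ for $0 < i+j < n$. First I would recall the standard fact (for a smooth closed embedding $\iota: Y \hookrightarrow X$ of codimension $r$ with normal bundle $\nu$) that there is an isomorphism of sheaves on $Y$
\[
\mathcal{E}xt^k_{\O_X}(\iota_* \O_Y, \iota_* \O_Y) \cong \textstyle\bigwedge^k \nu,
\]
coming from the Koszul resolution of $\O_Y$ as an $\O_X$-module, together with the self-intersection/excess-intersection identification $\I_Y/\I_Y^2 \cong \nu^\vee$. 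This gives the local-to-global (hypercohomology) spectral sequence
\[
E_2^{p,q} = H^p\bigl(Y, \textstyle\bigwedge^q \nu\bigr) \Longrightarrow \Hom^{p+q}_{\DC(X)}(\O_Y, \O_Y).
\]

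Next I would read off the consequences. The hypothesis kills $E_2^{p,q}$ whenever $0 < p+q < n$, so the spectral sequence degenerates in that range (no nonzero differentials can enter or leave a zero group), and the only possibly nonzero contributions to $\Hom^*$ come from $p+q = 0$, $p+q \geq n$. For $p+q = 0$ we get $E_2^{0,0} = H^0(Y, \O_Y) = \C$ since $Y$ is connected and compact (hence $\O_Y$ has only constants as global sections). For the top degrees: $\bigwedge^q \nu = 0$ for $q > r$, and $H^p(Y,-) = 0$ for $p > \dim Y = n - r$; the maximal value of $p+q$ with both factors potentially nonzero is $(n-r) + r = n$, attained only at $(p,q) = (n-r, r)$, giving $E_2^{n-r,r} = H^{n-r}(Y, \bigwedge^r \nu) = H^{n-r}(Y, \omega_Y \otimes \iota^*\omega_X^\vee)$; since $X$ is Calabi-Yau, $\iota^*\omega_X \cong \O_Y$, so this is $H^{n-r}(Y,\omega_Y) \cong \C$ by Serre duality on $Y$. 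Thus $\Hom^0 = \C$, $\Hom^n = \C$, and $\Hom^k = 0$ otherwise, which is precisely the spherical condition; the composition $\Hom^0 \times \Hom^n \to \Hom^n$ is automatically the expected perfect pairing by Serre duality on $X$ (here one uses the quasi-projectivity and smoothness of $X$ to have a well-behaved $\DC(X)$ with Serre functor, at least on the relevant objects), which confirms $\O_Y$ is genuinely a spherical object and not merely one with the right graded dimensions.

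The main obstacle is establishing the $\mathcal{E}xt$-sheaf identification $\mathcal{E}xt^k_{\O_X}(\O_Y,\O_Y) \cong \bigwedge^k \nu$ cleanly and then justifying that the local-to-global spectral sequence degenerates for degree reasons. The $\mathcal{E}xt$ computation is classical (Koszul complex / conormal exact sequence), but one must be a little careful that $X$ is only quasi-projective, so I would work locally on $X$ to build the resolution and then globalize, noting that $Y$ compact is what makes the hypercohomology groups finite-dimensional and Serre duality applicable. As indicated in the excerpt, this is exactly Lemma 3.7 of \cite{ST}, so in the write-up I would either cite that directly or sketch the argument above; the degeneration itself is a triviality once the $E_2$-page is shown to vanish in the middle range, since there is simply no room for differentials.
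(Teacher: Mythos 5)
Your proof is correct and is essentially the argument in Seidel--Thomas's Lemma 3.7, which the paper simply cites rather than reproducing: compute $\mathcal{E}xt^q_{\O_X}(\O_Y,\O_Y)\cong\bigwedge^q\nu$ via the Koszul resolution of the regular embedding, feed this into the local-to-global spectral sequence, and observe that the vanishing hypothesis leaves only $E_2^{0,0}=\C$ and $E_2^{n-r,r}=H^{n-r}(Y,\det\nu)\cong H^{n-r}(Y,\omega_Y)\cong\C$ (using $\omega_X|_Y\cong\O_Y$, i.e.\ the Calabi--Yau hypothesis which is implicit in the surrounding definition of spherical). Your only nonessential addition is the remark about the Serre pairing, which the paper's stated definition of spherical does not actually require.
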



More generally we have

\begin{cor} \label{spherical_ondiv}
With the same hypothesis of Lemma~\ref{y:spher}, let $\mathcal L$ be a line bundle on $X$ and $\mathcal L_Y = \mathcal L \otimes \mathcal O_Y$. Then $\mathcal L_Y \in \DC(X)$ is spherical.
\end{cor}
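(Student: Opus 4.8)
The plan is to reduce the statement to Lemma~\ref{y:spher} by a standard projection–formula computation, exploiting that tensoring with a line bundle $\mathcal{L}$ pulled back from $X$ is an exact autoequivalence of $\DC(X)$ that, in particular, does not change the self-$\Hom$ groups of a sheaf. First I would record the basic identity: for $\mathcal{L}$ a line bundle on $X$ and $\mathcal{L}_Y = \mathcal{L}\otimes\mathcal{O}_Y = \iota_*(\iota^*\mathcal{L})$, we have
\[ \rsh_{\mathcal{O}_X}(\mathcal{L}_Y, \mathcal{L}_Y) \cong \rsh_{\mathcal{O}_X}(\mathcal{L}\otimes\mathcal{O}_Y, \mathcal{L}\otimes\mathcal{O}_Y) \cong \rsh_{\mathcal{O}_X}(\mathcal{O}_Y, \mathcal{O}_Y), \]
since $-\otimes\mathcal{L}$ is an equivalence of $\DC(X)$ with quasi-inverse $-\otimes\mathcal{L}^{-1}$, and both functors are exact, so they commute with $\rsh$ in each variable and the twist cancels. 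Taking hypercohomology gives $\Hom^r_{\DC(X)}(\mathcal{L}_Y,\mathcal{L}_Y) \cong \Hom^r_{\DC(X)}(\mathcal{O}_Y,\mathcal{O}_Y)$ for all $r$.

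Next I would invoke Lemma~\ref{y:spher}: under the stated hypotheses on $Y$ (connected compact submanifold of codimension $r$ with $H^i(Y,\bigwedge^j\nu)=0$ for $0<i+j<n$), the object $\mathcal{O}_Y\in\DC(X)$ is spherical, i.e. $\Hom^r_{\DC(X)}(\mathcal{O}_Y,\mathcal{O}_Y)$ is $\C$ for $r=0,n$ and $0$ otherwise. Combining this with the isomorphism of the previous paragraph immediately yields that $\Hom^r_{\DC(X)}(\mathcal{L}_Y,\mathcal{L}_Y)$ has the same dimensions, which is precisely the assertion that $\mathcal{L}_Y$ is spherical. One should also note that $\mathcal{L}_Y$ is genuinely an object of $\DC(X)$ (i.e. has coherent cohomology, supported on the compact $Y$), which is clear since $\mathcal{L}_Y = \iota_*(\mathcal{L}|_Y)$ with $\mathcal{L}|_Y$ a line bundle on the smooth projective-over-a-point $Y$.

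There is really no serious obstacle here; the only point that requires a word of care is the compatibility of $-\otimes\mathcal{L}$ with $\rsh$, which is the (derived) projection formula / the fact that tensoring by an invertible sheaf is exact, so that $\rsh(\mathcal{A}\otimes\mathcal{L}, \mathcal{B}\otimes\mathcal{L}) \cong \rsh(\mathcal{A},\mathcal{B})$ canonically — the twists on the two sides cancel because one appears contravariantly and one covariantly. Since everything is formal once Lemma~\ref{y:spher} is granted, the "hard part" is merely to state the cancellation cleanly; I would phrase the proof in two sentences: apply the autoequivalence $-\otimes\mathcal{L}^{-1}$ to $\mathcal{L}_Y$ to get $\mathcal{O}_Y$, note that autoequivalences preserve the graded dimensions of self-$\Hom$, and quote Corollary/Lemma~\ref{y:spher} for $\mathcal{O}_Y$. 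This is exactly the content of Lemma~3.7 and its surrounding discussion in \cite{ST}, so a reference there suffices for any reader wanting more detail.
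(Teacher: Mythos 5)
Your proof is correct and follows essentially the same route as the paper: reduce to $\Ext^k_X(\mathcal L_Y,\mathcal L_Y)\cong\Ext^k_X(\mathcal O_Y,\mathcal O_Y)$ by cancelling the line-bundle twist, then invoke Lemma~\ref{y:spher}. The paper simply cites Hartshorne, Proposition III.6.7 for this $\Ext$ isomorphism, while you phrase it as the autoequivalence $-\otimes\mathcal L^{-1}$ preserving graded self-$\Hom$s; these are the same observation.
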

\begin{proof}
If follows from the fact that $\Ext_{X}^k(\mathcal L_Y, \mathcal L_Y) = \Ext_{X}^k(\mathcal O_Y, \mathcal O_Y)$ (see \cite{harshorne}, Proposition 6.7, pg. 235). 
\end{proof}

Observe that when $Y$ is a hypersurface in a Calabi-Yau manifold $X$, 
then the condition on the cohomology of the normal bundle of $Y$ becomes 
\begin{equation} \label{nu:hs}
  H^i(Y, \mathcal O_Y) = 0, \, i=1, \ldots, n-1. 
\end{equation}
Indeed, this is clearly the necessary condition for $j=0$, and for $j=1$,
one needs $H^i(Y,\omega_Y)=0$ for $0 < i+1 < n$, or by Serre duality
$H^{n-1-i}(Y,\mathcal O_Y)=0$ for $0<i+1<n$, which is equivalent to the
above condition.
In the case $X$ is a $3$-fold, these conditions hold when $Y \subset X$ is a 
rational surface. 

In the case of a rational curve $C$ in a $3$-fold $X$, then we must have that 
\begin{equation} \label{nu:curve}
 \nu = \mathcal O_C(-1) \oplus \mathcal O_C(-1).
\end{equation} 
These are called $(-1,-1)$-curves.

Let us now give some examples of $A_m$-configurations.

\begin{lem} \label{inters:point} Let $X$ be a quasi-projective Calabi-Yau $3$-fold, $Y$ a smooth, compact, embedded surface in $X$ satisfying (\ref{nu:hs}) and $C$ an embedded rational curve satisfying (\ref{nu:curve}). Assume that $C$ and $Y$ intersect transversely at a point. Given a line bundle $\mathcal L$ on $X$, let $\mathcal E_1 = \mathcal L_{Y}$ and $\mathcal E_2 = \mathcal O_{C}$. Then $\mathcal E_1$ and $\mathcal E_2$
form an $A_2$-configuration of objects in $\DC(X)$.
\end{lem}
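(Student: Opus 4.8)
The statement is essentially a bookkeeping exercise built on Corollary \ref{spherical_ondiv} and Lemma \ref{y:spher}, so the plan is to verify the three conditions in the definition of an $A_2$-configuration: that $\mathcal E_1$ and $\mathcal E_2$ are each spherical, and that $\dim_{\C}\Hom^*_{\DC(X)}(\mathcal E_1, \mathcal E_2) = \dim_{\C}\Hom^*_{\DC(X)}(\mathcal E_2, \mathcal E_1) = 1$. The first condition is immediate: $\mathcal E_1 = \mathcal L_Y$ is spherical by Corollary \ref{spherical_ondiv} applied to the surface $Y$ (using that (\ref{nu:hs}) is the hypothesis on $\nu$ translated into the codimension-one case, as explained just after Corollary \ref{spherical_ondiv}); and $\mathcal E_2 = \mathcal O_C$ is spherical by Lemma \ref{y:spher} applied to the $(-1,-1)$-curve $C$, since for $r=2$ and $\nu = \mathcal O_C(-1)\oplus\mathcal O_C(-1)$ one checks $H^i(C,\bigwedge^j\nu)=0$ for $0<i+j<3$ directly: $\bigwedge^0\nu=\mathcal O_C$ has no $H^1$ on $\PP^1$, $\bigwedge^1\nu=\mathcal O_C(-1)^{\oplus 2}$ has no $H^0$ or $H^1$, and $\bigwedge^2\nu=\mathcal O_C(-2)$ has no $H^1$ (only $H^0$ vanishes here, but $i+j = 2$ forces $j=2,i=0$, and $H^0(C,\mathcal O_C(-2))=0$).

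\textbf{Computing the cross-Hom groups.} For the off-diagonal terms I would use the local-to-global structure of $\Ext$ together with the transversality hypothesis. Since $C$ and $Y$ meet transversely at a single point $x$, the derived tensor product $\mathcal O_C \otimes^{\mathbf L}_{\mathcal O_X} \mathcal O_Y$ is supported at $x$, and by a local Koszul computation (transverse intersection of a curve and a surface in a smooth $3$-fold, so the two local equation ideals form a regular sequence of the expected length) $\mathcal T\!or^{\mathcal O_X}_i(\mathcal O_C,\mathcal O_Y)$ is $\C_x$ for $i=0$ and $0$ for $i>0$. One then feeds this into the local-to-global spectral sequence for $\Ext$ (or argues directly via $R\Hom(\mathcal O_C,\mathcal O_Y) = R\Hom(\mathcal O_C,\mathcal O_X)\otimes^{\mathbf L}\mathcal O_Y$ using that $C$ and $Y$ are smooth): twisting $\mathcal O_Y$ by $\mathcal L$ does not change the dimensions by the same reasoning as in Corollary \ref{spherical_ondiv}, so it suffices to treat $\mathcal L = \mathcal O_X$. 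The upshot should be $\bigoplus_r \Hom^r_{\DC(X)}(\mathcal O_C,\mathcal L_Y) \cong \C$, concentrated in a single degree. The reverse direction $\Hom^*(\mathcal L_Y,\mathcal O_C)$ then follows by Serre duality on the compact Calabi-Yau $X$ (or on a compactification, noting everything is supported on the compact set $C\cup Y$): $\Hom^r(\mathcal L_Y,\mathcal O_C) \cong \Hom^{3-r}(\mathcal O_C,\mathcal L_Y)^\vee$, giving again a total dimension of $1$.

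\textbf{Main obstacle.} The routine part is the sphericality of the two objects; the delicate point is making the transverse-intersection $\Ext$ computation clean and checking it is unaffected by the twist $\mathcal L$. The cleanest route is probably to work locally: near $x$, choose coordinates so that $Y = \{z_1 = 0\}$ and $C = \{z_2 = z_3 = 0\}$, resolve $\mathcal O_Y$ by $\mathcal O_X(-Y)\to\mathcal O_X$ and $\mathcal O_C$ by a Koszul complex on $(z_2,z_3)$, and compute $\Hom$ in the derived category as the cohomology of the resulting bicomplex; the Calabi-Yau/Serre-duality symmetry then pins down in which degrees the one-dimensional groups sit. I should also make sure the ambient variety is either compact or that I invoke a version of Serre duality valid for complexes with proper support on a smooth quasi-projective Calabi-Yau; since $C$ and $Y$ are compact this is standard, but it is the one place where the word ``quasi-projective'' in the hypothesis needs care.
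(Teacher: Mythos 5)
The paper leaves this lemma to the reader (``This is a simple calculation''), so there is no stated proof to compare against, but your argument is correct. Your sphericality checks are exactly the right invocations of Lemma~\ref{y:spher} and Corollary~\ref{spherical_ondiv}, and the cross-Hom computation works as you sketch: transversality of $C$ and $Y$ gives $\mathcal O_C\otimes^{\mathbf L}_{\O_X}\O_Y\cong\O_x$ with no higher Tor, so $R\ch(\O_C,\mathcal L_Y)\cong \mathcal L|_C\otimes\det\nu_{C/X}[-2]\otimes^{\mathbf L}\O_Y$ is a length-one skyscraper in degree $2$, and Serre duality for compactly supported objects on the quasi-projective Calabi--Yau handles the reverse direction. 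One remark on route: the proofs the paper does spell out for the sibling Lemmas~\ref{curv_in_surf} and~\ref{inters_curv} proceed by resolving the codimension-one object via $0\to\O_X(-Y)\to\O_X\to\O_Y\to 0$ and applying $\ch(\cdot,\O_C)$; in the present setting that two-term complex is $\O_C\xrightarrow{\,s|_C\,}\O_C(1)$, with $s|_C$ vanishing simply at $x$, so $\ext^0=0$, $\ext^1\cong\O_x$, and the local-to-global spectral sequence gives $\Ext^1(\mathcal L_Y,\O_C)=\C$ and nothing else. That route is marginally more elementary (no derived duals or Tor computations), and matches the style of the neighbouring lemmas, but yours is entirely equivalent. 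One small imprecision: you invoke Serre duality ``on a compactification,'' which is not needed and not quite the right phrasing --- Serre duality holds directly on the smooth quasi-projective $X$ with $\omega_X\cong\O_X$ for pairs of objects with proper support, which is exactly the situation here since both $C$ and $Y$ are compact.
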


This is a simple calculation which we leave to the reader. Another possible configuration is the following:

\begin{lem} \label{curv_in_surf} Let $X$ be a quasi-projective Calabi-Yau $3$-fold, $Y$ a smooth, compact, embedded surface in $X$ satisfying (\ref{nu:hs}) and $C$ an embedded rational curve satisfying (\ref{nu:curve}). Assume that $C$ is contained in $Y$ and that it has self-intersection $-1$ in $Y$. Given a line bundle $\mathcal L$ on $X$, let $\mathcal E_1 = \mathcal L_{Y}$ and $\mathcal E_2 = \mathcal O_{C}$. Then $\mathcal E_1$ and $\mathcal E_2$
form an $A_2$-configuration of objects in $\DC(X)$ if and only if 
\[ \mathcal L|_{C} = \mathcal O_{C} \ \ \text{or} \ \ \mathcal{O}_{C}(1). \]
 \end{lem}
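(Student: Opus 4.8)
The plan is to compute $\Ext^*_X(\mathcal E_1,\mathcal E_2)$, $\Ext^*_X(\mathcal E_2,\mathcal E_1)$ and $\Ext^*_X(\mathcal E_i,\mathcal E_i)$ directly, reducing everything to cohomology computations on $C\cong\PP^1$ and on $Y$. First, since tensoring with a line bundle is an autoequivalence of $\DC(X)$, I may assume $\mathcal L=\mathcal O_X$ on the nose when checking the sphericity of $\mathcal E_1=\mathcal O_Y$ and $\mathcal E_2=\mathcal O_C$ separately: $\mathcal E_1$ is spherical by Corollary~\ref{spherical_ondiv} (using \eqref{nu:hs}, which holds because $Y$ is rational) and $\mathcal E_2$ is spherical by Lemma~\ref{y:spher} applied to the $(-1,-1)$-curve $C$ (using \eqref{nu:curve}); neither of these uses the hypothesis $\mathcal L|_C=\mathcal O_C$ or $\mathcal O_C(1)$. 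What remains is to show that $\sum_k \dim_{\C}\Ext^k_X(\mathcal E_1,\mathcal E_2)=1$ and $\sum_k\dim_{\C}\Ext^k_X(\mathcal E_2,\mathcal E_1)=1$, and that this fails otherwise.

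The key tool is a local (Koszul) resolution. Because $C$ is a $(-1,-1)$-curve sitting inside the smooth surface $Y$ with $\mathcal O_C(C)=\mathcal O_C(-1)$, the ideal sheaf sequences let me write $\mathcal O_C$ as a complex built from $\mathcal O_Y$-modules, and dually resolve $\mathcal O_Y$ near $C$. Concretely I would use $0\to \mathcal O_Y(-C)\to\mathcal O_Y\to\mathcal O_C\to 0$ together with the conormal/normal bundle data to get, via the local-to-global $\Ext$ spectral sequence, $\Ext^k_X(\mathcal O_Y,\mathcal O_C)\cong \bigoplus_{p+q=k} H^p(C,\bigwedge^q\nu_{Y/X}\otimes\mathcal L|_C)$ where $\nu_{Y/X}|_C$ is a line bundle on $C$; more efficiently, $R\mathcal Hom_X(\mathcal L_Y,\mathcal O_C)\simeq \mathcal O_C\otimes\mathcal L^{-1}|_C\otimes \bigwedge^\bullet \nu_{Y/X}^\vee|_C[-\bullet]$ plus the contribution of $\mathcal O_C(C)$. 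Then $\Ext^*_X(\mathcal E_1,\mathcal E_2)$ is computed by cohomology on $\PP^1$ of a two-term complex of line bundles, whose total dimension depends only on $\deg(\mathcal L|_C)$ and on the known degrees coming from $C\subset Y\subset X$; requiring the total to equal $1$ pins down $\deg(\mathcal L|_C)\in\{0,1\}$. The computation of $\Ext^*_X(\mathcal E_2,\mathcal E_1)$ is handled symmetrically (or by Serre duality on the Calabi-Yau $X$, $\Ext^k_X(\mathcal E_2,\mathcal E_1)\cong\Ext^{3-k}_X(\mathcal E_1,\mathcal E_2)^\vee$), giving the same numerical condition, and the vanishing statements for $|i-j|>1$ are vacuous here since $m=2$.

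The main obstacle I anticipate is bookkeeping the twist by the normal bundle $\nu_{Y/X}|_C$ and the line bundle $\mathcal O_C(C)=\mathcal O_C(-1)$ correctly inside the local-to-global spectral sequence, and in particular checking that the spectral sequence degenerates (so that the alternating sum I can compute via Riemann-Roch actually equals the total dimension, with no cancellation). For the ``only if'' direction I would argue that if $\deg(\mathcal L|_C)=d$ then the relevant $\PP^1$-cohomology produces either $h^0$ or $h^1$ of $\mathcal O_{\PP^1}(d)$ and of $\mathcal O_{\PP^1}(d-2)$ type bundles, so the total $\Ext$-dimension is $1$ exactly when $d\in\{0,1\}$ and is $\ge 2$ (or $0$, which also violates the $A_2$ condition) otherwise. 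I expect the cleanest route is to reduce first to the statement $R\Gamma(C, \mathcal L|_C\oplus \mathcal L|_C(-1)[-1])$ has total dimension $1$ iff $\mathcal L|_C\cong\mathcal O_C$ or $\mathcal O_C(1)$, which is an elementary $\PP^1$ fact, and then to justify that this complex indeed computes $\Ext^*_X(\mathcal E_1,\mathcal E_2)$ — the latter justification being where the real work lies.
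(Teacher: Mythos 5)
Your strategy is the same as the paper's: resolve $\O_Y$ by the Koszul sequence $0\to\O_X(-Y)\to\O_X\to\O_Y\to 0$, apply $\ch(\cdot,\O_C)$ to get a two-term complex of line bundles on $C\cong\PP^1$, and run the local-to-global spectral sequence. Your reduction of the sphericity of $\mathcal E_1,\mathcal E_2$ to Corollary~\ref{spherical_ondiv} and Lemma~\ref{y:spher}, and your use of Serre duality on the Calabi--Yau to handle $\Ext^*(\mathcal E_2,\mathcal E_1)$, are both correct and match what the paper implicitly uses.

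The gap is in the twist bookkeeping, which is exactly where you flagged your own uncertainty, and as stated your formula gives the wrong answer. Applying $\ch(\cdot,\O_C)$ to the (tensored) Koszul resolution produces, on $C$, the two-term complex with terms $\mathcal L^{-1}|_C$ in degree $0$ and $\mathcal L^{-1}|_C\otimes\O_X(Y)|_C$ in degree $1$; the twist is by $\nu_{Y/X}|_C$, not $\nu_{Y/X}^\vee|_C$ as you wrote. Since $X$ is Calabi--Yau, $\O_X(Y)|_Y=\omega_Y$, and adjunction on $Y$ together with $C^2=-1$ gives $\omega_Y|_C=\omega_C\otimes\nu_{C/Y}^{-1}=\O_C(-2)\otimes\O_C(1)=\O_C(-1)$. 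So writing $\mathcal L|_C=\O_C(k)$, the correct complex is $\O_C(-k)\oplus\O_C(-k-1)[-1]$, whose total cohomology has dimension $|k-1|+|k|$, which equals $1$ precisely for $k\in\{0,1\}$ and is $\ge 3$ otherwise. Your proposed reduction to $R\Gamma\bigl(C,\,\mathcal L|_C\oplus\mathcal L|_C(-1)[-1]\bigr)$ drops the inverse on $\mathcal L$ and, followed literally, yields the condition $k\in\{0,-1\}$, which is not the statement being proved. Two further small points: the spectral sequence degenerates at $E_2$ automatically because the $\ext$ sheaves are supported on a curve, so only $p\le 1$ contributes and $d_2$ raises $p$ by $2$; and the differential in the two-term complex on $C$ is identically zero because it is multiplication by $s|_C$ where $s$ cuts out $Y\supseteq C$ (the paper instead notes there is no nonzero map $\O_C\to\O_C(-1)$). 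In particular Riemann--Roch is no help here: the Euler characteristic of the two-term complex is $1$ for every $k$, so you must compute the actual cohomology groups, as the paper does.
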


\begin{proof}
Consider the standard locally free resolution of $\mathcal O_{Y}$
\begin{equation} \label{a2:res}
 0 \longrightarrow \mathcal{O}_X(-Y) \stackrel{\otimes s}{\longrightarrow} \mathcal O_X \longrightarrow \mathcal{O}_Y \rightarrow 0,
\end{equation}
where $s$ is a section of $\mathcal{O}_X(Y)$ vanishing along $Y$. Now apply $\ch(\cdot, \mathcal{O}_C)$ to the sequence
and we obtain the complex calculating $\ext^*(\mathcal{O}_Y,\mathcal{O}_C)$:
\begin{equation} \label{a2:seq}
  0 \rightarrow \ch(\mathcal{O}_X, \mathcal{O}_C ) \longrightarrow  \ch(\mathcal{O}_X(-Y), \mathcal{O}_C ). 
\end{equation}
Now 
\[ \ch(\mathcal{O}_X, \mathcal{O}_C ) = \mathcal{O}_C  \]
and 
\begin{align*}
  \ch(\mathcal{O}_X(-Y), \mathcal{O}_C ) & =  \mathcal{O}_X(Y)|_{C} =   \omega_Y |_{C}  \\ 
  & =  \nu_{C|Y}^{-1}  \otimes \omega_C = \mathcal{O}_C(-1).  
\end{align*}
where $\nu_{C|Y}$ denotes the normal bundle of $C$ inside $Y$.
Since there is no non-trivial map from $\mathcal{O}_C$ to $\mathcal{O}_C(-1)$, the second map in (\ref{a2:seq}) is zero, 
hence the only non-trivial $\ext$ sheaves are 
\[ \ext^0(\mathcal{O}_Y, \mathcal{O}_C) = \mathcal{O}_C, \ \ \ \ \ext^1(\mathcal{O}_Y, \mathcal{O}_C) = \mathcal{O}_C(-1).  \]
Hence the $E_2$ page of the local-global spectral sequence computing $\Ext$ groups 
(i.e., $E_2^{i,j} = H^i(X, \ext^j(\mathcal{O}_Y, \mathcal{O}_C)$) is non-zero at $E_2^{0,0} = \C$. 
Hence $\mathcal E_1 = \mathcal{O}_Y$ and $\mathcal E_2 = \mathcal{O}_C$ form an $A_2$-configuration since $\Ext^0(\mathcal{O}_Y, \mathcal{O}_C) = \C$ and $\Ext^r(\mathcal{O}_Y, \mathcal{O}_C) = 0$ when $r >0$. 
In the case $\mathcal E_1 = \mathcal L_Y$, then a resolution of $\mathcal L_Y$ is obtained by tensoring (\ref{a2:res}) by $\mathcal L$. 
Then the same calculation leads to the following non-trivial $\ext$ sheaves
\[ \begin{split}
            \ext^0(\mathcal L, \mathcal{O}_C) & = \mathcal L^{-1} \otimes \mathcal{O}_C = \mathcal{O}_C(-k), \\
            \ext^1(\mathcal L, \mathcal{O}_C) &= \mathcal L^{-1} \otimes \mathcal{O}_C(-1) = \mathcal{O}_C(-1-k) 
   \end{split} \]
using $\mathcal L|_C=\mathcal{O}_C(k)$.
The spectral sequence then gives that $$\Hom^*_{\DC(X)}(\mathcal L_Y, \mathcal{O}_Y)$$
is only one-dimensional in the cases $k=0,1$. 
\end{proof}

We will also need the following:

\begin{lem} \label{inters_curv}
Let $X$ be a quasi-projective Calabi-Yau $3$-fold, $Y_1$ and $Y_2$ a pair of smooth, connected, compact, embedded surfaces in $X$. Assume that $Y_1$ and $Y_2$ intersect transversely along an embedded rational curve $C$ and let $m$ be the self-intersection number of $C$ inside $Y_2$. Given a line bundle $\mathcal L$ on $X$, suppose that 
\[ \mathcal L|_{C} = \mathcal O_{C}(k). \]
Then $\dim (\Hom^*_{\DC(X)}(\mathcal{O}_{Y_1}, \mathcal L_{Y_2})) = 0$ if and only if 
\[ k + m = -1. \]
\end{lem}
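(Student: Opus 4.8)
The plan is to mimic the computation carried out in the proof of Lemma~\ref{curv_in_surf}, replacing $\O_X$ by $\O_{Y_1}$ and keeping careful track of the normal direction to $C$ inside $Y_2$.

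First I would resolve $\O_{Y_1}$ by the standard locally free complex
\[ 0 \longrightarrow \O_X(-Y_1) \stackrel{\otimes s}{\longrightarrow} \O_X \longrightarrow \O_{Y_1} \longrightarrow 0, \]
where $s$ is a section of $\O_X(Y_1)$ cutting out $Y_1$, and then apply $\ch(\,\cdot\,, \mathcal L_{Y_2})$, where $\mathcal L_{Y_2} = \mathcal L \otimes \O_{Y_2}$. This produces the two-term complex computing $\ext^*(\O_{Y_1}, \mathcal L_{Y_2})$:
\[ 0 \longrightarrow \mathcal L_{Y_2} \stackrel{\otimes s}{\longrightarrow} \mathcal L \otimes \O_X(Y_1)\otimes \O_{Y_2} \longrightarrow 0. \]
Since $Y_1$ and $Y_2$ meet transversely along $C$, the restriction of $s$ to the smooth connected (hence integral) surface $Y_2$ is a nonzero section whose zero scheme is exactly $C$; in particular $\O_X(Y_1)|_{Y_2}\cong\O_{Y_2}(C)$, the displayed map is injective, and its cokernel is supported on $C$. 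Hence
\[ \ext^0(\O_{Y_1}, \mathcal L_{Y_2}) = 0, \qquad \ext^1(\O_{Y_1}, \mathcal L_{Y_2}) \cong \mathcal L|_{Y_2}\otimes\O_{Y_2}(C)\big|_C \cong \mathcal L|_C \otimes \nu_{C|Y_2}, \]
with all higher $\ext$ sheaves vanishing, where $\nu_{C|Y_2}$ is the normal bundle of $C$ in $Y_2$. Using $C\cong\PP^1$, $\deg\nu_{C|Y_2} = (C\cdot C)_{Y_2} = m$, and $\mathcal L|_C = \O_C(k)$, we obtain $\ext^1(\O_{Y_1},\mathcal L_{Y_2}) \cong \O_C(k+m)$.

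Finally I would run the local-to-global spectral sequence $E_2^{i,j} = H^i(X,\ext^j(\O_{Y_1},\mathcal L_{Y_2})) \Rightarrow \Ext^{i+j}_{\DC(X)}(\O_{Y_1},\mathcal L_{Y_2})$. Only the row $j=1$ is nonzero, and its sheaf lives on $C\cong\PP^1$, so the spectral sequence degenerates and gives
\[ \Ext^1_{\DC(X)}(\O_{Y_1},\mathcal L_{Y_2}) \cong H^0(\PP^1,\O(k+m)), \qquad \Ext^2_{\DC(X)}(\O_{Y_1},\mathcal L_{Y_2}) \cong H^1(\PP^1,\O(k+m)), \]
with all other $\Ext$ groups zero. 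Both of these vanish simultaneously exactly when $k+m = -1$, which is the assertion.

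The only genuinely delicate point — and the step I expect to need the most care — is the passage from the transversality hypothesis to the identification $\O_X(Y_1)|_{Y_2} \cong \O_{Y_2}(C)$ together with the injectivity of the connecting map $\,\cdot\, s\colon \mathcal L_{Y_2}\to\mathcal L\otimes\O_X(Y_1)\otimes\O_{Y_2}$, i.e.\ the fact that $s|_{Y_2}$ is a nonzerodivisor vanishing precisely along $C$. This is standard given that $Y_2$ is a smooth connected surface and $C$ a curve, but it must be stated carefully; everything after that is routine cohomology of line bundles on $\PP^1$.
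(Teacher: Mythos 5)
Your proof is correct and follows essentially the same route as the paper's: resolve $\O_{Y_1}$ by the Koszul complex, apply $\ch(\cdot,\mathcal L_{Y_2})$ to identify $\ext^0=0$ and $\ext^1\cong\mathcal L|_C\otimes\nu_{C|Y_2}\cong\O_C(k+m)$, then conclude via the cohomology of line bundles on $\PP^1$. The only difference is that you spell out the injectivity of multiplication by $s|_{Y_2}$ and the local-to-global spectral sequence explicitly, which the paper leaves implicit; this is a welcome clarification, not a deviation.
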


\begin{proof}
Consider the sequence (\ref{a2:res}) applied to $Y_1$ and take $\ch( \cdot, \mathcal L_{Y_2})$. We have 
the complex 
\begin{equation*} 
  0 \rightarrow \ch(\mathcal{O}_X, \mathcal L_{Y_2} ) \longrightarrow  \ch(\mathcal{O}_X(-Y_1), \mathcal L_{Y_2} ) 
\end{equation*}
which becomes 
\begin{equation*} 
  0 \rightarrow \mathcal L_{Y_2} \longrightarrow \mathcal{O}_X(Y_1) \otimes \mathcal L_{Y_2}  
\end{equation*}
where the first map is tensoring with a section $s$ of $\mathcal{O}_X(Y_1)$ vanishing on $Y_1$. In particular the first map is injective. So the cohomology of the complex gives
\[ \ext^0(\mathcal{O}_{Y_1}, \mathcal L_{Y_2}) = 0 \] 
and 
\[ \ext^1(\mathcal{O}_{Y_1}, \mathcal L_{Y_2}) = \nu_{C|Y_2} \otimes \mathcal L|_{C} = \mathcal{O}_{C}(k+m). \]
Then, $\dim (\Hom^*_{\DC(X)}(\mathcal{O}_{Y_1}, \mathcal L_{Y_2})) = 0$ if and only if $\mathcal{O}_{C}(k+m)$ has no cohomology, i.e., if and only if $k+m =  -1$.
\end{proof}

In particular, let $Y_1, Y_2$ and $C= Y_1 \cap Y_2$ be as in the above lemma,  such that $Y_1$ and $Y_2$ satisfy (\ref{nu:hs}) and $C$ satisfies (\ref{nu:curve}). If $C$ has self-intersection $-1$ inside $Y_2$,  then $\mathcal E_1 = \mathcal{O}_{Y_1}$, $\mathcal E_2 = \mathcal{O}_C$ and $\mathcal E_3 = \mathcal{O}_{Y_2}$ form an $A_3$-configuration.

There are other possibilities, for instance suppose that $Y_1$, $Y_2$, $\mathcal L$ and $C$ are as in the Lemma above, with $k+m = -1$. Suppose $C'$ is another embedded rational curve contained as a $-1$ curve inside $Y_2$ and intersecting $Y_1$ transversely in one point. If $Y_1$ and $Y_2$ satisfy (\ref{nu:hs}) and $C'$ satisfies (\ref{nu:curve}), then $\mathcal E_1 = \mathcal{O}_{Y_1}$, $\mathcal E_2 = \mathcal L|_{C'}$ and $\mathcal E_3 = \mathcal L_{Y_2}$ also form an $A_3$-configuration.

\subsection{$A_{2d-1}$-singularities} 
$A_m$-configurations of Lagrangian spheres appear naturally as vanishing cycles of certain singularities. In fact consider the singularity defined by the equation
\begin{equation} \label{a-sing}
 x^2 + y^2 + u^2 + v^{2d} = 0.
\end{equation}
This singularity is said to be of type $A_{2d-1}$. If $X$ is the smoothing of this singularity given by equation
\begin{equation} \label{a-sing-smooth}
 x^2 + y^2 + u^2 + v^{2d} = \epsilon.
\end{equation}
then $X$ contains $2d-1$ Lagrangian spheres $L_1, \ldots, L_{2d-1}$ representing vanishing cycles in $H_{3}(X)$ and forming an $A_{2d-1}$-configuration of Lagrangian spheres. 
Seidel and Thomas propose a possible mirror manifold $\check X$ and make a guess at what the corresponding $A_{2d-1}$ configuration of objects in $\check X$ might look like. They propose that $\check X$ is a Calabi-Yau manifold which should contain embedded smooth surfaces $S_{2}, S_{4}, \ldots, S_{2d-2}$ and curves $C_{1}, C_{3}, \ldots,\linebreak C_{2d-1}$ such that the following holds
\begin{itemize}
\item[1)] each $S_{2i}$ is isomorphic to $\PP^2$ with two points blown up;
\item[2)] $S_{2i} \cap S_{2j} = \emptyset$ if $|i-j| >1$; 
\item[3)] $S_{2i-2}$ and $S_{2i}$ are transverse and intersect in $C_{2i-1}$, which is a rational curve and has self-intersection $-1$ both in $S_{2i-2}$ and in $S_{2i}$.
\end{itemize} 
Let $\mathcal E_{2j} = \mathcal{O}_{S_{2j}}$ and $\mathcal E_{2i-1} = \mathcal{O}_{C_{2i-1}}$. Then it follows from the discussion in the previous section that $\mathcal E_{1}, \ldots, \mathcal E_{2d-1}$ do indeed form an $A_{2d-1}$-configuration of objects in $\DC(\check X)$. 

\subsection{Mirror $A_{2d-1}$ configurations} \label{mirroran}
Here we use our results to make a different and more precise proposal than the one by Seidel and Thomas described above. First we describe the vanishing cycles using the constructions of Sections \ref{vc:d3} and \ref{vc:edges}, then we use Conjecture \ref{hms:spheres} to find their mirror objects in $\DC(\check X)$ and we prove that they form an $A_{2d-1}$ configuration. As in Example \ref{a2d:mirror},  let $X$ be given by equation (\ref{a2d:smoothing}). We observed that $X$ is a smoothing of the singularity $xy=z^2+w^{2d}$, which is equivalent to the one in \eqref{a-sing} by a simple change of coordinates. Here we choose a subdivision of $P$ such that every compact toric divisor of $\check X$ is a one point blow-up of a Hirzebruch surface (see Figure \ref{a2d:trop} for the case $d=3$). The fibration $f: X \rightarrow \R^3$ has as discriminant locus a fattening of the tropical curve in Figure \ref{a2d:trop} (for the case $d=3$). The Lagrangian spheres will be of two types. First we will have $d-1$ spheres which are of the type constructed over the $d-1$ bounded regions $C_1, \ldots, C_{d-1}$ of the complement of $\Gamma$ (as in \S \ref{vc:d3}). Let us denote these by $M_1, \ldots, M_{d-1}$ (see next paragraph for a precise definition).  For  $j=1, \ldots, d$, consider the edge, in the subdivision of $P$, which connects the point $(j-1,1)$ to the point $(j,0)$. The dual to this edge in $\Gamma$ is an edge over which we can construct spheres as in \S \ref{vc:edges}. Over every such edge we take the sphere constructed with $k=0$. Let us denote these spheres by $N_1, \ldots, N_d$.  
The divisor $D_j$ in $\check X$ corresponding to $C_j$ is isomorphic to the one point blowup of a Hirzebruch surface of type $j$.  Let us label and order the edges of $C_j$ as in Figure \ref{sections_a2d}. According to this ordering we have 
\[ K_{C_j}=(j-2, -1,-1,-j-1, -2). \]

Recall that a sphere over $C_j$ is determined by numbers $\ell_j=(\ell_{j1}, \ldots, \ell_{j5})$ whose entries have the same parity as the corresponding entries in $K_{C_j}$.

Let us consider spheres with minimal twisting numbers, i.e., such that the entries of $\ell_j$ are either $0$ or $\pm 1$. So we can define
\[ \ell_j = \begin{cases}
                    (-1,1,-1,0,0) \quad \text{when} \ \ j \ \ \text{is odd}, \\
                    (0,-1,1,-1,0) \quad \text{when} \ \ j \ \ \text{is even},
            \end{cases} \]
see Example \ref{a2d:mirrorredux}.
It can be easily checked that the above choices satisfy (\ref{vc:balance}). We define $M_j = L_{\ell_j}$. It is not hard to see that $M_j  \cap M_k = \emptyset$ when $j \neq k$.  This is clear if $|k-j| \geq 2$. In the case $k=j+1$, this is because the only possible place where they could intersect is along the common edge of $C_j$ and $C_{j+1}$, but along this edge the twisting numbers are the same for both spheres. This implies that along this edge the image of the maps $\lambda_j$ and $\lambda_{j+1}$, defining the spheres over $C_j$ and $C_{j+1}$ respectively, can run parallel to each other without intersecting.  

We now use Conjecture \ref{hms:spheres} to construct the sheaves which are mirror to the spheres $M_j$. 
If we let
\[
\kappa_j = \frac{ K_{C_j}- \ell_j}{2}, 
\]
then formula \eqref{hms:vanishing3} tells us that 
\[
 \mathscr E_{L_{\ell_j}}  = \mathscr L_{\kappa_j}|_{D_j}. 
\]
We have that
\begin{equation} \label{bundles:vc3}
 \kappa_j = \begin{cases}
                    (\frac{j-1}{2}, -1,0, -\frac{j+1}{2} ,-1) \quad \text{when} \ \ j \ \ \text{is odd}, \\
                    (\frac{j-2}{2}, 0, -1, -\frac{j}{2} ,-1) \quad \text{when} \ \ j \ \ \text{is even}.
            \end{cases} 
\end{equation}

It is not hard to see that $\#(N_j \cap M_j) = \#(N_{j+1} \cap M_j) = 1$. Figure \ref{an_sing_inters} shows how these spheres intersect. It is clear that $\#(N_j \cap M_k) = 0$ when $j-k \neq 0,1$ and that $\#(N_j \cap N_k)= 0$ when $j\neq k$.  The fact that $\#(M_j \cap M_k) = 0$ when $j \neq k$ is explained above. Therefore the collection of spheres $(N_1, M_1, \ldots, M_{d-1}, N_{d})$ gives an $A_{2d-1}$-configuration of objects of $\DF(X)$. One can also show that the Lagrangian spheres can be constructed so that the intersections are transversal. 

\begin{figure}[!ht] 
\begin{center}
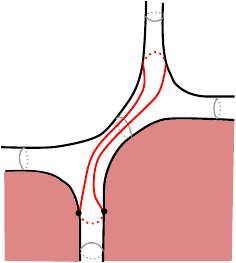
\caption{The picture shows the intersection of $N_j$ with the surface $S$ and the spheres $M_j$ and $M_{j+1}$ (shaded areas). The intersection points of $N_j$ with $M_j$ and $M_{j+1}$ are the two black dots.} \label{an_sing_inters}
\end{center}
\end{figure}

Now observe that the edge in the subdivision of $P$ connecting $(j-1,1)$ to $(j,0)$ corresponds to a rational curve in $\check X$ satisfying \eqref{nu:curve}. Denote this curve by $\PP^1_{j}$.  As mirror to the spheres $N_j$, let us propose the sheaf
\[ \mathscr E_{N_{j}} = \mathcal{O}_{\PP^1_j}(-1). \]
We have the following

\begin{prop} \label{a2d:sequence}
The collection of sheaves $(\mathscr E_{N_{1}}, \mathscr E_{M_{1}}, \ldots, \mathscr E_{M_{d-1}}, \mathscr E_{E_{d}} )$ defines an $A_{2d-1}$-configuration of objects of $\DC(\check X)$.
\end{prop}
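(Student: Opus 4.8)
The plan is to verify the $A_{2d-1}$-configuration axioms for the ordered collection
$(\mathscr E_{N_1}, \mathscr E_{L_1}, \dots, \mathscr E_{L_{d-1}}, \mathscr E_{N_d})$ by computing all the relevant $\Hom$ groups in $\DC(\check X)$, using the explicit geometric data assembled in Example~\ref{a2d:mirror}, Example~\ref{a2d:lineb} and formula~\eqref{bundles:vc3}. First I would record the geometric picture: each $D_j$ is a one-point blowup of the Hirzebruch surface $\Sigma_j$, hence a rational surface, so it satisfies \eqref{nu:hs}; each $\PP^1_j$ is a $(-1,-1)$-curve in $\check X$, so it satisfies \eqref{nu:curve}; and the incidence pattern among the $D_j$'s and $\PP^1_j$'s is exactly: $D_j$ and $D_{j+1}$ meet transversally along a rational curve (the $\PP^1_{\check e}$ for the shared edge), $\PP^1_j$ meets $D_{j-1}$ and $D_j$ transversally in a point (or sits inside one of them as a $-1$-curve — this needs to be pinned down from the subdivision), and all pairs at distance $\geq 2$ in the chain are disjoint. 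Once this dictionary is in place, the proposition becomes a finite list of applications of Corollary~\ref{spherical_ondiv} (to get that each $\mathscr E_{L_j}=\mathscr L_{\kappa_j}|_{D_j}$ is spherical, and each $\mathscr E_{N_j}=\O_{\PP^1_j}(-1)$ is spherical since $\PP^1_j$ is a $(-1,-1)$-curve), together with Lemmas~\ref{inters:point}, \ref{curv_in_surf} and \ref{inters_curv} for the off-diagonal entries.

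The organized sequence of steps would be: (1) establish the geometric dictionary above, including the self-intersection numbers $b_e$ of the boundary curves of each $D_j$ inside $D_j$, which are read off from $K_{C_j}=(j-2,-1,-1,-j-1,-2)$ via the relation $k_e=-b_e-2$ from Definition~\ref{defi:Kc}; (2) check sphericity of each object; (3) verify $\dim\Hom^*(\mathscr E_{N_1},\mathscr E_{L_1})=1$ and $\dim\Hom^*(\mathscr E_{L_{d-1}},\mathscr E_{N_d})=1$ using the appropriate one of Lemmas~\ref{inters:point}/\ref{curv_in_surf}, depending on whether $\PP^1_1$ meets $D_1$ transversally in a point or lies inside it; (4) for each $j$, verify $\dim\Hom^*(\mathscr E_{L_j},\mathscr E_{L_{j+1}})=1$ using Lemma~\ref{inters_curv} applied to $Y_1=D_j$, $Y_2=D_{j+1}$, $C=D_j\cap D_{j+1}$, with the line bundle $\mathcal L=\mathscr L_{\kappa_{j+1}}$: the content is to show the numerical condition $k+m=-1$ holds, where $m$ is the self-intersection of $C$ in $D_{j+1}$ and $k$ is the degree of $\mathscr L_{\kappa_{j+1}}$ restricted to $C$ — and symmetrically check the other pair $\Hom^*(\mathscr E_{L_{j+1}},\mathscr E_{L_j})$; (5) verify $\dim\Hom^*(\mathscr E_{N_j},\mathscr E_{L_j})=\dim\Hom^*(\mathscr E_{N_{j+1}},\mathscr E_{L_j})=1$ for interior $j$ using Lemma~\ref{inters:point} or \ref{curv_in_surf}; (6) check all vanishing statements $\Hom^*(\mathcal E_i,\mathcal E_j)=0$ for $|i-j|\geq 2$, which follow from disjointness of the supports together with the local-to-global spectral sequence (disjoint supports force the $\ext$-sheaves to vanish); (7) assemble these into the definition of an $A_{2d-1}$-configuration. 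Throughout, I would exploit that the intersection numbers $\kappa_j$ listed in \eqref{bundles:vc3} are, by Conjecture~\ref{hms:spheres} and Example~\ref{a2d:lineb}, precisely the degrees of $\mathscr L_{\kappa_j}$ along the toric boundary curves of $D_j$, so that all the "$k$" appearing in the lemmas are directly available.

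The main obstacle I anticipate is step (4): pinning down, for each $j$, whether the curve $\PP^1_j$ is embedded in $D_{j-1}$ and $D_j$ as an interior $-1$-curve or instead meets them transversally at a single point, and likewise sorting out which of the five boundary edges of $C_j$ corresponds to the shared curve $D_j\cap D_{j+1}$, and what its self-intersection $m$ is inside each of $D_j$ and $D_{j+1}$. This is a purely combinatorial bookkeeping problem about the chosen subdivision of $P=\conv\{(0,0),(0,2),(2d,0)\}$, but it is delicate because the two surfaces induce different self-intersection numbers on their common curve (as in the setup of Lemma~\ref{inters_curv} and point~3 of the Seidel--Thomas proposal, where the curve has self-intersection $-1$ in both), and the parity constraints on $\ell_j$ interact with these numbers. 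Concretely I expect to need the toric identity \eqref{self:int}, $n_{\check e^-}=-n_{\check e^+}-b_e n_{\check e}$, applied at the vertices $v_{C_j}$ to extract the $b_e$'s, and then to track how the support function $\phi_{\kappa_{j+1}}$ restricts to the shared edge. Once these numerical facts are established the lemmas apply mechanically, and the configuration axioms follow; so the proof is essentially a careful unwinding of the combinatorics rather than the introduction of any new idea, and I would present it by first stating the geometric dictionary as a lemma and then doing the $\Hom$-computations case by case.
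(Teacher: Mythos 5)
Your plan follows the paper's own approach closely: the same three computational lemmas (\ref{inters:point}, \ref{curv_in_surf}, \ref{inters_curv}) together with Corollary~\ref{spherical_ondiv} do all the work, and the ``geometric dictionary'' you propose to set up first is exactly what the paper's proof uses (in particular: $\PP^1_j$ meets $D_j$ transversally at one point, while $\PP^1_{j+1}$ lies \emph{inside} $D_j$ as a $(-1)$-curve, corresponding to the edge $e_{j3}$; so your ``this needs to be pinned down'' does have a definite answer, and both of Lemmas~\ref{inters:point} and \ref{curv_in_surf} get used, one for each type of adjacency).

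However there is a real confusion in your steps~(4) and~(6) about which pairs are adjacent in the chain. In the ordered collection $(\mathscr E_{N_1},\mathscr E_{L_1},\mathscr E_{N_2},\mathscr E_{L_2},\dots,\mathscr E_{N_d})$ of $2d-1$ objects, the pair $(\mathscr E_{L_j},\mathscr E_{L_{j+1}})$ sits at distance~$2$, not distance~$1$, so the required statement is $\dim\Hom^*(\mathscr E_{L_j},\mathscr E_{L_{j+1}})=0$, \emph{not} $=1$ as you wrote in step~(4). (Your cited condition $k+m=-1$ is precisely the \emph{vanishing} criterion of Lemma~\ref{inters_curv}, so the method you invoke contradicts the target you state; and indeed the paper shows $-\kappa_{j4}+\kappa_{(j+1)1}=j$ and the self-intersection of $D_j\cap D_{j+1}$ inside $D_{j+1}$ is $-(j+1)$, so $k+m=-1$ and $\Hom^*=0$.) Conversely, step~(6)'s claim that \emph{all} vanishings at distance $\geq 2$ follow from disjointness of supports is false: $D_j$ and $D_{j+1}$ meet along a rational curve, so $(\mathscr E_{L_j},\mathscr E_{L_{j+1}})$ is the one distance-$2$ pair where the supports are \emph{not} disjoint — that is exactly why Lemma~\ref{inters_curv} is needed and is the only genuinely nontrivial vanishing. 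The other distance-$\geq 2$ pairs (e.g. $(\mathscr E_{N_j},\mathscr E_{N_{j+1}})$, or anything at distance $\geq 3$) do have disjoint supports and vanish trivially, so step~(6) is fine for those. Straightening out which pair sits where in the chain, and swapping the $=1$/$=0$ in step~(4), recovers the paper's argument.
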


\begin{proof}
Since the divisors $D_j$ are one-point blow-up of Hirzebruch surfaces, they all satisfy (\ref{nu:hs}). As we said, the curves $\PP^1_j$ all satisfy (\ref{nu:curve}). Therefore Lemma \ref{y:spher} and Corollary \ref{spherical_ondiv} imply that the sheaves $\mathscr E_{M_{j}}$ and $\mathscr E_{N_{j}}$ are spherical objects. Observe that $\PP^1_j$ and $D_j$ intersect transversely at one point, therefore Lemma \ref{inters:point} implies that, for every $j$, $\mathscr E_{N_{j}}$ and $\mathscr E_{M_{j}}$ form an $A_2$-configuration of objects. The curve $\PP^1_{j+1}$ is contained in $D_j$ and has self-intersection $-1$ inside $D_j$. Observe that $\PP^1_{j+1}$ corresponds to the edge $e_{j3}$ of Figure \ref{sections_a2d}. This implies that $\mathscr E_{M_{j}}|_{\PP^1_{j+1}}$ is determined by the third entry of $\kappa_j$ in formula (\ref{bundles:vc3}), i.e., we have
\[ \mathscr E_{M_{j}}|_{\PP^1_{j+1}} = \mathscr L_{\kappa_j}|_{\PP^1_{j+1}} =  \begin{cases}
                                                                  \mathcal{O}_{\PP^1_{j+1}} \quad \text{when} \ \ j \ \ \text{is odd}, \\
                                                                  \mathcal{O}_{\PP^1_{j+1}}(-1) \quad \text{when} \ \ j \ \ \text{is even}.
                                                                  \end{cases} \]
Now let $\mathscr L$ be any line bundle on $\check X$ such that $\mathscr L|_{\PP^1_{j+1}} = \mathcal{O}_{\PP^1_{j+1}}(1)$. We have
\begin{align*}
\Hom^*_{\DC(\check X)}(\mathscr E_{M_j}, \mathscr E_{N_{j+1}}) & =  \Hom^*_{\DC(\check X)}(\mathscr E_{M_j}, \mathcal{O}_{\PP^1_{j+1}}(-1))  \\
    	  & =   \Hom^*_{\DC(\check X)}(\mathscr E_{M_j} \otimes \mathscr L, \mathcal{O}_{\PP^1_{j+1}}(-1) \otimes \mathscr L)  \\
	  & =  \Hom^*_{\DC(\check X)}(\mathscr E_{M_j} \otimes \mathscr L, \mathcal{O}_{\PP^1_{j+1}}).
\end{align*}
Now observe that 
\[ \mathscr E_{M_j} \otimes \mathscr L|_{\PP^1_{j+1}}= \begin{cases}
                                                                  \mathcal{O}_{\PP^1_{j+1}}(1) \quad \text{when} \ \ j \ \ \text{is odd}, \\
                                                                  \mathcal{O}_{\PP^1_{j+1}} \quad \text{when} \ \ j \ \ \text{is even}.
                                                                  \end{cases} \]
It follows from Lemma \ref{curv_in_surf} that $\mathscr E_{M_j} \otimes \mathscr L$ and $\mathcal{O}_{\PP^1_{j+1}}$ form an $A_2$-configuration of objects, therefore also $\mathscr E_{M_j}$ and  $\mathscr E_{N_{j+1}}$.

It remains to show that 
\begin{equation} \label{hom_o} 
\dim \left( \Hom^*_{\DC(\check X)}(\mathscr E_{M_j}, \mathscr E_{M_{j+1}}) \right) = 0. 
\end{equation}
Notice that $D_j$ and $D_{j+1}$ intersect in a rational curve $Q$ corresponding to the edge $e_{j4}$ of $C_j$, which also coincides with the edge $e_{(j+1)1}$ of $C_{j+1}$. We have that
\[  \Hom^*_{\DC(\check X)}(\mathscr E_{M_j}, \mathscr E_{M_{j+1}}) =  \Hom^*_{\DC(\check X)}(\mathcal{O}_{D_j}, \mathscr L_{-\kappa_j} \otimes \mathscr L_{\kappa_{j+1}}|_{D_{j+1}}). \]
Observe that we always have $-\kappa_{j4} + \kappa_{(j+1)1} = j$. This implies
\[ \mathscr L_{-\kappa_j} \otimes \mathscr L_{\kappa_{j+1}}|_{Q} = \mathcal{O}_Q(j). \]
Considering that the self-intersection of $Q$ inside $D_{j+1}$ is $-(j+1)$, equality (\ref{hom_o}) follows from Lemma \ref{inters_curv}. This completes the proof.
\end{proof}

Recall that a Hirzebruch surface of type $j$ is defined as $\Sigma_j = \PP(\mathcal{O}_{\PP^1} \oplus \mathcal{O}_{\PP^1}(j))$. The second cohomology $H^2(\Sigma_j, \C)$ is spanned by divisors $B$ and $F$, where $B = \PP(\mathcal{O}_{\PP^1}(j)) \subset \Sigma_j$ and $F$ is the fibre of the projection onto $\PP^1$. The one point blowup of $\Sigma_j$  has second cohomology spanned by $B$, $F$ and the exceptional curve $E$. We have
\[ B^2 = -j, \ \ F^2 = 0, \ \ E^2 = -1, \ \ B \cdot F = 1, \ \ E \cdot F = E \cdot B = 0. \]
In our picture for $D_j$ (in Figure \ref{sections_a2d}), the divisor $B$ corresponds to the edge $e_{j1}$, $F$ corresponds to $e_{j5}$ and $E$ to $e_{j3}$. Using these facts, it is easy to show that 
\[ \mathscr E_{M_{j}} = \mathscr L_{\kappa_j}|_{D_j} = \begin{cases}
	-B - \frac{j+1}{2}F &\ j \  \text{odd}, \\
  -B- \frac{j+2}{2}F +E &\ j \  \text{even}.
\end{cases} \]

\bibliography{biblio}
\bibliographystyle{plain}

\vspace{1cm}
\begin{flushleft}
Mark GROSS \\
DPMMS, Centre for Mathematical Sciences \\
University of Cambridge \\
Wilberforce Road \\
Cambridge, CB3 0WB \\
United Kingdom\\
E-mail address: \email{mgross@dpmms.cam.ac.uk} \\

\vspace{1cm}
Diego MATESSI \\
Dipartimento di Matematica \\
Universit\`a degli Studi di Milano \\
Via Saldini 50 \\
I-20133 Milano, Italy \\
E-mail address: \email{diego.matessi@unimi.it}
\end{flushleft}

\end{document}